\theoremstyle{definition}
\newtheorem{theorem}{Theorem}[section]
\newtheorem{prop}[theorem]{Proposition}
\newtheorem{lemma}[theorem]{Lemma}
\newtheorem{cor}[theorem]{Corollary}
\newtheorem{notation}[theorem]{Notation}
\newtheorem*{theorema}{Theorem A}
\newtheorem*{theoremb}{Theorem B}
\newtheorem{ex}[theorem]{Example}
\theoremstyle{remark}
\newtheorem{dfn}[theorem]{Definition}
\newtheorem{remark}[theorem]{Remark}
\newtheorem{claim}[theorem]{Claim}
\def\co{\colon\thinspace}
\def\ep{\epsilon}
\def\R{\mathbb{R}}
\def\Z{\mathbb{Z}}
\def\N{\mathbb{N}}
\def\C{\mathbb{C}}
\DeclareMathOperator{\Img}{Im}
\def\co{\colon\thinspace}
\begin{document}

\title{Persistent homology and Floer-Novikov theory}

\author{Michael Usher}
\email{usher@math.uga.edu}
\address{Department of Mathematics\\University of Georgia\\Athens, GA 30602}

\author{Jun Zhang}
\email{jzhang@math.uga.edu}
\address{Department of Mathematics\\University of Georgia\\Athens, GA 30602}

\date{\today}

\begin{abstract}
We construct ``barcodes'' for the chain complexes over Novikov rings that arise in Novikov's Morse theory for closed one-forms and in Floer theory on not-necessarily-monotone symplectic manifolds. In the case of classical Morse theory these coincide with the barcodes familiar from persistent homology.  Our barcodes completely characterize the filtered chain homotopy type of the chain complex; in particular they subsume in a natural way previous filtered Floer-theoretic invariants such as boundary depth and torsion exponents, and also reflect information about spectral invariants.  We moreover prove a continuity result which is a natural analogue both of the classical bottleneck stability theorem in persistent homology and of standard continuity results for spectral invariants, and we use this to prove a $C^0$-robustness result for the fixed points of Hamiltonian diffeomorphisms.  Our approach, which is rather different from the standard methods of persistent homology, is based on a non-Archimedean singular value decomposition for the boundary operator of the chain complex.
\end{abstract}

\maketitle
  
\tableofcontents

\section{Introduction}

Persistent homology is a well-established tool in the rapidly-developing field of topological data analysis.  On an algebraic level, the subject studies ``persistence modules,'' i.e., structures $\mathbb{V}$ consisting of a module $V_t$ associated to each $t\in \R$ with homomorphisms $\sigma_{st}\co V_s\to V_t$ whenever $s\leq t$ satisfying the functoriality properties that $\sigma_{ss}=I_{V_s}$, the identity map on module $V_s$, and $\sigma_{su}=\sigma_{tu}\circ\sigma_{st}$ (more generally $\R$ could be replaced by an arbitrary partially ordered set, but this generalization will not be relevant to this paper).  Persistence modules arise naturally in topology when one considers a continuous function $f\co X\to \R$ on a topological space $X$; for a field $\mathcal K$ one can then let $V_t=H_*(\{f\leq t\}; \mathcal K)$ be the homology of the $t$-sublevel set, with the $\sigma_{st}$ being the inclusion-induced maps.  For example if $X=\R^n$ and the function $f\co \R^n\to \R$ is given by the minimal distance to a finite collection of points sampled from some subset $S\subset \R^n$, then $V_t$ is the homology of the union of balls of radius $t$ around the points of the sample; the structure of the associated persistence module has been used effectively to make inferences about the topological structure of the set $S$ in some real-world situations, see e.g. \cite{CBull}.

Under finiteness hypotheses on the modules $V_t$ (for instance finite-type as in \cite{ZC} or more generally pointwise-finite-dimensionality as in \cite{CB}), provided that the coefficient ring for the modules $V_t$ is a field $\mathcal K$, it can be shown that the persistence module $\mathbb{V}$ is isomorphic in the obvious sense to a direct sum of ``interval modules'' $\mathcal{K}_I$, where $I\subset \R$ is an interval and by definition $(\mathcal{K}_I)_t = \mathcal K$ for $t \in I$ and $\{0\}$ otherwise and the morphisms $\sigma_{st}$ are the identity on $\mathcal K$ when $s,t\in I$ and $0$ otherwise.  The {\it barcode} of $\mathbb{V}$ is then defined to be the multiset of intervals appearing in this direct sum decomposition.  When $\mathbb{V}$ is obtained as the filtered homology of a finite-dimensional chain complex, \cite{ZC} gives a worst-case-cubic-time algorithm that computes the barcode given the boundary operator on the chain complex.

If $f\co X\to \R$ is a Morse function on a compact smooth manifold, a standard construction (see e.g. \cite{Sc93}) yields a ``Morse chain complex'' $(CM_{*}(f),\partial)$. The degree-$k$ part $CM_{k}(f)$ of the complex is formally spanned (say over the field $\mathcal K$) by the critical points of $f$ having index $k$. The boundary operator $\partial \co CM_{k+1}(f)\to CM_{k}(f)$ counts (with appropriate signs) negative gradient flowlines of $f$ which are asymptotic as $t\to-\infty$ to an index-$(k+1)$ critical point and as $t\to\infty$ to an index $k$ critical point.  For any $t\in \R$, if we consider the subspace  $CM^{t}_{*}(f)\leq CM_{*}(f)$ spanned only by those critical points $p$ of $f$ with $f(p)\leq t$, then the fact that $f$ decreases along its negative gradient flowlines readily implies that $CM^{t}_{*}(f)$ is a subcomplex of $CM_{*}(f)$. So taking homology gives filtered Morse homology groups $HM_{*}^{t}(f)$, with inclusion induced maps $HM_{*}^{s}(f)\to HM_{*}^{t}(f)$ when $s\leq t$ that satisfy the usual functoriality properties.  Thus the filtered Morse homology groups associated to a Morse function yield a persistence module; given a formula for the Morse boundary operator one could then apply the algorithm from \cite{ZC} to compute its barcode.  In fact, standard results of Morse theory show that this persistence module is (up to isomorphism) simply the persistence module comprising the sublevel homologies $H_{*}(\{f\leq t\};\mathcal{K})$ with the inclusion-induced maps.

There are a variety of situations in which one can do some form of Morse theory for a suitable function $\mathcal{A}\co \mathcal{C}\to \R$ on an appropriate infinite-dimensional manifold $\mathcal{C}$.  Indeed Morse himself \cite{Morse} applied his theory to the energy functional on the loop space of a Riemannian manifold in order to study its geodesics.  Floer discovered some rather different manifestations of infinite-dimensional Morse theory \cite{F88}, \cite{F88b}, \cite{F89} involving functions $\mathcal{A}$ which, unlike the energy functional, are unbounded above and below and have critical points of infinite index.  In these cases, one still obtains a Floer chain complex analogous to the Morse complex of the previous paragraph and can still speak of the filtered homologies $HF^t$ with their inclusion-induced maps $HF^s\to HF^t$; however it is no longer true that these filtered homology groups relate directly to classical topological invariants---rather they are new objects.  Thus Floer's construction gives (taking filtrations into account as above) a persistence module.  If the persistence module satisfies appropriate finiteness conditions one then obtains a barcode by the procedure indicated earlier; however as we will explain below the finiteness conditions only hold in rather restricted circumstances.  While the filtered Floer groups have been studied since the early 1990's and have been a significant tool in symplectic topology since that time (see e.g. \cite{FH}, \cite{Sc00}, \cite{EP03}, \cite{Oh05}, \cite{U13}, \cite{HLS}), it is only very recently that they have been considered from a persistent-homological point of view.  Namely, the authors of \cite{PS} apply ideas from persistent homology to prove interesting results about autonomous Hamiltonian diffeomorphisms of  symplectic manifolds, subject to a topological restriction that is necessary to guarantee the finiteness property that leads to a barcode.  This paper will generalize the notion of a barcode to more general Floer-theoretic situations. In particular this opens up the possibility of extending the results from \cite{PS} to manifolds other than those considered therein; this is the subject of work in progress by the second author.

The difficulty with applying the theory of barcodes to general Floer complexes lies in the fact that, typically, Floer theory is more properly viewed as an infinite dimensional version of Novikov's Morse theory for closed one-forms (\cite{Nov}, \cite{Fa}) rather than of classical Morse theory.  Here one considers a closed $1$-form $\alpha$ on some manifold $M$ which vanishes transversely with finitely many zeros, and takes a regular covering $\pi\co \tilde{M}\to M$ on which we have $\pi^*\alpha = d\tilde{f}$ for some function $\tilde{f}\co\tilde{M}\to \R$.    Then $\tilde{f}$ will be a Morse function whose critical locus consists of the preimage of the (finite) zero locus of $\alpha$ under $\pi$; in particular if the de Rham cohomology class of $\alpha$ is nontrivial then $\pi\co\tilde{M}\to M$ will necessarily have infinite fibers and so $\tilde{f}$ will have infinitely many critical points.  

One then attempts to construct a Morse-type complex $CN_{*}(\tilde{f})$ by setting $CN_k(\tilde{f})$ equal to the span over $\mathcal K$ of the index-$k$ critical points\footnote{``index'' means Morse index in the finite-dimensional case (see, e.g. \cite{Sc93}), and typically some version of the Maslov index in the Floer-theoretic case (see, e.g. \cite{RS}).} of $\tilde{f}$, with boundary operator $\partial\co CN_{k+1}(\tilde{f})\to CN_k(\tilde{f})$ given by setting, for an index-$(k+1)$ critical point $p$ of $\tilde{f}$, \begin{equation}\label{boundary} \partial p=\sum_{ind_{\tilde{f}}(q)=k}n(p,q)q \nonumber \end{equation} where $n(p,q)$ is a count of negative gradient flowlines for $\tilde{f}$ (with respect to suitably generic Riemannian metric pulled back to $\tilde{M}$ from $M$) asymptotic to $p$ in negative time and to $q$ in positive time.  However the above attempt does not quite work because the sum on the right-hand side may have infinitely many nonzero terms; thus it is necessary to enlarge $CN_k(\tilde{f})$ to accommodate certain formal infinite sums.  The correct definition is, denoting by $Crit_k(\tilde{f})$ the set of critical points of $\tilde{f}$ with index $k$: \begin{equation}\label{novcomp} CN_k(\tilde{f})=\left\{\sum_{p\in Crit_k(\tilde{f})}a_pp\,\left|\, a_p\in \mathcal{K},\,(\forall C\in \R)(\#\{p|a_p\neq 0,\,\tilde{f}(p)>C\}<\infty)\right.\right\}. \end{equation} Then under suitable hypotheses it can be shown that the definition of $\partial$ above gives a well-defined map $\partial\co CN_{k+1}(\tilde{f})\to CN_k(\tilde{f})$ such that $\partial^2=0$. This construction can be carried out in many contexts, including the classical Novikov complex where $M$ is compact and various Floer theories where $M$ is infinite-dimensional. In the latter case, the zeros of $\alpha$ are typically some objects of interest, such as closed orbits of a Hamiltonian flow, on some other finite-dimensional manifold.  In these cases, just as in Morse theory, $\partial$ preserves the $\R$-filtration given by, for $t\in \R$, letting  $CN^{t}_{k}(\tilde{f})$ consist of only those formal sums ${\textstyle \sum_p a_pp}$ where each $\tilde{f}(p)\leq t$.  In this way we obtain filtered Novikov homology groups $HN^{t}_{*}(\tilde{f})$ with inclusion-induced maps $HN^{s}(\tilde{f})\to HN^{t}(\tilde{f})$ satisfying the axioms of a persistence module over $\mathcal{K}$.  

 However when the cover $\tilde{M}\to M$ is nontrivial, this persistence module over $\mathcal K$ does not satisfy the hypotheses of many of the major theorems of persistent homology---the maps $HN^{s}(\tilde{f})\to HN^{t}(\tilde{f})$ generally have infinite rank over $\mathcal K$  (due to a certain ``lifting" scenario which is described later in this paragraph) and so the persistence module is not ``q-tame'' in the sense of \cite{CSGO}.  As is well-known, to get a finite-dimensional object out of the Novikov complex one should work not over $\mathcal K$ but over a suitable Novikov ring.  From now on we will assume that the cover $\pi\co \tilde{M}\to M$ is minimal subject to the property that $\pi^*\alpha$ is exact---in other words the covering group coincides with the kernel of the homomorphism $I_{\alpha}\co\pi_1(M)\to \R$ induced by integrating $\alpha$ over loops; this will lead to our Novikov ring being a field.  Given this assumption, let $\hat{\Gamma}\leq \R$ be the image of $I_{\alpha}$.  Then by, for any $g\in \hat{\Gamma}$, lifting loops in $M$ with integral equal to $-g$ to paths in $\tilde{M}$, we obtain an action of $\hat{\Gamma}$ on the critical locus of $\tilde{f}$ such that $\tilde{f}(p)-\tilde{f}(gp)=g$.  In some Floer-theoretic situations this action can shift the index by $s(g)$ for some homomorphism $s\co \hat{\Gamma}\to \Z$. For instance, in Hamiltonian Floer theory $s$ is given by evaluating twice the first Chern class of the symplectic manifold on spheres, whereas in the classical case  of the Novikov chain complex of a closed one-form on a finite-dimensional manifold, $s$ is zero. Now let $\Gamma=\ker s$, so that $\Gamma$ acts on the index-$k$ critical points of $\tilde{f}$, and this action then gives rise to an action of the following {\it Novikov field} on $CN_{k}(\tilde{f})$: \[
\Lambda^{\mathcal K,\Gamma}=\left\{\sum_{g\in \Gamma}a_gT^g\, \bigg|\,a_g\in\mathcal{K},(\forall C\in \R)(\#\{g|a_g\neq 0,\,g<C\}<\infty)\right\}. \] It follows from the description that $CN_{k}(\tilde{f})$ is a vector space over $\Lambda^{\mathcal K,\Gamma}$ of (finite!) dimension equal to the number of zeros of our original $\alpha\in\Omega^1(M)$ which admit a lift to $\tilde{M}$ which is an index-$k$ critical point for $\tilde{f}$---indeed if the set $\{\tilde{p}_1,\ldots,\tilde{p}_{m_i}\}\subset \tilde{M}$ consists of exactly one such lift of each of these zeros of $\alpha$ then $\{\tilde{p}_1,\ldots,\tilde{p}_{m_i}\}$ is a $\Lambda^{\mathcal K,\Gamma}$-basis for $CN_{k}(\tilde{f})$.

Now since the action by an element $g$ of $\Gamma$ shifts the value of $\tilde{f}$ by $-g$, the filtered groups $CN^{t}_{k}(\tilde{f})$ are not preserved by multiplication by scalars in $\Lambda^{\mathcal K,\Gamma}$, and so the aforementioned persistence module $\{HF^{t}(\tilde{f})\}$ over $\mathcal K$ can {\it not} be viewed as a persistence module over $\Lambda^{\mathcal K,\Gamma}$, unless of course $\Gamma=\{0\}$, in which case $\Lambda^{\mathcal K,\Gamma}=\mathcal K$.   Our strategy in this paper is to understand filtered Novikov and Floer complexes not through their induced persistence modules on homology (cf. Remark \ref{homology} below) but rather through the {\it non-Archimedean geometry} that the filtration induces on the chain complexes.    This will lead to an alternative theory of barcodes which recovers the standard theory in the case that $\Gamma=\{0\}$ (cf. \cite{ZC}, \cite{CSGO} and, for a different perspective, \cite{Bar})  but which also makes sense for arbitrary $\Gamma$, while continuing to enjoy various desirable properties.  

We should mention that, in the case of Morse-Novikov theory for a function $f\co X\to S^1$, a different approach to persistent homology is taken in \cite{BD}, \cite{BH}.  These works are based around the notion of the (zigzag) persistent homology of level sets of the function; this is a rather different viewpoint from ours, as in order to obtain insight into Floer theory we only use the algebraic features of the Floer chain complex---in a typical Floer theory there is nothing that plays the role of the  homology of a level set.  Rather we construct what could be called an algebraic simulation of the more classical sublevel set persistence, even though (as noted in \cite{BD}) from a geometric point of view it does not make sense to speak of the sublevel sets of an $S^1$-valued function.  Also our theory, unlike that of \cite{BD}, \cite{BH}, applies to the Novikov complexes of closed one-forms that have dense period groups.  Notwithstanding these differences there are some indications (see in particular the remark after \cite[Theorem 1.4]{BH}) that the constructions may be related on their common domains of applicability; it would be interesting to understand this further.

\subsection{Outline of the paper and summary of main results}

With the exception of an application to Hamiltonian Floer theory in Section \ref{hamsect}, the entirety of this paper is written in a general algebraic context involving chain complexes of certain kinds of non-Archimedean normed vector spaces over Novikov fields $\Lambda=\Lambda^{\mathcal{K},\Gamma}$. (In particular, no knowledge of Floer theory is required to read the large majority of the paper, though it may be helpful as motivation.) The definitions necessary for our theory are somewhat involved and so will not be included in detail in this introduction, but they make use of the standard notion of orthogonality in non-Archimedean normed vector spaces, a subject which is reviewed Section \ref{orthsect}.  Our first key result is Theorem \ref{existsvd}, which shows that any linear map $A\co C\to D$  between two finite-dimensional non-Archimedean normed vector spaces $C$ and $D$ over $\Lambda$ having orthogonal bases admits a \emph{singular value decomposition}: there are orthogonal bases $B_C$ for $C$ and $B_D$ for $D$ such that $A$ maps each member of $B_C$ either to zero or to one of the elements of $B_D$.  In the case that $C$ and $D$ admit ortho\emph{normal} bases and not just orthogonal ones this was known (see \cite[Section 4.3]{Ked}); however Floer complexes typically admit orthogonal but not orthonormal bases (unless one extends coefficients, which leads to a loss of information), and in this case Theorem \ref{existsvd} appears to be new.  

In Definition \ref{dfnfcc} we introduce the notion of a ``Floer-type complex'' $(C_*,\partial,\ell)$ over a Novikov field $\Lambda$; this is a chain complex of $\Lambda$-vector spaces $(C_*,\partial)$ with a non-Archimedean norm $e^\ell$ on each graded piece $C_k$ that induces a filtration which is respected by $\partial$.  We later construct our versions of the barcode by consideration  of singular value decompositions of the various graded pieces of the boundary operator.  Singular value decompositions are rather non-unique, but we prove a variety of results reflecting that data about filtrations of the elements involved in a singular value decomposition is often independent of choices and so gives rise to invariants of the Floer-type complex $(C_*,\partial,\ell)$. The first instance of this appears in Theorem \ref{genebd}, which relates the boundary depth of \cite{U11},\cite{U13}, as well as generalizations thereof, to singular value decompositions.  Theorem \ref{torsion} shows that these generalized boundary depths are equal to (an algebraic abstraction of) the torsion exponents from \cite{FOOO09}.  Since the definition of the torsion exponents in \cite{FOOO09} requires first extending coefficients to the universal Novikov field (with $\Gamma=\R$), whereas our definition in terms of singular value decompositions does not require such an extension, this implies new restrictions on the values that the torsion exponents can take: in particular they all must be equal to differences between filtration levels of chains in the original Floer complex.  

\subsubsection{Barcodes}
Our fundamental invariants of a Floer-type complex, the ``verbose barcode'' and the ``concise barcode,'' are defined in Definition \ref{dfnbc}.  The verbose barcode in any given degree is a \emph{finite} multiset of elements $([a],L)$ of the Cartesian product $(\R/\Gamma)\times [0,\infty]$, where $\Gamma\leq \R$ is the subgroup described above and involved in the definition of the Novikov field $\Lambda=\Lambda^{\mathcal{K},\Gamma}$.  The concise barcode is simply the sub-multiset of the verbose barcode consisting of elements $([a],L)$ with $L>0$.   Both barcodes are constructed in an explicit way from singular value decompositions of the graded pieces of the boundary operator on a Floer-type complex.

To be a bit more specific, as is made explicit in Proposition \ref{ournormalform}, a  singular value decomposition can be thought of as expressing the Floer-type complex as an \emph{orthogonal} direct sum of very simple complexes\footnote{The ``Morse-Barannikov complex'' described in \cite{Bar}, \cite[Section 2]{LNV} can be seen as a special case of this direct sum decomposition when $\Gamma=\{0\}$ and the Floer-type complex is the Morse complex of a Morse function whose critical values are all distinct; see Remark \ref{brmk} for details.} having the form \begin{equation}\label{simple} \cdots\to 0\to span_{\Lambda}\{y\}\to span_{\Lambda}\{\partial y\}\to 0\to\cdots\quad \mbox{or}\quad \cdots\to 0\to span_{\Lambda}\{x\}\to 0\to\cdots \end{equation} and the verbose barcode consists of the elements $([\ell(\partial y)],\ell(y)-\ell(\partial y))$ for summands of the first type and $([\ell(x)],\infty)$ for summands of the second type.  The concise barcode discards those elements coming from summands with $\ell(\partial y)=\ell(y)$ (as these do not affect any of the filtered homology groups).

To put these barcodes into context, suppose that $\Gamma=\{0\}$ and that our Floer-type complex $(C_*,\partial,\ell)$ is given by the Morse complex $CM_*(f)$ of a Morse function $f$ on a compact manifold $X$ (with $\ell$ recording the highest critical value attained by a given chain in the Morse chain complex).  Then standard persistent homology methods associate to $f$ a barcode, which is a collection of intervals $[a,b)$ with $a<b\leq\infty$, given the interpretation that each interval $[a,b)$ in the collection corresponds to a topological feature of $X$ which is ``born'' at the level $\{f=a\}$ and ``dies'' at the level $\{f=b\}$ (or never dies if $b=\infty$).  Theorem \ref{classical} proves that, when $\Gamma=\{0\}$ (so that $\R/\Gamma=\R$), our concise barcode is equivalent to the classical persistent homology barcode under the correspondence that sends a pair $(a,L)$ in the concise barcode to an interval $[a,a+L)$.  (Thus the second coordinates $L$ in our elements of the concise barcode correspond to the lengths of bars in the persistent homology barcode.)  To relate this back to the persistence module $\{HM^{t}_{*}(f)\}_{t\in \R}\cong \left\{H_*(\{f\leq t\};\mathcal{K})\right\}_{t\in \R}$ discussed earlier in the introduction, each $HM^{t}_{k}(f)$ has dimension equal to the number of elements $(a,L)$ in the degree-$k$ concise barcode such that $a\leq t<a+L$, and the rank of the inclusion-induced map $HM^{s}_{k}(f)\to HM^{t}_{k}(f)$ is equal to the number of such elements with $a\leq s\leq t<a+L$.

When $\Gamma$ is a nontrivial subgroup of $\R$, a Floer-type complex over $\Lambda$ is more akin to the Morse-Novikov complex of a multivalued function $f$, where the ambiguity of the values of $f$ is given by the group $\Gamma$ (for instance, identifying $S^1=\R/\Z$, for an $S^1$-valued function we would have $\Gamma=\Z$).  While this situation lies outside the scope of classical persistent homology barcodes for reasons indicated earlier in the introduction, on a naive level it should be clear that if a topological feature of $X$ is born where $f=a$ and dies
where $f=b$ (corresponding to a bar $[a,b)$ in a hypothetical barcode), then it should equally be true that, for any $g\in \Gamma$, a topological feature of $X$ is born where $f=a+g$ and dies where $f=b+g$.  So bars would come in $\Gamma$-parametrized families with $\Gamma$ acting on both endpoints of the interval; such families in turn can be specified by the coset $[a]$ of the left endpoint $a$ in $\R/\Gamma$ together with the length $L=b-a\in[0,\infty]$.  This motivates our definition of the verbose and concise barcodes as multisets of elements of $(\R/\Gamma)\times [0,\infty]$. In terms of the summands in (\ref{simple}), the need to quotient by $\Gamma$ simply comes from the fact that the elements $y$ and $x$ are only specified up to the scalar multiplication action of $\Lambda\setminus\{0\}$, which can affect their filtration levels by an arbitrary element of $\Gamma$.   The following classification results are two of the main theorems of this paper.

\begin{theorema} Two Floer-type complexes $(C_*, \partial_C, \ell_C)$ and $(D_*, \partial_D, \ell_D)$ are filtered chain isomorphic to each other if and only if they have identical verbose barcodes in all degrees.\end{theorema}

\begin{theoremb} Two Floer-type complexes $(C_*, \partial_C, \ell_C)$ and $(D_*, \partial_D, \ell_D)$ are filtered chain homotopy equivalent to each other if and only if they have identical concise barcodes in all degrees.
\end{theoremb}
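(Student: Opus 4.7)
Both directions will be derived from Theorem~A by way of the orthogonal decomposition provided by Proposition~\ref{ournormalform}, together with the key observation that a simple summand of the form $0\to\text{span}_\Lambda\{y\}\xrightarrow{\partial}\text{span}_\Lambda\{\partial y\}\to 0$ with $\ell(y)=\ell(\partial y)$ is filtered chain contractible: the degree-one map $h(\partial y)=y$ is filtration-preserving because $\ell(h(\partial y))=\ell(y)=\ell(\partial y)$, and satisfies $\partial h+h\partial=\text{id}$. Such summands contribute to the verbose barcode but not to the concise one, so they may be freely adjoined or removed without changing the filtered chain homotopy type.

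\textbf{Sufficiency.} Decompose orthogonally $C_*=C_*^{\circ}\oplus T_C$ and $D_*=D_*^{\circ}\oplus T_D$, where the \emph{essential} parts $C_*^{\circ}$ and $D_*^{\circ}$ collect the simple summands with $\ell(\partial y)<\ell(y)$ or $\partial y=0$, and the \emph{trivial} parts $T_C$, $T_D$ collect those with $\ell(y)=\ell(\partial y)$. By the key observation, $T_C$ and $T_D$ are filtered chain contractible, so $C_*\simeq C_*^{\circ}$ and $D_*\simeq D_*^{\circ}$. By construction, the verbose barcode of $C_*^{\circ}$ equals the concise barcode of $C_*$, and similarly for $D$; so if the concise barcodes of $C_*$ and $D_*$ agree in every degree then Theorem~A applied to $C_*^{\circ}$ and $D_*^{\circ}$ yields a filtered chain isomorphism, which together with the two equivalences above gives $C_*\simeq D_*$.

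\textbf{Necessity.} Suppose $\Phi\co C_*\to D_*$, $\Psi\co D_*\to C_*$, $H_C$, and $H_D$ realize a filtered chain homotopy equivalence. Decomposing both sides as in the sufficiency argument reduces the problem to the case where $C_*=C_*^{\circ}$ and $D_*=D_*^{\circ}$ are reduced. The plan is to build a filtered analogue $E_*$ of the mapping cylinder of $\Phi$, of the form $C_*\oplus C_*[1]\oplus D_*$ with differential incorporating $\Phi$ and filtration $\ell_E(c,c',d)=\max\{\ell_C(c),\ell_C(c'),\ell_D(d)\}$, and then to exhibit filtered chain \emph{isomorphisms} $C_*\oplus K_C\cong E_*\cong D_*\oplus K_D$ in which $K_C$ and $K_D$ are filtered chain contractible. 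Granting this, two applications of Theorem~A give an identity of multisets $\text{verbose}(C_*)+\text{verbose}(K_C)=\text{verbose}(D_*)+\text{verbose}(K_D)$. Now any filtered chain contractible complex has empty concise barcode: the acyclicity kills every $\text{span}\{x\}$ summand in the normal form, while for any first-kind summand the chain homotopy $h$ supplies, for the boundary $\partial y$, a preimage $h(\partial y)$ with $\ell(h(\partial y))\le\ell(\partial y)$, so the generator of the summand can be replaced by $h(\partial y)$, forcing $\ell(y)=\ell(\partial y)$. Hence $\text{verbose}(K_C)$ and $\text{verbose}(K_D)$ consist only of zero-length entries, and cancelling them yields $\text{concise}(C_*)=\text{concise}(D_*)$.

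\textbf{Main obstacle.} The delicate step is constructing the isomorphisms $C_*\oplus K_C\cong E_*$ and $E_*\cong D_*\oplus K_D$ as genuine non-Archimedean isometries rather than maps that merely respect filtrations with some loss. The naive cylinder splitting has to be corrected by terms built from $\Psi$, $H_C$, and $H_D$, and one uses the orthogonal bases produced by the singular value decomposition of Theorem~\ref{existsvd}, adapted simultaneously to the relevant graded pieces of $\Phi$ and $\Psi$, to verify that the resulting sections and retractions send orthogonal bases to orthogonal bases with matching filtration levels. Once this is set up, the contractibility of the complements $K_C$ and $K_D$ follows from their explicit cone-like form, and the rest of the argument is bookkeeping.
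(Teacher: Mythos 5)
Your overall strategy is essentially the one the paper uses: the sufficiency direction via the orthogonal decomposition of Proposition~\ref{ournormalform} and the contractibility of the zero-length summands $\mathcal{E}(a,0,k)$, and the necessity direction via the mapping cylinder $Cyl(\Phi)_*$ with the filtration $\ell_{cyl}(c,d,e)=\max\{\ell_C(c),\ell_D(d),\ell_C(e)\}$ and the two orthogonal direct-sum decompositions $Cyl(\Phi)_*=i_C(C_*)\oplus\ker\beta$ and $Cyl(\Phi)_*=i_D(D_*)\oplus\ker\alpha$ from Lemma~\ref{cylhtopy}. Two small points of calibration are worth flagging. First, your ``Main obstacle'' is genuinely there but is considerably easier than your description suggests: you do not need to simultaneously adapt singular value decompositions to the graded pieces of $\Phi$ and $\Psi$. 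The orthogonality of $\ker\beta$ to $i_C(C_*)$ (and of $\ker\alpha$ to $i_D(D_*)$) falls out of Lemma~\ref{or&pr}: since $\ker\beta=\{(-\Psi d-K_Ce,d,e)\}$, the projection to $\{0\}\oplus D_*\oplus C[1]_*$ sends $(-\Psi d-K_Ce,d,e)\mapsto(0,d,e)$ and this preserves $\ell_{cyl}$ because $\ell_C(\Psi d)\leq\ell_D(d)$ and $\ell_C(K_Ce)\leq\ell_C(e)$, so the degree-by-degree orthogonal-complement criterion is immediate. Second, your argument that a filtered-contractible complex has empty concise barcode (``the generator of the summand can be replaced by $h(\partial y)$'') needs to be anchored to Lemma~\ref{checkingrobust}: the element $y_i$ from a singular value decomposition attains the minimal filtration among all preimages of $\partial y_i$, so the bound $\ell(h(\partial y_i))\leq\ell(\partial y_i)$ forces $\ell(y_i)\leq\ell(\partial y_i)$, hence $\ell(y_i)=\ell(\partial y_i)$; this is exactly how the paper argues that $\ker\alpha$ and $\ker\beta$ contribute only zero-length bars. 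With those two clarifications your proposal is a correct reproduction of the paper's proof.
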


Theorem A includes the statement that the verbose (and hence also the concise) barcode is independent of the singular value decomposition used to define it; indeed this statement is probably the hardest part of Theorems A and B to prove.  We prove these theorems in Section \ref{classsect}.  

As should already be clear from the above discussion, the only distinction between the verbose and concise barcodes of a Floer-type complex $(C_*,\partial,\ell)$ arises from elements $y\in C_*$ with $\ell(\partial y)=\ell(y)$.  While our definition of a Floer-type complex only imposes the inequality $\ell(\partial y)\leq \ell(y)$, in many of the most important examples, including the Morse complex of a Morse function or the Hamiltonian Floer complex of a nondegenerate Hamiltonian, one in fact always has a strict inequality $\ell(\partial y)<\ell(y)$ for all $y\in C_*\setminus\{0\}$.  For complexes satisfying this latter property the verbose and concise barcodes are equal, and so Theorems A and B show that the filtered chain isomorphism classification of such complexes is exactly the same as their filtered chain homotopy equivalence classification.  (This fact can also be proven in a more direct way, see for instance the argument at the end of \cite[Proof of Lemma 3.8]{U11}.)  

In Remark \ref{nonstrict} below we mention some examples of naturally-occurring Floer-type complexes in which an equality $\ell(\partial y)=\ell(y)$ can sometimes hold.  In these complexes the verbose and concise barcodes are generally different, and thus the filtered chain homotopy equivalence classification is coarser than the filtered chain isomorphism classification.  For many purposes the filtered chain isomorphism classification is likely too fine, in that it may depend on auxiliary choices made in the construction of the complex (for instance, in the Morse-Bott complex as constructed in \cite{Fr}, it would depend on the choices of Morse functions on the critical submanifolds of the Morse-Bott function under consideration).  The filtered chain homotopy type (and thus, by Theorem B, the concise barcode) is generally insensitive to such choices, and moreover is robust in a sense made precise in Theorem \ref{mainstab} below.

When $\Gamma=\{0\}$, Theorem B may be seen as an analogue of standard results from persistent homology theory (like \cite[Corollary 3.1]{ZC}) which imply that the degree-$k$ barcode of a Floer-type complex completely classifies the persistence module obtained from its filtered homologies $H^{t}_{k}(C_*)$. Of course, the filtered chain homotopy type of a filtered chain complex is sufficient to determine its filtered homologies.  Conversely, still assuming that $\Gamma=\{0\}$,
by using the description of finite-type persistence modules as $\mathcal{K}[t]$-modules in \cite{ZC}, and taking advantage of the fact that (because $\mathcal{K}[t]$ is a PID) chain complexes of free $\mathcal{K}[t]$-modules are classified up to chain homotopy equivalence by their homology, one can show that the filtered chain homotopy type of a Floer-type complex is determined by its filtered homology persistence module.   Thus  although the persistent homology literature generally focuses on homological invariants rather than classification of the underlying chain complexes up to filtered isomorphism or filtered homotopy equivalence, when $\Gamma=\{0\}$ Theorem B can be deduced from \cite{ZC} together with a little homological algebra and Theorem \ref{classical}.  

For any choice of the group $\Gamma$, the concise barcode contains information about various numerical invariants of Floer-type complexes that have previously been used in filtered Floer theory.  In particular, by Theorems \ref{genebd} and \ref{torsion} and the definition of the concise barcode, the torsion exponents from \cite{FOOO09} are precisely the second coordinates $L$ of elements $([a],L)$ of the concise barcode having $L<\infty$, written in decreasing order; the boundary depth of \cite{U11} is just the largest of these.  Meanwhile in Section \ref{specrel} we show that the concise barcode also carries information about the spectral invariants as in \cite{Sc00}, \cite{Oh05}.  In particular a number $a$ arises as the spectral invariant of some class in the homology of the complex if and only if there is an element of form $([a],\infty)$ in the concise barcode.  By contrast, the numbers $a$ appearing in elements $([a],L)$ of the concise barcode with $L<\infty$ do not seem to have standard analogues in Floer theory, and so could be considered as new invariants. Whereas the spectral invariants and boundary depth have the notable feature of varying in Lipschitz fashion with respect to the Hofer norm on the space of Hamiltonians, these numbers $a$ have somewhat more limited robustness properties, which can be understood in terms of our stability results such as Corollary \ref{hamintro} below. 

In Section \ref{dualcodes} we show how the verbose (and hence also the concise) barcodes of a Floer-type complex in various degrees are related to those of its dual complex, and to those of the complex obtained by extending the coefficient field by enlarging the group $\Gamma$.  The relationships are rather simple; in the case of the dual complex they can be seen as extending results from \cite{U10} on the Floer theory side and from \cite{SMV} on the persistent homology side.

\begin{remark}\label{homology}
Our approach differs from the conventional approach in the persistent homology literature in that we work almost entirely at the chain level; for the most part our theorems do not directly discuss the homology persistence modules $\{H_{k}^{t}(C_*)\}_{t\in\R}$.  The primary reason for this is that, when $\Gamma\neq \{0\}$, such homology persistence modules are unlikely to fit into any reasonable classification scheme.  The basic premise of the original introduction of barcodes in \cite{ZC} is that a finite-type persistence module over a field $\mathcal{K}$ can be understood in terms of the classification of finitely-generated $\mathcal{K}[x]$-modules; however, when $\Gamma\neq \{0\}$ our persistence modules are infinitely-generated over $\mathcal{K}$, leading to infinitely-generated $\mathcal{K}[x]$-modules and suggesting that one should work with a larger coefficient ring than $\mathcal{K}$.  Since the action of the Novikov field does not preserve the filtration on the chain complex, the $H_{k}^{t}(C_*)$ are not modules over the full Novikov field $\Lambda$.  They are however modules over the subring $\Lambda_{\geq 0}$ consisting of elements $\sum_g a_gT^g$ with all $g\geq 0$, and if $\Gamma$ is nontrivial and discrete (in which case $\Lambda_{\geq 0}$ is isomorphic to a formal power series ring $\mathcal{K}[[t]]$) then each $H_{k}^{t}(C_*)$ is a finitely generated $\Lambda_{\geq 0}$-module.  But then the approach from \cite{ZC} leads to the consideration of finitely generated $\mathcal{K}[[t]][x]$-modules, which again do not admit a simple description in terms of barcode-type data since $\mathcal{K}[[t]][x]$ is not a PID.

Our chain-level approach exploits the fact that the chain groups $C_k$ in a Floer-type complex, unlike the filtered homologies, are finitely generated vector spaces over a field (namely $\Lambda$), which makes it more feasible to obtain a straightforward classification.  It does follow from our results that the filtered homology persistence module of a Floer-type complex can be expressed as a finite direct sum of filtered homology persistence modules of the building blocks $\mathcal{E}(a,L,k)$ depicted in (\ref{simple}).  However, since the filtered homology persistence modules of the $\mathcal{E}(a,L,k)$ are themselves somewhat complicated (as the interested reader may verify by direct computation) it is not clear whether this is a useful observation. For instance we do not know whether the image on homology of a filtered chain map between two Floer-type complexes can always likewise be written as a direct sum of these basic persistence modules; if this is true then it might be possible to adapt arguments from \cite{BL} or \cite[Section 3.4]{CSGO} to remove the factor of $2$ in Theorem \ref{mainstab} below.
\end{remark}

\subsubsection{Stability}
Among the most important theorems in persistent homology theory is the bottleneck stability theorem, which in its original form \cite{CEH} shows that, for the sublevel persistence modules $\{H_*(\{f\leq t\};\mathcal K\}_{t\in \R}$ associated to suitably tame functions $f\co X\to \R$ on a fixed topological space $X$, the barcode of the persistence module depends in $1$-Lipschitz fashion on $f$, where we use the $C^0$-norm to measure the distance between functions and the bottleneck distance (recalled below) to measure distances between barcodes.  Since in applications there is inevitably some imprecision in the function $f$, some sort of result along these lines is evidently important in order to ensure that the barcode detects robust information.  More recently a number of extensions and new proofs of the bottleneck stability theorem have appeared, for instance in \cite{CCGGO}, \cite{CSGO}, \cite{BL}; these have recast the theorem as an essentially algebraic result about persistence modules satisfying a finiteness condition such as $q$-tameness or pointwise finite-dimensionality (see \cite[p. 4]{BL} for precise definitions).  When recast in this fashion  the stability theorem can be improved to an isometry theorem, stating that two natural metrics on an appropriate class of persistence modules are equal.

Hamiltonian Floer theory (\cite{F89},\cite{HS},\cite{LT},\cite{FO},\cite{Pa}) associates a Floer-type complex to any suitably non-degenerate Hamiltonian $H\co S^1\times M\to \R$ on a compact symplectic manifold $(M,\omega)$.  A well-established and useful principle in Hamiltonian Floer theory is that many aspects of the filtered Floer complex are robust under $C^0$-small perturbations of the Hamiltonian; for instance various $\R$-valued quantities that can be extracted from the Floer complex such as spectral invariants and boundary depth are Lipschitz with respect to the $C^0$-norm on Hamiltonian functions (\cite{Sc00},\cite{Oh05},\cite{U11}).  Naively this is rather surprising since $C^0$-perturbing a Hamiltonian can dramatically alter its Hamiltonian flow.  Our notion of the concise barcode---which by Theorem B gives a complete invariant of the filtered chain homotopy type of a Floer-type complex---allows us to obtain a more complete understanding of this $C^0$-rigidity property, as an instance of a general algebraic result which extends the bottleneck stability/isometry theorem to Floer-type complexes for general subgroups $\Gamma\leq \R$.

In order to formulate our version of the stability theorem we must explain  the notions of distance that we use between Floer-type complexes on the one hand and concise barcodes on the other.  Beginning with the latter, consider two multisets $\mathcal{S}$ and $\mathcal{T}$ of elements of $(\R/\Gamma)\times [0,\infty]$.  For $\delta\geq 0$, a $\delta$-\textbf{matching} between $\mathcal{S}$ and $\mathcal{T}$ consists of the following data:
\begin{itemize} \item[(i)] submultisets $\mathcal{S}_{short}$ and $\mathcal{T}_{short}$ such that the second coordinate $L$ of every element $([a],L)\in \mathcal{S}_{short}\cup\mathcal{T}_{short}$ obeys $L\leq 2\delta$.
\item[(ii)] A bijection $\sigma\co \mathcal{S}\setminus\mathcal{S}_{short}\to \mathcal{T}\setminus\mathcal{T}_{short}$ such that, for each $([a],L)\in \mathcal{S}\setminus\mathcal{S}_{short}$ (where $a\in \R$, $L\in [0,\infty]$) we have $\sigma([a],L)=([a'],L')$ where for all $\ep>0$ the representative $a'$ of the coset $[a']\in \R/\Gamma$ can be chosen such that both $|a'-a|\leq \delta+\ep$ and either $L=L'=\infty$ or $|(a'+L')-(a+L)|\leq \delta+\ep$.
\end{itemize}

Thus, viewing elements $([a],L)$ as corresponding to intervals $[a,a+L)$ (modulo $\Gamma$-translation), a $\delta$-matching is a matching which shifts both endpoints of each interval by at most $\delta$, with the proviso that we allow an interval $I$ to be matched with a fictitious zero-length interval at the center of $I$.  

\begin{dfn} If $\mathcal{S}$ and $\mathcal{T}$ are two multisets of elements of $(\R/\Gamma)\times [0,\infty]$ then the \emph{bottleneck distance} between $\mathcal{S}$ and $\mathcal{T}$ is \[ d_{B}(\mathcal{S},\mathcal{T})=\inf\{\delta\geq 0\,|\,\mbox{There exists a $\delta$-matching between }\mathcal{S}\mbox{ and }\mathcal{T}\}. \]
If $\mathcal{S}=\{\mathcal{S}_k\}_{k\in\Z}$ and  
$\mathcal{T}=\{\mathcal{T}_k\}_{k\in\Z}$ are two $\Z$-parametrized families of multisets of elements of $(\R/\Gamma)\times[0,\infty]$ then we write \[ d_B(\mathcal{S},\mathcal{T})=\sup_{k\in\Z}d_B(\mathcal{S}_k,\mathcal{T}_k). \]
\end{dfn}

It is easy to see that in the special case that $\Gamma=\{0\}$ the above definition agrees with the notion of bottleneck distance in \cite{CEH}.  
Note that the value $d_{B}$ can easily be infinity. For instance this occurs if $\mathcal S = \{([a], \infty)\}$ and $\mathcal T = \{([a], L)\}$ where $L < \infty$. 

On the Floer complex side, we make the following definition which is a slight modification of \cite[Definition 3.7]{U13}.  As is explained in Appendix \ref{intsect} this is very closely related to the notion of \emph{interleaving} of persistence modules from  \cite{CCGGO}.

\begin{dfn} Let $(C_*,\partial_C,\ell_C)$ and $(D_*,\partial_D,\ell_D)$ be two Floer-type complexes, and $\delta\geq 0$.  A $\delta$-\textbf{quasiequivalence} between $C_*$ and $D_*$ is a quadruple $(\Phi,\Psi,K_1,K_2)$ where:
\begin{itemize} \item $\Phi\co C_*\to D_*$ and $\Psi\co D_*\to C_*$ are chain maps, with $\ell_D(\Phi c)\leq \ell_C(c)+\delta$ and $\ell_C(\Psi d)\leq \ell_D(d)+\delta$ for all $c\in C_*$ and $d\in D_*$.
\item $K_C\co C_*\to C_{*+1}$ and $K_D\co D_*\to D_{*+1}$ obey the homotopy equations $\Psi\circ\Phi-I_{C_*}=\partial_CK_C+K_C\partial_C$ and $\Phi\circ\Psi-I_{D_*}=\partial_DK_D+K_D\partial_D$, and for all $c\in C_*$ and $d\in D_*$ we have $\ell_C(K_Cc)\leq \ell_C(c)+2\delta$ and $\ell_D(K_Dd)\leq \ell_D(d)+2\delta$.
\end{itemize}
The \textbf{quasiequivalence distance} between   $(C_*,\partial_C,\ell_C)$ and $(D_*,\partial_D,\ell_D)$ is then defined to be \[ d_Q((C_*,\partial_C,\ell_C),(D_*,\partial_D,\ell_D))=\inf\left\{\delta\geq 0\left|\begin{array}{cc}\mbox{There exists a $\delta$-quasiequivalence between }\\(C_*,\partial_C,\ell_C)\mbox{ and }(D_*,\partial_D,\ell_D)\end{array}\right.\right\}. \]
\end{dfn}
 
We will prove the following as Theorems \ref{stabthm} and \ref{convstab} in Sections \ref{stabproof} and \ref{convsect}:

\begin{theorem} \label{mainstab} Given a Floer-type complex $(C_*,\partial_C,\ell_C)$, denote its concise barcode by $\mathcal{B}(C_*,\partial_C,\ell_C)$ and the degree-$k$ part of its concise barcode by $\mathcal{B}_{C,k}$.  Then the bottleneck and quasiequivalence distances obey, for any Floer-type complexes $(C_*,\partial_C,\ell_C)$ and $(D_*,\partial_D,\ell_D)$:
\begin{itemize}\item[(i)] $ \begin{aligned}[t]  d_Q((C_*,\partial_C,\ell_C),(D_*,\partial_D,\ell_D)&\leq d_B(\mathcal{B}(C_*,\partial_C,\ell_C),\mathcal{B}(D_*,\partial_D,\ell_D))\\ & \qquad \qquad \leq 2d_Q((C_*,\partial_C,\ell_C),(D_*,\partial_D,\ell_D)).\end{aligned}$
\item[(ii)]  For $k\in \Z$ let $\Delta_{D,k}>0$ denote the smallest second coordinate $L$ of all of the elements of $\mathcal{B}_{D,k}$.  If $d_Q((C_*,\partial_C,\ell_C),(D_*,\partial_D,\ell_D))<\frac{\Delta_{D,k}}{4}$, then \[  
 d_B(\mathcal{B}_{C,k},\mathcal{B}_{D,k})\leq d_Q((C_*,\partial_C,\ell_C),(D_*,\partial_D,\ell_D)). \] \end{itemize}
\end{theorem}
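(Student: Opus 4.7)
My plan is to prove the three inequalities separately, exploiting the orthogonal direct-sum decomposition of Proposition \ref{ournormalform} together with the classification in Theorem B.

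For the inequality $d_Q\le d_B$, given any $\delta$-matching between the concise barcodes I would build a $\delta$-quasiequivalence summand by summand. By Proposition \ref{ournormalform}, each of $(C_*,\partial_C,\ell_C)$ and $(D_*,\partial_D,\ell_D)$ splits as an orthogonal direct sum of elementary complexes of the form $\mathcal{E}(a,L,k)$ depicted in (\ref{simple}). For each matched pair of elementary summands $\mathcal{E}(a,L,k)\subset C_*$ and $\mathcal{E}(a',L',k)\subset D_*$, I would define $\Phi$ and $\Psi$ to be scaled identity-type maps between generators (multiplying by an appropriate $T^g$ with $g\in\Gamma$ chosen so that the filtration shifts are at most $\delta$, which is possible precisely because of the matching condition on both endpoints). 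On unmatched short summands $\mathcal{E}(a,L,k)$ with $L\le 2\delta$, I would set $\Phi$ (or $\Psi$) to zero and realize the identity as $\partial_CK_C+K_C\partial_C$ via $K_C(\partial_C y)=y$, $K_C(y)=0$; this shifts filtration by exactly $L\le2\delta$, as required. Assembling these local pieces produces the desired $\delta$-quasiequivalence.

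The harder inequality $d_B\le 2d_Q$ is the main technical step. Given a $\delta$-quasi\-equivalence $(\Phi,\Psi,K_C,K_D)$, I would choose singular value decompositions of $\partial_C$ and $\partial_D$ via Theorem \ref{existsvd}, producing orthogonal bases $\{y_i,\partial_Cy_i\}\cup\{x_j\}$ for $C_*$ and $\{y'_{i'},\partial_Dy'_{i'}\}\cup\{x'_{j'}\}$ for $D_*$ whose filtration differences realize the concise barcodes. The homotopy identity $\Psi\Phi-I_{C_*}=\partial_CK_C+K_C\partial_C$, together with the bound $\ell_C(K_Cc)\le\ell_C(c)+2\delta$, forces $\Phi(y_i)$ to have, for each long-enough bar $L_i$, a dominant non-Archimedean component along some $y'_{\sigma(i)}$, and analogously for $\Phi(\partial_C y_i)$ and $\Phi(x_j)$; the orthogonality machinery of Section \ref{orthsect} is what lets one extract a genuine leading-term matching. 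I would construct $\sigma$ inductively, processing bars in decreasing order of length and peeling off a matched pair at each step, using the non-Archimedean orthogonality to guarantee that the remaining (shorter) bars are unaffected by the already-matched generators. All bars of length at most $2\delta$ that remain unmatched at the end form $\mathcal{S}_{short}$ and $\mathcal{T}_{short}$, yielding a $\delta$-matching and hence $d_B\le 2\delta$ for any $\delta>d_Q$.

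For part (ii), fix $\delta$ slightly larger than $d_Q$ with $\delta<\Delta_{D,k}/4$, so $4\delta<\Delta_{D,k}$. In the matching produced above, no element of $\mathcal{B}_{D,k}$ can lie in $\mathcal{T}_{short}$, since that would require length at most $2\delta<\Delta_{D,k}$. Moreover, if an element of $\mathcal{B}_{C,k}$ were placed in $\mathcal{S}_{short}$, then running the inductive matching argument with the roles of $\Phi$ and $\Psi$ reversed would produce a bar of $\mathcal{B}_{D,k}$ of length at most $2\delta+2\delta=4\delta$, again contradicting the hypothesis. Hence both $\mathcal{S}_{short}$ and $\mathcal{T}_{short}$ are empty in degree $k$, and every degree-$k$ bar is matched with filtration shift at most $\delta$; letting $\delta\searrow d_Q$ gives $d_B(\mathcal{B}_{C,k},\mathcal{B}_{D,k})\le d_Q$.

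The principal obstacle is the direction $d_B\le 2d_Q$. The singular value decompositions furnished by Theorem \ref{existsvd} are highly non-unique, and the delicate point is to show that one can choose SVDs of $\partial_C$ and $\partial_D$ that are \emph{simultaneously} compatible with the given quasiequivalence, so that the chain homotopy relation $\Psi\Phi-I=\partial_CK_C+K_C\partial_C$ yields a genuine bar-to-bar matching with filtration shifts bounded by $\delta$ and the only loss (up to the $2\delta$ threshold for short bars) is accounted for by the bound on $K_C,K_D$. Making this leading-term comparison precise and free of extra constant factors is the heart of the argument.
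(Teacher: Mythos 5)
Your first step, the inequality $d_Q \le d_B$, is correct and is essentially the paper's proof of Theorem \ref{convstab}: decompose both complexes via Proposition \ref{ournormalform}, build a $\delta$-quasiequivalence between matched elementary summands using $T^g$-scaling and Lemma \ref{elemqe}, and null-homotope unmatched short summands via $K(\partial y)=y$, $K(y)=0$.

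The genuine gap is in the direction $d_B \le 2d_Q$, and you essentially concede it yourself when you write that ``making this leading-term comparison precise \ldots is the heart of the argument.'' The plan of picking singular value decompositions of $\partial_C$ and $\partial_D$ separately and then reading off a matching from the leading terms of $\Phi$ and $\Psi$ does not go through as stated. A non-Archimedean SVD is highly non-unique, and a quasiequivalence $\Phi$ need not carry an SVD summand $span_\Lambda\{y_i,\partial_C y_i\}$ anywhere near a single SVD summand of $D_*$; there is no canonical ``dominant component'' to extract, and the homotopy identity $\Psi\Phi - I = \partial_C K_C + K_C\partial_C$ only constrains $\Psi\Phi$, not $\Phi$ itself. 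The proposed inductive ``peeling'' also requires restricting the whole quadruple $(\Phi,\Psi,K_C,K_D)$ to orthogonal complements of the already-matched generators, but none of these maps preserves the chosen orthogonal decompositions, so after one step the induction hypothesis is no longer available. The paper circumvents all of this by a two-stage reduction: it first proves the sharp inequality $d_B \le d_Q$ in the special case that $C_*$ and $D_*$ share the same underlying chain complex and boundary operator, differing only in filtration (Proposition \ref{twofilt}); this special case is tractable because one can construct a \emph{compatible pair} of SVDs using the algorithm of Theorem \ref{algsvd} applied first in one direction and then, after dualizing via Proposition \ref{dualsvd}, in the other. It then reduces the general case to this special case by putting two filtrations $\ell_0,\ell_1$ on the mapping cylinder $Cyl(\Phi)_*$ (with $|\ell_1-\ell_0|\le\delta$), where $\ell_0$ reproduces the barcode of $D_*$ (Proposition \ref{decompcyl}) and $\ell_1$ reproduces the barcode of $C_*$ together with short bars of length $\le 2\delta$ coming from the mapping cone (Propositions \ref{changefil} and \ref{bdmpc}); the factor $2$ is exactly the cost of those extra cone bars. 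A further ingredient you do not mention is Corollary \ref{dqsplit}, which lets one replace the given quasiequivalence by a \emph{split} one so that the cone bound of Proposition \ref{bdmpc} is $2\delta$ rather than $4\delta$.

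Your argument for part (ii) also has a flaw: you claim that $\mathcal{S}_{short}$ is empty by asserting that an unmatched short bar of $\mathcal{B}_{C,k}$ would force a short bar of $\mathcal{B}_{D,k}$ upon reversing $\Phi$ and $\Psi$. This does not follow: short bars of $\mathcal{B}_{C,k}$ can perfectly well exist with nothing matching them on the $D$ side (they can be contracted by the homotopy), and the conclusion does not require $\mathcal{S}_{short}=\varnothing$ in any case. What the paper actually shows is that, in the $\delta$-matching between $\mathcal{B}_{D}$ and $\mathcal{B}_{C}\cup\mathcal{S}$ coming from Proposition \ref{twofilt} on the mapping cylinder, the only elements that could spoil the bound are those in $\mathcal{S}$; since they have length $\le 2\delta$, a matched partner in $\mathcal{B}_{D,k}$ would have length $\le 4\delta < \Delta_{D,k}$, which cannot occur, so all of $\mathcal{S}$ is unmatched and can be discarded, leaving a $\delta$-matching between $\mathcal{B}_{C,k}$ and $\mathcal{B}_{D,k}$.
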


Thus the map from filtered chain homotopy equivalence classes of Floer-type complexes to concise barcodes is  at least bi-Lipschitz, with Lipschitz constant $2$.  We expect that it is always an isometry; in fact when $\Gamma=\{0\}$ this can be inferred from  \cite[Theorem 4.11]{CSGO} and Theorem \ref{classical}, and as mentioned in Remark \ref{densecase} it is also true in the opposite extreme case when $\Gamma$ is dense. 

Our proof that the bottleneck distance $d_B$  obeys the upper bounds of Theorem \ref{mainstab} is roughly divided into two parts.  First, in Proposition \ref{twofilt}, we prove the sharp inequality $d_B\leq d_Q$ in the special case that the Floer-type complexes $(C_*,\partial_C,\ell_C)$ and $(D_*,\partial_D,\ell_D)$ have the same underlying chain complex, and differ only in  their filtration functions $\ell_C$ and $\ell_D$.  In the rest of Section \ref{stabproof} we approximately reduce the general case to this special case, using a mapping cylinder construction to obtain two different filtration functions on a single chain complex, one of which has concise barcode equal to that of $(D_*,\partial_D,\ell_D)$ (see Proposition \ref{decompcyl}), and the other of which has concise barcode consisting of the concise barcode of $(C_*,\ell_C,\partial_C)$ together with some ``extra''  elements $([a],L)\in(\R/\Gamma)\times[0,\infty]$ all having $L\leq 2d_Q((C_*,\partial_C,\ell_C),(D_*,\partial_D,\ell_D))$ (see Proposition \ref{changefil}).  These constructions are quickly seen in Section \ref{endproof} to yield the upper bounds on $d_B$ in the two parts of Theorem \ref{mainstab}; the factor of $2$ in part (i) arises from the ``extra'' bars in the concise barcode of the Floer-type complex from Proposition \ref{changefil}.

Meanwhile, the proof of the other inequality $d_Q\leq d_B$ in Theorem \ref{mainstab}(i) is considerably simpler, and is carried out by a direct construction in Section \ref{convsect}.

As mentioned earlier, it is likely that the factor of $2$ in Theorem \ref{mainstab}(i) is unnecessary, \emph{i.e.} that the map from  Floer-type complexes to concise barcodes is an isometry with respect to the quasiequivalence distance $d_Q$ on Floer-type complexes and the bottleneck distance $d_B$ on concise barcodes.  Although we do not prove this, by taking advantage of Theorem \ref{mainstab}(ii) we show in Section \ref{interp} that, if $d_Q$ is replaced by a somewhat more complicated distance $d_P$ that we call the interpolating distance, then the map is indeed an isometry (see Theorem \ref{globaliso}).  The expected isometry between $d_Q$ and $d_B$ is then equivalent to the statement that $d_P=d_Q$. Consistently with this, our experience in concrete situations has been that methods which lead to bounds on one of $d_P$ or $d_Q$ often also produce identical bounds on the other.


The final section of the body of the paper applies our general algebraic results to Hamiltonian Floer theory, the relevant features of which are reviewed at the beginning of that section.\footnote{While we focus on Hamiltonian Floer theory in Section \ref{hamsect}, very similar results would apply to the Hamiltonian-perturbed Lagrangian Floer chain complexes or to the chain complexes underlying Novikov homology.}     Combining Theorem \ref{globaliso} with standard results from Hamiltonian Floer theory proves the following, later restated as Corollary \ref{hamint}:

\begin{cor} \label{hamintro} If $H_0$ and $H_1$ are two non-degenerate Hamiltonians on any compact symplectic manifold $(M,\omega)$, then the bottleneck distance between their associated concise barcodes of $(CF_*(H_0),\partial_{H_0},\ell_{H_0})$ and $(CF_{*}(H_1),\partial_{H_1},\ell_{H_1})$ is less than or equal to ${\textstyle \int_{0}^{1}}\|H_1(t,\cdot)-H_0(t,\cdot)\|_{L^{\infty}}dt$.\end{cor}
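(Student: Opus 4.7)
The plan is to realize $\int_{0}^{1}\|H_1(t,\cdot)-H_0(t,\cdot)\|_{L^{\infty}}dt$ as an upper bound for one of the distances on Floer-type complexes that Theorem \ref{globaliso} shows to be isometric to $d_B$, using standard Floer continuation constructions to produce the relevant data between $(CF_*(H_0),\partial_{H_0},\ell_{H_0})$ and $(CF_*(H_1),\partial_{H_1},\ell_{H_1})$. Write $\delta=\int_{0}^{1}\|H_1(t,\cdot)-H_0(t,\cdot)\|_{L^{\infty}}dt$, and note that this quantity dominates both $\int_0^1\max_M(H_1-H_0)\,dt$ and $\int_0^1\max_M(H_0-H_1)\,dt$ simultaneously.

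First I would recall from the review of Hamiltonian Floer theory at the start of Section \ref{hamsect} the continuation maps $\Phi_{01}\co CF_*(H_0)\to CF_*(H_1)$ and $\Phi_{10}\co CF_*(H_1)\to CF_*(H_0)$ associated to generic monotone homotopies, and their classical $a$ $priori$ energy estimates: for any $c\in CF_*(H_0)$ and $d\in CF_*(H_1)$, $\ell_{H_1}(\Phi_{01}c)\leq \ell_{H_0}(c)+\int_0^1\max_M(H_1-H_0)\,dt\leq \ell_{H_0}(c)+\delta$, and symmetrically $\ell_{H_0}(\Phi_{10}d)\leq \ell_{H_1}(d)+\delta$. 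Then I would invoke the standard homotopy-of-homotopies argument producing chain homotopies $K_0,K_1$ realizing $\Phi_{10}\Phi_{01}-I=\partial_{H_0}K_0+K_0\partial_{H_0}$ and $\Phi_{01}\Phi_{10}-I=\partial_{H_1}K_1+K_1\partial_{H_1}$, with the classical estimates $\ell_{H_0}(K_0c)\leq \ell_{H_0}(c)+2\delta$ and $\ell_{H_1}(K_1d)\leq \ell_{H_1}(d)+2\delta$. This exhibits $(\Phi_{01},\Phi_{10},K_0,K_1)$ as a $\delta$-quasiequivalence in the sense of the definition preceding Theorem \ref{mainstab}, hence $d_Q\leq\delta$.

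Directly applying Theorem \ref{mainstab}(i) would give only $d_B\leq 2\delta$, which loses a factor of $2$. To obtain the sharp bound I would instead use Theorem \ref{globaliso}, which asserts that $d_B$ equals the interpolating distance $d_P$ (defined in Section \ref{interp}). The idea is to subdivide the linear homotopy $H_s=(1-s)H_0+sH_1$ at a mesh $0=s_0<s_1<\cdots<s_N=1$ (after a small perturbation to restore nondegeneracy of each intermediate $H_{s_i}$, which may be removed by a continuity argument at the end), and apply the previous paragraph's construction on each subinterval. This yields a chain of Floer-type complexes between which consecutive quasiequivalence distances telescope to a total bounded by $\int_0^1\|H_1-H_0\|_{L^\infty}dt=\delta$. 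Feeding this chain into the definition of $d_P$ gives $d_P\leq\delta$, and the isometry $d_B=d_P$ finishes the proof.

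The main obstacle is the last step: bypassing the factor of $2$ inherent to $d_Q$ by passing to $d_P$, which requires careful bookkeeping of the filtration shifts along the subdivided homotopy and a limiting/perturbation argument to handle possible degenerate intermediate Hamiltonians. The Floer-theoretic input is entirely standard; the genuinely new content is translating the Hofer-norm estimates into the language of the interpolating distance so that Theorem \ref{globaliso} can deliver the sharp bottleneck bound.
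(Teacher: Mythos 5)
Your overall approach is the same as the paper's: bound the interpolating distance $d_P$ by $\delta := \int_0^1\|H_1(t,\cdot)-H_0(t,\cdot)\|_{L^\infty}dt$ using standard Floer continuation/energy estimates along (a perturbation of) the linear homotopy, and then appeal to the isometry $d_B=d_P$ of Theorem \ref{globaliso}. The recognition that passing from $d_Q$ to $d_P$ is what removes the factor of $2$ is precisely the point.

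There is, however, one subtle but genuine gap in your description of the final step. A finite ``chain'' of Floer-type complexes $CF_*(H_{s_0}),\ldots,CF_*(H_{s_N})$ with consecutive quasiequivalence bounds is \emph{not} a $\delta$-interpolation in the sense of Definition \ref{dfnitp}: that definition requires a one-parameter family $(C^s_*,\partial^s,\ell^s)$ indexed by \emph{all} $s\in[0,1]\setminus S$ (for $S$ finite), with the estimate $d_Q(C^s_*,C^t_*)\leq \delta|s-t|$ holding for \emph{every} pair $s,t$. The reason this matters is precisely the factor of $2$: telescoping $d_B\leq 2d_Q$ over a finite mesh never sheds that factor, no matter how fine the mesh. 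The proof of Theorem \ref{globaliso} instead uses the continuous family to run the following argument: at each $p\in[0,1]\setminus S$ one has $\Delta_{C^p_k}>0$, so the \emph{local} isometry estimate of Theorem \ref{stabthm}(ii) (which gives $d_B\leq d_Q$, not $2d_Q$, when $d_Q<\Delta/4$) applies on a small two-sided neighborhood $(p-\ep_{p,k},p+\ep_{p,k})$; a finite subcover of $[0,1]$ by such neighborhoods then yields the sharp bound. This requires the complex $CF_*(H_s)$ to exist for essentially every $s$, not just at finitely many mesh points. Concretely, the paper perturbs the entire linear homotopy $\hat{H}^0(s,t,m)=sH_1(t,m)+(1-s)H_0(t,m)$ in $C^1$-norm to a homotopy $\hat{H}$ with only finitely many degenerate slices, checks the uniform Hofer-norm estimate $\int_0^1\|\hat{H}_{s_1}(t,\cdot)-\hat{H}_{s_0}(t,\cdot)\|_{L^\infty}dt\leq(\delta+\ep)|s_1-s_0|$ for all pairs $s_0,s_1$, and takes $(CF_*(\hat{H}_s))_{s\in[0,1]\setminus S}$ as the $(\delta+\ep)$-interpolation. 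Your mesh-and-telescope wording should be replaced by this genuinely continuous family; otherwise the argument only delivers $d_B\leq 2\delta$.
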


To summarize, we have shown how to associate to the Hamiltonian Floer complex combinatorial data in the form of the concise barcode, which completely classifies the complex up to filtered chain homotopy equivalence, and which is continuous with respect to variations in the Hamiltonian in a way made precise in Corollary \ref{hamintro}.  Given the way in which torsion exponents, the boundary depth, and spectral invariants are encoded in the concise barcode, this continuity can be seen as a simultaneous extension of continuity results for those quantities (\cite[Theorem 6.1.25]{FOOO09}, \cite[Theorem 1.1(ii)]{U11}, \cite[(12)]{Sc00}).  

We then apply Corollary \ref{hamintro} to prove our main application, Theorem \ref{pert}, concerning the robustness of the fixed points of a nondegenerate Hamiltonian diffeomorphism under $C^0$-perturbations of the Hamiltonian: roughly speaking, as long as the perturbation is small enough (as determined by the concise barcode of the original Hamiltonian), the perturbed Hamiltonian, if it is still nondegenerate, will have at least as many fixed points as the original one, with actions that are close to the original actions.  Moreover, depending in a precise way on the concise barcode, fixed points with certain actions may be identified as enjoying stronger robustness properties (in the sense that  a larger perturbation is required to eliminate them) than general fixed points of the same map.  While  $C^0$-robustness of fixed points is a familiar idea in Hamiltonian Floer theory (see, \emph{e.g}, \cite[Theorem 2.1]{CR}), Theorem \ref{pert} goes farther than previous results both in its control over the actions of the perturbed fixed points and in the way that it gives stronger bounds for the robustness of unperturbed fixed points with certain actions (see Remark \ref{pertrem}). 

Finally, Appendix \ref{intsect} identifies the quasiequivalence distance $d_Q$ that features in Theorem \ref{mainstab} with a chain level version of the interleaving distance that is commonly used (e.g. in \cite{CCGGO}) in the persistent homology literature.

\subsection*{Acknowledgements} The first author thanks the TDA Study Group in the UGA Statistics Department for introducing him to persistent homology, and K. Ono for pointing out the likely relationship between torsion thresholds and boundary depth.  Both authors are grateful to L. Polterovich for encouraging us to study Floer theory from a persistent-homological point of view, for comments on an initial version of the paper, and for various useful conversations.  Some of these conversations occurred during a visit of the second author to Tel Aviv University in Fall 2014; he is indebted to its hospitality and also to a guided reading course overseen by L. Polterovich. In particular, discussions with D. Rosen during this visit raised the question considered in Appendix \ref{intsect}. The authors also thank an anonymous referee for his/her careful reading and many  suggestions.  This work was partially supported by NSF grants DMS-1105700 and DMS-1509213.

\section{Non-Archimedean orthogonality} \label{orthsect}

\subsection{Non-Archimedean normed vector spaces}

 Fixing a ground field $\mathcal K$ and an additive subgroup $\Gamma \leq \mathbb R$ as in the introduction, we will consider vector spaces over the {\it Novikov field} defined as 
\[ \begin{array}{l}
\Lambda = \Lambda^{\mathcal K, \Gamma} = \left\{\left. \sum_{g \in \Gamma} a_g T^g \, \right \vert \,a_g\in \mathcal K,\,(\forall C \in \mathbb R) \left( \# \{g \,| \, a_g \neq 0, g < C \} < \infty \right)     \right\}
\end{array} \]
where $T$ is a formal symbol and we use the obvious ``power series'' addition and multiplication.  This Novikov field adapts the ring used by Novikov in his version of Morse theory for multivalued functions; see \cite{HS} both for some of its algebraic properties and for its use in Hamiltonian Floer homology.  Note that when $\Gamma$ is the trivial group, $\Lambda$ reduces to the ground field $\mathcal{K}$. 

First, we need the following classical definition.

\begin{dfn} \label{dfn-val} A {\bf valuation} $\nu$ on a field $\mathcal F$ is a function $\nu: \mathcal F \to \R \cup \{\infty\}$ such that 
\begin{itemize}
\item [(V1)]  $\nu(x) = \infty$ if and only if $x = 0$;
\item[(V2)] For any $x, y \in \mathcal F$, $\nu(xy) = \nu(x) + \nu(y)$;
\item[(V3)] For any $x, y \in \mathcal F$, $\nu(x+y) \geq \min\{ \nu(x), \nu(y)\}$ with equality when $\nu(x) = \nu(y)$. 
\end{itemize}
Moreover, we call a valuation $\nu$ {\bf trivial} if $\nu(x) = 0$ for $x \neq 0$ and $\nu(x) = \infty$ precisely when $x = 0$. 
\end{dfn}

For $\mathcal F = \Lambda$ defined as above, we can associate a valuation simply by 
\[ \begin{array}{l}
\nu\left( \sum_{g \in \Gamma} a_g T^g \right) = \min \{g \,| \,a_g \neq 0\}
\end{array} \] where we use the standard convention that the minimum of the empty set is $\infty$. 
It is easy to see that this $\nu$ satisfies conditions (V1), (V2) and (V3). Note that the finiteness condition in the definition of Novikov field ensures that the minimum exists. If $\Gamma=\{0\}$, then the valuation $\nu$ is trivial.

\begin{dfn} \label{dfn-fil} A \textbf{non-Archimedean normed vector space} over $\Lambda$ is a pair $(C,\ell)$ where $C$ is a vector space over $\Lambda$ endowed with a filtration function $\ell: C \rightarrow \mathbb R \, \cup \, \{-\infty\}$ satisfying the following axioms:

\begin{itemize}
\item[(F1)] $\ell(x) = -\infty$ if and only if $x=0$;
\item[(F2)] For any $\lambda \in \Lambda$ and $x \in C$, $\ell(\lambda x) = \ell(x) - \nu(\lambda)$;
\item[(F3)] For any $x,y \in C$, $\ell(x+y) \leq \max\{\ell(x), \ell(y)\}$.
\end{itemize}
\end{dfn}

In terms of Definition \ref{dfn-fil}, the standard convention would be that the norm on a non-Archimedean normed vector space $(C,\ell)$ is $e^{\ell}$, not $\ell$. The phrasing of the above definition reflects the fact that we will focus on the function $\ell$, not on the norm $e^{\ell}$.

We record the following standard fact:

\begin{prop} \label{strict} If $(C,\ell)$ is a non-Archimedean normed vector space over $\Lambda$ and the elements $x,y\in C$ satisfy $\ell(x)\neq \ell(y)$, then \begin{equation}\label{strictna} \ell(x+y)=\max\{\ell(x),\ell(y)\}.\end{equation}
\end{prop}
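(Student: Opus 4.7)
The plan is a standard ``strong triangle inequality'' argument familiar from non-Archimedean analysis, adapted to the slightly non-standard axiomatization in Definition \ref{dfn-fil} (where $\ell$ plays the role of $-\log$ of a norm, so inequalities point the opposite way from the usual convention).

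Without loss of generality, I would assume $\ell(x) > \ell(y)$ and aim to show $\ell(x+y) = \ell(x)$. The ``$\leq$'' direction is immediate from axiom (F3). For the reverse inequality, the idea is to apply (F3) to the decomposition $x = (x+y) + (-y)$, which gives
\[
\ell(x) \leq \max\{\ell(x+y),\ \ell(-y)\}.
\]
If I can show that $\ell(-y) = \ell(y)$, then since $\ell(x) > \ell(y)$ the maximum on the right cannot be realized by the second term, forcing $\ell(x+y) \geq \ell(x)$ and completing the argument.

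The only thing to verify is thus the auxiliary claim $\ell(-y) = \ell(y)$, which by axiom (F2) amounts to $\nu(-1) = 0$. This follows from the valuation axioms on $\Lambda$: axiom (V2) gives $\nu(1) = \nu(1 \cdot 1) = 2\nu(1)$, and since $1 \neq 0$ axiom (V1) ensures $\nu(1) \neq \infty$, so $\nu(1) = 0$; then $0 = \nu(1) = \nu((-1)(-1)) = 2\nu(-1)$ forces $\nu(-1) = 0$ as well.

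I do not expect any real obstacle here; the only subtlety is being careful to use the axioms as stated (with $\ell$ rather than a norm), and to notice that one must invoke (F3) on $x = (x+y) + (-y)$ rather than directly on $x+y$ in order to get the matching lower bound.
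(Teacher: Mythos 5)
Your proof is correct and follows essentially the same route as the paper: both reduce to the decomposition $x=(x+y)+(-y)$ together with (F3) and the fact that $\ell(-y)=\ell(y)$. The only difference is that you spell out why $\nu(-1)=0$ (which the paper simply asserts via (F2)); that extra detail is fine and correct.
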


\begin{proof} Of course the inequality ``$\leq$'' in (\ref{strictna}) is just (F3).  For ``$\geq$'' we assume without loss of generality that $\ell(x)>\ell(y)$, so we are to show that $\ell(x+y)\geq \ell(x)$.  Now (F2) implies that $\ell(-y)=\ell(y)$, so $\ell(x)=\ell((x+y)+(-y))\leq\max\{\ell(x+y),\ell(y)\}$.  Thus since we have assumed that $\ell(x)>\ell(y)$ we indeed must have $\ell(x)\leq \ell(x+y)$.
\end{proof}

\begin{ex}\label{RipsComplex} (Rips complexes). 
Let $X$ be a collection of points in Euclidean space. We will define a one-parameter family of ``Rips complexes'' associated to $X$ as follows. Let $CR_*(X)$ be the simplicial chain complex over $\mathcal{K}$ of the complete simplicial complex on the set $X$, so that $CR_k(X)$ is the free $\mathcal{K}$-vector space generated by the $k$-simplices all of whose vertices lie in $X$.  Define $\ell\co CR_*(X)\to \R\cup\{-\infty\}$ by setting $\ell({\textstyle{\sum_i}} a_i\sigma_i)$ equal to the largest diameter of any of the simplicies $\sigma_i$ with $a_i\neq 0$ (and to $-\infty$ when ${\textstyle \sum_i} a_i\sigma_i=0$).  Then $(CR_*(X),\ell)$ is a non-Archimedean vector space over $\Lambda^{\mathcal{K},\{0\}}=\mathcal{K}$. For any $\ep>0$ we define the Rips complex with parameter $\ep$, $CR_*(X;\ep)$, to be the subcomplex of $C_*$ with  degree-$k$ part given by \[ CR_k(X;\ep)=\{c\in CR_k(X)\,|\,\ell(x)\leq \ep\}.\] Thus $CR_*(X;\ep)$ is spanned by those simplices with diameter at most $\ep$.  The standard simplicial boundary operator maps $CR_k(X;\ep)$ to $CR_{k-1}(X;\ep)$, yielding Rips homology groups $HR_k(X;\ep)$, and the dependence of these homology groups on $\ep$ is a standard object of study in applied persistent homology, as in \cite{ZC}.

\end{ex}

\begin{ex}\label{MorseComplex} (Morse complex). Suppose we have a closed manifold $X$ and $f$ is a Morse function on $X$.  We may then consider its Morse chain complex $CM_*(X; f)$ over the field $\mathcal K=\Lambda^{\mathcal{K},\{0\}}$ as in \cite{Sc93}. Let $C = {\textstyle \bigoplus_{k}} CM_k(X;f)$. For any element $x \in C$, by the definition of the Morse chain complex, $x = {\textstyle \sum_i a_i p_i}$ where each $p_i$ is a critical point and $a_i\in\mathcal{K}$. Then define $\ell: C \rightarrow \mathbb R\cup\{-\infty\}$ by
\[ \begin{array}{l}
\ell \left( \sum_i a_i p_i\right) = \max\{f(p_i) \, |\, a_i \neq 0\}
\end{array}, \]
with the usual convention that the maximum of the empty set is $-\infty$. It is easy to see that $\ell$ satisfies (F1), (F2) and (F3) above.  Therefore, $({\textstyle \bigoplus_{k}} CM_k(X;f), \ell)$ is a non-Archimedean normed vector space over $\mathcal{K}=\Lambda^{\mathcal{K},\{0\}}$.
\end{ex}

\begin{ex}\label{NovComplex} 
Given a closed one-form $\alpha$ on a closed manifold $M$, let $\pi\co \tilde{M}\to M$ denote the regular covering space of $M$ that is minimal subject to the property that $\pi^*\alpha$ is exact, and choose $\tilde{f}\co \tilde{M}\to \R$ such that $d\tilde{f} = \pi^* \alpha$. The graded parts $CN_k(\tilde{f})$ of the Novikov complex (see (\ref{novcomp})) can likewise be seen as non-Archimedean vector spaces over $\Lambda=\Lambda^{\mathcal{K},\Gamma}$ where the group $\Gamma\leq \R$ consists of all possible integrals of $\alpha$ around loops in $M$.  Namely, just as in the previous two examples we put \[ \ell\left(\sum a_pp\right)=\max\{\tilde{f}(p)\,|\,a_p\neq 0\}. \]  We leave verification of axioms (F1), (F2), and (F3) to the reader.
\end{ex}

\subsection{Orthogonality}

We use the standard notions of orthogonality in non-Archimedean normed vector spaces (cf. \cite{MS65}). 

\begin{dfn}\label{dfnor} Let $(C,\ell)$ be a non-Archimedean normed vector space over a Novikov field $\Lambda$. \begin{itemize}
\item 
 Two subspaces $V$ and $W$ of $C$ are said to be {\bf orthogonal} if for all $v \in V$ and $w \in W$, we have 
\[ 
\ell(v+w) = \max\{\ell(v), \ell(w)\}
. \]
\item A finite ordered collection $(w_1,\ldots,w_r)$ of elements of $C$ is said to be \textbf{orthogonal} if, for all $\lambda_1,\ldots,\lambda_r\in \Lambda$, we have \begin{equation}\label{eltorth} \ell\left(\sum_{i=1}^{r}\lambda_iw_i\right)=\max_{1\leq i\leq r}\ell(\lambda_iw_i). \end{equation}
\end{itemize}
\end{dfn}

In particular a pair $(v,w)$ of elements of $C$ is orthogonal if and only if the spans $\langle v\rangle_{\Lambda}$ and $\langle w\rangle_{\Lambda}$ are orthogonal as subspaces of $C$.  Of course, by (F2), the criterion (\ref{eltorth}) can equivalently be written as \begin{equation} \label{eltorth2} \ell\left(\sum_{i=1}^{r}\lambda_iw_i\right)=\max_{1\leq i\leq r}(\ell(w_i)-\nu(\lambda_i)). \end{equation}

\begin{ex}\label{exor} Here is a simple example illustrating the notion of orthogonality. Let $\Gamma=\{0\}$ so that $\Lambda=\mathcal{K}$ has the trivial valuation defined in Definition \ref{dfn-val}. Let $C$ be a two-dimensional $\mathcal{K}$-vector space, spanned by elements $x,y$.  We may define a filtration function $\ell$ on $C$ by declaring $(x,y)$ to be an orthogonal basis with, say, $\ell(x)=1$ and $\ell(y)=0$; then in accordance with (\ref{eltorth2}) and the definition of the trivial valuation $\nu$  we will have \[ \ell(\lambda x+\eta y)=\left\{\begin{array}{ll} 1 & \lambda\neq 0 \\ 0 & \lambda = 0,\,\eta\neq 0 \\ -\infty & \lambda=\eta=0 \end{array}\right.. \]

The ordered basis $(x+y,y)$ will likewise be orthogonal: indeed for $\lambda,\eta\in \mathcal{K}$ we have \[ \ell(\lambda(x+y)+\eta y)=\ell(\lambda x+(\lambda+\eta)y) =  \left\{\begin{array}{ll} 1 & \lambda\neq 0 \\ 0 & \lambda = 0,\,\lambda+\eta\neq 0 \\ -\infty & \lambda=\eta=0 \end{array}\right. \] which is indeed equal to the maximum of $\ell(\lambda(x+y))$ and $\ell(\eta y)$ (the former being $1$ if $\lambda\neq 0$ and $-\infty$ otherwise, and the latter being $0$ if $\eta\neq 0$ and $-\infty$ otherwise).  

On the other hand the pair $(x,x+y)$ is \emph{not} orthogonal: letting $\lambda=-1$ and $\eta=1$ we see that $\ell(\lambda x+\eta(x+y))=\ell(y)=0$ whereas $\max\{\ell(\lambda x),\ell(\eta(x+y))\}=1$.
 \end{ex}

Here are some simple but useful observations that follow directly from Definition \ref{dfnor}.

\begin{lemma}\label{bsorprop} If $(C, \ell)$ is an non-Archimedean normed vector space over $\Lambda$, then:
\begin{itemize} 
\item[(i)]If two subspaces $U$ and $V$ are orthogonal, then $U$ intersects $V$ trivially. 
\item[(ii)] For subspaces $U, V, W$, if $U$ and $V$ are orthogonal, and $U \oplus V$ and $W$ are orthogonal, then $U$ and $V \oplus W$ are orthogonal.
\item[(iii)] If $U$ and $V$ are orthogonal subspaces of $C$, and if $(u_1,\ldots,u_r)$ is an orthogonal ordered collection of elements of $U$ while $(v_1,\ldots,v_s)$ is an orthogonal ordered collection of elements of $V$, then $(u_1,\ldots,u_r,v_1,\ldots,v_s)$ is orthogonal in $U\oplus V$.
\end{itemize}\end{lemma}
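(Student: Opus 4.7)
Each part should follow directly from unwinding the definition of orthogonality together with the basic axioms (F1), (F2), and Proposition \ref{strict}; the only mildly nontrivial step is part (ii), where one must carefully combine the two given orthogonality relations.

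For (i), suppose for contradiction that there exists a nonzero $x\in U\cap V$. Apply the orthogonality hypothesis on $U$ and $V$ to the pair $(x,-x)$, where $x\in U$ and $-x\in V$. On one hand, $x+(-x)=0$, so by (F1) we have $\ell(x+(-x))=-\infty$. On the other hand, using (F2) with $\lambda=-1$ (so $\nu(-1)=0$), we have $\ell(-x)=\ell(x)$, and thus orthogonality gives $\ell(x+(-x))=\max\{\ell(x),\ell(-x)\}=\ell(x)>-\infty$. This contradiction forces $x=0$.

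For (ii), I plan to show that for arbitrary $u\in U$ and $z\in V\oplus W$, writing $z=v+w$ with $v\in V$ and $w\in W$, we have $\ell(u+z)=\max\{\ell(u),\ell(z)\}$. The key observation is that since $V\subseteq U\oplus V$ (with the $U$-component equal to zero), the hypothesis that $U\oplus V$ and $W$ are orthogonal applied to the pair $(v,w)$ gives $\ell(v+w)=\max\{\ell(v),\ell(w)\}$, hence $\ell(z)=\max\{\ell(v),\ell(w)\}$. Next, applying the orthogonality of $U\oplus V$ and $W$ to the pair $(u+v,w)$ gives $\ell(u+v+w)=\max\{\ell(u+v),\ell(w)\}$, and then applying the orthogonality of $U$ and $V$ to $(u,v)$ gives $\ell(u+v)=\max\{\ell(u),\ell(v)\}$. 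Stringing these together yields $\ell(u+z)=\max\{\ell(u),\ell(v),\ell(w)\}=\max\{\ell(u),\ell(z)\}$, as required.

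For (iii), given arbitrary scalars $\lambda_1,\ldots,\lambda_r,\mu_1,\ldots,\mu_s\in\Lambda$, set $u=\sum_{i=1}^r\lambda_i u_i\in U$ and $v=\sum_{j=1}^s\mu_j v_j\in V$. By the orthogonality of $U$ and $V$ as subspaces, $\ell(u+v)=\max\{\ell(u),\ell(v)\}$. The orthogonality of the ordered collections $(u_1,\ldots,u_r)$ and $(v_1,\ldots,v_s)$ then gives $\ell(u)=\max_i\ell(\lambda_i u_i)$ and $\ell(v)=\max_j\ell(\mu_j v_j)$, and combining these produces $\ell(u+v)=\max\{\max_i\ell(\lambda_i u_i),\max_j\ell(\mu_j v_j)\}$, which is precisely the orthogonality condition (\ref{eltorth}) for the combined ordered collection $(u_1,\ldots,u_r,v_1,\ldots,v_s)$.
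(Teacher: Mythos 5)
Your proof is correct and follows essentially the same approach as the paper: part (i) via the pair $(w,-w)$ and (F1)/(F2), part (ii) by the same chain of three max-identities (first observing $V,W$ orthogonal, then combining orthogonality of $U\oplus V$ with $W$ and of $U$ with $V$), and part (iii) by direct unwinding of the definitions. The paper leaves (iii) to the reader; your write-up simply makes that step explicit.
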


\begin{proof}
For (i), if $w \in U \cap V$, then noting that (F2) implies that $\ell(-w)=\ell(w)$, we see that, since $w\in U$ and $-w\in V$ where $U$ and $V$ are orthogonal, \[ -\infty=\ell(0)=\ell(w+(-w))=\max\{\ell(w),\ell(w)\}=\ell(w) \] and so $w=0$ by (F1). So indeed $U$ intersects $V$ trivially.

For (ii), first note that if $U \oplus V$ and $W$ are orthogonal, then in particular, $V$ and $W$ are orthogonal. For any elements $u \in U, v \in V$ and $w \in W$, we have
\begin{align*}
\ell(u + (v + w)) = \ell((u+v) + w) &= \max\{\ell(u+v), \ell(w)\} \\
&= \max\{\ell(u), \ell(v), \ell(w)\} = \max\{\ell(u), \ell(v+w)\}. 
\end{align*}
The second equality comes from orthogonality between $U \oplus V$ and $W$; the third equality comes from orthogonality between $U$ and $V$; and the last equality comes from orthogonality between $V$ and $W$. 

Part (iii) is an immediate consequence of the definitions.
\end{proof}

\begin{dfn} {\bf An orthogonalizable $\Lambda$-space $(C,\ell)$} is a finite-dimensional non-Archimedean normed vector space over $\Lambda$ such that there exists an orthogonal basis for $C$.\end{dfn}

\begin{ex} $(\Lambda, - \nu)$ is an orthogonalizable $\Lambda$-space. \end{ex}

\begin{ex}\label{geneorvs} $(\Lambda^n, -\vec{\nu})$ is an orthogonalizable $\Lambda$-space, where $\vec{\nu}$ is defined as $\vec{\nu}(\lambda_1, ..., \lambda_n) = {\textstyle \min_{1 \leq i \leq n}} \nu(\lambda_i)$.  Moreover, fixing some vector $\vec{t} = (t_1, ..., t_n)\in\R^n$, the shifted version $(\Lambda^n, - \vec{\nu}_{\vec{t}})$ is also an orthogonalizable $\Lambda$-space, where $\vec{\nu}_{\vec{t}}$ is defined as 
\[\vec{\nu}_{\vec{t}}(\lambda_1, ..., \lambda_n) = {\textstyle \min_{1 \leq i \leq n}} (\nu(\lambda_i) - t_i).\]  Specifically, an orthogonal ordered  basis is given by the standard basis $(e_1,\ldots,e_n)$ for $\Lambda^n$: indeed, we have $-\vec{\nu}_{\vec{t}}(e_i)=t_i$, and \[ -\vec{\nu}_{\vec{t}}\left(\sum_{i=1}^{n}\lambda_ie_i\right)=\max_{1\leq i\leq n}(t_i-\nu(\lambda_i))=\max_{1\leq i\leq n}(-\vec{\nu}_{\vec{t}}(e_i)-\nu(\lambda_i)).\] 


In Example \ref{NovComplex} above, if we let $\{\tilde{p}_i\}_{i=1}^{n}\subset \tilde{M}$ consist of one point in every fiber of the covering space $\tilde{M}\to M$ that contains an index-$k$ critical point, then it is easy to see that we have a vector space isomorphism $CN_{k}(\tilde{f})\cong \Lambda^n$, with the filtration function $\ell$ on $CN_{k}(\tilde{f})$ mapping to the shifted filtration function $-\vec{\nu}_{\vec{t}}$ where $t_i=\tilde{f}(\tilde{p}_i)$.

\end{ex}

\begin{remark}\label{nu-t-univ}
In fact, using (F2) and the definition of orthogonality, it is easy to see that \emph{any} orthogonalizable $\Lambda$-space $(C,\ell)$ is isomorphic in the obvious sense to some $(\Lambda^n,-\vec{\nu}_{\vec{t}})$: if $(v_1,\ldots,v_n)$ is an ordered orthogonal basis for $(C,\ell)$ then mapping $v_i$ to the $i$th standard basis vector for $\Lambda^n$ gives an isomorphism of vector spaces which sends $\ell$ to $-\vec{\nu}_{\vec{t}}$ where $t_i=\ell(v_i)$. 
\end{remark}

\subsection{Non-Archimedean Gram-Schmidt process}

In classical linear algebra, the Gram-Schmidt process is applied to modify a set of linearly independent elements into an orthogonal set. A similar procedure can be developed in the non-Archimedean context. The key part of this process comes from the following theorem, which we state using our notations in this paper (see Remark \ref{nu-t-univ}).

\begin{theorem}\label{U082.5} (\cite{Ush08}, Theorem 2.5). Suppose $(C,\ell)$ is an orthogonalizable $\Lambda$-space and $W \leq C$ is a $\Lambda$-subspace. Then for any $x \in C \backslash W$ there exists some $w_0 \in W$ such that 
\begin{equation}
\ell(x - w_0) = \inf\{\ell(x-w) \,| \, w \in W\}.
\end{equation} \end{theorem}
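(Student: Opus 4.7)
The plan is to prove this by induction on $n=\dim C$. The base case $n\leq 1$ is immediate (the only proper subspace of a $\leq 1$-dimensional $C$ is $\{0\}$, and $w_0=0$ works). For the inductive step, I would fix an orthogonal basis $(v_1,\ldots,v_n)$ of $C$ and distinguish two cases. If $W\subset\langle v_1,\ldots,v_{n-1}\rangle$, decompose $x=x'+a_nv_n$ with $x'\in\langle v_1,\ldots,v_{n-1}\rangle$; orthogonality gives $\ell(x-w)=\max(\ell(x'-w),\ell(a_nv_n))$ for every $w\in W$, so the problem reduces to the best approximation of $x'$ by elements of $W$ inside the orthogonalizable $(n-1)$-dimensional space $\langle v_1,\ldots,v_{n-1}\rangle$, which is precisely the induction hypothesis.

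In the remaining case, some element of $W$ has a nonzero $v_n$-component; after rescaling, $y=y'+v_n\in W$ with $y'\in\langle v_1,\ldots,v_{n-1}\rangle$. Setting $W_0=W\cap\langle v_1,\ldots,v_{n-1}\rangle$, we have $W=W_0\oplus\langle y\rangle$ with $\dim W_0=\dim W-1$. A standard non-Archimedean Gram--Schmidt construction (itself a corollary of the induction hypothesis applied iteratively to subspaces of $\langle v_1,\ldots,v_{n-1}\rangle$: build an orthogonal basis of $W_0$ by repeatedly adjoining new vectors modified by subtracting their best approximations in the already-selected orthogonal span, then extend to $\langle v_1,\ldots,v_{n-1}\rangle$ by the same device) yields an orthogonal basis $(w_1,\ldots,w_{k-1},z_1,\ldots,z_{n-k})$ of $\langle v_1,\ldots,v_{n-1}\rangle$ with $(w_1,\ldots,w_{k-1})$ spanning $W_0$; appending $v_n$ then gives an orthogonal basis of $C$. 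Expanding $x,y$ in this basis and choosing the $w_i$-coefficients of a general $w\in W$ optimally so as to annihilate the corresponding components of $x-w$, the minimization of $\ell(x-w)$ reduces to the one-parameter problem
\[
\inf_{\mu\in\Lambda}\max\Bigl(\max_{j}\bigl(c_j-\nu(\lambda_j-\mu)\bigr),\; \ell\bigl((a_n-\mu)v_n\bigr)\Bigr)
\]
for certain fixed constants $c_j\in\R$ and $\lambda_j\in\Lambda$.

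The main obstacle is this one-parameter minimization, which is where the non-Archimedean structure is essential. The key lemma I would invoke is: for any function of the form $F(\mu)=\max_{i\in J}\bigl(c_i-\nu(\lambda_i-\mu)\bigr)$ with distinct $\lambda_i\in\Lambda$, the infimum $\inf_{\mu\in\Lambda}F(\mu)$ is attained at $\mu=\lambda_{j^*}$ for some $j^*\in J$. Indeed, given an arbitrary $\mu\in\Lambda$, pick $j^*\in J$ maximizing $\nu(\lambda_j-\mu)$; the strong triangle inequality applied to $\lambda_i-\lambda_{j^*}=(\lambda_i-\mu)-(\lambda_{j^*}-\mu)$ gives $\nu(\lambda_i-\lambda_{j^*})\geq \nu(\lambda_i-\mu)$ for each $i\neq j^*$ (with equality whenever $\nu(\lambda_i-\mu)<\nu(\lambda_{j^*}-\mu)$), from which $F(\lambda_{j^*})\leq F(\mu)$ follows immediately. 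Thus the infimum of $F$ is realized over the finite set $\{\lambda_j:j\in J\}$, and a straightforward comparison with the additional $\ell((a_n-\mu)v_n)$ term singles out which element of $\{\lambda_j\}\cup\{a_n\}$ attains the overall minimum, completing the inductive step.
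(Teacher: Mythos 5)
The paper itself does not prove this statement: it is imported from \cite{Ush08}, Theorem 2.5, and used as a black box to establish Lemma \ref{testor}, the Gram--Schmidt process (Theorem \ref{GSp}), and everything downstream. So there is nothing in the paper to compare your argument against line by line; what I can do is check your proof on its own and situate it relative to the paper's logical structure.

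Your argument is essentially correct, and it is nicely organized to avoid the obvious circularity trap (the paper's Gram--Schmidt depends on this theorem, so you cannot simply invoke Corollary \ref{orsub} or \ref{orext} wholesale). The dimension induction, applied to the $(n-1)$-dimensional hyperplane $\langle v_1,\ldots,v_{n-1}\rangle$, lets you run Gram--Schmidt there using only the inductive hypothesis and Lemma \ref{testor} (which is independent of this theorem). The reduction of the best-approximation problem to the one-parameter minimization $\inf_\mu \max_i\bigl(c_i-\nu(\lambda_i-\mu)\bigr)$, after killing the $W_0$-coordinates by the free choice of $\gamma_i$, is the right move, and your lemma identifying $\{\lambda_j\}$ as a finite set over which the infimum is realized is both correct and the genuine crux: the strong triangle inequality computation $\nu(\lambda_i-\lambda_{j^*})\geq\min\bigl(\nu(\lambda_i-\mu),\nu(\lambda_{j^*}-\mu)\bigr)=\nu(\lambda_i-\mu)$ for $j^*$ a maximizer of $\nu(\lambda_j-\mu)$ is exactly what is needed.

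A few small points should be tightened. In Case 1 you should treat the degenerate subcase $x'\in W$ separately (there the answer is trivially $w_0=x'$, giving $\ell(x-w_0)=\ell(a_nv_n)$, which is clearly the infimum since orthogonality forces $\ell(x-w)\geq\ell(a_nv_n)$ for all $w\in W$), because the inductive hypothesis as stated only applies to points outside the subspace. In Case 2, the $z_j$-terms with $\beta_j=0$ contribute $\mu$-independent constants $\ell(b_jz_j)$; they should be pulled out of the minimization and reinserted via a final max, and you should note that if all of those $b_j$ vanish and there is no $\beta_j\neq 0$ term, the inner infimum is $-\infty$ at $\mu=a_n$ but then $x$ would lie in $W$, contradicting the hypothesis. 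Finally, the $\lambda_j$ in your lemma need to be distinct; coincident ones should be merged (replacing the pair $(c_j,c_{j'})$ by $\max\{c_j,c_{j'}\}$), and $a_n$ should be folded into the $\{\lambda_j\}$ set on the same footing before applying the lemma, rather than treated as a separate term. None of these affects the correctness of the idea; they are bookkeeping. The proof is sound.
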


Thus $w_0$ achieves the minimal distance to $x$ among all elements of $W$.  Note that (in contrast to the situation with more familiar notions of distance such as the Euclidean distance on $\R^n$) the element $w_0$ is generally not unique.  However, similarly to the case of the Euclidean distance, solutions to this distance-minimization problem are closely related to orthogonality, as the following lemma shows. 

\begin{lemma}\label{testor} Let $(C,\ell)$ be a non-Archimedean normed vector space over  $\Lambda$, and let $W \leq C$ be a $\Lambda$-subspace and $x \in C \backslash W$. Then $W$ and $\left< x\right>_{\Lambda}$ are orthogonal if and only if $\ell(x) = \inf\{\ell(x - w) \,| \, w \in W\}$.\end{lemma}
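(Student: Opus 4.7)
The plan is to prove both directions directly from the axioms (F1)--(F3), (V2), and Proposition \ref{strict}, without needing Theorem \ref{U082.5}.

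For the forward direction, suppose $W$ and $\langle x\rangle_\Lambda$ are orthogonal. For any $w\in W$, the element $-w$ lies in $W$, so by the definition of orthogonality of subspaces applied to $x\in\langle x\rangle_\Lambda$ and $-w\in W$,
\[\ell(x-w)=\max\{\ell(x),\ell(-w)\}=\max\{\ell(x),\ell(w)\}\geq \ell(x),\]
using (F2) with $\lambda=-1$ to see $\ell(-w)=\ell(w)$. Taking the infimum over $w\in W$ and noting that the value $\ell(x)$ is achieved at $w=0$ yields $\ell(x)=\inf\{\ell(x-w)\mid w\in W\}$.

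For the reverse direction, assume $\ell(x-w)\geq \ell(x)$ for all $w\in W$. I need to show that for arbitrary $\lambda\in\Lambda$ and $w\in W$, $\ell(\lambda x+w)=\max\{\ell(\lambda x),\ell(w)\}$. The case $\lambda=0$ is immediate since $\ell(0\cdot x)=-\infty$. For $\lambda\neq 0$, writing $\lambda x+w=\lambda(x-(-\lambda^{-1}w))$ and noting that $-\lambda^{-1}w\in W$, (F2) gives
\[\ell(\lambda x+w)=\ell(x-(-\lambda^{-1}w))-\nu(\lambda)\geq \ell(x)-\nu(\lambda)=\ell(\lambda x),\]
where the inequality uses the hypothesis.

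It remains to upgrade this inequality to the desired equality $\ell(\lambda x+w)=\max\{\ell(\lambda x),\ell(w)\}$. If $\ell(\lambda x)\neq \ell(w)$, Proposition \ref{strict} applied to $\lambda x$ and $w$ gives the equality directly. If instead $\ell(\lambda x)=\ell(w)$, then (F3) gives $\ell(\lambda x+w)\leq \max\{\ell(\lambda x),\ell(w)\}=\ell(\lambda x)$, which combined with the reverse inequality just established produces equality. Neither direction presents a serious obstacle; the main subtlety is simply keeping track of the role of $\lambda^{-1}$ when translating the distance-minimization hypothesis (phrased only in terms of translates $x-w$) into the scaled orthogonality condition (\ref{eltorth}).
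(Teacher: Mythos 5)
Your proof is correct and takes essentially the same route as the paper's: the forward direction uses orthogonality of $W$ and $\langle x\rangle_\Lambda$ applied to the pair $x,-w$, and the reverse direction writes $\lambda x + w = \lambda(x - (-\lambda^{-1}w))$ to reduce to the distance-minimization hypothesis via (F2), then finishes by casework with Proposition \ref{strict}. The only cosmetic difference is how you organize the final casework (comparing $\ell(\lambda x)$ and $\ell(w)$ by equality vs.\ inequality rather than by order), but the content is identical.
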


\begin{proof} Suppose $W$ and $\left< x\right>_{\Lambda}$ are orthogonal. Then for any $w \in W$, by orthogonality, 
\[ \begin{array}{l}
\ell(x-w) = \max \{\ell(x), \ell(w)\} \geq \ell(x).
\end{array} \]
Therefore, taking an infimum, we get $\inf\{\ell(x - w) \,| \, w \in W\} \geq \ell(x)$. Moreover, by taking $w =0$, we have $\inf\{\ell(x - w) \,| \, w \in W\} \leq \ell(x-0) = \ell(x)$. Therefore, $\ell(x) = \inf\{\ell(x - w) \,| \, w \in W\}$.

Conversely, suppose that  $\ell(x) = \inf\{\ell(x - w) \,| \, w \in W\}$ and let $y=w+\mu x$ be a general element of $W\oplus\langle x\rangle_{\Lambda}$. We must show that $\ell(y)=\max\{\ell(w),\ell(\mu x)\}$; in fact, the inequality ``$\leq$'' automatically follows from (F3), so we just need to show  that $\ell(y)\geq \max\{\ell(w),\ell(\mu x)\}$.  If $\mu=0$ this is obvious since then $y=w$, so assume from now on that $\mu\neq 0$.  Then \[ \ell(y)=\ell\left(\mu(\mu^{-1}w+x)\right)
=\ell(\mu^{-1}w+x)-\nu(\mu)\geq \ell(x)-\nu(\mu)=\ell(\mu x) \] where the inequality uses the assumed optimality property of $x$.  If $\ell(\mu x)\geq \ell(w)$ this proves that $\ell(y)\geq \max\{\ell(w),\ell(\mu x)\}$.  On the other hand if $\ell(\mu x)<\ell(w)$ then the fact that $\ell(y)\geq \max\{\ell(w),\ell(\mu x)\}$ simply follows by Proposition \ref{strict}. \end{proof}

\begin{theorem}\label{GSp} (non-Archimedean Gram-Schmidt process). Let $(C,\ell)$ be an orthogonalizable $\Lambda$-space and let $\{x_1, ..., x_r\}$ be a basis for a subspace $V \leq C$.  Then there exists an \emph{orthogonal} ordered basis $(x_1', ..., x_r')$ for $V$ whose members have the form
\begin{align*}
& x_1' = x_1;\\
& x_2' = x_2 - \lambda_{2,1} x_1;\\
& \ldots\\
& x_r' = x_r - \lambda_{r, r-1} x_{r-1} - \lambda_{r, r-2} x_{r-2} -  \ldots - \lambda_{r,1} x_1,
\end{align*}
where $\lambda_{\alpha, \beta}$'s are constants in $\Lambda$.  Moreover if the first $i$ elements of the initial basis are such that $(x_1,\ldots,x_i)$ are orthogonal, then we can take $x'_j=x_j$ for $j=1,\ldots,i$.   \end{theorem}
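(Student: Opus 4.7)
The plan is to proceed by induction on $r$, the idea being that at each step we produce $x'_r$ as the ``best approximation'' to $x_r$ out of the span of previously-chosen elements, using Theorem \ref{U082.5} as the key tool that was previously unavailable in the classical Gram-Schmidt setting (which relied on an inner-product formula).

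For the base case $r=1$, take $x_1'=x_1$; the singleton collection is trivially orthogonal. For the inductive step, assume we have constructed an orthogonal ordered basis $(x_1',\ldots,x_{r-1}')$ of the required lower-triangular form for $W := \mathrm{span}_\Lambda(x_1,\ldots,x_{r-1})$. Since $C$ is orthogonalizable by hypothesis and $x_r\in C\setminus W$ (as $\{x_1,\ldots,x_r\}$ is linearly independent), Theorem \ref{U082.5} furnishes some $w_0\in W$ with
\[ \ell(x_r-w_0)=\inf\{\ell(x_r-w)\,|\,w\in W\}. \]
Set $x_r':=x_r-w_0$. Then Lemma \ref{testor} immediately gives that $W$ and $\langle x_r'\rangle_\Lambda$ are orthogonal as subspaces.

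To upgrade this to the statement that the ordered collection $(x_1',\ldots,x_{r-1}',x_r')$ is orthogonal, I would invoke Lemma \ref{bsorprop}(iii) with $U=W$ and the subspace spanned by the single element $x_r'$: the inductive hypothesis provides the orthogonality of $(x_1',\ldots,x_{r-1}')$ inside $U$, orthogonality of the singleton $(x_r')$ is automatic, and the orthogonality of the two subspaces was just established. For the prescribed form, note that $W=\mathrm{span}_\Lambda(x_1,\ldots,x_{r-1})$ as well, so we may write $w_0=\sum_{j=1}^{r-1}\lambda_{r,j}x_j$ for some $\lambda_{r,j}\in\Lambda$, yielding $x_r'=x_r-\sum_{j=1}^{r-1}\lambda_{r,j}x_j$ as required. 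Finally, the ``moreover'' clause is handled by starting the induction at $j=i+1$ and setting $x_j':=x_j$ for $j\leq i$, which satisfies the inductive hypothesis at step $i$ by assumption.

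The main conceptual obstacle has in fact been sidestepped by the prior results: in the Euclidean setting one would define $w_0$ via an explicit orthogonal-projection formula, but here best approximants are not unique and there is no inner product to write such a formula. Theorem \ref{U082.5} replaces the formula with an existence statement, and Lemma \ref{testor} supplies the bridge from ``best approximant'' to the required orthogonality property. Given these two tools, what remains above is essentially bookkeeping: packaging the one-element orthogonality extension into an ordered-basis statement via Lemma \ref{bsorprop}(iii), and checking that the recursive definition produces the prescribed lower-triangular expression in the original generators.
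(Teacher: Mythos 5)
Your proof is correct and follows essentially the same approach as the paper's: induction on the dimension, using Theorem \ref{U082.5} to produce the best approximant $w_0$, Lemma \ref{testor} to pass from optimality to orthogonality against $W$, and Lemma \ref{bsorprop}(iii) to assemble the full orthogonal ordered basis. Your handling of the ``moreover'' clause (beginning the induction at step $i$) is a minor repackaging of the paper's equivalent device of carrying the partial-orthogonality hypothesis through the inductive statement.
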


\begin{proof} We proceed by induction on the dimension $r$ of $V$.  If $V$ is one-dimensional then we simply take $x'_1=x_1$.  Assuming the result to be proven for all $k$-dimensional subspaces, let $(x_1,\ldots,x_{k+1})$ be an ordered basis for $V$, with $(x_1,\ldots,x_i)$ orthogonal for some $i\in\{1,\ldots,k+1\}$. If $i=k+1$ then we can set $x'_j=x_j$ for all $j$ and we are done.  Otherwise apply the inductive hypothesis to the span $W$ of $\{x_1,\ldots,x_k\}$
to obtain an orthogonal ordered basis $(x'_1,\ldots,x'_k)$ for $W$, with $x'_j=x_j$ for all $j\in\{1,\ldots,i\}$.  Now apply Theorem \ref{U082.5} to $W$ and the element $x_{k+1}$ to obtain some $w_0\in W$ such that $\ell(x_{k+1}-w_0)=\inf\{\ell(x_{k+1}-w)|w\in W\}$. Let $x'_{k+1}=x_{k+1}-w_0$. It then follows from Lemma \ref{testor} that $W$ and $\langle x'_{k+1}\rangle_{\Lambda}$ are orthogonal, and so by Lemma \ref{bsorprop} (iii)  $(x'_1,\ldots,x'_k,x'_{k+1})$ is an orthogonal ordered basis for $V$.  Moreover since $x'_{k+1}=x_{k+1}-w_0$ where $w_0$ lies in the span of $x_1,\ldots,x_k$, it is clear that $x_{k+1}$ has the form required in the theorem.  This completes the inductive step and hence the proof.\end{proof}

\begin{cor}\label{orsub} If $(C,\ell)$ is an orthogonalizable $\Lambda$-space, then for every subspace $W \leq C$, $(W, \ell|_W)$ is also an orthogonalizable $\Lambda$-space.\end{cor}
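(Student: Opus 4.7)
The plan is to directly invoke the non-Archimedean Gram-Schmidt process (Theorem \ref{GSp}) applied to an arbitrary basis of $W$. First I would note that $W$ is automatically finite-dimensional because $C$ is, so I only need to exhibit an orthogonal basis for $(W,\ell|_W)$.

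Choose any $\Lambda$-basis $\{x_1,\ldots,x_r\}$ of $W$. Since $W$ is a subspace of the orthogonalizable $\Lambda$-space $(C,\ell)$, Theorem \ref{GSp} applies (with $V = W$) and produces an ordered basis $(x_1',\ldots,x_r')$ for $W$ which is orthogonal as a collection of elements of $C$. But the orthogonality condition (\ref{eltorth}) is phrased purely in terms of the filtration values $\ell(\sum_i \lambda_i x_i')$ of $\Lambda$-linear combinations that already lie in $W$; so $(x_1',\ldots,x_r')$ is equally well an orthogonal ordered basis with respect to $\ell|_W$. Therefore $(W,\ell|_W)$ admits an orthogonal basis and is thus an orthogonalizable $\Lambda$-space.

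There is essentially no obstacle here: the corollary is a one-line consequence of Theorem \ref{GSp} together with the observation that the definition of orthogonality of a tuple depends only on the filtration values of its own $\Lambda$-span. The only minor point worth making explicit is the passage from ``orthogonal in $C$'' to ``orthogonal in $W$'' under the restricted filtration, which is immediate from Definition \ref{dfnor}.
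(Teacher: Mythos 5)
Your proof is correct and follows exactly the paper's argument: apply the non-Archimedean Gram-Schmidt process (Theorem \ref{GSp}) to an arbitrary basis of $W$ to produce an orthogonal ordered basis. The extra remark that orthogonality of a tuple in $W$ under $\ell|_W$ is the same as orthogonality in $C$ is correct and worth noting, but the paper treats it as implicit.
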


\begin{proof}  Apply Theorem \ref{GSp} to an arbitrary basis for $W$ to obtain an orthogonal ordered basis for $W$. \end{proof}

\begin{cor}\label{orext} If $(C,\ell)$ is an orthogonalizable $\Lambda$-space and  $V \leq W \leq C$, any orthogonal ordered basis of $V$ may be extended to an orthogonal basis of $W$.\end{cor}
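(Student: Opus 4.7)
The plan is to reduce this to the non-Archimedean Gram–Schmidt process (Theorem \ref{GSp}), specifically exploiting its ``moreover'' clause which guarantees that an initial orthogonal segment of the input basis is preserved.

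First I would start with the given orthogonal ordered basis $(v_1,\ldots,v_k)$ of $V$. Since $V \leq W \leq C$ and $W$ is a finite-dimensional $\Lambda$-vector space (being a subspace of the finite-dimensional $C$), by elementary linear algebra the linearly independent set $\{v_1,\ldots,v_k\}$ can be extended to an (a priori unordered) basis $\{v_1,\ldots,v_k,x_{k+1},\ldots,x_n\}$ of $W$.

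Next I would apply Theorem \ref{GSp} to the ordered basis $(v_1,\ldots,v_k,x_{k+1},\ldots,x_n)$ of $W$, viewed as a subspace of the orthogonalizable $\Lambda$-space $(C,\ell)$. Because the first $k$ elements $(v_1,\ldots,v_k)$ are by hypothesis already orthogonal, the ``moreover'' clause of Theorem \ref{GSp} permits us to take $v'_j = v_j$ for $j=1,\ldots,k$, while producing new elements $x'_{k+1},\ldots,x'_n$ (each a linear combination of the original $v_1,\ldots,v_k,x_{k+1},\ldots,x_j$) such that the resulting ordered tuple $(v_1,\ldots,v_k,x'_{k+1},\ldots,x'_n)$ is an orthogonal ordered basis of $W$.

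There is no real obstacle here: the entire content of the corollary is already packaged into the stated form of Theorem \ref{GSp}, so the argument is essentially a one-line invocation after performing the elementary basis extension. The mild point worth remarking on is that we must interpret ``orthogonal basis'' as an \emph{ordered} collection in order for the ``moreover'' clause of Theorem \ref{GSp} to apply, but this matches the convention established in Definition \ref{dfnor}.
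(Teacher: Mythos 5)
Your proof is correct and matches the paper's own argument almost verbatim: extend the given orthogonal basis of $V$ to an arbitrary basis of $W$, then apply the non-Archimedean Gram--Schmidt process (Theorem \ref{GSp}) and invoke its ``moreover'' clause to keep the initial orthogonal segment fixed. If anything, your version is a shade cleaner than the paper's, which first cites Corollary \ref{orsub} to \emph{produce} an orthogonal basis for $V$ even though the statement already supplies one.
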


\begin{proof} By Corollary \ref{orsub}, we have an orthogonal ordered basis $(v_1, ..., v_i)$ for $V$. Extend it arbitrarily to a basis $\{v_1, ..., v_i, v_{i+1}, ..., v_r\}$ for $W$, and then apply Theorem \ref{GSp} to obtain an orthogonal ordered basis for $W$ whose first $i$ elements are $v_1,\ldots,v_i$. \end{proof}

\begin{cor}\label{orcomple} Suppose that $(C,\ell)$ is an orthogonalizable $\Lambda$-space and $U \leq C$. Then there exists a subspace $V$ such that $U \oplus V = C$ and $U$ and $V$ are orthogonal. (We call any such $V$ an {\it orthogonal complement} of $U$).\end{cor}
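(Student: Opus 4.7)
The proof should be a quick synthesis of the results already established in the Gram-Schmidt subsection. The plan is to build $V$ by extending an orthogonal basis of $U$ to an orthogonal basis of the whole space $C$, and then taking $V$ to be the span of the newly added basis vectors.

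More precisely, I would first apply Corollary \ref{orsub} to the subspace $U \leq C$ to obtain an orthogonal ordered basis $(u_1,\ldots,u_k)$ of $U$. Next, invoke Corollary \ref{orext} with the chain $U \leq C$ to extend $(u_1,\ldots,u_k)$ to an orthogonal ordered basis $(u_1,\ldots,u_k,v_1,\ldots,v_m)$ of $C$. Then set $V = \langle v_1,\ldots,v_m\rangle_\Lambda$. By construction $U \oplus V = C$ as vector spaces, so only the orthogonality of $U$ and $V$ remains to be checked.

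For the orthogonality, take arbitrary elements $u = \sum_{i=1}^k a_i u_i \in U$ and $v = \sum_{j=1}^m b_j v_j \in V$. Since $(u_1,\ldots,u_k,v_1,\ldots,v_m)$ is an orthogonal ordered collection, the defining identity (\ref{eltorth}) gives
\[
\ell(u+v) = \max\Bigl\{\max_{1\leq i\leq k}\ell(a_i u_i),\ \max_{1\leq j\leq m}\ell(b_j v_j)\Bigr\}.
\]
On the other hand, restricting (\ref{eltorth}) to the orthogonal sub-collections $(u_1,\ldots,u_k)$ and $(v_1,\ldots,v_m)$ (each of which inherits orthogonality from the full basis) yields $\ell(u) = \max_i \ell(a_i u_i)$ and $\ell(v) = \max_j \ell(b_j v_j)$. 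Combining these identities gives $\ell(u+v) = \max\{\ell(u),\ell(v)\}$, which is exactly the orthogonality of the subspaces $U$ and $V$ in the sense of Definition \ref{dfnor}.

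There is no real obstacle here: all the work has already been done in establishing the non-Archimedean Gram-Schmidt theorem and its corollaries. The only subtle point worth being careful about is the harmless observation that an ordered sub-collection of an orthogonal ordered collection is itself orthogonal, which is immediate from setting the unused coefficients to zero in (\ref{eltorth}).
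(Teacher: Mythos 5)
Your proof is correct and matches the paper's approach exactly: apply Corollary \ref{orsub} to get an orthogonal basis for $U$, extend it via Corollary \ref{orext} to an orthogonal basis of $C$, and let $V$ be the span of the new vectors. The paper leaves the final orthogonality verification implicit, whereas you spell it out; both are fine.
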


\begin{proof} By Corollary \ref{orsub}, we have an orthogonal ordered basis $(u_1, ..., u_k)$ for subspace $U$. By Corollary \ref{orext}, extend it to an orthogonal ordered basis for $C$, say $(u_1, ..., u_k, v_1, ..., v_l)$ (so $\dim(C) = k+l$). Then $V = span_{\Lambda} \{v_1, ..., v_l\}$ satisfies the desired properties. \end{proof}

Orthogonal complements are generally not unique, as is already illustrated by Example \ref{exor} in which $\langle x+ay\rangle_{\mathcal{K}}$ is an orthogonal complement to $\langle y\rangle_{\mathcal{K}}$ for any $a\in\mathcal{K}$.

\subsection{Duality} \label{dualss}

Given a non-Archimedean normed vector space $(C, \ell)$, the dual space $C^*$ (over $\Lambda$) becomes a non-Archimedean normed vector space if we associate a filtration function $\ell^*: C^* \rightarrow \mathbb R \cup \{\infty\}$ defined by 
\[ \begin{array}{l}
\ell^*(\phi) = {\displaystyle \sup_{0 \neq x \in C}} (-\ell(x) - \nu(\phi(x))).
\end{array} \]
Indeed, for $\phi$ and $\psi$ in $C^*$ and $x\in C$, we have 
\begin{align*}
-\ell(x) - \nu(\phi (x) + \psi (x)) &\leq - \ell(x) - \min\{\nu(\phi(x)), \nu(\psi(x))\} \\
& = \max\{- \ell(x) - \nu(\phi(x)), -\ell(x) - \nu(\psi(x))\} \leq \max\{\ell^*(\phi), \ell^*(\psi)\}
\end{align*}
and so taking the supremum over $x$ shows that $\ell^*(\phi+\psi)\leq \max\{\ell^*(\phi),\ell^*(\psi)\}$, and  it is easy to check the other 
axioms (F1) and (F3) required of $\ell^*$. The following proposition demonstrates a relation between bases of the original space and its dual space.

\begin{prop}\label{dualor} If $(C, \ell)$ is an orthogonalizable $\Lambda$-space with orthogonal ordered basis $(v_1, \ldots, v_n)$, then $(C^*, \ell^*)$ is an orthogonalizable $\Lambda$-space with an orthogonal ordered basis given by the dual basis $(v_1^*,\ldots, v_n^*)$. Moreover, for each $i$, we have 
\begin{equation} \label{flip}
\ell^*(v_i^*) = -\ell(v_i).
\end{equation} \end{prop}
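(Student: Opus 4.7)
My plan is to prove the proposition in two stages. First, I will compute $\ell^*(v_i^*) = -\ell(v_i)$ for each $i$, establishing (\ref{flip}). Second, I will verify the orthogonality condition (\ref{eltorth2}) for the collection $(v_1^*,\ldots,v_n^*)$. Since $(v_1,\ldots,v_n)$ is a basis of the finite-dimensional $C$, the dual $(v_1^*,\ldots,v_n^*)$ is automatically a basis of $C^*$, so these two facts together establish $(C^*,\ell^*)$ as an orthogonalizable $\Lambda$-space with the claimed orthogonal ordered basis.

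For the first part, I would write an arbitrary nonzero $x\in C$ as $x=\sum_j \lambda_j v_j$ and use orthogonality of $(v_1,\ldots,v_n)$ together with (F2) to obtain $\ell(x)=\max_j(\ell(v_j)-\nu(\lambda_j))$, equivalently $-\ell(x)=\min_j(\nu(\lambda_j)-\ell(v_j))$. Since $v_i^*(x)=\lambda_i$, the quantity being supremized in the definition of $\ell^*(v_i^*)$ is $\min_j(\nu(\lambda_j)-\ell(v_j))-\nu(\lambda_i)$, which is bounded above by its $j=i$ term $-\ell(v_i)$ (using the standard convention $\infty-\infty=0$ handled by restricting to $\lambda_i\neq 0$, since otherwise $\nu(\lambda_i)=\infty$). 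Equality is achieved by testing $x=v_i$, giving $-\ell(v_i)-\nu(1)=-\ell(v_i)$.

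For the second part, fix $\mu_1,\ldots,\mu_n\in\Lambda$ and let $\phi=\sum_i \mu_i v_i^*$. The inequality $\ell^*(\phi)\geq \max_i(-\ell(v_i)-\nu(\mu_i))$ follows by testing the supremum at each $x=v_i$, since $\phi(v_i)=\mu_i$. For the reverse inequality, I apply (V3) to conclude $\nu(\phi(x))\geq \min_i(\nu(\mu_i)+\nu(\lambda_i))$ whenever $x=\sum_j\lambda_j v_j$, so that $-\nu(\phi(x))\leq \max_i(-\nu(\mu_i)-\nu(\lambda_i))$. Let $i_0$ be an index realizing this maximum. Combining with $-\ell(x)=\min_j(\nu(\lambda_j)-\ell(v_j))\leq \nu(\lambda_{i_0})-\ell(v_{i_0})$, the $\nu(\lambda_{i_0})$ terms cancel and I obtain
\[
-\ell(x)-\nu(\phi(x))\leq \bigl(\nu(\lambda_{i_0})-\ell(v_{i_0})\bigr)+\bigl(-\nu(\mu_{i_0})-\nu(\lambda_{i_0})\bigr)=-\ell(v_{i_0})-\nu(\mu_{i_0})\leq \max_i(-\ell(v_i)-\nu(\mu_i)).
\]
Taking the supremum over $x$ yields the desired upper bound, and combined with the first part this gives $\ell^*(\phi)=\max_i(\ell^*(v_i^*)-\nu(\mu_i))$, which is precisely (\ref{eltorth2}).

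The main subtle point is the upper bound in the second stage: (V3) only provides an inequality for $\nu(\phi(x))$, not an equality, so the bookkeeping must be arranged so that the maximizing index $i_0$ for the $-\nu(\phi(x))$ estimate is the same one used to bound the minimum $\min_j(\nu(\lambda_j)-\ell(v_j))$ from above. Choosing these indices consistently is what allows the $\nu(\lambda_{i_0})$ terms to cancel and produce a bound purely in terms of the $\ell(v_i)$ and $\nu(\mu_i)$, which is exactly what orthogonality of the dual basis requires.
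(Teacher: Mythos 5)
Your proof is correct and follows essentially the same two-stage structure as the paper's proof: compute $\ell^*(v_i^*)$ by testing against $x=v_i$ and bounding above via orthogonality of $(v_1,\ldots,v_n)$, then verify (\ref{eltorth2}) by obtaining the lower bound from $\phi(v_{i_0})$ and the upper bound from the ultrametric inequality. The only cosmetic difference is that the paper invokes the already-established (F3) for $\ell^*$ to get the upper bound, whereas you re-derive that bound inline via (V3) applied to $\nu(\phi(x))$ with a careful choice of $i_0$; both are the same underlying estimate.
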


\begin{proof} For any $x\in C$, written as ${\textstyle \sum_{j=1}^n \lambda_i v_i}$,  we have have $v_{i}^{*}x=\lambda_i$ for each $i$, so if $\lambda_i=0$ then $-\ell(x)-\nu(v_{i}^{*}x)=-\infty$, while otherwise
\begin{align*}
-\ell(x) - \nu(v_i^* x) &= -\max_{1 \leq j \leq n} (\ell(v_j) - \nu(\lambda_j) ) - \nu(\lambda_i) \\
& \leq -(\ell(v_i) - \nu(\lambda_i)) - \nu(\lambda_i)  = - \ell(v_i).
\end{align*}
Equality holds in the above when $x = v_i$, so $\ell^*(v_i^*) = -\ell(v_i)$.\\

To prove orthogonality, given any $\lambda_1,\ldots, \lambda_n \in \Lambda$,  choose $i_0$ to maximize the quantity $-\ell(v_{i}) - \nu(\lambda_{i})$ over $i \in \{1, \ldots, n\}$. Then 
\begin{align*}
\ell^*\left( \sum_{i=1}^n \lambda_i v_i^* \right) &\geq - \ell(v_{i_0}) - \nu\left( \left( \sum_{i=1}^n \lambda_i v_i^* \right) v_{i_0} \right)\\
& =- \ell(v_{i_0}) - \nu(\lambda_{i_0}) = \max_{1 \leq i \leq n} (\ell^* (v_i^*) - \nu(\lambda_i)).
\end{align*}
The reverse direction immediately follows from the non-Archimedean triangle inequality (F3) in Definition \ref{dfn-fil}. Therefore, we have proven the orthogonality of the dual basis.\end{proof}

\subsection{Coefficient extension}\label{coefsect}
This is a somewhat technical subsection which is not used for most of the main results---mainly we are including it in order to relate our barcodes to the torsion exponents from \cite{FOOO09}---so it could reasonably be omitted on first reading.

Throughout most of this paper we consider a fixed subgroup $\Gamma\leq \R$, with associated Novikov field $\Lambda=\Lambda^{\mathcal{K},\Gamma}$, and we consider orthogonalizable $\Lambda$-spaces over this fixed Novikov field $\Lambda$.  Suppose now that we consider a larger subgroup $\Gamma'\geq \Gamma$ (still with $\Gamma'\leq \R$).
 The inclusion $\Gamma\hookrightarrow \Gamma'$ induces in obvious fashion a field extension $\Lambda\hookrightarrow \Lambda^{\mathcal{K},\Gamma'}$, and so for any $\Lambda$ vector space $C$ we obtain a $\Lambda^{\mathcal{K},\Gamma'}$-vector space \[ C'=C\otimes_{\Lambda}\Lambda^{\mathcal{K},\Gamma'}.\]  

If $(C,\ell)$ is an orthogonalizable $\Lambda$-space with orthogonal ordered basis $(w_1,\ldots,w_n)$ then $\{w_1\otimes 1,\ldots,w_n\otimes 1\}$ is a basis for $C'$ and so  we can make $C'$ into an orthogonalizable $\Lambda^{\mathcal{K},\Gamma'}$-space $(C',\ell')$ by putting \[ \ell'\left(\sum_{i=1}^{n}\lambda'_{i}w_i\otimes 1\right) = \max_i\left(\ell(w_i)-\nu(\lambda'_i)\right)\] for all $\lambda'_1,\ldots,\lambda'_n\in\Lambda^{\mathcal{K},\Gamma'}$; in other words we are defining $\ell'$ by declaring $(w_1\otimes 1,\ldots,w_n\otimes 1)$ to be an orthogonal ordered basis for $(C',\ell')$.  The following proposition might be read as saying that this definition is independent of 
the choice of orthogonal basis $(w_1,\ldots,w_n)$ for $(C,\ell)$.

\begin{prop}\label{coextorth} With the above definition, if $(x_1,\ldots,x_n)$ is \textbf{any} orthogonal ordered basis for $(C,\ell)$ then $(x_1\otimes 1,\ldots,x_n\otimes 1)$ is an orthogonal ordered basis for $(C',\ell')$.
\end{prop}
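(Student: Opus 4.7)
The plan is to define, for \emph{any} orthogonal basis $(v_1,\ldots,v_n)$ of $(C,\ell)$, a filtration $\ell'_v$ on $C'$ by the same formula used to define $\ell'$ in the statement, but with $(v_k)$ in place of $(w_i)$:
\[ \ell'_v\Bigl(\sum_k \xi'_k(v_k\otimes 1)\Bigr) = \max_k\bigl(\ell(v_k)-\nu'(\xi'_k)\bigr). \]
By construction, the given $\ell'$ equals $\ell'_w$, and the orthogonality assertion for $(x_1\otimes 1,\ldots,x_n\otimes 1)$ in $(C',\ell')$ is exactly the statement $\ell'_w=\ell'_x$.

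First I would record the routine properties of each such $\ell'_v$: it satisfies (F1)--(F3), obeys (F2) with respect to the extended valuation $\nu'$ on $\Lambda^{\mathcal{K},\Gamma'}$, and restricts to $\ell$ on $C\subset C'$ under $c\mapsto c\otimes 1$.  The restriction uses orthogonality of $(v_k)$ in $(C,\ell)$ together with $\nu'|_\Lambda=\nu$: writing $c=\sum_k \lambda_k v_k$ with $\lambda_k\in\Lambda$, one has $\ell'_v(c\otimes 1)=\max_k(\ell(v_k)-\nu(\lambda_k))=\ell(c)$.

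The key step is a maximality observation: any function $\ell^{(C')}\colon C'\to\R\cup\{-\infty\}$ that satisfies (F2) with respect to $\nu'$ and (F3), and that restricts to $\ell$ on $C$, is pointwise bounded above by $\ell'_v$ for \emph{every} orthogonal basis $(v)$ of $(C,\ell)$.  Indeed, writing $y=\sum_k \xi'_k(v_k\otimes 1)$, axioms (F3) and (F2) applied to $\ell^{(C')}$ force
\[ \ell^{(C')}(y)\leq \max_k\ell^{(C')}\bigl(\xi'_k(v_k\otimes 1)\bigr)=\max_k\bigl(\ell^{(C')}(v_k\otimes 1)-\nu'(\xi'_k)\bigr)=\max_k\bigl(\ell(v_k)-\nu'(\xi'_k)\bigr)=\ell'_v(y). \]
Applying this first with $\ell^{(C')}=\ell'_x$ and basis $(w)$ gives $\ell'_x\leq\ell'_w$, and then with $\ell^{(C')}=\ell'_w$ and basis $(x)$ gives $\ell'_w\leq\ell'_x$; hence $\ell'_w=\ell'_x$, which is exactly the desired orthogonality identity for $(x_1\otimes 1,\ldots,x_n\otimes 1)$ in $(C',\ell')$.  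The special case $y=x_j\otimes 1$ also recovers $\ell'(x_j\otimes 1)=\ell(x_j)$ as a byproduct.

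There is no substantial obstacle once the maximality observation is isolated; the argument reduces to two symmetric applications of that single lemma.  The only preliminary care is in verifying that $\ell'_x$ satisfies the hypotheses of that lemma---in particular that it restricts to $\ell$ on $C$---before applying maximality to it, but this follows verbatim as for $\ell'_w$, using only orthogonality of $(x_j)$ over $\Lambda$.
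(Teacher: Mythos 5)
Your proof is correct, and it takes a genuinely different route from the paper's. The paper's argument is a direct calculation: it introduces the basis change matrix $N$ relating $(w_i)$ and $(x_j)$, decomposes each coefficient vector $\vec{\lambda'}\in(\Lambda^{\mathcal{K},\Gamma'})^n$ into pieces indexed by distinct $\Gamma$-cosets inside $\Gamma'$, and threads that decomposition through the orthogonality of both bases to establish $\ell'\bigl(\sum\lambda'_jx_j\otimes 1\bigr)=\max_j(\ell(x_j)-\nu(\lambda'_j))$. Your approach instead abstracts the relevant structure into a single maximality principle: among all functions $C'\to\R\cup\{-\infty\}$ satisfying (F2) with respect to $\nu'$ and (F3) and restricting to $\ell$ on $C\otimes 1$, the function $\ell'_v$ attached to any orthogonal basis $(v)$ of $(C,\ell)$ is the unique maximal one. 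Since both $\ell'_w$ and $\ell'_x$ lie in this class, two symmetric applications give $\ell'_x\leq\ell'_w$ and $\ell'_w\leq\ell'_x$, so $\ell'_w=\ell'_x$, which is the claimed orthogonality. The hypotheses needed for each application (that $\ell'_x$ and $\ell'_w$ satisfy (F2)--(F3) and restrict to $\ell$) are exactly as routine as you say: (F2)--(F3) are automatic for $\vec{\nu}_{\vec{t}}$-type filtrations, and the restriction uses only orthogonality of the chosen basis in $(C,\ell)$ together with $\nu'|_\Lambda=\nu$. What this buys you over the paper's proof is avoiding the matrix bookkeeping and the $\Gamma$-coset decomposition entirely, and it also makes transparent that the extended filtration $\ell'$ is intrinsic (the maximal extension with the expected properties) rather than basis-dependent, which is really the conceptual content of the proposition. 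The paper's computation, on the other hand, is self-contained and does not require isolating the maximality lemma, but it is less illuminating about why the result is true.
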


\begin{proof} Let $(w_1,\ldots,w_n)$ denote the orthogonal basis that was used to define $\ell'$. Let $N\in GL_{n}(\Lambda)$ be the basis change matrix from $(w_1,\ldots,w_n)$ to $(x_1,\ldots,x_n)$, i.e., the matrix characterized by the fact that for $j\in\{1,\ldots,n\}$ we have $x_j={\textstyle \sum_i} N_{ij}w_i$. Then for $\vec{\lambda'}=(\lambda'_1,\ldots,\lambda'_n)\in(\Lambda^{\mathcal{K},\Gamma'})^n$ we have \begin{equation} \label{ellprime}\ell'\left(\sum_{j=1}^{n}\lambda'_j x_j\otimes 1\right) = \ell\left(\sum_{i=1}^{n} (N\vec{\lambda'})_i w_i\right) = \max_i \left(\ell(w_i)-\nu((N\vec{\lambda'})_i)\right).\end{equation}
Now the vector $\vec{\lambda'}\in (\Lambda^{\mathcal{K},\Gamma'})^n$ is a formal sum $\vec{\lambda'} = {\textstyle \sum_{g\in \Gamma'}}\vec{v}_g T^g$ where $\vec{v}_g\in \mathcal{K}^n$ and where the set of $g$ with $\vec{v}_g\neq 0$ is discrete and bounded below.  Let $S_{\vec{\lambda'}}\subset \Gamma'$ consist of those $g\in\Gamma'$ such that $g$ is the minimal element in its coset $g+\Gamma\subset \Gamma'$ having $\vec{v}_g\neq 0$.  We can then reorganize the above sum as \[ \vec{\lambda'} = \sum_{g\in S_{\vec{\lambda'}}}\vec{\lambda}_g T^g \] where now $\vec{\lambda}_g\in \Lambda^n$, and where  the set $S_{\vec{\lambda'}}$ is discrete and bounded below and has the property that distinct elements of $S_{\vec{\lambda'}}$ belong to distinct cosets of $\Gamma$ in $\Gamma'$.

Now since $N$ has its coefficients in $\Lambda$, we will have \[ N\vec{\lambda'} =  \sum_{g\in S_{\vec{\lambda'}}}N\vec{\lambda}_g T^g \] where each $N\vec{\lambda}_g\in \Lambda^n$. For each $i$ the various $\nu((N\vec{\lambda}_g)_i T^g)$ are equal to $g+\nu((N\vec{\lambda}_g)_i)$ and so belong to distinct cosets of $\Gamma$ in $\Gamma'$ (in particular, they are distinct from each other) and so we have for each $i$ \[ \nu((N\vec{\lambda'})_i) = \min_{g\in S_{\vec{\lambda'}}}\left(g+\nu((N\vec{\lambda}_g)_i)\right),\] and similarly $\nu(\lambda'_j)=\min_{g}(g+\nu((\vec{\lambda}_g)_j))$ for each $j$.
Combining this with (\ref{ellprime}) and  using the orthogonality of $(w_1,\ldots,w_n)$ and $(x_1,\ldots,x_n)$ with respect to $\ell$ and the fact that the $\vec{\lambda}_g$ belong to $\Lambda^n$ gives \begin{align*} \ell'\left(\sum_{j=1}^{n}\lambda'_j x_j\otimes 1\right)&=\max_{i,g}\left(\ell(w_i) - g -\nu((N\vec{\lambda}_g)_i)\right)
\\ &=\max_g\left(-g+\max_i(\ell(w_i)-\nu((N\vec{\lambda}_g)_i))\right) = \max_g\left(-g+\ell\left(\sum_i(N\vec{\lambda}_g)_iw_i\right)\right) 
\\ &= \max_g\left(-g+\ell\left(\sum_j(\vec{\lambda}_g)_jx_j\right)\right) = \max_g\left(-g+\max_j(\ell(x_j)-\nu((\vec{\lambda}_g)_j))\right)
\\ &= \max_j\left(\ell(x_j)-\min_g(g+\nu((\vec{\lambda}_g)_j))\right) = \max_j(\ell(x_j)-\nu(\lambda'_j)),\end{align*} proving the orthogonality of $(x_1\otimes 1,\ldots,x_n\otimes 1)$ since it follows directly from the original definition of $\ell'$ in terms of $(w_1,\ldots,w_n)$ that $\ell'(x\otimes 1)=\ell(x)$ whenever $x\in C$.
\end{proof}

\section {(Non-Archimedean) singular value decompositions}

Recall that in linear algebra over $\C$ with its standard inner product, a singular value decomposition for a linear transformation $A\co \C^n\to \C^m$ is typically defined to be a factorization $A=X\Sigma Y^*$ where $X\in U(m)$, $Y\in U(n)$, and $\Sigma_{ij}=0$ when $i\neq j$ while each $\Sigma_{ii}\geq 0$.  The ``singular values'' of $A$ are by definition the diagonal entries $\sigma_i=\Sigma_{ii}$, and then we have an orthonormal basis $(y_1,\ldots,y_n)$ for $\C^n$ (given by the columns of $Y$) and an orthonormal basis $(x_1,\ldots,x_m)$ for $\C^m$ (given by the columns of $X$) with $Ay_i=\sigma_ix_i$ for all $i$ with $\sigma_i\neq 0$, and $Ay_i=0$ otherwise.

An analogous construction for linear transformations between orthogonalizable $\Lambda$-spaces will play a central role in this paper.  In the generality in which we are working, we should not ask for the bases $(y_1,\ldots,y_n)$ to be ortho\emph{normal}, since an orthogonalizable $\Lambda$-space may not even admit an orthonormal basis (for the examples $(\Lambda^n,-\vec{\nu}_{\vec{t}})$ of Example \ref{geneorvs}, an orthonormal basis exists if and only if each $t_i$ belongs to the value group $\Gamma$).  However in the classical case asking for a singular value decomposition is equivalent to asking for orthogonal bases $(y_1,\ldots,y_n)$ for the domain and $(x_1,\ldots,x_m)$ for the codomain such that for all $i$ either $Ay_i=x_i$ or $Ay_i=0$; the singular values could then be recovered as the numbers $\textstyle {\frac{\|Ay_i\|}{\|y_i\|}}$.  This is precisely what we will require in the non-Archimedean context.  For the case in which the spaces in question do admit orthonormal bases (and so are equivalent to $(\Lambda^n,-\vec{\nu})$) such a construction can be found in \cite[Section 4.3]{Ked}.

\subsection{Existence of (non-Archimedean) singular value decomposition}

\begin{dfn} \label{dfnsvd} Let $(C, \ell_C)$ and $(D, \ell_D)$ be orthogonalizable $\Lambda$-spaces and let $A: C \rightarrow D$ be a linear map with rank $r$. A {\it singular value decomposition of $A$} is a choice of \textbf{orthogonal} ordered bases $(y_1, ..., y_n)$ for $C$ and $(x_1, ..., x_m)$ for $D$ such that:
\begin{itemize}
\item[(i)] $(y_{r+1}, ..., y_n)$ is an orthogonal ordered basis for $\ker A$;

\item[(ii)] $(x_1, ..., x_r)$ is an orthogonal ordered basis for ${\rm Im}A$;

\item[(iii)] $A y_i = x_i$ for $i \in \{1, ..., r\}$;

\item[(iv)] $\ell_C(y_1) - \ell_D(x_1) \geq \ldots \geq \ell_C(y_r) - \ell_D(x_r)$. \end{itemize} \end{dfn}

\begin{remark}
Consistently with the remarks at the start of the section, the singular values of $A$ would then be the quantities $e^{\ell_D(x_i)-\ell_C(y_i)}$ for $1\leq i\leq r$, as well as $0$ if $r<n$.  So the quantities $\ell_C(y_i)-\ell_D(x_i)$ from (iv) are the negative logarithms of the singular values.
\end{remark} 

\begin{remark}\label{unordered}
Occasionally it will be useful to consider data $\left((y_1,\ldots,y_n),(x_1,\ldots,x_m)\right)$ which satisfy all of the conditions of Definition \ref{dfnsvd} except condition (iv); such  $\left((y_1,\ldots,y_n),(x_1,\ldots,x_m)\right)$ will be called an \emph{unsorted singular value decomposition}.  Of course passing from an unsorted singular value decomposition to a genuine singular value decomposition is just a matter of sorting by the quantity $\ell_C(y_i)-\ell_C(x_i)$.
\end{remark}

The rest of this subsection will be devoted to proving the following existence theorem:

\begin{theorem} \label{existsvd} If $(C, \ell_C)$ and $(D, \ell_D)$ are orthogonalizable $\Lambda$-spaces, then any $\Lambda$-linear map $A: C \rightarrow D$ has a singular value decomposition.\end{theorem}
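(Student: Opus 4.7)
The plan is to reduce to the case that $A$ is injective and then induct on $\dim C$.  For the reduction, Corollary \ref{orcomple} gives an orthogonal direct sum decomposition $C=\ker A\oplus W$, and Corollary \ref{orsub} furnishes an orthogonal basis $(y_{r+1},\ldots,y_n)$ of $\ker A$ (to become the kernel portion of the SVD).  Given orthogonal ordered bases $(y_1,\ldots,y_r)$ of $W$ and $(x_1,\ldots,x_r)$ of $\Img A$ satisfying (iii) and (iv) for the injective map $A|_W$, one may extend $(x_1,\ldots,x_r)$ to an orthogonal basis of $D$ via Corollary \ref{orext} and concatenate the $y$'s via Lemma \ref{bsorprop}(iii) to obtain the full SVD of $A$.

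For the injective case, set $F(y)=\ell_C(y)-\ell_D(Ay)$.  I first establish that $\sup F$ is attained.  Starting from an orthogonal basis $(c_1,\ldots,c_n)$ of $C$, apply Theorem \ref{GSp} to the linearly independent tuple $(Ac_1,\ldots,Ac_n)$ to produce an orthogonal sequence $(d'_1,\ldots,d'_n)$ in $D$ with each $d'_i=A c'_i$, where $c'_i=c_i-\sum_{j<i}\lambda_{ij}c_j$ (so the $c'_i$ form a second basis of $C$, generally not orthogonal).  For $y=\sum_i\mu_ic'_i$, orthogonality of $(d'_i)$ gives $\ell_D(Ay)=\max_i(\ell_D(d'_i)-\nu(\mu_i))$ exactly, while (F3) yields $\ell_C(y)\leq\max_i(\ell_C(c'_i)-\nu(\mu_i))$.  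A short index-chasing argument using these two facts shows that $F(y)\leq M:=\max_i\bigl(\ell_C(c'_i)-\ell_D(d'_i)\bigr)$ for all $y$, with equality realized at $y=c'_{i^*}$ for any index $i^*$ achieving $M$.

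Now fix such a $y_1$ with $F(y_1)=M$, set $x_1=Ay_1$, and (by Corollary \ref{orcomple}) let $C_1$ be \emph{any} orthogonal complement of $\langle y_1\rangle_{\Lambda}$ in $C$.  The key claim is that $A(C_1)$ is then automatically orthogonal to $\langle x_1\rangle_{\Lambda}$ in $D$.  To prove this, suppose for contradiction that some $c\in C_1$ and $\lambda\in\Lambda$ satisfy $\ell_D(Ac+\lambda x_1)<\max\{\ell_D(Ac),\ell_D(\lambda x_1)\}$; Proposition \ref{strict} then forces $\ell_D(Ac)=\ell_D(\lambda x_1)$.  Applying the maximality $F(c)\leq M$ yields $\ell_C(c)\leq\ell_C(\lambda y_1)$, and the orthogonality of $C_1$ with $\langle y_1\rangle_{\Lambda}$ then gives $\ell_C(c+\lambda y_1)=\ell_C(\lambda y_1)$.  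Consequently,
\[
F(c+\lambda y_1)=\ell_C(\lambda y_1)-\ell_D(Ac+\lambda x_1)>\ell_C(\lambda y_1)-\ell_D(\lambda x_1)=\ell_C(y_1)-\ell_D(x_1)=M,
\]
contradicting the maximality of $F$ at $y_1$.

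Granting the claim, $A|_{C_1}\co C_1\to D$ is injective and its image is orthogonal to $\langle x_1\rangle_{\Lambda}$; since $C_1$ is orthogonalizable (Corollary \ref{orsub}) with $\dim C_1=n-1$, the inductive hypothesis supplies orthogonal bases $(y_2,\ldots,y_n)$ of $C_1$ and $(x_2,\ldots,x_n)$ of $A(C_1)$ satisfying (i)--(iv) for $A|_{C_1}$.  Two applications of Lemma \ref{bsorprop}(iii), using $\langle y_1\rangle_{\Lambda}\perp C_1$ and $\langle x_1\rangle_{\Lambda}\perp A(C_1)$, show that the concatenated sequences $(y_1,\ldots,y_n)$ and $(x_1,\ldots,x_n)$ are orthogonal in $C$ and in $D$ respectively, and condition (iv) is automatic from $F(y_1)=M$.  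The main obstacle is the key claim above: the seemingly more natural construction of defining $C_1$ as $A^{-1}(D_1)$ for some a priori chosen orthogonal complement $D_1$ of $\langle x_1\rangle_{\Lambda}$ does \emph{not} in general produce an orthogonal complement of $\langle y_1\rangle_{\Lambda}$, and the maximality of $F$ at $y_1$ is essential for making the direct construction work.
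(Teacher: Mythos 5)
Your proof is correct, but it takes a genuinely different route from the one given in the paper. The paper establishes Theorem \ref{existsvd} as a corollary of the algorithmic Theorem \ref{algsvd}, which is an explicit Gaussian-elimination procedure with a carefully tuned pivot-selection rule (the pivot $(i_0,j_0)$ maximizes $\ell_D(w_i)-\nu(A_{ij})-\ell_C(v_j)$); the bulk of the argument is in Claims \ref{claim1} and \ref{claim2}, which show that the column operations preserve orthogonality of the domain basis and force orthogonality of the output vectors $Av'_j$. Your argument is instead a conceptual induction on $\dim C$ organized around the extremal element $y_1$ maximizing $F(y)=\ell_C(y)-\ell_D(Ay)$, i.e.\ the direction of smallest ``singular value,'' with the punchline being the key claim that \emph{any} orthogonal complement $C_1$ of $\langle y_1\rangle_\Lambda$ in $C$ has image $A(C_1)$ orthogonal to $\langle Ay_1\rangle_\Lambda$. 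Both work; the paper's approach has the advantage of being directly implementable (and of cubic complexity, as the authors note), whereas yours is shorter and isolates the geometric mechanism (an extremal-direction splitting) that makes the decomposition go through. It is worth pointing out that the \texttt{comment} block in the source contains a suppressed alternative proof of Theorem \ref{existsvd} that is the exact dual of yours: it maximizes $\ell_D(Ay)-\ell_C(y)$ (the largest-singular-value direction $y_0$), chooses an orthogonal complement $U$ of $\langle Ay_0\rangle_\Lambda$ in $\Img A$, and shows that $V=A^{-1}(U)$ is an orthogonal complement of $\langle y_0\rangle_\Lambda$ in $C$; your closing remark that the $A^{-1}(D_1)$ construction fails is accurate for your choice of extremizer but becomes the correct move under the opposite extremization. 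So you have in effect rediscovered the suppressed proof in dualized form, a duality that Proposition \ref{dualsvd} formalizes.
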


We will prove Theorem \ref{existsvd} by providing an algorithm (with proof) for producing a singular value decomposition of linear map $A$ between orthogonalizable $\Lambda$-spaces.\\
The algorithm is essentially Gaussian elimination, but with a carefully-designed rule for pivot selection which allows us to achieve the desired orthogonality properties.  In this respect it is similar to the algorithm from \cite{ZC} (that computes barcodes in classical persistent homology); however \cite{ZC} uses a pivot-selection rule which does not adapt well to our context where the value group $\Gamma$ may be nontrivial, leading us to use a different such rule.  Like the algorithm from \cite{ZC}, our algorithm requires a number of field operations that is at most cubic in the dimensions of the relevant vector spaces, and can be expected to do better than this in common situations where the matrix representing the linear map is sparse.  Of course, when working over a Novikov field there is an additional concern regarding how one can implement arithmetic operations in this field on a computer; we do not attempt to address this here.

\begin{theorem} \label{algsvd} (Algorithmic version of Theorem \ref{existsvd}). Let $(C,\ell_C)$ and $(D,\ell_D)$ be orthogonalizable $\Lambda$-spaces, let $A: C \rightarrow D$ be a $\Lambda$-linear map, and let $(v_1,\ldots,v_n)$ be an orthogonal ordered basis for $C$.  Then one may algorithmically construct an orthogonal ordered basis $(v'_1,\ldots,v'_n)$ of $C$ such that
\begin{itemize}
\item [(i)] $\ell_C(v'_i)=\ell_C(v_i)$ and $\ell_D(Av'_i)\leq \ell_D(Av_i)$ for each $i$;
\item[(ii)] Let $\mathcal{U} = \left\{\left.i\in \{1,\ldots,n\}\,\right| \,Av'_i\neq 0\right\}$. Then the ordered subset $(Av'_i\,|\,i\in\mathcal{U})$ is orthogonal in  $D$.\end{itemize} \end{theorem}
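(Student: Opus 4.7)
My plan is to proceed by explicit algorithmic construction, viewing the statement as a non-Archimedean analog of Gaussian elimination applied to the ``matrix'' of $A$. Starting from $(v_1,\ldots,v_n)$, I process the indices one at a time in a carefully chosen order. At each step I pick an unprocessed index $i$, apply Theorem~\ref{U082.5} in $D$ to find $w_0 = \sum_j \gamma_j Av'_j$ minimizing $\ell_D(Av_i - w_0)$ over the orthogonal span $W$ of the previously constructed nonzero images, and then set $v'_i = v_i - \sum_j \gamma_j v'_j$. The new image $Av'_i = Av_i - w_0$ is then orthogonal to $W$ by Lemma~\ref{testor}, while comparison with the candidate $w_0=0$ immediately yields $\ell_D(Av'_i)\le \ell_D(Av_i)$.

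The inductive invariants to maintain are: (I1) the collection of already-modified $v'_j$'s together with the still-original $v_k$'s remains an orthogonal basis of $C$; (I2) $\ell_C(v'_j) = \ell_C(v_j)$ for each modified $j$; (I3) $\ell_D(Av'_j) \le \ell_D(Av_j)$ for each modified $j$; and (I4) the nonzero images among modified vectors are orthogonal in $D$.  Here (I3) is immediate, (I4) follows by induction from Lemmas~\ref{bsorprop}(iii) and~\ref{testor}, and (I1) follows by induction from (I2) by an argument parallel to the proof of Theorem~\ref{GSp}, using that each update $v'_i - v_i$ lies in the span of previously processed $v'_j$'s and subtracts terms of filtration no greater than $\ell_C(v_i)$. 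Thus everything reduces to verifying (I2).

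The main obstacle is precisely (I2). Combining axiom~(F2) with orthogonality of the basis in $C$, preservation of $\ell_C(v'_i) = \ell_C(v_i)$ amounts to the requirement $\nu(\gamma_j) \ge \ell_C(v_j) - \ell_C(v_i)$ for every $j$ with $\gamma_j \ne 0$, whereas orthogonality of $W$ and optimality of $w_0$ provide only $\nu(\gamma_j) \ge \ell_D(Av'_j) - \ell_D(Av_i)$. The crux is therefore to design a pivot rule guaranteeing
\[
\ell_C(v_i) - \ell_D(Av_i) \;\ge\; \ell_C(v_j) - \ell_D(Av'_j)
\]
whenever $\gamma_j \ne 0$. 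A static sort by $\ell_C - \ell_D$ on the original data is not quite sufficient, because invariant (I3) may be strict and so inflate $\ell_C(v_j) - \ell_D(Av'_j)$ above $\ell_C(v_j) - \ell_D(Av_j)$.  I expect the correct fix to be a dynamic pivot rule, in the spirit of the pivot selection in the classical algorithm of~\cite{ZC} but adapted to the Novikov setting, which at each stage selects both the next index $i$ and the subset of previously-processed $j$'s contributing to the projection so that the required valuation bound is automatic (for instance, by arranging that the coefficient $\gamma_j$ is forced to vanish whenever using the corresponding $v'_j$ would violate the $C$-filtration constraint). Verifying that such a rule can always be carried out, terminating after exactly $n$ steps, is the principal technical step of the proof.
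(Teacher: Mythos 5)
You have correctly identified the crux of the proof --- preserving $\ell_C(v'_i)=\ell_C(v_i)$ while decreasing $\ell_D(Av'_i)$ --- and you are right that a static ordering by $\ell_C - \ell_D$ fails for exactly the reason you give. However, as you acknowledge, you have not actually produced and verified the pivot rule, and this is not a detail to be filled in: the ``project $Av_i$ onto the span of prior images $Av'_j$, subtract the corresponding combination of the $v'_j$'' scheme does not obviously admit any ordering making (I2) hold. The bound $\nu(\gamma_j)\ge\ell_D(Av'_j)-\ell_D(Av_i)$ must be compared against the target $\nu(\gamma_j)\ge\ell_C(v_j)-\ell_C(v_i)$, and the quantity $\ell_D(Av'_j)$ on the left is the \emph{post-reduction} value while $\ell_D(Av_i)$ is the \emph{original} one; as you process later columns from their original images, these never line up cleanly, even with a dynamic rule based on tentative residuals. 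There is also a second subtlety you did not flag: the minimizer $w_0$ furnished by Theorem~\ref{U082.5} is highly non-unique, and not every minimizer leads to coefficients $\gamma_j$ of large enough valuation, so even the existence of a filtration-preserving choice requires argument.

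The paper takes a structurally different route that circumvents both difficulties. Rather than projecting the current column onto already-processed images, it performs Gaussian-elimination column operations on the entire remaining matrix at each step: having fixed an orthogonal basis $(w_1,\ldots,w_m)$ of $D$ and written $Av_j=\sum_i A_{ij}w_i$, it repeatedly picks a pivot pair $(i_0,j_0)$ maximizing $\ell_D(w_{i_0})-\nu(A_{i_0j_0})-\ell_C(v_{j_0})$ over all rows $i_0$ and unprocessed columns $j_0$ (equivalently, picks $j_0$ maximizing $\ell_D(Av_{j_0})-\ell_C(v_{j_0})$ for the \emph{current} vectors), and replaces every remaining $v_j$ by $v_j-\frac{A_{i_0j}}{A_{i_0j_0}}v_{j_0}$. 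This has two payoffs. First, the optimality of the pivot gives precisely the one-step estimate $\ell_C\bigl(\frac{A_{i_0j}}{A_{i_0j_0}}v_{j_0}\bigr)\le\ell_C(v_j)$ --- inequality~(\ref{J}) --- so each elementary column operation preserves $\ell_C$ and orthogonality of the basis of $C$ (Claim~\ref{claim1}), and the issue of comparing ``before'' and ``after'' reductions never arises because $v_{j_0}$ is frozen at the moment it becomes the pivot. Second, the orthogonality of the output images $(Av'_j)_{j\in\mathcal{U}}$ is established not via Lemma~\ref{testor} and explicit projection but via the triangular vanishing pattern~(\ref{vanish}) of the reduced matrix (Claim~\ref{claim2}). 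To rescue your scheme you would effectively need to maintain running, reduced values of $Av_j$ for \emph{all} unprocessed $j$ --- at which point you have recovered the paper's algorithm --- and you would still need to choose a particular minimizer $w_0$, namely the one the column operations implicitly select, rather than an arbitrary output of Theorem~\ref{U082.5}.
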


\begin{remark}\label{orthker}
In particular, $(v'_i\,|\,i\notin \mathcal{U})$ then gives an orthogonal ordered basis for $\ker A$.
\end{remark}

\begin{proof}
Fix throughout the algorithm an orthogonal ordered basis $(w_1,\ldots,w_m)$ for $D$. Represent $A$ by a matrix $(A_{ij})$ with respect to these bases, so that $Av_j={\textstyle \sum_iA_{ij}w_i}$.  Note that $v_j$ changes  as the algorithm proceeds (though the $w_i$ do not), so the elements $A_{ij}\in \Lambda$ will likewise change  in a corresponding way. Initialize the set of ``unused column indices'' to be $\mathcal{J}=\{1,\ldots,n\}$, and the set of ``{\rm pivot} pairs'' to be $\mathcal{P}=\varnothing$; at each step an element will be removed from $\mathcal{J}$ and an element will be added to $\mathcal{P}$. Here is the algorithm:

\begin{algorithm}[H]
\While{ $(\exists j\in \mathcal{J})(Av_j\neq 0)$ }{ 
 Choose $i_0\in\{1,\ldots,m\}$ and $j_0\in \mathcal{J}$ which maximize the quantity $\,\,\ell_D(w_i)-\nu(A_{ij})-\ell_C(v_j)$ over all $(i,j)\in\{1\ldots,m\}\times\mathcal{J}$ \;
 Add $(i_0,j_0)$ to the set $\mathcal{P}$\;
 Remove $j_0$ from the set $\mathcal{J}$\;
 For each $j\in\mathcal{J}$, replace $v_j$ by $v'_j:=v_j-{\textstyle \frac{A_{i_0j}}{A_{i_0j_0}}v_{j_0}}$\;
 For each $j\in \mathcal{J}$ and $i\in \{1,\ldots,m\}$, replace $A_{ij}$ by $A'_{ij}:= A_{ij}-{\textstyle \frac{A_{i_0j}A_{ij_0}}{A_{i_0j_0}}}$ (thus restoring the property that $Av_j={\textstyle \sum_{i=1}^{m}A_{ij}w_i}$)\;
}
\end{algorithm}

Note that the while loop predicate implies that in each iteration there is some $(i,j)\in\{1,\ldots,m\}\times\mathcal{J}$ such that $A_{ij}\neq 0$, so in particular $A_{i_0j_0}\neq 0$ (otherwise $A=0$) and so the divisions by $A_{i_0j_0}$ in the last two steps of the iteration are not problematic. The ordered basis $(v'_{1},\ldots,v'_{n})$ promised in the statement of this theorem is then simply the tuple to which $(v_1,\ldots,v_n)$ has evolved upon the termination of the while loop.  To prove that this satisfies the required properties it suffices to prove that, in each iteration of the while loop, the following assertions hold:

\begin{claim}\label{claim1} If the initial basis $(v_{1},\ldots,v_{n})$ is orthogonal, then so is the basis obtained by replacing $v_{j}$ by $v'_{j}=v_j-{\textstyle \frac{A_{i_0j}}{A_{i_0j_0}}v_{j_0}}$ for each $j\in\mathcal{J}\setminus\{j_0\}$. Moreover $\ell_C(v'_j)= \ell_C(v_j)$ while $\ell_D(Av'_j)\leq \ell_D(Av_j)$.\end{claim}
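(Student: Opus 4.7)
The plan is to use two ingredients: the inequality $\ell_C(\lambda_j v_{j_0}) \le \ell_C(v_j)$ forced by the pivot-selection rule (where $\lambda_j = A_{i_0 j}/A_{i_0 j_0}$), together with the assumed orthogonality of $(v_1, \ldots, v_n)$ and of the fixed basis $(w_1, \ldots, w_m)$ for $D$.

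First I will extract the key estimate. Specializing the maximization property of $(i_0, j_0)$ to $i = i_0$ gives $\ell_D(w_{i_0}) - \nu(A_{i_0 j}) - \ell_C(v_j) \le \ell_D(w_{i_0}) - \nu(A_{i_0 j_0}) - \ell_C(v_{j_0})$, which rearranges to $\ell_C(\lambda_j v_{j_0}) = \ell_C(v_{j_0}) - \nu(A_{i_0 j}) + \nu(A_{i_0 j_0}) \le \ell_C(v_j)$. Combined with orthogonality of the pair $(v_j, v_{j_0})$, this immediately yields $\ell_C(v'_j) = \max\{\ell_C(v_j), \ell_C(\lambda_j v_{j_0})\} = \ell_C(v_j)$.

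Next, to prove that the ordered tuple obtained by replacing $v_j$ by $v'_j$ for $j \in \mathcal{J} \setminus \{j_0\}$ is still orthogonal, I will write a general linear combination $L = \sum_j \mu_j \tilde v_j$ of the new basis (with $\tilde v_j = v'_j$ for $j \in \mathcal{J} \setminus \{j_0\}$ and $\tilde v_j = v_j$ otherwise) and re-expand in the original orthogonal basis; the only change is that the coefficient of $v_{j_0}$ acquires an extra term $-\sum_j \mu_j \lambda_j$. Applying orthogonality of $(v_1, \ldots, v_n)$, the value $\ell_C(L)$ equals the max of the $\ell_C$'s of the old-basis components. Two applications of (F3) together with the estimate $\ell_C(\mu_j \lambda_j v_{j_0}) \le \ell_C(\mu_j v_j)$ let me replace the coefficient of $v_{j_0}$ by $\mu_{j_0}$ without changing the overall maximum, so the old and new expansions give the same value of $\ell_C(L)$, and orthogonality of the new basis follows.

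For the remaining inequality, orthogonality of $(w_1, \ldots, w_m)$ gives $\ell_D(Av_{j_0}) = \max_i (\ell_D(w_i) - \nu(A_{i j_0}))$, and the pivot-selection rule (applied to column $j_0$) shows that this max is realized at $i = i_0$. A direct substitution then yields $\ell_D(\lambda_j A v_{j_0}) = \ell_D(w_{i_0}) - \nu(A_{i_0 j}) \le \max_i (\ell_D(w_i) - \nu(A_{i j})) = \ell_D(A v_j)$, so $\ell_D(Av'_j) \le \max\{\ell_D(Av_j), \ell_D(\lambda_j A v_{j_0})\} = \ell_D(Av_j)$ by (F3). The main obstacle I anticipate is the orthogonality step, where one must carefully verify that the two max-expressions agree in both directions; the filtration estimates on $C$ and $D$ are then quick consequences of the pivot rule applied to column $j$ and column $j_0$ respectively.
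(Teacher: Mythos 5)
Your proof is correct and rests on the same two ingredients as the paper's (the pivot-rule estimate $\ell_C(\lambda_j v_{j_0}) \le \ell_C(v_j)$ with $\lambda_j = A_{i_0 j}/A_{i_0 j_0}$, and orthogonality of the ambient bases), so the filtration-level claims about $\ell_C(v'_j)$ and $\ell_D(Av'_j)$ match the paper essentially line-for-line. The one place where you diverge is the orthogonality verification. The paper re-expands $\sum_j \lambda_j v'_j$ in the old basis and then splits into cases depending on whether the $j_0$ term strictly dominates; if not, it introduces a maximizing index $j_1 \ne j_0$ and chains inequalities through $\ell_C(\lambda_{j_1} v_{j_1})$. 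You avoid the case split: writing $M = \max_{j \ne j_0} \ell_C(\mu_j v_j)$, you observe that the old-basis expansion gives $\ell_C(L) = \max\{M,\, \ell_C\bigl((\mu_{j_0} - \sum_{j\in\mathcal J} \mu_j\lambda_j) v_{j_0}\bigr)\}$, and then a symmetric pair of (F3) estimates — each bounded via $\ell_C(\mu_j\lambda_j v_{j_0}) \le \ell_C(\mu_j v_j) \le M$ — shows that $\max\{M, \ell_C((\mu_{j_0} - \sum\mu_j\lambda_j)v_{j_0})\} = \max\{M, \ell_C(\mu_{j_0}v_{j_0})\}$, which is exactly $\max_j \ell_C(\mu_j \tilde v_j)$. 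This is a modest but genuine streamlining: it removes the auxiliary index $j_1$ and the case distinction while using nothing the paper doesn't already have on the table.
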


\begin{claim}\label{claim2} After each iteration, the ordered set $(Av_j\,|\,j\notin\mathcal{J})\subset D$ is orthogonal.\end{claim}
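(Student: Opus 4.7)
My plan is to verify orthogonality directly from the definition by producing, for an arbitrary non-trivial $\Lambda$-combination $\sum_{s}c_sAv_{j_s}$, an explicit coordinate in the ambient orthogonal basis $(w_1,\ldots,w_m)$ of $D$ whose contribution witnesses the target filtration value $M:=\max_s\ell_D(c_sAv_{j_s})$. Write $\alpha_i^{(s)}$ for the $w_i$-coefficient of $Av_{j_s}$ at the end of iteration $s$; since $v_{j_s}$ is untouched after becoming a pivot, this is its value throughout all subsequent states.

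Two structural facts drive the argument. The first (\emph{elimination}) is that $\alpha_{i_r}^{(s)}=0$ whenever $r<s$. Its proof is an induction on iteration index establishing the stronger invariant that for each fixed $r$ and each $q\geq r$, the entries $A_{i_r j}$ remain zero for every $j$ still in $\mathcal{J}$ at the start of iteration $q+1$. The inductive step works because the update at iteration $q$ modifies $A_{i_r j}$ by subtracting $\frac{A_{i_q j}\,A_{i_r j_q}}{A_{i_q j_q}}$, and the factor $A_{i_r j_q}$ is already zero by the inductive hypothesis applied at iteration $q$ with $j=j_q$. The second (\emph{pivot dominance}) is that $\ell_D(w_i)-\nu(\alpha_i^{(s)})\leq\ell_D(Av_{j_s})$ for every $i$, with equality at $i=i_s$; this is just the pivot-maximization rule at iteration $s$ evaluated on pairs of the form $(i,j_s)$, combined with the $(w_i)$-orthogonality identity $\ell_D(Av_{j_s})=\max_i(\ell_D(w_i)-\nu(\alpha_i^{(s)}))$.

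Given these, let $S^{*}=\{s:\ell_D(c_sAv_{j_s})=M\}$ and $s^{**}=\min S^{*}$. I examine the $w_{i_{s^{**}}}$-coordinate $\sum_s c_s\alpha_{i_{s^{**}}}^{(s)}$ of the combination. Elimination kills every term with $s>s^{**}$; pivot dominance at $i=i_{s^{**}}$ gives $\nu(c_{s^{**}}\alpha_{i_{s^{**}}}^{(s^{**})})=\ell_D(w_{i_{s^{**}}})-M$ and, for $s<s^{**}$, the strict inequality $\nu(c_s\alpha_{i_{s^{**}}}^{(s)})>\ell_D(w_{i_{s^{**}}})-M$ (because $s\notin S^{*}$ forces $\ell_D(c_sAv_{j_s})<M$). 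Only the $s=s^{**}$ term realises the minimum valuation, and it does so strictly; the strict-inequality case of the non-Archimedean triangle inequality (Proposition \ref{strict}) therefore gives $\nu(\sum_s c_s\alpha_{i_{s^{**}}}^{(s)})=\ell_D(w_{i_{s^{**}}})-M$. The orthogonality of $(w_i)$ then yields $\ell_D(\sum_s c_sAv_{j_s})\geq M$, and the reverse inequality is immediate from (F3), so equality holds as desired.

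The main obstacle is the elimination fact: although it looks like a triangular-form observation one reads off from the algorithm, actually establishing it requires showing that later column operations never re-pollute the rows cleared by earlier pivots, which needs the simultaneous induction described above. A secondary but essential subtlety is the choice $s^{**}=\min S^{*}$ rather than $\max S^{*}$: pivot dominance controls $\nu(c_s\alpha_{i_{s^{**}}}^{(s)})$ only in terms of $\ell_D(c_sAv_{j_s})$, so to beat the dominant term strictly one must arrange that every other index $s$ contributing to the relevant coordinate satisfies $\ell_D(c_sAv_{j_s})<M$; taking $s^{**}$ as the minimum of $S^{*}$ is exactly what achieves this, since elimination has already disposed of all $s>s^{**}$.
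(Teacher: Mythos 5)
Your proposal is correct and follows essentially the same route as the paper: the two structural facts you isolate (``elimination,'' which is the paper's display (\ref{vanish}), and ``pivot dominance,'' which is the paper's (\ref{av}) plus the maximization rule) are used in the same way, you pick the same index (the \emph{smallest} $s$ at which $\ell_D(c_sAv_{j_s})$ is maximal, matching the paper's $l_0$), and you conclude by inspecting the corresponding pivot-row coordinate and invoking orthogonality of $(w_1,\ldots,w_m)$.
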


\begin{proof}[Proof of Claim \ref{claim1}] For any $j \in \mathcal J\setminus\{j_0\}$, by the orthogonality of $(v_1,\ldots,v_n)$ and the definition of $v'_j$, we have 
\[ \begin{array}{l}
\ell_C(v'_j) = \max\left\{\ell_C(v_j),\ell_C\left(\frac{A_{i_0j}}{A_{i_0j_0}}v_{j_0}\right)\right\}.
\end{array} \]
Because $(i_0,j_0)$ is chosen to satisfy  $\ell_D(w_{i_0}) - \nu(A_{i_0j_0}) - \ell_C(v_{j_0}) \geq \ell_D(w_{i}) - \nu(A_{ij}) - \ell_C(v_{j})$ for all $i$ and $j$, it in particular holds that
\[ \begin{array}{l}
\ell_D(w_{i_0}) - \nu(A_{i_0j_0}) - \ell_C(v_{j_0}) \geq \ell_D(w_{i_0}) - \nu(A_{i_0j}) - \ell_C(v_{j})
\end{array} \]
which can be rearranged to give 
\begin{equation}\label{J}
\ell_C\left({\textstyle \frac{A_{i_0j}}{A_{i_0j_0}}v_{j_0}}\right)\leq \ell_C(v_j).
\end{equation}
So we get 
\begin{equation}\label{j}
\ell_C(v'_j)=\ell_C(v_j).
\end{equation}
As for the statement about $\ell_D(Av'_j)$, note that 
 \[ \ell_D(Av_{j_0}) = \ell_D\left(\sum_{i=1}^{m}A_{ij_0}w_i\right)=\max_i(\ell_D(w_i)-\nu(A_{ij_0}))=\ell_D(w_{i_0})-\nu(A_{i_0j_0})\] where the last equation follows from the optimality criterion satisfied by $(i_0,j_0)$. Therefore,
 \begin{align*} 
 \ell_D\left(\frac{A_{i_0j}}{A_{i_0j_0}}Av_{j_0}\right)=\ell_D(w_{i_0})-\nu(A_{i_0j})\leq \max_{1\leq i\leq n}\ell_D(A_{ij}w_i)=\ell_D\left(\sum_{i=1}^{n}A_{ij}w_i\right)=\ell_D(Av_j)   \end{align*}
and hence $\ell_D(Av'_j)\leq{\textstyle \max\left\{\ell_D(Av_j),\ell_D\left(\frac{A_{i_0j}}{A_{i_0j_0}}Av_{j_0}\right)\right\}}=\ell_D(Av_j)$.\\

It remains to prove orthogonality of the basis obtained by replacing the $v_j$ by $v'_j$ for $j\in\mathcal{J}$. Here and for the rest of the proof we use the variable values as they are after the third step of the given iteration of the while loop---thus the $v_j$ have not been changed but $j_0$ has been removed from $\mathcal{J}$.  The new basis will be $\{v'_1,\ldots,v'_n\}$ where $v'_j=v_j$ if $j\notin \mathcal{J}$ and $v'_j=v_j-{\textstyle \frac{A_{i_0j}}{A_{i_0j_0}}v_{j_0}}$ otherwise. Let $\lambda_1,\ldots,\lambda_n\in\Lambda$ and observe that, by the orthogonality of $\{v_1,\ldots,v_n\}$,
\begin{align}
\ell_C\left(\sum_{j=1}^{n}\lambda_jv'_j\right)&=\ell_C\left(\sum_{j=1}^{n}\lambda_jv_j-\sum_{j\in\mathcal{J}}\lambda_j\frac{A_{i_0j}}{A_{i_0j_0}}v_{j_0}\right)\nonumber \\ \label{vjprime}
&= \max\left\{\ell_C\left(\left(\lambda_{j_0}-\sum_{k\in\mathcal{J}}\lambda_k\frac{A_{i_0k}}{A_{i_0j_0}}\right)v_{j_0}\right)  , \max_{j\neq j_0}\ell_C(\lambda_jv_j) \right\}.
\end{align}
If $\ell(\lambda_{j_0}v'_{j_0})>\ell(\lambda_j v'_j)$ for all $j\neq j_0$, then of course $\ell_C\left({\textstyle \sum_{j=1}^{n}}\lambda_j v'_j\right)=\ell_C(\lambda_{j_0}v'_{j_0})={\textstyle \max_j} \{\ell_C(\lambda_j v'_j)\}$. Otherwise, there is $j_1\neq j_0$ such that 
\begin{equation} \label{j_1}
{\textstyle \max_j}\ell_C(\lambda_jv'_j)=\ell_C(\lambda_{j_1}v'_{j_1}).
\end{equation}
Now by (\ref{j}) and the optimality condition (\ref{j_1}), we have  
\begin{equation} \label{j_0>j_1}
\ell_C(\lambda_{j_1}v_{j_1}) = \ell_C(\lambda_{j_1}v'_{j_1}) \geq\ell_C(\lambda_{j_0}v'_{j_0}) = \ell_C(\lambda_{j_0}v_{j_0}).
\end{equation}
Also, by (\ref{J}) and (\ref{j_1}), for all $k\in\mathcal{J}$, 
\[
\ell_C(\lambda_{j_1}v_{j_1})\geq \ell_C\left(\lambda_k {\textstyle\frac{A_{i_0k}}{A_{i_0j_0}}v_{j_0}}\right).
\]
Thus \begin{equation}  \label{A-} \ell_C(\lambda_{j_1}v_{j_1})\geq \ell_C\left(\left(\lambda_{j_0}-\sum_{k\in\mathcal{J}}\lambda_k  \frac{A_{i_0k}}{A_{i_0j_0}}\right)v_{j_0}\right). \end{equation}
So combining (\ref{vjprime}), (\ref{j_1}), and (\ref{A-}), we have \[ \ell_C\left(\sum_{j=1}^{n}\lambda_jv'_j\right)=\max_j\ell_C(\lambda_{j}v'_j),\] proving the orthogonality of $(v'_1,\ldots,v'_n)$. This completes the proof of Claim \ref{claim1}.\end{proof}

\begin{proof}[Proof of Claim \ref{claim2}] For $k\geq 1$ let $(i_k,j_k)$ denote the {\rm pivot} pair that is added to the set $\mathcal{P}$ during the $k$-th iteration of the while loop. In particular $j_k$ is removed from $\mathcal{J}$ during the $k$-th iteration, and after this removal we have $\mathcal{J}=\{1,\ldots,n\}\setminus\{j_1,\ldots,j_k\}$.  So the column operation in the last step of the $k$-th iteration replaces the matrix entries $A_{i_kj}$ for $j\notin\{j_1,\ldots,j_k\}$ by $A_{i_kj}-{\textstyle \frac{A_{i_kj}A_{i_kj_k}}{A_{i_kj_k}}}=0$. Moreover for $j\notin\{j_1,\ldots,j_k\}$ and any $i\in\{1,\ldots,m\}$ such that after the prior iteration we had $A_{ij_{k}}=A_{ij}=0$ (for instance this applies, inductively, to any $i\in\{i_1,\ldots,i_{k-1}\}$), the fact that $A_{ij}=0$ will be preserved after the $k$-th iteration. Thus, \begin{equation}\label{vanish}
\mbox{After the $k$th iteration, $A_{i_lj}=0$ for $l\in\{1,\ldots,k\}$ and } j\notin\{j_1,\ldots,j_l\}.
\end{equation}

We now show that, after the $k$-th iteration, the ordered set $(Av_{j_1},\ldots,Av_{j_k})$ is orthogonal; this is evidently equivalent to the statement of the claim. Note that, for $1\leq l\leq k$, neither the element $v_{j_l}$ nor the $j_l$-th column of the matrix $(A_{ij})$ changes during or after the $l$-th iteration of the while loop, due to the removal of $j_l$ from $\mathcal{J}$ during that iteration. For $l\in\{1,\ldots,k\}$, the optimality condition satisfied by the pair $(i_l,j_l)$ guarantees that $\ell_D(w_i)-\nu(A_{ij_l})\leq \ell_D(w_{i_l})-\nu(A_{i_lj_l})$ for all $i$ and hence 
\begin{equation}\label{av}
\ell_D(Av_{j_l})=\max_i(\ell_D(A_{ij_l}w_i))=\ell_D(A_{i_lj_l}w_{i_l}).
\end{equation}
Given $\lambda_1,\ldots,\lambda_{k}\in\Lambda$ we shall show that $\ell_D({\textstyle \sum_{l=1}^{k}}\lambda_lAv_{j_{l}})=\max_{l}\ell_D(\lambda_lAv_{j_l})$.  Let $l_0$ be the {\it smallest} element of $\{1,\ldots,k\}$ with the property that \[ \ell_D(\lambda_{l_0}A_{i_{l_0}j_{l_0}}w_{i_{l_0}})=\max_{1\leq l\leq k}\ell_D(\lambda_{l}A_{i_lj_l}w_{i_l}).\]  For all $i\in \{1,\ldots,m\}$ and $l\in\{1,\ldots,k\}$ we have, by the choice of $(i_l, j_l)$, \[ \ell_D(\lambda_lA_{ij_l}w_{i})\leq \ell_D(\lambda_{l}A_{i_lj_l}w_{i_l})\leq \ell_D(\lambda_{l_0}A_{i_{l_0}j_{l_0}}w_{i_{l_0}}). \]  Meanwhile, using (\ref{vanish}), $A_{i_{l_0}j_l}\neq 0$ only for $l\leq l_0$, and so
\[ 
\sum_l\lambda_{l}A_{i_{l_0}j_l}w_{i_{l_0}}
=\lambda_{l_0}A_{i_{l_0}j_{l_0}}w_{i_{l_0}}+\sum_{l<l_0}\lambda_{l}A_{i_{l_0}j_l}w_{i_{l_0}}
. \]
Each term $\lambda_{l}A_{i_{l_0}j_l}w_{i_{l_0}}$ has filtration level bounded above by $\ell_D(\lambda_lA_{i_lj_l}w_{i_l})$ by the second equality in (\ref{av}), and this latter filtration level is, for $l<l_0$, \emph{strictly lower than} $\ell_D(\lambda_{l_0}A_{i_{l_0}j_{l_0}}w_{i_{l_0}})$ because we chose $l_0$ as the smallest maximizer of
$\ell_D(\lambda_lA_{i_lj_l}w_{i_l})$.  So we in fact have \[
\ell_D\left(\sum_l\lambda_{l}A_{i_{l_0}j_l}w_{i_{l_0}}\right)  =  \ell_D(\lambda_{l_0}A_{i_{l_0}j_{l_0}}w_{i_{l_0}}).  \]
 
By the orthogonality of the ordered basis $(w_1,\ldots,w_m)$ we therefore have \begin{align*}
\ell_D\left(\sum_{l=1}^{k}\lambda_lAv_{j_l}\right)&=\ell_D\left(\sum_{l=1}^{k}\sum_{i=1}^{m}\lambda_lA_{ij_l}w_i\right) \\ 
&= \max_{1\leq i\leq m}\ell_D\left(\sum_{l=1}^{k}\lambda_{l}A_{ij_l}w_i\right)\geq \ell_D(\lambda_{l_0}A_{i_{l_0}j_{l_0}}w_{i_{l_0}}) \\
& = \max_{l}\ell_D(\lambda_{l}A_{i_lj_l}w_{i_l})=\max_l\ell_D(\lambda_l Av_{j_l})
\end{align*} where in the first equality in the third line we use the defining property of $l_0$ and in the last equality we use (\ref{av}).  Since the reverse inequality $\ell_D({\textstyle \sum_l\lambda_lAv_{j_l}})\leq \max_l\ell_D(\lambda_l Av_{j_l})$ is trivial this completes the proof of the orthogonality of $(Av_{j_1},\ldots,Av_{j_k})$.\end{proof} 
As noted earlier, Claims \ref{claim1} and \ref{claim2} directly imply that the basis for $C$ obtained at the termination of the while loop satisfies the required properties, thus completing the proof of Theorem \ref{algsvd}.
\end{proof}

\begin{proof}[Proof of Theorem \ref{existsvd}] First reorder the elements $v'_i$ produced by the Theorem \ref{algsvd} so that $Av'_i\neq 0$ if and only if $i\in \{1,\ldots,r\}$ where $r$ is the rank of $A$, and such that $\ell_C(v'_1)-\ell_D(Av'_1)\geq\cdots\geq \ell_C(v'_r)-\ell_D(Av'_r)$. If $A$ is surjective, then $\left((v'_1,\ldots,v'_n),(Av'_1,\ldots,Av'_r)\right)$ will immediately be a singular value decomposition for $A$.
More generally, we may use Corollary \ref{orcomple} to find an orthogonal complement of $\mathrm{Im}(A)$ in $D$, and by Corollary  \ref{orsub}  this orthogonal complement has some orthogonal ordered basis $(x_{r+1},\ldots,x_m)$. Then \\ 
$\left((v'_1,\ldots,v'_n),(Av'_1,\ldots,Av'_r,x_{r+1},\ldots,x_m)\right)$ is a singular value decomposition for $A$.  \end{proof}

\subsection{Duality and coefficient extension for singular value decompositions}

Proposition \ref{dualor} allows us to easily convert a singular value decomposition for a map $A\co C\to D$ to one for the adjoint map $A^*\co D^*\to C^*$. Explicitly: 

\begin{prop}\label{dualsvd} Let $(C,\ell_C)$ and $(D, \ell_D)$ be two orthogonalizable $\Lambda$-spaces and $A: C \rightarrow D$ be a $\Lambda$-linear map with rank $r$. Suppose $((y_1, ..., y_n), (x_1, ..., x_m))$ is a singular value decomposition for $A$. Then $((x_1^*, ..., x_m^*), (y_1^*, ..., y_n^*))$ is a singular value decomposition for its adjoint map $A^*: D^* \rightarrow C^*$.\end{prop}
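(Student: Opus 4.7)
The plan is to verify directly, in order, the four conditions (i)--(iv) of Definition \ref{dfnsvd} for the pair $((x_1^*,\ldots,x_m^*),(y_1^*,\ldots,y_n^*))$ with respect to the adjoint $A^*\co D^*\to C^*$. Most of the work has already been done: Proposition \ref{dualor} tells us that $(x_1^*,\ldots,x_m^*)$ and $(y_1^*,\ldots,y_n^*)$ are orthogonal ordered bases for $(D^*,\ell_D^*)$ and $(C^*,\ell_C^*)$ respectively, and moreover records the key sign-flip $\ell_D^*(x_i^*)=-\ell_D(x_i)$, $\ell_C^*(y_j^*)=-\ell_C(y_j)$ that we will need for condition (iv).

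First I would compute $A^*x_i^*$ explicitly. For any $c\in C$, write $c=\sum_j \lambda_j y_j$ using that $(y_1,\ldots,y_n)$ is a basis of $C$. Then, using $Ay_j=x_j$ for $j\le r$ and $Ay_j=0$ for $j>r$,
\[
(A^*x_i^*)(c)=x_i^*(Ac)=x_i^*\Bigl(\sum_{j=1}^{r}\lambda_j x_j\Bigr).
\]
If $i\in\{1,\ldots,r\}$ the right hand side equals $\lambda_i=y_i^*(c)$, while if $i\in\{r+1,\ldots,m\}$ it equals $0$ since the sum involves only $x_1,\ldots,x_r$. Thus $A^*x_i^*=y_i^*$ for $1\le i\le r$, giving condition (iii), and $x_{r+1}^*,\ldots,x_m^*\in\ker A^*$.

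Next I would verify (i) and (ii). Since $(x_{r+1}^*,\ldots,x_m^*)$ is a subfamily of an orthogonal ordered basis, it is itself orthogonal in $D^*$; and it lies in $\ker A^*$ by the computation above. A dimension count (or the standard identity $\mathrm{rank}(A^*)=\mathrm{rank}(A)=r$) shows $\dim \ker A^*=m-r$, so $(x_{r+1}^*,\ldots,x_m^*)$ is an orthogonal ordered basis of $\ker A^*$, establishing (i). Similarly, $(y_1^*,\ldots,y_r^*)=(A^*x_1^*,\ldots,A^*x_r^*)$ is orthogonal as a subfamily of the orthogonal basis $(y_1^*,\ldots,y_n^*)$ and spans $\mathrm{Im}(A^*)$ (which has dimension $r$), giving (ii).

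Finally, for (iv), I would use the formula $\ell^*(\cdot)=-\ell(\cdot)$ from Proposition \ref{dualor}: for each $i\in\{1,\ldots,r\}$,
\[
\ell_{D^*}(x_i^*)-\ell_{C^*}(y_i^*)=-\ell_D(x_i)-(-\ell_C(y_i))=\ell_C(y_i)-\ell_D(x_i),
\]
so the decreasing condition (iv) for the original SVD translates word-for-word into the decreasing condition for the adjoint. There is no serious obstacle here; the argument is essentially formal once Proposition \ref{dualor} is in hand, and the content of the proposition is the mild bookkeeping observation that taking adjoints exchanges domain and codomain while negating all filtration values.
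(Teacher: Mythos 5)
Your proof is correct and follows essentially the same path as the paper's: apply Proposition \ref{dualor} for orthogonality of the dual bases and the sign-flip $\ell^*(v^*)=-\ell(v)$, compute $A^*x_i^*=y_i^*$ for $i\le r$ and $A^*x_i^*=0$ for $i>r$, and then read off conditions (i)--(iv). Your version spells out the evaluation $(A^*x_i^*)(c)$ slightly more explicitly than the paper, but the argument is the same.
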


\begin{proof} By the first assertion of Proposition \ref{dualor}, $(x_1^*, ..., x_m^*)$ is an orthogonal ordered basis for $D^*$ and $(y_1^*,..., y_n^*)$ is an orthogonal ordered basis for $C^*$. By the definition of a singular value decomposition, $Ay_i = x_i$ for $i \in \{1, ..., r\}$ and $Ay_i=0$ for $i \in \{r+1, ... ,n\}$, so $A^* x_i^* = y_i^*$ for $i \in \{1, ..., r\}$ and $A^*x_{i}^{*}=0$ for $i \in \{r+1, ..., m\}$. Therefore $(x_{r+1}^*, ..., x_{m}^*)$ is an orthogonal ordered basis for $\ker A^*$ and $\{y_1, ..., y_r\} = \{A^*x_1^*, ..., A^* x_r^*\}$ is an orthogonal ordered basis for ${\rm Im} A^*$. Finally, for $i \in \{1, ..., r\}$, by the second assertion of Proposition \ref{dualor}, we have 
\[ 
\ell^*_{D^*}(x_i^*) - \ell^*_{C^*}(y_i^*) = - \ell_D(x_i) + \ell_C(y_i) = \ell_C(y_i) - \ell_D(x_i). \]
So the ordering of $\ell_C(y_i) - \ell_D(x_i)$ implies the desired ordering for $\ell^*_{D^*}(x_i^*) - \ell^*_{C^*}(y_i^*)$.  \end{proof}

Similarly, Proposition \ref{coextorth} implies that singular value decompositions 
are well-behaved under coefficient extension.

\begin{prop} \label{coextsvd}
Consider two subgroups $\Gamma\leq \Gamma'\leq \R$, and write $\Lambda=\Lambda^{\mathcal{K},\Gamma}$ and $\Lambda'=\Lambda^{\mathcal{K},\Gamma'}$.  Let $(C,\ell_C)$ and $(D,\ell_D)$ be orthogonalizable $\Lambda$-spaces and let $A\co C\to D$ be a $\Lambda$-linear map, with singular value decomposition $\left((y_1,\ldots,y_n),(x_1,\ldots,x_m)\right)$.  Then if $C\otimes_{\Lambda}\Lambda'$ and $D\otimes_{\Lambda}\Lambda'$ are endowed with the filtration functions $\ell'_C$ and $\ell'_D$ as in Section \ref{coefsect}, the map $A\otimes 1\co C\otimes_{\Lambda}\Lambda'\to D\otimes_{\Lambda}\Lambda'$ has singular value decomposition given by $\left((y_1\otimes 1,\ldots,y_n\otimes 1),(x_1\otimes 1,\ldots,x_m\otimes 1)\right)$.
\end{prop}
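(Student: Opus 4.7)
The plan is to verify the four conditions of Definition \ref{dfnsvd} in turn for the proposed pair of bases on $C\otimes_{\Lambda}\Lambda'$ and $D\otimes_{\Lambda}\Lambda'$, drawing essentially all the structural input from Proposition \ref{coextorth}.

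First I would invoke Proposition \ref{coextorth} directly: since $(y_1,\ldots,y_n)$ is an orthogonal ordered basis of $(C,\ell_C)$ and $(x_1,\ldots,x_m)$ is an orthogonal ordered basis of $(D,\ell_D)$, the tuples $(y_1\otimes 1,\ldots,y_n\otimes 1)$ and $(x_1\otimes 1,\ldots,x_m\otimes 1)$ are orthogonal ordered bases of $(C\otimes_{\Lambda}\Lambda',\ell'_C)$ and $(D\otimes_{\Lambda}\Lambda',\ell'_D)$ respectively. From the definition of $\ell'_C$ and $\ell'_D$ in Section \ref{coefsect} (which assigns $\ell'(w\otimes 1)=\ell(w)$ to basis elements), we get $\ell'_C(y_i\otimes 1)=\ell_C(y_i)$ and $\ell'_D(x_i\otimes 1)=\ell_D(x_i)$, so the decreasing ordering in condition (iv) is inherited from the ordering for the original singular value decomposition.

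Next I would use the compatibility of the tensor product with linear maps: $(A\otimes 1)(y_i\otimes 1)=(Ay_i)\otimes 1$, which equals $x_i\otimes 1$ for $i\leq r$ and $0$ for $i>r$. This gives condition (iii) and shows that $\{x_1\otimes 1,\ldots,x_r\otimes 1\}\subseteq \mathrm{Im}(A\otimes 1)$. Since the $y_i\otimes 1$ span $C\otimes_{\Lambda}\Lambda'$, the reverse containment is immediate, so $(x_1\otimes 1,\ldots,x_r\otimes 1)$ spans $\mathrm{Im}(A\otimes 1)$; and orthogonality is inherited as a subtuple of an orthogonal tuple, giving (ii).

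The only step requiring a small argument is (i), identifying $\ker(A\otimes 1)$. The tuple $(y_{r+1}\otimes 1,\ldots,y_n\otimes 1)$ is orthogonal (as a subtuple) and lies in $\ker(A\otimes 1)$ by the previous step. Conversely, if $z=\sum_{i=1}^{n}\lambda'_i\,(y_i\otimes 1)$ satisfies $(A\otimes 1)z=0$, then $\sum_{i\leq r}\lambda'_i\,(x_i\otimes 1)=0$. The orthogonality of $(x_1\otimes 1,\ldots,x_m\otimes 1)$ gives $\ell'_D\bigl(\sum_{i\leq r}\lambda'_i\,(x_i\otimes 1)\bigr)=\max_{i\leq r}\ell'_D(\lambda'_i\,(x_i\otimes 1))$, and (F1) then forces each $\lambda'_i\,(x_i\otimes 1)=0$, so each $\lambda'_i=0$ for $i\leq r$ since $x_i\otimes 1\neq 0$. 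Hence $z$ is a $\Lambda'$-linear combination of $y_{r+1}\otimes 1,\ldots,y_n\otimes 1$, completing (i).

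I do not anticipate any real obstacle: once Proposition \ref{coextorth} is in hand, the only conceptual point is the non-Archimedean analogue of linear independence of orthogonal families with nonzero entries (an immediate consequence of (F1) and the orthogonality identity), which is what lets us conclude that $\ker(A\otimes 1)$ contains no unexpected elements beyond the $\Lambda'$-span of $\{y_{r+1}\otimes 1,\ldots,y_n\otimes 1\}$.
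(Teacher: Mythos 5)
Your proof is correct and follows the same route as the paper: invoke Proposition \ref{coextorth} to get orthogonality of the tensored tuples, note that the filtration levels are preserved, and then check conditions (i)--(iv) of Definition \ref{dfnsvd}. The paper leaves the verification of (i)--(iv) as ``a trivial matter,'' whereas you helpfully spell out the kernel computation; both arguments are the same in substance.
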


\begin{proof}
Proposition \ref{coextorth} implies that the ordered sets $(y_1\otimes 1,\ldots,y_n\otimes 1)$ and $(x_1\otimes 1,\ldots,x_m\otimes 1)$ are orthogonal.  Moreover by definition of the relevant filtration functions we have $\ell'_C(y_i\otimes 1)=\ell_C(y_i)$ and $\ell'_D(x_i\otimes 1)=\ell_D(x_i)$ for all $i$ such that these are defined.  Once these facts are known it is a trivial matter to check each of the conditions (i)-(iv) in the definition of a singular value decomposition.
\end{proof}

\section{Boundary depth and torsion exponents via singular value decompositions} \label{beta}

The \emph{boundary depth} as defined in \cite{U11} or \cite{U13} is a numerical invariant of a filtered chain complex that, in the case of the Hamiltonian and Lagrangian Floer complexes, has been effectively used to obtain applications in symplectic topology. 
A closely related notion is that of the \emph{torsion threshold} and more generally the \emph{torsion exponents} that were introduced in \cite[Section 6.1]{FOOO09} for the Lagrangian Floer complex over the universal Novikov ring and were used in \cite{FOOO13} to obtain lower bounds for the displacement energies of polydisks.  We will see in this section that, for complexes like those that arise in Floer theory, both of these notions are naturally encoded in the (non-Archimedean) singular value decomposition of the boundary operator of the chain complex.  In particular our discussion will show that the boundary depth coincides with the torsion threshold when both are defined, and that certain natural generalizations of the boundary depth likewise coincide with the rest of the torsion exponents.  This implies new restrictions on the values that the torsion exponents can take.  Our generalized boundary depths will be part of the data that comprise the concise barcode of a Floer-type complex, our main invariant to be introduced in Section \ref{barsect}.

For the rest of the paper, we will always work with what we call a {\bf Floer-type complex} over a Novikov field $\Lambda$, defined as follows:

\begin{dfn} \label{dfnfcc} A {\bf Floer-type complex} $(C_{\ast}, \partial_C, \ell_C)$ over a Novikov field $\Lambda=\Lambda^{\mathcal{K},\Gamma}$ is a chain complex $(C_*=\oplus_{k\in\Z}C_k,\partial_C)$ over $\Lambda$ together with a function $\ell_C\co C_*\to \R\cup\{-\infty\}$  such that each $(C_k,\ell|_{C_k})$ is an orthogonalizable $\Lambda$-space, and for each $x\in C_k$ we have $\partial_Cx\in C_{k-1}$ with $\ell_C(\partial_C x)\leq \ell_C(x)$.\end{dfn}

\begin{ex} According to Example \ref{geneorvs}, the Morse, Novikov, and Hamiltonian Floer chain complexes are all Floer-type complexes. In each case the boundary operator is defined by counting connecting trajectories between two critical points for some function, which satisfy a certain  differential equation (see, e.g., \cite[Section 1.5]{Sal97} for the Hamiltonian Floer case).\end{ex}

\begin{remark}\label{nonstrict}
In fact in many Floer-type complexes including the Morse, Novikov, and Hamiltonian Floer complexes one has the strict inequality $\ell_C(\partial_C x)<\ell_C(x)$.  However it is also often useful in Morse and Floer theory to consider complexes where the inequality is not  necessarily strict; for instance the Biran-Cornea pearl complex \cite{BC} with appropriate coefficients can be described in this way, as can the Morse-Bott complex built from moduli spaces of ``cascades'' in \cite[Appendix A]{Fr}.  Also our definition allows other, non-Floer-theoretic, constructions such as the Rips complex (see Example \ref{RipsComplex}), and the mapping cylinders which  play a crucial role in the proofs of Theorem B and Theorem \ref{mainstab},  to be described as Floer-type complexes, whereas requiring $\ell_C(\partial_C x)<\ell_C(x)$ would rule these out.  In the case that one does have a strict inequality for the effect of the boundary operator on the filtration, the verbose and concise barcodes that we define later are easily seen to be equal to each other.
\end{remark}

\begin{dfn}\label{dfnfiso} Given two Floer-type complexes $(C_*, \partial_C, \ell_C)$ and $(D_*, \partial_D, \ell_D)$, a {\bf filtered chain isomorphism} between these two complexes is a chain isomorphism $\Phi: C_* \rightarrow D_*$  such that $\ell_D(\Phi(x)) = \ell_C(x)$ for all $x\in C_*$.\end{dfn}

\begin{dfn}\label{dfnfht} Given two Floer-type complexes $(C_*, \partial_C, \ell_C)$ and $(D_*, \partial_D, \ell_D)$, two chain maps $\Phi,\Psi\co C_*\to D_*$ are called {\bf filtered chain homotopic} if there exists $K: C_* \rightarrow D_{*+1}$ such that $\Phi - \Psi = \partial_D K+ K \partial_D$ and $K$ {\it preserves} filtration, i.e. $\ell_D(K(x)) \leq \ell_C(x)$ for all $x$, and both $\Phi$ and $\Psi$ preserve filtration as well. 

We say that $(C_*, \partial_C, \ell_C)$ is {\bf filtered homotopy equivalent} to $(D_*, \partial_D, \ell_D)$ if there exist chain maps $\Phi\co C_*\to D_*$ and $\Psi\co D_*\to C_*$ which both preserve filtration such that $\Psi\circ\Phi$ is filtered chain homotopic to identity $I_C$ while $\Phi\circ\Psi$ is filtered chain homotopic to the $I_D$. \end{dfn} 

In order to cut down on the number of indices that appear in our formulas, we will sometimes work in the following setting:

\begin{dfn} \label{dfntwo} A {\bf two-term Floer-type complex} $(C_1 \xrightarrow{\partial} C_0)$ is a Floer-type complex of the following form 
\[ \begin{array}{l}
\cdots \rightarrow 0 \rightarrow C_1 \xrightarrow{\partial} C_0 \rightarrow 0 \rightarrow \cdots\,.
\end{array} \]
Given any  Floer-type complex $(C_*, \partial_C, \ell_C)$, fixing a degree $k$, we can consider the following two-term Floer-type complex: 
\[ (\tilde{C}^{(k)}_1 \xrightarrow{\partial|_{C_k}} \tilde{C}^{(k)}_0)\]
where $\tilde{C}^{(k)}_1  = C_k$ and $\tilde{C}^{(k)}_0 = \ker (\partial|_{C_{k-1}}) (\leq C_{k-1})$.\end{dfn}

For the rest of this section, we will focus mainly on two-term  Floer-type complexes; consistently with the above discussion this roughly corresponds to focusing on a given degree in one of the multi-term chain complexes that we are ultimately interested in. For a  two-term Floer-type complex $(C_1 \xrightarrow{\partial} C_0)$, by Theorem \ref{existsvd} we may fix a singular value decomposition $((y_1, ..., y_n), (x_1, ..., x_m))$ for the boundary map $\partial\co C_1\to C_0$. Denote the rank of $\partial$ by $r$.  We will see soon that the numbers $\{\ell(y_i) - \ell(x_i)\}$ for $i \in \{1, ..., r\}$ (which have earlier been described as the negative logarithms of the singular values of $\partial$) can be characterized in terms of the following notion of {\it robustness} of the boundary operator. 

\begin{dfn} \label{dfn-robust} Let $\delta\in\R$.  An element $x\in C_0$ is said to be {\bf $\delta$-robust} if for all $y\in C_1$ such that $\partial y=x$ it holds that $\ell(y)>\ell(x)+\delta$. A subspace $V\leq C_0$ is said to be $\delta$-robust if every $x\in V\setminus\{0\}$ is $\delta$-robust.\end{dfn}

\begin{ex}  When $(C_1 \xrightarrow{\partial} C_0)$ is the two-term Floer-type complex $\widetilde{CM}^{(k)}_{*}(f)$ induced by the degree-$k$ and degree-$(k-1)$ parts of the Morse complex $CM_{*}(f)$ of a Morse function on a compact manifold, the reader may verify that each nonzero element of $C_0$ is $\delta$-robust for all $\delta<\delta_k$, where $\delta_k$ is the minimal positive difference between a critical value of an index-$k$ critical point and a critical value of an index-$(k-1)$ critical point.  Because a strict inequality is required in the definition of robustness, there may be elements of $C_0$ which are not $\delta_k$-robust.
\end{ex}

In the presence of our singular value decomposition $\left((y_1,\ldots,y_n),(x_1,\ldots,x_m)\right)$, the following simple observation is useful for checking $\delta$-robustness:

\begin{lemma}\label{checkingrobust} Let $x={\textstyle \sum_{i=1}^{r}}\lambda_ix_i$ be any element of ${\rm Im\partial}$, and suppose $y\in C_1$ obeys $\partial y=x$.  Then \[ \ell(y)\geq \ell\left(\sum_{i=1}^{r}\lambda_i y_i\right) = \max\{\ell(y_i)-\nu(\lambda_i)|1\leq i\leq r\}. \] \end{lemma}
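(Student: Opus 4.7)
The plan is to directly exploit the singular value decomposition to express $y$ in the basis $(y_1,\ldots,y_n)$ and then use orthogonality to bound $\ell(y)$.

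First, the equality $\ell\left(\sum_{i=1}^{r}\lambda_i y_i\right) = \max\{\ell(y_i)-\nu(\lambda_i)\mid 1\leq i\leq r\}$ is immediate: since $(y_1,\ldots,y_n)$ is an orthogonal ordered basis for $C_1$ (by condition in Definition \ref{dfnsvd}), any subfamily including $(y_1,\ldots,y_r)$ is orthogonal, and the identity is just the definition of orthogonality rewritten via (F2) as in equation (\ref{eltorth2}).

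For the inequality $\ell(y)\geq \ell\left(\sum_{i=1}^{r}\lambda_i y_i\right)$, I would write $y$ in the orthogonal basis as $y=\sum_{i=1}^{n}\mu_i y_i$ for some $\mu_i\in\Lambda$. Applying $\partial$ and using the defining property of the singular value decomposition that $\partial y_i=x_i$ for $1\leq i\leq r$ and $\partial y_i=0$ for $i>r$, we obtain $\partial y=\sum_{i=1}^{r}\mu_i x_i$. Comparing with the hypothesis $\partial y=\sum_{i=1}^{r}\lambda_i x_i$ and invoking the linear independence of $(x_1,\ldots,x_r)$ (which follows from orthogonality via Lemma \ref{bsorprop}(i), since orthogonal elements in particular span subspaces intersecting trivially), we conclude $\mu_i=\lambda_i$ for $1\leq i\leq r$.

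Therefore $y=\sum_{i=1}^{r}\lambda_i y_i + \sum_{i=r+1}^{n}\mu_i y_i$, and the orthogonality of $(y_1,\ldots,y_n)$ gives
\[
\ell(y)=\max\left\{\max_{1\leq i\leq r}(\ell(y_i)-\nu(\lambda_i)),\ \max_{r+1\leq i\leq n}(\ell(y_i)-\nu(\mu_i))\right\}\geq \max_{1\leq i\leq r}(\ell(y_i)-\nu(\lambda_i)),
\]
which by the equality established above equals $\ell\left(\sum_{i=1}^{r}\lambda_i y_i\right)$, completing the argument.

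There is no significant obstacle here; the lemma is essentially a formal consequence of the properties (i)--(iii) of a singular value decomposition combined with non-Archimedean orthogonality. The only nontrivial input is the uniqueness of the coefficients $\lambda_i$, which is extracted from linear independence of $(x_1,\ldots,x_r)$ as an orthogonal family in $\text{Im}\,\partial$.
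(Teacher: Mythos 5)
Your proof is correct and follows essentially the same route as the paper's: both expand $y$ in the orthogonal basis $(y_1,\ldots,y_n)$, use $\partial y_i=x_i$ (for $i\leq r$), $\partial y_i=0$ (for $i>r$), and the linear independence of $(x_1,\ldots,x_r)$ to pin down the first $r$ coefficients, and then invoke orthogonality to conclude. You merely spell out the intermediate steps that the paper states more tersely.
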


\begin{proof} Since $\partial y_i=x_i$ for $1\leq i\leq r$ and $\partial y_i=0$ for $i>r$, and since the $x_i$ are linearly independent, the elements $y\in C_1$ such that $\partial y=x$ are precisely those of form ${\textstyle \sum_{i=1}^{r}}\lambda_i y_i +{\textstyle \sum_{i=r+1}^{n}}\mu_iy_i$ for arbitrary $\mu_{r+1},\ldots,\mu_n\in \Lambda$.  The proposition then follows directly from the fact that $(y_1,\ldots,y_n)$ is an orthogonal ordered basis for $C_1$.\end{proof}

\begin{dfn} \label{barlength} Given a two-term chain complex $(C_1\xrightarrow{\partial} C_0)$ and a positive integer $k$, let 
\[\begin{array}{l}
\beta_k(\partial) = \sup(\{0\} \cup \{\delta \geq 0\,|\,$There exists a\,\,$\delta$-robust subspace$\,V \leq {\rm {\rm Im} \partial}\, \,$with$\,\, \dim(V) =k\}).
\end{array} \]\end{dfn}

Note that  $\beta_k(\partial)=0$ if $\partial$ is the zero map or if $k > \dim({\rm {\rm Im} \partial})$. It is easy to see that, when $k\leq\dim({\rm {\rm Im} \partial})$, $\beta_k(\partial)$ can be rephrased as 
\[ \begin{array}{l}
\beta_k(\partial)= \displaystyle \sup_{\begin{tiny}\begin{array}{c}V\leq {\rm {\rm Im} \partial}\\ \dim(V) =k\end{array}\end{tiny}} \inf_{x\in V\setminus\{0\}}\{\ell(y) - \ell(x) \,| \, \partial y = x\}. 
\end{array} \]
When $k=1$, this is exactly the definition of boundary depth in \cite{U13} (see \cite[(24)]{U13}), and so we can view the $\beta_k(\partial)$ as generalizations of the boundary depth.  Clearly one has \[ \beta_1(\partial)\geq\beta_2(\partial)\geq \cdots\beta_k(\partial)\geq 0 \] for all $k$. We will prove the following theorem which relates the $\beta_k(\partial)$'s to singular value decompositions.

\begin{theorem} \label{genebd} Given a singular value decomposition $((y_1, ..., y_n), (x_1, ..., x_m))$ for a two-term chain complex $(C_1 \xrightarrow{\partial} C_0)$, the numbers $\beta_k(\partial)$ are given by \[ \beta_k(\partial) = \left\{\begin{array}{ll}\ell(y_k)-\ell(x_k) & 1\leq k\leq r \\ 0 & k>r \end{array}\right. \] where $r$ is the rank of $\partial$.\end{theorem}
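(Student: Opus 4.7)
The plan is to verify $\beta_k(\partial) = \ell(y_k) - \ell(x_k)$ for $1 \le k \le r$ by separately proving the two inequalities, after disposing of the trivial case $k > r$ where $\dim \Img\partial = r < k$ forces $\beta_k(\partial) = 0$ directly from Definition \ref{barlength}.

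For the lower bound $\beta_k(\partial) \ge \ell(y_k) - \ell(x_k)$, I would use the explicit witness subspace $V = span_{\Lambda}\{x_1,\ldots,x_k\} \le \Img \partial$. Given any $x = \sum_{i=1}^{k}\lambda_i x_i \in V \setminus \{0\}$ and any $y \in C_1$ with $\partial y = x$, Lemma \ref{checkingrobust} gives $\ell(y) \ge \max_i(\ell(y_i) - \nu(\lambda_i))$. Picking any index $i_0 \le k$ that realizes $\ell(x) = \ell(x_{i_0}) - \nu(\lambda_{i_0})$ (using orthogonality of $(x_1,\ldots,x_k)$), and invoking condition (iv) of Definition \ref{dfnsvd} to get $\ell(y_{i_0}) - \ell(x_{i_0}) \ge \ell(y_k) - \ell(x_k)$, I find
\[
\ell(y) - \ell(x) \;\ge\; \ell(y_{i_0}) - \nu(\lambda_{i_0}) - \ell(x) \;=\; \ell(y_{i_0}) - \ell(x_{i_0}) \;\ge\; \ell(y_k) - \ell(x_k).
\]
Hence $V$ is $\delta$-robust for every $\delta < \ell(y_k)-\ell(x_k)$, and taking the supremum gives the desired lower bound.

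For the upper bound, the key is to exhibit, inside any $k$-dimensional subspace $V \le \Img \partial$, a nonzero element whose minimal-$\ell$ preimage has filtration at most $\ell(x) + \ell(y_k) - \ell(x_k)$. Setting $W := span_{\Lambda}\{x_k,\ldots,x_r\}$, which has dimension $r-k+1$, the dimension count $\dim V + \dim W = r + 1 > r = \dim \Img \partial$ forces $V \cap W \neq \{0\}$. I pick a nonzero $x = \sum_{i=k}^{r}\lambda_i x_i \in V \cap W$ and define $y = \sum_{i=k}^{r}\lambda_i y_i$, so $\partial y = x$ with $y \in span_{\Lambda}\{y_k,\ldots,y_r\}$. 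Letting $i^*$ (necessarily in $\{k,\ldots,r\}$) achieve the maximum $\ell(y) = \ell(y_{i^*}) - \nu(\lambda_{i^*})$, orthogonality of $(x_k,\ldots,x_r)$ gives $\ell(x) \ge \ell(x_{i^*}) - \nu(\lambda_{i^*})$, so condition (iv) together with $i^* \ge k$ yields
\[
\ell(y) - \ell(x) \;\le\; \ell(y_{i^*}) - \ell(x_{i^*}) \;\le\; \ell(y_k) - \ell(x_k).
\]
Consequently any $\delta$-robust $V$ of dimension $k$ satisfies $\delta < \ell(y_k) - \ell(x_k)$, and taking the supremum completes the proof.

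The main obstacle is engineering the upper bound so that the leading index $i^*$ of the chosen $x$ with respect to $(x_1,\ldots,x_r)$ lies in $\{k,\ldots,r\}$: this is precisely what the elementary dimension-count yielding $V \cap span_{\Lambda}\{x_k,\ldots,x_r\} \neq \{0\}$ provides, after which the monotonicity in Definition \ref{dfnsvd}(iv) does the work. The rest of the argument is routine manipulation of orthogonal bases via Lemma \ref{checkingrobust}.
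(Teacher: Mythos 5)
Your proof is correct and follows essentially the same approach as the paper: the witness subspace $span_{\Lambda}\{x_1,\ldots,x_k\}$ for the lower bound via Lemma \ref{checkingrobust}, the intersection with $W=span_{\Lambda}\{x_k,\ldots,x_r\}$ for the upper bound, and the monotonicity in Definition \ref{dfnsvd}(iv) to compare indices. The only cosmetic difference is that the paper phrases the codimension of $W$ in $\Img\partial$ as $k-1$ where you phrase the dimension count as $\dim V+\dim W>r$; these are of course equivalent.
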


\begin{proof} For each $k\in \{1, ..., r\}$, we will show that there exists a $k$-dimensional $\delta$-robust subspace of $\Img\partial$ for any $\delta< \ell(y_k) - \ell(x_k)$, but that no $k$-dimensional subspace is $(\ell(y_k) - \ell(x_k))$-robust. This clearly implies the result by the definition of $\beta_k(\partial)$.

Considering the subspace $V_k=span_{\Lambda}\{x_1, ..., x_k\}$, let $x={\textstyle \sum_{i=1}^{k}}\lambda_ix_i$ be any nonzero element in $V_k$.  Let $i_0\in \{1,\ldots,k\}$ maximize the quantity $\ell(x_i)-\nu(\lambda_i)$ over all $i\in\{1,\ldots,k\}$, so that by the orthogonality of the $x_i$ we have $\ell(x)=\ell(x_{i_0})-\nu(\lambda_{i_0})$.  Then, using the orthogonality of the $y_i$, \begin{align*}
\ell\left(\sum_{i=1}^{k}\lambda_iy_i\right)-\ell(x)&=\max_i(\ell(y_i)-\nu(\lambda_i))-(\ell(x_{i_0})-\nu(\lambda_{i_0}))
\\ &\geq (\ell(y_{i_0})-\nu(\lambda_{i_0}))-(\ell(x_{i_0})-\nu(\lambda_{i_0}))=\ell(y_{i_0})-\ell(x_{i_0})
\\ &\geq \ell(y_k)-\ell(x_k) 
\end{align*}
where the last inequality follows from our ordering convention for the $x_i$. But then by Lemma \ref{checkingrobust}, it follows that whenever $\partial y=x$ we have $\ell(y)-\ell(x)\geq \ell(y_k)-\ell(x_k)$.  Since this holds for an arbitrary  element $x\in span_{\Lambda}\{x_1,\ldots,x_k\}\setminus\{0\}$ we obtain that  $span\{x_1,\ldots,x_k\}$ is $\delta$-robust for all $\delta<\ell(y_k)-\ell(x_k)$.

Next, for any $k$-dimensional subspace $V\leq \Img\partial$, let $W=span_{\Lambda}\{x_k,x_{k+1},\ldots,x_r\}$. Since $W$ has codimension $k-1$ in ${\rm {\rm Im} \partial}$, the intersection $V\cap W$ contains some nonzero element $x$. Since $x\in W$ we can write $x={\textstyle \sum_{i=k}^{r}}\lambda_ix_i$ where not all $\lambda_i$ are zero. Choose $i_0\in \{k,\ldots,r\}$ to maximize the quantity $\ell(y_i)-\nu(\lambda_i)$ over $i\in\{k,\ldots,r\}$. Let $y={\textstyle \sum_{i=k}^{r}}\lambda_iy_i$.  Then we have $\partial y=x$, and 
\begin{align*}
\ell(y)-\ell(x) &= (\ell(y_{i_0})-\nu(\lambda_{i_0}))-\max_i(\ell(x_{i})-\nu(\lambda_i))\leq (\ell(y_{i_0})-\nu(\lambda_{i_0}))-(\ell(x_{i_0})-\nu(\lambda_{i_0})) \\& = \ell(y_{i_0})-\ell(x_{i_0})\leq \ell(y_k)-\ell(x_k)
\end{align*}
by our ordering convention for the $x_i$. So since $x\in V\setminus\{0\}$ (and since the inequality required in the definition of $\delta$-robustness is strict) this proves that  $V$ is not $(\ell(y_k)-\ell(x_k))$-robust.

Finally, when $k >r$, there is no $V \leq {\rm {\rm Im} \partial}$ such that $\dim(V) = k$ (since $\dim({\rm {\rm Im} \partial}) = k$). Then by definition of $\beta_k(\partial)$, it is zero. \end{proof}

Note that Definition \ref{barlength} makes clear that $\beta_k(\partial)$ is independent of the choice of singular value decomposition; thus we deduce the non-obvious fact that the difference $\ell(y_k)-\ell(x_k)$ is likewise independent of the choice of singular value decomposition for each $k \in \{1, ..., r\}$. Note also that any filtration-preserving $\Lambda$-linear map $A$ between two orthogonalizable $\Lambda$-spaces $C$ and $D$ can just as well be viewed as a two-term chain complex $(C \xrightarrow{A} D)$, and so we obtain generalized boundary depths $\beta_k(A)$. Theorem \ref{existsvd} or Theorem \ref{algsvd} provides a systematic way to compute $\beta_k(A)$.   It is also clear from the definition that if $A\co C\to D$ has image contained in some subspace $D'\leq D$ then $\beta_k(A)$ is the same regardless of whether we regard $A$ as a map $C\to D$ or as a map $C\to D'$.  For instance if $(C_*,\partial_C,\ell_C)$ is a Floer-type complex, for any $i$ we could consider either of the two-term complexes $(C_{i} \xrightarrow{\partial|_{C_i}} C_{i-1})$ or $(C_i\xrightarrow{\partial|_{C_i}} \ker (\partial_C|_{C_{i-1}}))$ and obtain the same values of $\beta_k$.

We conclude this section by phrasing the torsion exponents of \cite{FOOO09}, \cite{FOOO13} in our terms and proving that these torsion exponents coincide with our generalized boundary depths $\beta_k$.  We will explain this just for two-term Floer-type complexes $(C_1 \xrightarrow{\partial} C_0)$; this represents no loss of generality, as for a general Floer-type complex $(C_*,\partial_C,\ell_C)$ one may apply the discussion below to the various two-term Floer-type complexes $(C_{i+1}\xrightarrow{\partial|_{C_{i+1}}} \ker (\partial_C|_{C_{i}}))$ in order to relate the torsion exponents and generalized boundary depths in any degree $i\in\Z$.

So let $(C_1 \xrightarrow{\partial} C_0)$ be a two-term Floer-type complex over $\Lambda = \Lambda^{\mathcal{K},\Gamma}$.  We first define the torsion exponents (in degree zero) in our language, leaving it to readers familiar with \cite{FOOO09} to verify that our definition is consistent with theirs.  Write $\Lambda^{univ}=\Lambda^{\mathcal{K},\R}$ for the ``universal'' Novikov field, so named because regardless of the choice of $\Gamma$ we have a field extension $\Lambda^{\mathcal{K},\Gamma}\hookrightarrow \Lambda^{univ}$.  Also define \[ \Lambda^{univ}_{0}=\{\lambda\in \Lambda^{univ}\,|\,\nu(\lambda)\geq 0\}; \]  thus $\Lambda^{univ}_{0}$ is the subring of $\Lambda^{univ}$ consisting of formal sums $\sum_g a_gT^g$ with each $g\geq 0$.

As in Section \ref{coefsect}, for $j=0,1$ let $C'_{j}=C_j\otimes_{\Lambda} \Lambda^{univ}$, and endow $C'_j$ with the filtration function obtained by choosing an orthogonal ordered basis $(w_1,\ldots,w_a)$ for $C_j$ and putting $\ell'\left(\sum_i \lambda'_i w_i\otimes 1\right)=\max_i(\ell(w_i)-\nu(\lambda'_i))$ for any $\lambda'_1,\ldots,\lambda'_a\in \Lambda^{univ}$.  By Proposition \ref{coextorth} this definition is independent of the choice of orthogonal basis $(w_1,\ldots,w_a)$.

Now, for $j=0,1$, define \[ \bar{C}'_j=\{c\in C'_j\,|\,\ell'(c)\leq 0\} \]  and observe that $\bar{C}_j$ is a module over the subring $\Lambda^{univ}_{0}$ of $\Lambda^{univ}$.  Moreover, again taking Proposition \ref{coextorth} into account, it is easy to see that if $(w_1,\ldots,w_a)$ is \emph{any} orthogonal ordered basis for $C_j$, then the elements $\bar{w}_i=w_i\otimes T^{\ell(w_i)}$ form a basis for $\bar{C}'_j$ as a $\Lambda^{univ}_{0}$-module.

The fact that $\ell(\partial c)\leq \ell(c)$ implies that the coefficient extension $\partial\otimes 1\co C'_1\to C'_0$ restricts to $\bar{C}'_1$ as a map to $\bar{C}'_0$.  So we have a (two-term) chain complex of $\Lambda^{univ}_{0}$-modules 
$(\bar{C}'_1 \xrightarrow{\partial\otimes 1} \bar{C}'_0)$.  
Fukaya, Oh, Ohta, and Ono show \cite[Theorem 6.1.20]{FOOO09} that the zeroth homology of this complex (i.e., the quotient $\bar{C}'_0/(\partial\otimes 1)\bar{C}'_1$) is isomorphic to \begin{equation}\label{tors} (\Lambda^{univ}_{0})^q\oplus \bigoplus_{k=1}^{s}(\Lambda^{univ}_{0}/T^{\lambda_k}\Lambda^{univ}_{0}) \end{equation} for some natural numbers $q,s$ and positive real numbers $\lambda_i,\ldots,\lambda_s$.

\begin{dfn}[\cite{FOOO09}] Order the summands in the decomposition (\ref{tors}) of $\bar{C}'_0/(\partial\otimes 1)\bar{C}'_1$ so that $\lambda_1\geq\cdots\geq\lambda_s$.  For a positive integer $k$, the \textbf{$k$th torsion exponent} of the two-term Floer-type complex 
$(C_1 \xrightarrow{\partial} C_0)$ is $\lambda_k$ if $k\leq s$ and $0$ otherwise.  The first torsion exponent is also called the \textbf{torsion threshold}.
\end{dfn}

\begin{theorem}\label{torsion} For each positive integer $k$ the $k$th torsion exponent of $(C_1 \xrightarrow{\partial} C_0)$ is equal to the generalized boundary depth $\beta_k(\partial)$.
\end{theorem}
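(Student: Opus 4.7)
The plan is to use the singular value decomposition of $\partial$ to exhibit an extremely concrete basis for the coefficient-extended complex $\bar{C}'_1 \xrightarrow{\partial\otimes 1} \bar{C}'_0$ with respect to which the boundary map is diagonal, and then read off both invariants directly. First, I would fix a singular value decomposition $\left((y_1,\ldots,y_n),(x_1,\ldots,x_m)\right)$ for $\partial\co C_1\to C_0$ as supplied by Theorem \ref{existsvd}, so that $\partial y_i = x_i$ for $1\leq i\leq r$, $\partial y_i = 0$ for $i>r$, and $\ell(y_1)-\ell(x_1)\geq\cdots\geq\ell(y_r)-\ell(x_r)$. By Theorem \ref{genebd} the generalized boundary depth is $\beta_k(\partial)=\ell(y_k)-\ell(x_k)$ for $k\leq r$ and zero otherwise, so it remains to identify the torsion exponents with these same numbers.

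Next, by Proposition \ref{coextsvd}, the pair $\left((y_i\otimes 1),(x_j\otimes 1)\right)$ is still a singular value decomposition for $\partial\otimes 1\co C'_1\to C'_0$ with $\ell'(y_i\otimes 1)=\ell(y_i)$ and $\ell'(x_j\otimes 1)=\ell(x_j)$. As remarked in the text just before the statement, the elements
\[
\bar y_i := y_i\otimes T^{\ell(y_i)}\in \bar{C}'_1,\qquad \bar x_j := x_j\otimes T^{\ell(x_j)}\in \bar{C}'_0
\]
form $\Lambda^{univ}_0$-bases for $\bar{C}'_1$ and $\bar{C}'_0$ respectively. A direct computation then gives, for $1\leq i\leq r$,
\[
(\partial\otimes 1)(\bar y_i) \;=\; x_i\otimes T^{\ell(y_i)} \;=\; T^{\ell(y_i)-\ell(x_i)}\,\bar x_i,
\]
and $(\partial\otimes 1)(\bar y_i)=0$ for $i>r$. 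Hence the matrix of $\partial\otimes 1$ in these bases is diagonal with entries $T^{\ell(y_i)-\ell(x_i)}$ on the first $r$ diagonal positions.

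From this diagonal form one reads off an isomorphism of $\Lambda^{univ}_0$-modules
\[
\bar{C}'_0/(\partial\otimes 1)\bar{C}'_1 \;\cong\; (\Lambda^{univ}_0)^{m-r} \,\oplus\, \bigoplus_{i=1}^{r} \Lambda^{univ}_0/T^{\ell(y_i)-\ell(x_i)}\Lambda^{univ}_0.
\]
Those indices $i$ for which $\ell(y_i)=\ell(x_i)$ contribute a trivial summand $\Lambda^{univ}_0/\Lambda^{univ}_0=0$, and may be dropped; writing $s'$ for the number of $i\in\{1,\ldots,r\}$ with $\ell(y_i)>\ell(x_i)$, the surviving positive exponents are $\ell(y_1)-\ell(x_1)\geq\cdots\geq\ell(y_{s'})-\ell(x_{s'})>0$, which is already in the decreasing order required by the definition of torsion exponents. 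Comparing with the decomposition (\ref{tors}) and invoking the uniqueness clause of \cite[Theorem 6.1.20]{FOOO09}, we conclude $s=s'$ and $\lambda_k=\ell(y_k)-\ell(x_k)$ for $1\leq k\leq s$. Thus the $k$th torsion exponent equals $\ell(y_k)-\ell(x_k)=\beta_k(\partial)$ for $k\leq s$, equals $0=\beta_k(\partial)$ for $s<k\leq r$ (since $\ell(y_k)=\ell(x_k)$ there), and equals $0=\beta_k(\partial)$ for $k>r$, completing the proof.

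The only real subtlety is the appeal to uniqueness of the invariant factors in (\ref{tors}); this is handled by citing FOOO directly, though one could equally well prove it ``by hand'' using that $\Lambda^{univ}_0$ is a valuation ring with value group $\R_{\geq 0}$ and that tensoring with $\Lambda^{univ}$ and reducing modulo the maximal ideal recover $q$ and $s$ together with $\mathcal{K}$-linear functionals detecting the individual $\lambda_k$. The rest of the argument is a straightforward translation between the singular value decomposition and the structure theorem.
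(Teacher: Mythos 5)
Your proof is correct and follows essentially the same route as the paper: both pass through Proposition~\ref{coextsvd}, use the $\Lambda^{univ}_0$-bases $\bar y_i=y_i\otimes T^{\ell(y_i)}$, $\bar x_j=x_j\otimes T^{\ell(x_j)}$, and read off the invariant-factor decomposition from the diagonal form $T^{\ell(y_i)-\ell(x_i)}$. Two small points where you are actually a bit cleaner than the published argument: (1) you compute $(\partial\otimes 1)(\bar y_i)$ directly, which lets you avoid the paper's appeal to Lemma~\ref{checkingrobust} for determining the image of $\bar C'_1$; and (2) you correctly note that indices with $\ell(y_i)=\ell(x_i)$ give trivial summands and so should be discarded before comparing with (\ref{tors}), whereas the paper's assertion that ``$s$ and $r$ are equal'' is, strictly speaking, only correct after excluding those trivial factors---your bookkeeping with $s'$ handles this properly.
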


\begin{proof}
Let $\left((y_1,\ldots,y_n),(x_1,\ldots,x_m)\right)$ be a singular value decomposition for $\partial\co C_1\to C_0$.  By Proposition \ref{coextsvd}, $\left((y_1\otimes 1,\ldots,y_n\otimes 1),(x_1\otimes 1,\ldots,x_m\otimes 1)\right)$ is a singular value decomposition for $\partial\otimes 1\co C'_1\to C'_0$.  Let $r$ denote the rank of $\partial$ (equivalently, that of $\partial\otimes 1$).

Let us determine the image $(\partial\otimes 1)(\bar{C}'_1)\subset C'_0$.  A general element $x$ of $C'_0$ can be written as $x=\sum_{i=1}^{m}\lambda_i x_i\otimes 1$ where $\lambda_i\in \Lambda^{univ}$.  By the definition of a singular value decomposition, in order for $x$ to be in the image of $\partial\otimes 1$ we evidently must have $\lambda_i=0$ for $i>r$.  Given that this holds, we will have $(\partial\otimes 1)\left(\sum_{i=1}^{r}\lambda_iy_i\otimes 1\right)=x$, and moreover by Lemma \ref{checkingrobust}, $\sum_{i=1}^{r}\lambda_iy_i\otimes 1$ has the lowest filtration level among all preimages of $x$ under $\partial \otimes 1$.  Now \[ \ell'\left(\sum_{i=1}^{r}\lambda_iy_i\otimes 1\right)=\max_i (\ell(y_i)-\nu(\lambda_i)), \] so we conclude that $x=\sum_{i=1}^{m}\lambda_i x_i\otimes 1$ belongs to 
$(\partial\otimes 1)(\bar{C}'_1)$ if and only if both $\lambda_i=0$ for $i>r$ and $\nu(\lambda_i)\geq \ell(y_i)$ for $i=1,\ldots,r$.

Recall that the elements $\bar{x_i}=x_i\otimes T^{\ell(x_i)}$ form a $\Lambda^{univ}_{0}$-basis for $\bar{C}'_0$.  Letting $\mu_i=T^{-\ell(x_i)}\lambda_i$,   the conclusion of the above paragraph can be rephrased as saying that $(\partial\otimes 1)(\bar{C}'_1)$ consists precisely of elements $\sum_{i=1}^{m}\mu_i\bar{x}_i$ such that $\mu_i=0$ for $i>r$ and $\nu(\mu_i)\geq \ell(y_i)-\ell(x_i)$ for $i=1,\ldots,r$.  Now for any $\mu\in \Lambda^{univ}$ and $c\in \R$, one has $\nu(\mu)\geq c$ if and only if $\mu\in T^c\Lambda^{univ}_{0}$.  So we conclude that 
\[ (\partial\otimes 1)(\bar{C}'_1)=span_{\Lambda^{univ}_{0}}\{T^{\ell(y_1)-\ell(x_1)}\bar{x}_1,\ldots,T^{\ell(y_r)-\ell(x_r)}\bar{x}_r\}, \] while as mentioned earlier \[
\bar{C}'_0=span_{\Lambda^{univ}_{0}}\{\bar{x}_1,\ldots,\bar{x}_m\}.
\]
These facts immediately imply that \[ \frac{\bar{C}'_0}{(\partial\otimes 1)(\bar{C}'_1)} = (\Lambda^{univ}_{0})^{m-r}\oplus \bigoplus_{k=1}^{r}(\Lambda^{univ}_{0}/T^{\ell(y_k)-\ell(x_k)}\Lambda^{univ}_{0}). \]  Comparing with (\ref{tors}) we see that the numbers that we have denoted by $s$ and $r$ are equal to each other, and that the $k$th torsion exponent is equal to $\ell(y_k)-\ell(x_k)$ for $1\leq k\leq r$ and to zero otherwise. By Theorem \ref{genebd} this is the same as $\beta_k(\partial)$.
\end{proof}

\section{Filtration spectrum}

The filtration spectrum of an orthogonalizable $\Lambda$-space is an algebraic abstraction of the set of critical values of a Morse function or the action spectrum of a Hamiltonian diffeomorphism (cf. \cite{Sc00}). 

In the definition below and elsewhere, our convention is that $\mathbb{N}$ is the set of nonnegative integers (so includes zero).

\begin{dfn} A {\bf multiset} $M$ is a pair $(S, \mu)$ where $S$ is a set and $\mu\co S \rightarrow \mathbb N\cup\{\infty\}$ is a function, called the {\bf multiplicity function} of $M$.  If $T$ is some other set, a \textbf{multiset of elements of $T$} is a multiset $(S,\mu)$ such that $S\subset T$.\end{dfn}

For $s\in S$, the value $\mu(s)$ should be interpreted as ``the number of times that $s$  appears'' in the multiset $M$.    By abuse of notation we will sometimes denote multisets in set-theoretic notation with elements repeated: for instance $\{1,3,1,2,3\}$ denotes a multiset with $\mu(1)=\mu(3)=2$ and $\mu(2)=1$.  The cardinality of the multiset $(S,\mu)$ is by definition $\sum_{s\in S}\mu(S)$. (For notational simplicity we are not distinguishing between different infinite cardinalities in our definition; in fact, for nearly all of the multisets that appear in this paper the multiplicity function will only take finite values.)

Also, if $S\subset T$ and $\mu\co T\to \N\cup\{\infty\}$ is a function with $\mu|_{T\setminus S}\equiv 0$ then we will not distinguish between the multisets $(T,\mu)$ and $(S,\mu|_S)$.

\begin{dfn}\label{dfnfs} Let $(C, \ell)$ be an orthogonalizable $\Lambda$-space with a fixed orthogonal ordered basis $(v_1, ..., v_n)$. The {\bf filtration spectrum} of $(C, \ell)$ is the multiset $(\mathbb R/\Gamma, \mu)$ where
\[ \begin{array}{l}
\mu(s) = \#\{v_i \in \{v_1, ..., v_n\} \,| \, \ell(v_i) \equiv s\,\,$mod$\,\Gamma\}.
\end{array} \]
\end{dfn}

\begin{remark} When $\Gamma$ is trivial, the filtration spectrum is just the set $\{\ell(v_1), ..., \ell(v_n)\}$ and multiplicity function is just defined by setting $\mu(s)$ equal to the number of $i$ such that $\ell(v_i)=s$.\end{remark}

\begin{ex} Let $\Gamma = \mathbb Z$ and $C = span_{\Lambda} \{v_1, v_2\}$ where $v_1,v_2$ are orthogonal with $\ell(v_1) = 2.5$ and $\ell(v_2) = 0.5$. Then for $[0.5] \in \mathbb R/\Gamma$, $\mu([0.5]) = 2$, while for $[0.7] \in \mathbb R/\Gamma$, $\mu([0.7]) = 0$. The filtration spectrum is then the multiset $\{[0.5], [0.5]\}$.\end{ex}

While Definition \ref{dfnfs} relies on a choice of an orthogonal basis for $(C,\ell)$, the following proposition shows that the filtration spectrum can be reformulated in a way that is manifestly independent of the choice of orthogonal basis, and so is in fact an invariant of the orthogonalizable $\Lambda$-space $(C,\ell)$. 

\begin{prop} \label{multi} Let $(C,\ell)$ be an orthogonalizable $\Lambda^{\mathcal{K},\Gamma}$-space and let $(\R/\Gamma,\mu)$ be the filtration spectrum of $(C,\ell)$ (as determined by an arbitrary orthogonal basis).  Then for any $s\in\R/\Gamma$, 
\[ \begin{array}{l}
\mu(s) = \max\left\{k \in \N\,\left|\,(\exists V \leq C) \left(\dim(V) =k\,\,\mbox{and}\, (\forall v\in V\backslash\{0\}) (\ell(v)\equiv s \,\mbox{mod}\,\Gamma)\right)\right.\right\}.
\end{array}\] \end{prop}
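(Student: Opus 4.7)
The plan is to prove both inequalities relating $\mu(s)$ to the supremum on the right-hand side, using only non-Archimedean orthogonality together with the fact that $\nu$ takes values in $\Gamma \cup \{\infty\}$ on $\Lambda$.

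First I would fix an orthogonal ordered basis $(v_1,\ldots,v_n)$ of $(C,\ell)$ as in the definition of the filtration spectrum and set $I_s = \{i \in \{1,\ldots,n\} \,|\, \ell(v_i) \equiv s \bmod \Gamma\}$, so that $\mu(s) = |I_s|$ and $\dim C = n$. The proof then splits into exhibiting a $\mu(s)$-dimensional subspace of the required type (the ``$\geq$'' direction) and ruling out larger such subspaces (the ``$\leq$'' direction).

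For the lower bound, I would take $V_s := \mathrm{span}_\Lambda\{v_i \,|\, i \in I_s\}$, which has dimension $\mu(s)$. For any nonzero $v = \sum_{i \in I_s} \lambda_i v_i \in V_s$, orthogonality of $(v_1,\ldots,v_n)$ together with (F2) gives
\[
\ell(v) = \max_{i \in I_s,\ \lambda_i \neq 0} \bigl(\ell(v_i) - \nu(\lambda_i)\bigr).
\]
Since any nonzero $\lambda_i \in \Lambda = \Lambda^{\mathcal{K},\Gamma}$ has $\nu(\lambda_i) \in \Gamma$, each term in this maximum is congruent to $s$ modulo $\Gamma$, hence $[\ell(v)] = s$. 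This shows that $V_s$ is a witness of dimension $\mu(s)$.

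For the upper bound, I would argue by contradiction: assume $V \leq C$ has $\dim V = k > \mu(s)$ with $[\ell(v)] = s$ for every $v \in V\setminus\{0\}$. Let $W := \mathrm{span}_\Lambda\{v_i \,|\, i \notin I_s\}$, so $\dim W = n - \mu(s)$. Since $\dim V + \dim W = k + n - \mu(s) > n$, by elementary dimension counting $V \cap W$ contains a nonzero element $w = \sum_{i \notin I_s} \lambda_i v_i$. By orthogonality, $\ell(w) = \ell(v_{j}) - \nu(\lambda_{j})$ for some index $j \notin I_s$ with $\lambda_j \neq 0$; since $\nu(\lambda_j) \in \Gamma$, this yields $[\ell(w)] = [\ell(v_j)] \neq s$, contradicting $w \in V\setminus\{0\}$. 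This forces $k \leq \mu(s)$ and completes the proof; as a bonus, combining the two directions shows that $\mu(s)$ itself does not depend on the chosen orthogonal basis, justifying Definition~\ref{dfnfs}.

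The key step is really the ``$\leq$'' direction, where the main point is the observation that $\nu(\Lambda\setminus\{0\}) \subseteq \Gamma$, so that multiplication by nonzero scalars and orthogonal expansion both preserve the $\R/\Gamma$-coset of the filtration level. Once this is noted, the argument reduces to a clean dimension count; no estimates or limiting arguments are needed.
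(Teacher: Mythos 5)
Your proof is correct and takes essentially the same approach as the paper's: both directions use the exact same orthogonal complementary spans $V_s$ and $W$ with the same dimension count and the same observation that $\nu$ takes values in $\Gamma$ on nonzero scalars. The paper's argument is just yours written out slightly less formally.
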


\begin{proof} Let $(v_1,\ldots,v_n)$ be an orthogonal ordered basis of $C$ and let $\mu$ be the multiplicity of some element $s \in \R/\Gamma$ in the filtration spectrum of $C$.  So by definition there are precisely $\mu$ elements $i_1,\ldots,i_{\mu} \in \{1,\ldots, n\}$ such that each $\ell(v_{i_j})\equiv s\,\,$mod$\,\Gamma$ for $j=1,\ldots,\mu$.  Any nonzero element $u$ in the $\mu$-dimensional subspace spanned by the $v_{i_j}$ can be written as $u= {\textstyle \sum_j}\lambda_jv_{i_j}$ where $\lambda_j\in\Lambda$ are not all zero, and then $\ell(u)={\textstyle \max_j}\{\ell(v_{i_j})- \nu(\lambda_j)\} \equiv s$\,mod $\Gamma$ since $\nu(\lambda_j)$ all belong to $\Gamma$.  This proves that $\mu$ is less than or equal to right hand side in the statement of the proposition.

For the reverse inequality, suppose that $V\leq C$ has dimension greater than $\mu$. For $i_1,\ldots,i_{\mu}$ as in the previous paragraph, let $W=span_{\Lambda}\{v_i\,|\,i\notin\{i_1,\ldots,i_{\mu}\}\}$.  Since $W$ has codimension $\mu$ and $\dim V>\mu$, $V$ and $W$ intersect non-trivially.  So there is some nonzero element $v={\textstyle \sum_{i\notin\{i_1,\ldots,i_{\mu}\}}}\lambda_i v_i\in V\cap W$.  Since the $v_i$'s are orthogonal, $\ell(v)$ has the same reduction modulo $\Gamma$ as one of the $v_i$ with $i\notin \{i_1,\ldots,i_{\mu}\}$, and so this reduction is not equal to $s$.  Thus no subspace of dimension greater than $\mu$ can have the property indicated in the statement of the proposition.\end{proof}

\begin{remark}\label{brmk}
Let us now relate our singular value decompositions to the Morse-Barannikov complex $\mathcal{C}(f)$ of an excellent Morse function $f\co M\to\R$ on a Riemannian manifold as described in \cite[Section 2]{LNV}, where the term ``excellent'' means in particular that the restriction of $f$ to its set of critical points is injective.  

This latter assumption means, in our language, that  the filtration spectrum of the orthogonalizable $\mathcal{K}$-space $(CM_{*}(f),\ell)$ consists of the index-$k$ critical values of $f$, each occurring with multiplicity one, since (essentially by definition)  $(CM_{*}(f),\ell)$ has an orthogonal basis given by the critical points of $f$, with filtrations given by their corresponding critical values.   So in view of Proposition \ref{multi}, the filtration function $\ell$ will restrict to any other orthogonal basis of $(CM_{*}(f),\ell)$ as a bijection to the set of critical values of $f$.  

Denoting by $\partial$ the boundary operator on $CM_{*}(f)$, Theorem \ref{existsvd} allows us to construct an orthogonal ordered basis $(x_1,\ldots,x_r,y_1,\ldots,y_r,z_1,\ldots,z_h)$ for $CM_{*}(f)$ such that $span\{x_1,\ldots,x_r\}=\Img(\partial)$, $span\{x_1,\ldots,x_r,z_1,\ldots,z_h\}=\ker(\partial)$, and $\partial y_i=x_i$. By the previous paragraph, then, each critical value $c$ of $f$ can then be written in exactly one way as $c=\ell(x_i)$ or $c=\ell(y_i)$ or $c=\ell(z_i)$.  

For $\lambda\in \R$, let $C^{\lambda}_{*}$ denote the subcomplex of $CM_{*}(f)$ spanned by the critical points with critical value at most $\lambda$.  Observe that $C^{\lambda}_{*}$ is equal to the subcomplex of $CM_{*}(f)$ spanned by the $x_i,y_i,z_i$ having $\ell\leq \lambda$ (indeed the latter is clearly a subspace of $C^{\lambda}_{*}$, but Proposition \ref{multi} implies that their dimensions are the same).  Now the treatment of the Barannikov complex in \cite{LNV} involves separating the critical values $c$ of $f$ into three types, where $\ep$ represents a small positive number:
\begin{itemize}
\item The \emph{lower} critical values, for which the natural map $H_{*}(C^{c+\ep}_{*}/C^{c-\ep}_{*})\to H_{*}(CM_{*}(f)/C^{c-\ep}_{*})$ vanishes;
\item The \emph{upper} critical values, for which the natural map $H_{*}(C^{c+\ep}_{*})\to H_{*}(C^{c+\ep}_{*},C^{c-\ep}_{*})$ vanishes (equivalently,  $H_{*}(C^{c-\ep}_{*})\to H_{*}(C^{c+\ep}_{*})$ is surjective);
\item All other critical values, called \emph{homological} critical values.
\end{itemize}

If $w$ is any of $x_i,y_i,$ or $z_i$ and if $\ell(w)=c$, one has $C^{c+\ep}_{*}=C^{c-\ep}_{*}\oplus\langle w\rangle$.  Consequently it is easy to see that $c$ is a lower critical value if and only if $c=\ell(x_i)$ for some $i$, that $c$ is an upper critical value if and only if $c=\ell(y_i)$ for some $i$, and that $c$ is a homological critical value if and only if $c=\ell(z_i)$ for some $i$.  Moreover, in the case that $c$ is an upper critical value so that $c=\ell(y_i)$ for some $i$, the natural map $H_*(C^{c+\ep}_{*}/C^{\lambda}_{*})\to H_*(C^{c+\ep}_{*}/C^{c-\ep}_{*})$ vanishes precisely for $\lambda\leq \ell(x_i)$.  

In \cite[Definition 2.9]{LNV}, the Morse-Barannikov complex $(\mathcal{C}(f),\partial_B)$ is described as the chain complex generated by the critical values of $f$, with boundary operator given by $\partial_Bc=0$ if $c$ is a lower critical value or a homological critical value, and \[ \partial_Bc = \sup\{\lambda|H_*(C^{c+\ep}_{*}/C^{\lambda}_{*})\to H_*(C^{c+\ep}_{*}/C^{c-\ep}_{*})\mbox{ is the zero map}\} \] if $c$ is an upper critical value.  The foregoing discussion shows that the unique linear map $(CM_{*}(f),\partial)\to (\mathcal{C}(f),\partial_B)$ that sends the basis elements $x_i,y_i,z_i$ to their respective filtration levels $\ell(x_i),\ell(y_i),\ell(z_i)$ defines an isomorphism of chain complexes.  In particular, the Morse-Barannikov complex can be recovered quite directly from a singular value decomposition.
\end{remark}

\section{Barcodes}\label{barsect}

Recall from the introduction that a persistence module $\mathbb{V}=\{V_t\}_{t\in \R}$ over the field $\mathcal{K}$ is a system of $\mathcal{K}$-vector spaces $V_t$ with suitably compatible maps $V_s\to V_t$ whenever $s\leq t$.  

A special case of a persistence module is obtained by choosing an interval $I\subset \R$ and defining 
\[ (\mathcal{K}_{I})_{t}=\left\{\begin{array}{ll}\mathcal{K} & t\in I \\ 0 & t\notin I \end{array}\right. \] with the maps $(\mathcal{K}_{I})_{s}\to (\mathcal{K}_{I})_{t}$ defined to be the identity when $s,t\in I$ and to be zero otherwise.

A persistence module $\mathbb{V}$ is called \emph{pointwise finite-dimensional} if each $V_t$ is finite-dimensional.  Such persistence modules obey the following structure theorem. 

\begin{theorem} \label{strthm}(\cite{ZC}, \cite{CB})  Every pointwise finite-dimensional persistence module $\mathbb V$ can be uniquely decomposed into the following normal form 
\begin{equation} \label{normalform}\begin{array}{l}
\mathbb V \cong \bigoplus_{\alpha} \mathcal{K}_{I_{\alpha}}
\end{array} \end{equation}
for certain intervals $I_{\alpha}\subset \R$
\end{theorem}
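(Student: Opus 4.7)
The plan is to separate existence and uniqueness. For existence, the strategy is first to identify the indecomposable pointwise finite-dimensional persistence modules as precisely the interval modules $\mathcal{K}_I$, and then to argue that any pointwise finite-dimensional $\mathbb{V}$ decomposes as a direct sum of indecomposables. Uniqueness will then follow from a Krull--Schmidt argument, since the endomorphism ring of each $\mathcal{K}_I$ is just the field $\mathcal K$, which is local.

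First I would check that each $\mathcal{K}_I$ is indecomposable: if $\mathcal{K}_I = \mathbb{W}\oplus \mathbb{W}'$, then for any $t\in I$ the one-dimensional space $(\mathcal{K}_I)_t$ splits as $W_t\oplus W'_t$, and since the structure maps between points of $I$ are identities, compatibility forces one of $\mathbb{W},\mathbb{W}'$ to be zero at every $t\in I$, and hence globally. Conversely, to show every indecomposable is of this form, I would use the fact (an analogue of Fitting's lemma, applicable because each $V_t$ is finite-dimensional) that the endomorphism ring of an indecomposable pointwise finite-dimensional persistence module is local, and leverage this to pin down its shape as supported on an interval with identity transition maps.

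For the existence of the decomposition, in the finite-type case treated in \cite{ZC} the cleanest route is to repackage $\mathbb{V}$ as a finitely generated $\Z$-graded module over $\mathcal{K}[x]$, where $x$ acts by a structure map, and then apply the structure theorem for finitely generated modules over the PID $\mathcal{K}[x]$; each cyclic summand corresponds to an interval module. For the general pointwise finite-dimensional case treated by Crawley-Boevey in \cite{CB}, one instead works directly: one chooses, in a compatible way across $t\in \R$, complements to $\bigcup_{s<t}\Img(\sigma_{st})$ inside $V_t$ and to $\bigcap_{u>t}\ker(\sigma_{tu})$ inside the relevant quotients, using pointwise finite-dimensionality at each stage and a transfinite or Zorn-type argument to organize these choices into a genuine direct sum decomposition into interval summands. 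Uniqueness is then a matter of observing that the multiplicity of an interval $I$ in the decomposition is determined by rank counts of the maps $\sigma_{st}$, which are invariants of $\mathbb V$ itself.

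The main obstacle is existence in the general pointwise finite-dimensional case: the index set $\R$ is uncountable, so one cannot simply induct on generators, and the ring-theoretic machinery of \cite{ZC} is unavailable since $\mathbb{V}$ need not be finitely generated over any polynomial ring. The delicate part of Crawley-Boevey's argument is ensuring that the iterative peeling of interval summands correctly tracks all four interval types (open, closed, and either half-open variant), as well as unbounded intervals, and that the residual persistence module at each stage remains pointwise finite-dimensional so that the process can continue coherently.
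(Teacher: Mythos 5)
The paper does not prove this theorem; it states it with citations to \cite{ZC} and \cite{CB} and uses it as a black box. So there is no ``paper's own proof'' to compare against, and your task here was really to reconstruct a proof from the literature. Your sketch is broadly sound: the uniqueness argument via Azumaya--Krull--Schmidt (valid because $\mathrm{End}(\mathcal{K}_I)\cong\mathcal{K}$ is local, which handles infinite direct sums) is correct, and the two existence routes you describe --- graded $\mathcal{K}[x]$-module structure theory for the finite-type case of \cite{ZC}, and a direct construction for the general pointwise-finite-dimensional case of \cite{CB} --- are the right references and the right ideas.

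Two cautionary notes on the details, since you are partly reconstructing Crawley-Boevey's argument. First, the logical structure in \cite{CB} is not ``classify indecomposables, then decompose into indecomposables'': the decomposition into interval modules is built directly via a functorial filtration argument, and the classification of indecomposables (that they are exactly the $\mathcal{K}_I$) is obtained as a corollary rather than used as an input. Your proposed route of first establishing local endomorphism rings for arbitrary indecomposable pointwise finite-dimensional modules and then proving the existence of a decomposition into indecomposables is harder than it looks, because pointwise finite-dimensionality does not give finite length, so ordinary Fitting-lemma arguments do not directly apply; this is precisely the difficulty Crawley-Boevey's filtration construction is designed to sidestep. Second, your description of choosing compatible complements to $\bigcup_{s<t}\mathrm{Im}(\sigma_{st})$ and $\bigcap_{u>t}\ker(\sigma_{tu})$ gestures at the right kind of structure, but the actual bookkeeping in \cite{CB} is organized around a different device (families of subspaces indexed by pairs of cuts of $\R$), and one should not expect a naive Zorn's-lemma choice of complements to glue into a global direct sum decomposition without that extra structure. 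In short: the citations, the uniqueness argument, and the overall plan are correct, but the existence half of the general case is more delicate than your sketch suggests and does not go through quite as written.
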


The (persistent homology) \textbf{barcode} of $\mathbb{V}$ is then by definition the multiset $(S,\mu)$ where $S$ is the set of intervals $I$ for which $\mathcal{K}_I$ appears in (\ref{normalform}) and $\mu(I)$ is the number of times that $\mathcal{K}_I$ appears.  As follows from the discussion at the end of the introduction in \cite{CB}, the barcode is a complete invariant of a finite-dimenisonal persistence module.

In classical persistent homology, where the persistence module is constructed from the filtered homologies of the \v{C}ech or Rips complexes associated to a point cloud,
\cite{ZC} provides an algorithm computing the resulting barcode (cf. Theorem \ref{algsvd} below).  In this case the intervals in the barcode are all half-open intervals $[a,b)$ (with possibly $b=\infty$). See, e.g., \cite[Figure 4]{Ghr08}, \cite[p. 278]{CBull} for some nice illustrations of barcodes. 

Returning to the context of the Floer-type complexes $(C_*, \partial, \ell)$ considered in this paper, for any $t\in\R$, if we let $C_{k}^{t}=\{c\in C_k|\ell(c)\leq t\}$ the assumption on the effect of $\partial$ on $\ell$ shows that we have a subcomplex $C_{*}^{t}$; just as discussed in the introduction for any $k$ the degree-$k$ homologies $H_{k}^{t}(C_*)$ of these complexes yield a persistence module over the base field $\mathcal K$.  Typically $H_{k}^{t}(C_*)$ can be infinite-dimensional (and also may not satisfy the weaker descending chain condition which appears in \cite{CB}), so Theorem \ref{strthm} usually does not apply to these persistence modules. The exception to this is when the subgroup $\Gamma\leq\R$ used in the Novikov field $\Lambda=\Lambda^{\mathcal K,\Gamma}$ is the trivial group, in which case we just have $\Lambda=\mathcal K$ and the chain groups $C_k$ (and so also the homologies) are finite-dimensional over $\mathcal{K}$.  So when $\Gamma=\{0\}$, Theorem \ref{strthm} does apply to show that the persistence module $\{H_{k}^{t}(C_*)\}_{t\in\R}$ decomposes as a direct sum of interval modules $\mathcal{K}_I$; by definition the degree-$k$ part of the barcode of $C_*$ is then the multiset of intervals appearing in this direct sum decomposition.  We have:

\begin{theorem}\label{classical}
Assume that $\Gamma=\{0\}$ and let $(C_*,\partial,\ell)$ be a Floer-type complex over $\Lambda^{\mathcal{K},\{0\}}=\mathcal{K}$. For each $k\in \Z$ write $\partial_{k+1}\co C_{k+1}\to C_{k}$ for the degree-$(k+1)$ part of the boundary operator $\partial$, and write $Z_k=\ker \partial_k$, so that $\partial_{k+1}$ has image contained  in $Z_{k}$.  Let $\left((y_1,\ldots,y_{n}),(x_1,\ldots,x_m)\right)$ be a singular value decomposition for  $\partial_{k+1}\co C_{k+1}\to Z_k$. Then if $r=rank(\partial_{k+1})$, the degree-$k$ part of the barcode of $C_*$ consists precisely of: 
\begin{itemize}\item an interval $[\ell(x_i),\ell(y_i))$ for each $i\in \{1,\ldots,r\}$ such that $\ell(y_i)>\ell(x_i)$; and 
\item an interval $[\ell(x_i),\infty)$ for each $i\in \{r+1,\ldots,m\}$.
\end{itemize}
\end{theorem}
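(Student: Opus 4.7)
The plan is to run a direct computation with the given singular value decomposition, exploiting two simplifications that arise because $\Gamma = \{0\}$: the Novikov field $\Lambda$ collapses to $\mathcal K$ with trivial valuation $\nu$, and every filtered chain group $C_k^t$ is finite-dimensional over $\mathcal K$, so $\{H_k^t(C_*)\}$ is a pointwise finite-dimensional persistence module to which Theorem \ref{strthm} applies.

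First I would translate orthogonality into an explicit description of the sublevel subspaces. Since $\nu$ is trivial, for any orthogonal ordered tuple $(w_1,\ldots,w_p)$ in a non-Archimedean normed $\mathcal K$-space and any scalars $\lambda_i \in \mathcal K$ one has $\ell\bigl(\sum_i \lambda_i w_i\bigr) = \max\{\ell(w_i): \lambda_i \neq 0\}$. Applied to the orthogonal basis $(x_1,\ldots,x_m)$ of $Z_k$ and the orthogonal basis $(y_1,\ldots,y_n)$ of $C_{k+1}$ furnished by the SVD, this yields
\[ Z_k^t = \mathrm{span}_{\mathcal K}\{x_i : \ell(x_i)\leq t\},\qquad C_{k+1}^t = \mathrm{span}_{\mathcal K}\{y_j : \ell(y_j)\leq t\}, \]
and hence, since $\partial_{k+1} y_i = x_i$ for $i \leq r$ and $\partial_{k+1} y_i = 0$ for $i > r$,
\[ B_k^t := \partial_{k+1}(C_{k+1}^t) = \mathrm{span}_{\mathcal K}\{x_i : 1\leq i\leq r,\ \ell(y_i)\leq t\}. \]

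Next I would read off the persistence module $\{H_k^t(C_*)\}$ from these descriptions. For each $i \in \{1,\ldots,m\}$ define an interval $I_i \subset \R$ by $I_i = [\ell(x_i),\ell(y_i))$ if $i \leq r$ (with the convention that $I_i = \varnothing$ when $\ell(y_i) = \ell(x_i)$) and $I_i = [\ell(x_i),\infty)$ if $i > r$. For each $t$, the cosets $\{[x_i] : t \in I_i\}$ form a $\mathcal K$-basis of $H_k^t(C_*) = Z_k^t/B_k^t$: they span by the two formulas above, and they are linearly independent because any vanishing combination would give a nonzero element of $Z_k^t \cap B_k^t$ that, by orthogonality of $(x_1,\ldots,x_m)$, could not be expressed as a combination of those $x_j$ with $j\leq r$ and $\ell(y_j)\leq t$. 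Moreover the inclusion-induced maps $H_k^s \to H_k^t$ send each such $[x_i]$ with $s \in I_i$ to $[x_i]$. Consequently, the $\mathcal K$-linear maps $\phi_i\co \mathcal K_{I_i} \to \{H_k^t(C_*)\}$ sending $1$ to $[x_i]$ assemble into an isomorphism of persistence modules $\bigoplus_{i=1}^m \mathcal K_{I_i} \xrightarrow{\cong} \{H_k^t(C_*)\}_{t\in\R}$.

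Finally, since each $H_k^t(C_*)$ has dimension at most $m < \infty$, Theorem \ref{strthm} applies and guarantees that the multiset of nonempty intervals appearing in any such interval decomposition of $\{H_k^t(C_*)\}$ is precisely the degree-$k$ barcode. Discarding the empty intervals (which correspond exactly to the pairs $i \leq r$ with $\ell(y_i) = \ell(x_i)$) yields the multiset asserted in the theorem. The main obstacle in this argument, such as it is, lies in step two: verifying that the $[x_i]$ form an honest basis of $H_k^t(C_*)$ rather than merely a spanning set. This is where orthogonality of the SVD does all the real work, trivializing what could otherwise be a delicate quotient computation.
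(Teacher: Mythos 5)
Your proposal is correct and takes essentially the same approach as the paper: both use the orthogonality of the bases supplied by the singular value decomposition (together with the triviality of $\nu$ when $\Gamma=\{0\}$) to give explicit spanning sets for $Z_k^t$ and $\partial_{k+1}(C_{k+1}^t)$, then read off the filtered homology $H_k^t$ and its transition maps and identify the resulting persistence module with a direct sum of interval modules $\mathcal K_{I_i}$, concluding by the uniqueness part of Theorem \ref{strthm}. The only cosmetic difference is that the paper packages this as an explicit abstract persistence module $\mathbb V = \{V_t\}$ with transition maps written out in (\ref{trans-map}), whereas you construct the interval-module morphisms $\phi_i$ directly, but these are the same computation.
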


\begin{proof} As explained earlier, $\{H^{t}_{k}(C_*)\}_{t \in \mathbb R}$ is a pointwise-finite-dimensional persistence module. Therefore by Theorem \ref{strthm}, we have a normal form ${\textstyle \bigoplus_{\alpha}}  \mathcal{K}_{I_{\alpha}}$. Given a singular value decomposition $((y_1, ..., y_n),(x_1, ..., x_m))$ as in the hypothesis, we first claim that, for all $t \in \mathbb R$,
\begin{equation} \label{svd-fil-homo}
H_k^t(C_*) = span_{\mathcal K} \left\{ [x_i]  \,\bigg|\,
\begin{array}{rcl}
          \ell(x_i) \leq t < \ell(y_i) & \mbox{if}
         & i \in \{1, \ldots, r\} \\ \ell(x_i) \leq t  & \mbox{if} & i \in \{r+1, \ldots, m\} \\
         \end{array} \right\}.
\end{equation}
In fact, $(x_1, ..., x_m)$ is an orthogonal ordered basis for $\ker \partial_k$, so $\{x_i \,| \, \ell(x_i) \leq t\}$ is an orthogonal basis for $\ker(\partial_k|_{C_k^t})$. Meanwhile, by Lemma \ref{checkingrobust} when $\Gamma =\{0\}$ (so that $\nu$ vanishes on all nonzero elements of $\Lambda$), an element $x=\sum_{i=1}^{m}\lambda_i x_i$ lies in $\partial_{k+1}(C_{k+1}^{t})$ if and only if it holds both that $\lambda_i=0$ for all $i>r$ and that $\ell\left(\sum_{i=1}^{r}\lambda_i y_i\right)\leq t$, i.e. if and only if  $x\in span_{\mathcal{K}}\{x_i|1\leq i\leq r,\,\ell(y_i)\leq t\}$.
So we have bases $\{x_i\,|\,\ell(x_i)\leq t\}$ for $Z_k\cap C_{k}^{t}$ and $\{x_i|1\leq i\leq r,\,\ell(y_i)\leq t\}$ for $\partial_{k+1}(C_{k+1}^{t})$, from which the expression (\ref{svd-fil-homo}) for $H_{k}^{t}(C_*)$ immediately follows.

Write $V_t$ for the right hand side of (\ref{svd-fil-homo}). For $s \leq t$, the inclusion-induced map $\sigma_{st}: H_k^s(C_*) \rightarrow H_k^t(C_*)$ is identified with the map $\sigma_{st}: V_s \rightarrow V_t$ defined as follows, for any generator $[x_i]$ of $V_s$, 
\begin{equation} \label{trans-map}
\sigma_{st} ([x_i]) =     \left\{ \begin{array}{rcl}
         [x_i]& \mbox{if}
         & \ell(y_i) >t \,\,\,$or$\,\,\, i \in \{r+1, ..., s\}   \\ 
         0 & \mbox{if} & \ell(y_i) \leq t
                \end{array}\right. . 
\end{equation}
Clearly, this is a $\mathcal K$-linear homomorphism. It is easy to check that $\sigma_{ss} = I_{V_s}$ and for $s \leq t \leq u$, $\sigma_{su} = \sigma_{tu} \circ \sigma_{st}$. Therefore, $\mathbb V = \{V_{t}\}_{t \in \mathbb R}$ is a persistence module, which is (tautologically) {\it isomorphic}, in the sense of persistence modules, to $\{H_k^t(C_*)\}_{t \in \mathbb R}$. 

On the other hand, the normal form of $\mathbb V$ can be explicitly written out as follows: 
\begin{equation} \label{expli-nor-form}
\mathbb V \cong \bigoplus_{1\leq i\leq r} \mathcal{K}_{[\ell(x_i), \ell(y_i))} \oplus \bigoplus_{r+1 \leq j \leq m} \mathcal{K}_{[\ell(x_j), \infty)}.
\end{equation}

Indeed the indicated isomorphism of persistence modules can be obtained by simply mapping  $1\in(\mathcal{K}_{[\ell(x_i),\ell(y_i))]})_{t} =\mathcal{K}$ to the class $[x_i]$ for $t\in [\ell(x_i),\ell(y_i))$ and $i=1,\ldots,r$, and similarly for the $\mathcal{K}_{[\ell(x_i), \infty)}$ for $i>r$.
\end{proof}

Thus in the ``classical'' $\Gamma=\{0\}$ case the barcode can be read off directly from the filtration levels of the elements involved in a singular value decomposition; in particular, these filtration levels are independent of the choice of singular value decomposition, consistently with Theorem \ref{indep} below. For nontrivial $\Gamma$ there is clearly some amount of arbitrariness of the filtration levels of the elements of a singular value decomposition: if $\left((y_1,\ldots,y_n),(x_1,\ldots,x_m)\right)$ is a singular value decomposition, then $\left((T^{g_1}y_1,\ldots,T^{g_r}y_r,y_{r+1},\ldots,y_n),(T^{g_1}x_1,\ldots,T^{g_m}x_m)\right)$ is also a singular value decomposition for any $g_1,\ldots,g_m\in \Gamma$; based on Theorem \ref{classical} one would expect this to result in a change of the positions of each of the intervals in the barcode.  Note that this change moves the endpoints of the intervals but does not alter their lengths.  This suggests the following definition, related to the ideas of boundary depth and filtration spectrum:
 
\begin{dfn} \label{dfnbc} Let  $(C_*, \partial, \ell)$ be a Floer-type complex over $\Lambda=\Lambda^{\mathcal{K},\Gamma}$ and for each $k\in \Z$ write $\partial_k=\partial|_{C_k}$ and $Z_k=\ker \partial_k$. Given any $k\in \Z$ choose a singular value decomposition $((y_1, ..., y_n), (x_1, ..., x_m))$ for the $\Lambda$-linear map  $\partial_{k+1}\co C_{k+1}\to Z_k$ and let $r$ denote the rank of $\partial_{k+1}$. Then the degree-$k$ {\bf verbose barcode} of $(C_*,\partial,\ell)$ is the multiset of elements of $(\mathbb R/\Gamma) \times [0, \infty]$ consisting of \begin{itemize}
\item [(i)] a pair $(\ell(x_i)$ mod $\Gamma, \ell(y_i) - \ell(x_i))$ for $i = 1, ..., r$;
\item[(ii)] a pair $(\ell(x_i)$ mod $\Gamma, \infty)$ for $i = r+1, ...,m$.\end{itemize} 
The {\bf concise barcode} is the submultiset of the verbose barcode consisting of those elements whose second element is positive.\end{dfn}

Thus in the case that $\Gamma=\{0\}$ elements $[a,b)$ of the persistent homology barcode correspond according to Theorem \ref{classical} to elements $(a,b-a)$ of the concise barcode.  In general we think of an element $([a],L)$ of the (verbose or concise) barcode as corresponding to an interval with left endpoint $a$ and length $L$, with the understanding that the left endpoint is only specified up to the additive action of $\Gamma$.

Definition \ref{dfnbc} appears to depend on a choice of singular value decomposition, but we will see in Theorem \ref{indep} that different choices of singular value decompositions yield the same verbose (and hence also concise) barcodes. Of course in the case that $\Gamma=\{0\}$ this already follows from Theorem \ref{classical}; in the opposite extreme case that $\Gamma=\R$ (in which case the first coordinates of the pairs in the verbose and concise barcodes carry no information) it can easily be inferred from Theorem \ref{torsion}.

\begin{remark} Our reduction modulo $\Gamma$ in Definition \ref{dfnbc} (i) and (ii) is easily seen to be necessary if there is to be any hope of the verbose and concise barcodes being independent of the choice of singular value decomposition, for the reason indicated in the paragraph before Definition \ref{dfnbc}.  Namely, acting on the elements involved in the singular value decompositon by appropriate elements of $\Lambda$ could change the various quantities $\ell(x_i)$ involved in the barcode by  arbitrary elements of $\Gamma$.
\end{remark}

\begin{remark} In the spirit of Theorem \ref{algsvd}, we outline the procedure for computing the degree-$k$ verbose barcode for a Floer-type complex $(C_*,\partial,\ell)$:
\begin{itemize}
\item First, by applying the algorithm in Theorem \ref{algsvd} to $\partial_k\co C_k\to C_{k-1}$ or otherwise, obtain an orthogonal ordered basis $(w_1,\ldots,w_m)$ for $\ker\partial_k$.
\item Express $\partial_{k+1}\co C_{k+1}\to \ker\partial_k$ in terms of an orthogonal basis for $C_{k+1}$ and the basis $(w_1,\ldots,w_m)$ for $\ker\partial_k$, and apply Theorem \ref{algsvd} to obtain data $(v'_1,\ldots,v'_n)$ and $\mathcal{U}$ as in the statement of that theorem.
\item The degree-$k$ verbose barcode consists of one element $([\ell(Av'_i)],\ell(v'_i)-\ell(Av'_i))$ for each $i\in \mathcal{U}$, and one element $([a],\infty)$ for each $[a]$ lying in the multiset complement $\{[\ell(w_1)],\ldots,[\ell(w_m)]\}\setminus\{[\ell(Av'_i)]|i\in \mathcal{U}\}$.
\end{itemize}
\end{remark}
\subsection{Relation to spectral invariants} \label{specrel}

Following a construction that is found in \cite{Sc00}, \cite{Oh05} in the context of Hamiltonian Floer theory (and which is closely related to classical minimax-type arguments in Morse theory), we may describe the \textbf{spectral invariants} associated to a Floer-type complex $(C_*,\partial,\ell)$: letting $H_k(C_*)$ denote the degree-$k$ homology of $C_*$,  these invariants take the form of a map $\rho\co H_k(C_*)\to \R\cup\{-\infty\}$ defined by, for $\alpha\in H_k(C_*)$, \[ \rho(\alpha)=\inf\{\ell(c)\,|\,c\in C_k,\,[c]=\alpha\} \] (where $[c]$ denotes the homology class of $c$).  In a more general context the main result of \cite{Ush08} shows that the infimum in the definition of $\rho(\alpha)$ is always attained.

The spectral invariants are reflected in the concise barcode in the following way.

\begin{prop}\label{spectral} Let $\mathcal{B}_{C,k}$ denote the degree-$k$ part of the concise barcode of a Floer-type complex $(C_*,\partial,\ell)$, obtained from a singular value decomposition of $\partial_{k+1}\co C_{k+1}\to \ker\partial_k$. Then:
\begin{itemize} 
\item[(i)] There is a basis $\{\alpha_1,\ldots,\alpha_h\}$ for $H_k(C_*)$ over $\Lambda$ such that the submultiset of $\mathcal{B}_{C,k}$ consisting of elements with second coordinate equal to $\infty$ is equal to $\{([\rho(\alpha_1)],\infty),\ldots,([\rho(\alpha_h)],\infty)\}$ where for each $i$, $[\rho(\alpha_i)]$ denotes the reduction of $\rho(\alpha_i)$ modulo $\Gamma$. 
\item[(ii)] For any class $\alpha\in H_k(C_*)$, if we write $\alpha=\sum_{i=1}^{h}\lambda_i\alpha_i$ where $\lambda_i\in \Lambda$ and $\{\alpha_1,\ldots,\alpha_h\}$ is the basis from (i), then $\rho(\alpha)=\max_i(\rho(\alpha_i)-\nu(\lambda_i))$.  In particular, if $\alpha\neq 0$, then the concise barcode $\mathcal{B}_{C,k}$ contains an element of the form $([\rho(\alpha)],\infty)$.
\end{itemize}
\end{prop}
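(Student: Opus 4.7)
The plan is to use the singular value decomposition directly to produce the claimed basis for $H_k(C_*)$ and to compute spectral invariants via orthogonality.

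Let $((y_1,\ldots,y_n),(x_1,\ldots,x_m))$ be the singular value decomposition of $\partial_{k+1}\co C_{k+1}\to \ker\partial_k$ used to build $\mathcal{B}_{C,k}$, and let $r=\mathrm{rank}(\partial_{k+1})$. By the definition of a singular value decomposition, $(x_1,\ldots,x_r)$ is an orthogonal ordered basis of $\mathrm{Im}(\partial_{k+1})$ and $(x_1,\ldots,x_m)$ is an orthogonal ordered basis of $\ker\partial_k$. Consequently the classes $\alpha_j := [x_{r+j}]$ for $j=1,\ldots,h:=m-r$ form a $\Lambda$-basis of $H_k(C_*)=\ker\partial_k/\mathrm{Im}(\partial_{k+1})$. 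The plan is to show that this is the basis promised in (i) and that the formula in (ii) holds for any class expressed in it.

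The key step is the following observation: for any $c\in\ker\partial_k$, write $c=\sum_{i=1}^{m}\mu_i x_i$; then $c':=\sum_{i=r+1}^{m}\mu_i x_i$ represents the same class in $H_k(C_*)$ (since $c-c'\in\mathrm{span}_{\Lambda}\{x_1,\ldots,x_r\}=\mathrm{Im}(\partial_{k+1})$), and any other representative of $[c]$ has the form $c'+b$ with $b\in\mathrm{span}_{\Lambda}\{x_1,\ldots,x_r\}$. Since $(x_1,\ldots,x_m)$ is orthogonal, the subspaces $\mathrm{span}_{\Lambda}\{x_1,\ldots,x_r\}$ and $\mathrm{span}_{\Lambda}\{x_{r+1},\ldots,x_m\}$ are orthogonal (by Lemma \ref{bsorprop}(iii) applied to the obvious splitting, or directly from the definition), so $\ell(c'+b)=\max\{\ell(c'),\ell(b)\}\geq \ell(c')$. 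Hence $c'$ achieves the infimum defining $\rho([c])$, and by orthogonality of $(x_{r+1},\ldots,x_m)$,
\begin{equation*}
\rho([c])=\ell(c')=\max_{r+1\leq i\leq m}\bigl(\ell(x_i)-\nu(\mu_i)\bigr).
\end{equation*}

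Specializing to $c=x_{r+j}$ gives $\rho(\alpha_j)=\ell(x_{r+j})$, so by Definition \ref{dfnbc}(ii) the infinite-length part of $\mathcal{B}_{C,k}$ is exactly $\{([\rho(\alpha_1)],\infty),\ldots,([\rho(\alpha_h)],\infty)\}$, proving (i). For (ii), given $\alpha=\sum_{j=1}^{h}\lambda_j\alpha_j$, the element $\sum_{j=1}^{h}\lambda_j x_{r+j}$ is a representative of $\alpha$ of the form handled above (with $\mu_{r+j}=\lambda_j$), so the displayed formula yields
\begin{equation*}
\rho(\alpha)=\max_{1\leq j\leq h}\bigl(\ell(x_{r+j})-\nu(\lambda_j)\bigr)=\max_{1\leq j\leq h}\bigl(\rho(\alpha_j)-\nu(\lambda_j)\bigr).
\end{equation*}
For the final clause, if $\alpha\neq 0$ then some $\lambda_j\neq 0$; letting $j_0$ realize the maximum above we have $\rho(\alpha)=\rho(\alpha_{j_0})-\nu(\lambda_{j_0})$, and since $\nu(\lambda_{j_0})\in\Gamma$ this gives $[\rho(\alpha)]=[\rho(\alpha_{j_0})]$ in $\R/\Gamma$, so $([\rho(\alpha)],\infty)=([\rho(\alpha_{j_0})],\infty)\in\mathcal{B}_{C,k}$ by (i).

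There is no real obstacle here beyond carefully invoking orthogonality; the only mild subtlety is that $\rho(\alpha)$ itself (before reducing modulo $\Gamma$) genuinely depends on the choice of SVD, which is why one must reduce modulo $\Gamma$ in the final identification of $[\rho(\alpha)]$ with a first coordinate appearing in $\mathcal{B}_{C,k}$. Independence of the basis $\{\alpha_j\}$ and the barcode itself from the choice of SVD will be established separately (in Theorem \ref{indep}); the argument above only requires that \emph{some} SVD has been fixed.
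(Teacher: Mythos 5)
Your argument is correct and follows essentially the same route as the paper: take the SVD, observe that $\mathrm{span}_\Lambda\{x_{r+1},\ldots,x_m\}$ is an orthogonal complement to $\mathrm{Im}\,\partial_{k+1}$ inside $\ker\partial_k$, set $\alpha_i=[x_{r+i}]$, and use orthogonality to evaluate $\rho$ on a general combination. One small correction to your closing remark: $\rho(\alpha)$ is defined intrinsically as an infimum of filtration levels and does not depend on the choice of SVD; what does depend on the SVD (and only up to elements of $\Gamma$) is the collection of real numbers $\ell(x_{r+i})$, which is why the barcode records only the cosets $[\ell(x_{r+i})]\in\R/\Gamma$ and why the identification $[\rho(\alpha)]=[\ell(x_{r+j_0})]$ takes place modulo $\Gamma$.
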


\begin{proof}
Let $((y_1,\ldots,y_m),(x_1,\ldots,x_n))$ be a singular value decomposition of $\partial_{k+1}\co C_{k+1}\to \ker\partial_k$.  In particular, if $r$ is the rank of $\partial_{k+1}$, then $span_{\Lambda}\{x_{r+1},\ldots,x_m\}$ is an orthogonal complement to ${\rm Im} \partial_{k+1}$.
Hence the classes $\alpha_i=[x_{r+i}]$ (for $1\leq i\leq m-r$) form a basis for $H_k(C_*)$, and the dimension of the $H_k(C_*)$ over $\Lambda$ is $h=m-r$.  By definition,   the submultiset of $\mathcal{B}_{C,k}$ consisting of elements with second coordinate equal to $\infty$ is $\{([\ell(x_{r+1})],\infty),\ldots,([\ell(x_{m})],\infty)\}$, so both part (i) and the first sentence of part (ii) of the proposition will follow if we show that, for any $\lambda_1,\ldots,\lambda_{m-r}\in \Lambda$ we have \begin{equation} \label{rholambda}
\rho\left(\sum_{i=1}^{m-r}\lambda_i\alpha_i\right)=\max_i(\ell(x_{r+i})-\nu(\lambda_i))
 \end{equation} (indeed the special case of (\ref{rholambda}) in which $\lambda_i=\delta_{ij}$ implies that $\rho(\alpha_j)=\ell(x_{r+j})$).
 
To prove (\ref{rholambda}), simply note that any class $\alpha=\sum_i\lambda_i\alpha_i\in H_k(C_*)$ is represented by the chain $\sum_{i}\lambda_ix_{r+i}$, and that the general representative of $\alpha$ is given by $x=y+\sum_{i}\lambda_ix_{r+i}$ for $y\in {\rm Im}\partial_{k+1}$.  So since $\{x_{r+1},\ldots,x_{m}\}$ is an orthogonal basis for an orthogonal complement to ${\rm Im}\partial_{k+1}$ it follows that \[ \ell(x)=\max\left\{\ell(y),\ell\left(\sum_i\lambda_ix_{r+i}\right)\right\}\geq \ell\left(\sum_i\lambda_ix_{r+i}\right)=\max_i(\ell(x_{r+i})-\nu(\lambda_i)),\] with equality if $y=0$.  Thus the minimal value of $\ell$ on any representative $x$ of $\sum_{i=1}^{m-r}\lambda_i\alpha_i$ is equal to $\max_i(\ell(x_{r+i})-\nu(\lambda_i))$, proving (\ref{rholambda}).

As noted earlier, (\ref{rholambda}) directly implies (i) and the first sentence of (ii).  But then the second sentence of (ii) also follows immediately, since each $\lambda\in \Lambda\setminus\{0\}$ has $\nu(\lambda)\in \Lambda$, and so if $\alpha=\sum_{i}\lambda_i\alpha_i\neq 0$ it follows from (\ref{rholambda}) that $\rho(\alpha)$ is congruent mod $\Gamma$ to one of the $\rho(\alpha_i)$.
\end{proof}

\subsection{Duality and coefficient extension for barcodes} \label{dualcodes}

Given a Floer-type complex $(C_*,\partial,\ell)$ over $\Lambda=\Lambda^{\mathcal{K},\Gamma}$ one obtains a dual complex $(C_{*}^{\vee},\delta,\ell^*)$ by taking $C_{k}^{\vee}$ to be the dual over $\Lambda$ of $C_{-k}$, $\delta\co C^{\vee}_{k}\to C^{\vee}_{k-1}$ to be the adjoint of $\partial\co C_{-k+1}\to C_{-k}$ and defining $\ell^{*}$ as in 
Section \ref{dualss}.  The following can be seen as a generalization both of \cite[Corollary 1.6]{U10} and of \cite[Proposition 2.4]{SMV}

\begin{prop}
For all $k$, denote by $\tilde{\mathcal{B}}_{C,k}$ the degree-$k$ verbose barcode of $(C_*,\partial,\ell)$. Then the degree-$k$ verbose barcode of $(C_{*}^{\vee},\delta,\ell^{*})$ is given by \begin{equation}\label{dualize} \tilde{\mathcal{B}}_{C^{\vee},k}=\{([-a],\infty)\,|\,([a],\infty)\in\tilde{\mathcal{B}}_{C,-k}\}\cup\{([-a-L],L)\,|\,L<\infty\mbox{ and }([a],L)\in\tilde{\mathcal{B}}_{C,-k-1}\}. \end{equation}
\end{prop}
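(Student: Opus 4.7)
The plan is to reduce the claim to a direct computation on the elementary summands provided by the normal form decomposition of $(C_*,\partial,\ell)$. The main ingredients I will use are: (a) Proposition \ref{ournormalform}, which expresses any Floer-type complex as a finite orthogonal direct sum of elementary subcomplexes of the two forms shown in (\ref{simple}); (b) the fact that duality is compatible with such orthogonal direct sums; and (c) the fact that verbose barcodes are additive under orthogonal direct sums.

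For (b), if $C_*=A_*\oplus B_*$ with $A_*$ and $B_*$ orthogonal Floer-type subcomplexes, then combining orthogonal bases for $A_*$ and $B_*$ yields an orthogonal basis for $C_*$, and by Proposition \ref{dualor} the corresponding dual basis is orthogonal in $C_*^\vee$. A short computation with the definition of $\ell^*$ then shows that the natural isomorphism $C_*^\vee\cong A_*^\vee\oplus B_*^\vee$ is a filtered chain isomorphism with respect to which the two factors are orthogonal. For (c), an SVD of the boundary operator of $A\oplus B$ in each degree assembles from SVDs of the boundary operators of $A$ and $B$ (after reordering to respect condition (iv) of Definition \ref{dfnsvd}), and the resulting verbose barcode is visibly the multiset union of the verbose barcodes of $A_*$ and $B_*$.

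Combining (a), (b), and (c), it suffices to verify the formula on each elementary summand. For an elementary piece $\mathcal{E}(\alpha,L,j)$ with $L<\infty$, which has generators $y$ in degree $j+1$ of filtration $\alpha+L$ and $x=\partial y$ in degree $j$ of filtration $\alpha$, the dual complex has orthogonal dual basis $\{x^*,y^*\}$ with $x^*$ in degree $-j$ and $y^*$ in degree $-j-1$, and by Proposition \ref{dualor} we have $\ell^*(x^*)=-\alpha$ and $\ell^*(y^*)=-\alpha-L$. The relation $(\delta x^*)(y)=x^*(\partial y)=x^*(x)=1$ shows $\delta x^*=y^*$, identifying $\mathcal{E}(\alpha,L,j)^\vee$ with $\mathcal{E}(-\alpha-L,L,-j-1)$. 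For an infinite elementary piece $\mathcal{E}(\alpha,\infty,j)$ consisting of a single generator $x$ in degree $j$ of filtration $\alpha$, dualization trivially yields $\mathcal{E}(-\alpha,\infty,-j)$.

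Finally I would read off the contributions. An elementary piece $\mathcal{E}(\alpha,L,j)$ with $L<\infty$ contributes to the degree-$k$ verbose barcode of $C^\vee$ precisely when $-j-1=k$, i.e.\ $j=-k-1$; thus each $([\alpha],L)\in\tilde{\mathcal{B}}_{C,-k-1}$ with $L<\infty$ produces $([-\alpha-L],L)\in\tilde{\mathcal{B}}_{C^\vee,k}$. An infinite elementary piece contributes when $-j=k$, i.e.\ $j=-k$, producing $([-\alpha],\infty)\in\tilde{\mathcal{B}}_{C^\vee,k}$ from each $([\alpha],\infty)\in\tilde{\mathcal{B}}_{C,-k}$. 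Summing the contributions from all summands then gives exactly (\ref{dualize}). The only substantial obstacle in this plan is the normal form decomposition (Proposition \ref{ournormalform}) itself, but this is established separately and used here only as a black box; once it is in hand, the rest of the argument is bookkeeping with degree shifts and the sign reversal in Proposition \ref{dualor}.
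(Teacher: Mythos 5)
Your proposal is correct, but it takes a genuinely different route from the paper's. The paper proves the statement directly at the level of singular value decompositions: it chooses an SVD $\left((y_1,\ldots,y_n),(x_1,\ldots,x_m)\right)$ for $\partial_{-k}\co C_{-k}\to C_{-k-1}$, uses the Gram--Schmidt process (Theorem \ref{GSp}) to arrange two auxiliary orthogonality conditions (that $(x_1,\ldots,x_t)$ spans $\ker\partial_{-k-1}$ orthogonally and $(y_{n-s+1},\ldots,y_n)$ spans $\mathrm{Im}\,\partial_{-k+1}$ orthogonally), then applies Proposition \ref{dualsvd} to read off an SVD for $\delta_{k+1}\co C_{k+1}^\vee\to \ker\delta_k$ and extracts the barcode from (\ref{flip}). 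You instead invoke the normal form decomposition of Proposition \ref{ournormalform}, verify that duality commutes with orthogonal direct sums (using Lemma \ref{bsorprop}(iii) and Proposition \ref{dualor}), and compute that $\mathcal{E}(\alpha,L,j)^\vee\cong\mathcal{E}(-\alpha-L,L,-j-1)$ for $L<\infty$ and $\mathcal{E}(\alpha,\infty,j)^\vee\cong\mathcal{E}(-\alpha,\infty,-j)$, from which the formula follows by bookkeeping. Your route is conceptually cleaner and the degree-shift in (\ref{dualize}) is rendered transparent by the elementary-piece computation, at the modest cost of routing the argument through Proposition \ref{ournormalform} (hence through Theorem \ref{indep}, which the paper's direct SVD manipulation avoids invoking explicitly); the paper's proof, by contrast, works entirely with bases and never leaves the SVD framework. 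Both proofs ultimately rest on Proposition \ref{dualor}, and the computations they perform are essentially the same computation viewed at different levels of packaging.
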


\begin{proof}
 Suppose that $r$ is the rank of $\partial_{-k}\co C_{-k}\to C_{-k-1}$, $s$ is the rank of $\partial_{-k+1}\co C_{-k+1}\to C_{-k}$, and $t\geq r$ is the dimension of the kernel of $\partial_{-k-1}\co C_{-k-1}\to C_{-k-2}$. It is straightforward (by using the Gram-Schmidt process in Theorem \ref{GSp} if necessary) to modify a singular value decomposition  $\left((y_1,\ldots,y_n),(x_1,\ldots,x_m)\right)$ of $\partial_{-k}\co C_{-k}\to C_{-k-1}$ so that it has the additional properties that:  \begin{itemize} \item[(i)] $(x_1,\ldots,x_{t})$ is an orthogonal ordered basis for $\ker\partial_{-k-1}$, so that in particular $\left((y_1,\ldots,y_n),(x_1,\ldots,x_t)\right)$ is a singular value decomposition for $\partial_{-k}\co C_{-k}\to\ker\partial_{-k-1}$.
\item[(ii)] $(y_{n-s+1},\ldots,y_n)$ is an orthogonal ordered basis for $\mathrm{Im}\partial_{-k+1}$, so that the elements $([a],L)$ of $\tilde{\mathcal{B}}_{C,-k}$ having $L=\infty$ are precisely the $([\ell(y_i)],\infty)$ for $i\in\{r+1,\ldots,n-s\}$.
\end{itemize}

By Proposition \ref{dualor}, a singular value decomposition for $\delta_{k+1}\co C^{\vee}_{k+1}\to C^{\vee}_{k}$ is given by $\left((x_{1}^{*},\ldots,x_{m}^{*}),(y_{1}^{*},\ldots,y_{n}^{*})\right)$, where the $x_{i}^{*}$ and $y_{j}^{*}$ form dual bases for the bases $(x_1,\ldots,x_m)$ and $(y_1,\ldots,y_n)$, respectively.  Moreover by (ii) above, the kernel of $\delta_{k}\co C^{\vee}_{k}\to C^{\vee}_{k-1}$ (i.e., the annihilator of the image of $\partial_{-k+1}$) is precisely the span of $y_{1}^{*},\ldots,y_{n-s}^{*}$, and so  $\left((x_{1}^{*},\ldots,x_{m}^{*}),(y_{1}^{*},\ldots,y_{n-s}^{*})\right)$
is a singular value decomposition for $\delta_{k+1}\co C^{\vee}_{k+1}\to \ker\delta_k$.  Since by (\ref{flip}) we have $\ell^{*}(x_{i}^{*})=-\ell(x_i)$ and $\ell^{*}(y^{*}_{i})=-\ell(y_i)$ it follows that
 \[ \tilde{\mathcal{B}}_{C^{\vee},k}=\{([-\ell(y_i)],\ell(y_i)-\ell(x_i))\,|\,i=1,\ldots,r\}\cup\{([-\ell(y_i)],\infty)\,|\,i=r+1,\ldots,n-s\} \] which precisely equals the right hand side of (\ref{dualize}).
 
\end{proof}

The effect on the verbose barcode of extending the coefficient field of a Floer-type complex by enlarging the value group $\Gamma$ is even easier to work out, given our earlier results.

\begin{prop}  Let $(C_*,\partial,\ell)$ be a Floer-type complex over $\Lambda=\Lambda^{\mathcal{K},\Gamma}$, let $\Gamma'\leq \R$ be a subgroup containing $\Gamma$, and consider the Floer-type complex $(C'_{*},\partial\otimes 1,\ell')$ over $\Lambda^{\mathcal{K},\Gamma'}$ given by letting $C'_k=C_k\otimes_{\Lambda}\Lambda^{\mathcal{K},\Gamma'}$ and defining $\ell'$ as in Section \ref{coefsect}. Let $\tilde{\mathcal{B}}_{C,k}$ be the verbose barcode of $(C_*,\partial,\ell)$ in degree $k$ and let $\pi\co \R/\Gamma\to\R/\Gamma'$ be the projection. Then the verbose barcode of $(C'_{*},\partial\otimes 1,\ell')$ in degree $k$ is \[ \{(\pi([a]),L)\,|\,([a],L)\in\tilde{\mathcal{B}}_{C,k}\}. \]
\end{prop}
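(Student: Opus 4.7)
The plan is to reduce the claim to Proposition \ref{coextsvd} by showing that coefficient extension commutes with the construction of the verbose barcode from a singular value decomposition. Concretely, for each degree $k$, the degree-$k$ verbose barcode is read off from a singular value decomposition of $\partial_{k+1}\co C_{k+1}\to \ker\partial_k$ (Definition \ref{dfnbc}), so I will start by fixing such an SVD $((y_1,\ldots,y_n),(x_1,\ldots,x_m))$ and then extend scalars to $\Lambda^{\mathcal{K},\Gamma'}$.

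The first key step is to identify what the corresponding SVD becomes after extension. Since the field extension $\Lambda\hookrightarrow\Lambda^{\mathcal{K},\Gamma'}$ is flat, tensoring preserves kernels and ranks: writing $r$ for the rank of $\partial_{k+1}$, the rank of $(\partial\otimes 1)_{k+1}$ is also $r$, and there is a canonical identification $\ker((\partial\otimes 1)_k)=(\ker\partial_k)\otimes_{\Lambda}\Lambda^{\mathcal{K},\Gamma'}$ inside $C'_k$. I also need the filtration on this kernel, regarded as a subspace of $C'_k$, to match the filtration defined from an orthogonal basis of $\ker\partial_k$ as in Section \ref{coefsect}; this follows by choosing an orthogonal ordered basis of $\ker\partial_k$ and extending it (using Corollary \ref{orext}) to an orthogonal ordered basis of $C_k$, since the two prescriptions for $\ell'$ are then given by the same formula on the same basis. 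Granted these identifications, Proposition \ref{coextsvd} applied to the map $\partial_{k+1}\co C_{k+1}\to\ker\partial_k$ yields that $((y_1\otimes 1,\ldots,y_n\otimes 1),(x_1\otimes 1,\ldots,x_m\otimes 1))$ is a singular value decomposition of $(\partial\otimes 1)_{k+1}\co C'_{k+1}\to\ker((\partial\otimes 1)_k)$.

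The final step is bookkeeping: the definition of $\ell'$ gives $\ell'(y_i\otimes 1)=\ell(y_i)$ and $\ell'(x_i\otimes 1)=\ell(x_i)$, so the entries of the degree-$k$ verbose barcode of the extended complex read off from this SVD are $([\ell(x_i)]_{\Gamma'},\ell(y_i)-\ell(x_i))$ for $i\leq r$ and $([\ell(x_i)]_{\Gamma'},\infty)$ for $r<i\leq m$. Since $[\ell(x_i)]_{\Gamma'}=\pi([\ell(x_i)]_{\Gamma})$, this multiset is exactly $\{(\pi([a]),L)\,|\,([a],L)\in\tilde{\mathcal{B}}_{C,k}\}$, as desired.

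The only nontrivial point in this argument is the compatibility mentioned above between the two natural filtrations on $(\ker\partial_k)\otimes_{\Lambda}\Lambda^{\mathcal{K},\Gamma'}$---the one inherited as a subspace of $(C'_k,\ell')$, and the one defined intrinsically from $(\ker\partial_k,\ell|_{\ker\partial_k})$ by the procedure of Section \ref{coefsect}. This is the step I expect to be the main (mild) obstacle, but it is essentially immediate once one uses Corollary \ref{orext} to choose compatible orthogonal bases so that Proposition \ref{coextorth} can be invoked on both sides. Everything else is a direct translation through Proposition \ref{coextsvd} and Definition \ref{dfnbc}.
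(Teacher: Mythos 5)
Your proof is correct and takes the same route the paper does: the paper's entire proof is the one-sentence remark "This follows directly from Proposition \ref{coextsvd} and the definitions." You have simply filled in the details, including the one genuinely nontrivial compatibility check---that the two filtrations on $(\ker\partial_k)\otimes_{\Lambda}\Lambda^{\mathcal{K},\Gamma'}$ (as a subspace of $(C'_k,\ell')$ versus intrinsically via Section \ref{coefsect}) agree---which the paper leaves implicit.
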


\begin{proof}
This follows directly from Proposition \ref{coextsvd} and the definitions.
\end{proof}

\section{Classification theorems} \label{classsect}

In the spirit of the structure theorem (Theorem \ref{strthm}) for pointwise finite-dimensional persistence modules, we will use the verbose and concise barcodes to classify Floer-type complexes up to filtered chain isomorphism and filtered homotopy equivalence. Specifically, we will prove the following two key theorems, stated earlier in the introduction. 

\begin{theorema} Two Floer-type complexes $(C_*, \partial_C, \ell_C)$ and $(D_*, \partial_D, \ell_D)$ are filtered chain isomorphic to each other if and only if they have identical verbose barcodes in all degrees.\end{theorema}

\begin{theoremb} Two Floer-type complexes $(C_*, \partial_C, \ell_C)$ and $(D_*, \partial_D, \ell_D)$ are filtered  homotopy equivalent to each other if and only if they have identical concise barcodes in all degrees.
\end{theoremb}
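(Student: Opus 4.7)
The verbose and concise barcodes of a Floer-type complex differ precisely by pairs $([a],0)$, and by Proposition \ref{ournormalform} each such pair arises from an orthogonal summand $\mathcal{E}(a,0,k)$: a two-term subcomplex spanned by $y\in C_{k+1}$ and $x=\partial_Cy\in C_k$ with $\ell_C(y)=\ell_C(x)=a$. Any such summand is filtered chain contractible via the homotopy $H(x)=y$, $H(y)=0$, which satisfies $\partial_CH+H\partial_C=I$ and preserves filtrations (since $\ell_C(y)=\ell_C(x)$). Consequently every $(C_*,\partial_C,\ell_C)$ is filtered homotopy equivalent to a \emph{reduced} complex $(C^{\mathrm{red}}_*,\partial^{\mathrm{red}}_C,\ell^{\mathrm{red}}_C)$ whose orthogonal decomposition uses only $\mathcal{E}(a_i,L_i,k_i)$ with $L_i>0$; its verbose barcode then equals its concise barcode and coincides with the concise barcode of the original. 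In such a reduced complex one has the strict inequality $\ell_C(\partial_Cc)<\ell_C(c)$ whenever $\partial_Cc\neq 0$.

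\textbf{Backward direction.} If $(C_*,\partial_C,\ell_C)$ and $(D_*,\partial_D,\ell_D)$ have identical concise barcodes in all degrees, then $C^{\mathrm{red}}_*$ and $D^{\mathrm{red}}_*$ share a verbose barcode in all degrees, so Theorem A furnishes a filtered chain isomorphism $C^{\mathrm{red}}_*\cong D^{\mathrm{red}}_*$. Composing with the filtered homotopy equivalences $C_*\simeq C^{\mathrm{red}}_*$ and $D^{\mathrm{red}}_*\simeq D_*$ produces the desired equivalence $C_*\simeq D_*$.

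\textbf{Forward direction.} A given filtered homotopy equivalence $C_*\simeq D_*$ descends, upon composition with the equivalences to the reductions, to a filtered homotopy equivalence $C^{\mathrm{red}}_*\simeq D^{\mathrm{red}}_*$. It thus suffices to establish the rigidity statement that a filtered homotopy equivalence between reduced Floer-type complexes is automatically a filtered chain isomorphism, for then Theorem A forces their verbose (which equal their concise) barcodes to agree. Given filtered chain maps $\Phi,\Psi$ and a filtration-preserving chain homotopy $K_C$ with $\Psi\Phi-I=\partial_CK_C+K_C\partial_C=:N_C$, the inequalities $\ell_C(K_Cc)\leq\ell_C(c)$ and $\ell_C(\partial_Cw)<\ell_C(w)$ (for $\partial_Cw\neq 0$ in the reduced setting) combine to give $\ell_C(N_Cc)<\ell_C(c)$ for all $c$ with $N_Cc\neq 0$. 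Fixing an orthogonal basis of each finite-dimensional $C^{\mathrm{red}}_n$, the finitely many positive strict-decrease gaps $\ell_C(v_i)-\ell_C(N_Cv_i)$ have a positive minimum $\epsilon$, yielding a uniform bound $\ell_C(N_Cc)\leq \ell_C(c)-\epsilon$. Hence the Neumann series $\sum_{k\geq 0}(-N_C)^k$ converges in the non-Archimedean topology to a filtration-preserving inverse of $\Psi\Phi$; the symmetric argument for $\Phi\Psi$ delivers a filtration-preserving two-sided inverse of $\Phi$, and $\ell_D(\Phi c)\leq\ell_C(c)$ together with $\ell_C(\Phi^{-1}d)\leq\ell_D(d)$ force equality, so $\Phi$ is a filtered chain isomorphism.

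\textbf{Main obstacle.} The key technical point is this rigidity upgrade: verifying the uniform strict filtration-decrease of $N_C$ on each graded piece and the consequent convergence of the Neumann series to a filtered inverse of $\Psi\Phi$. This generalizes, to arbitrary Novikov coefficients, the technique appearing at the end of \cite[Proof of Lemma 3.8]{U11}, and is the one place where the reduced-ness hypothesis---which in turn depends on the contractibility of the $\mathcal{E}(a,0,k)$ summands---is essential.
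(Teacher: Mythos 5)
Your proposal is correct, but the ``only if'' direction takes a genuinely different route from the paper's. The ``if'' direction coincides with the paper's Proposition \ref{if-part-THB}: both pass through Proposition \ref{ournormalform} and contract the $\mathcal{E}(a,0,k)$ summands. For the ``only if'' direction, the paper forms the mapping cylinder $Cyl(\Phi)_*$ with filtration $\ell_{cyl}(c,d,e)=\max\{\ell_C(c),\ell_D(d),\ell_C(e)\}$, exhibits two orthogonal splittings $Cyl(\Phi)_*=D_*\oplus\ker\alpha$ and $Cyl(\Phi)_*=C_*\oplus\ker\beta$, and shows that $\ker\alpha$ and $\ker\beta$ contribute only zero-length bars, so the concise barcode of $Cyl(\Phi)_*$ equals that of each of $C_*$ and $D_*$. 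You instead pass to the reduced complexes and prove the rigidity statement that any filtered homotopy equivalence between Floer-type complexes with $\ell(\partial c)<\ell(c)$ for $\partial c\neq 0$ is already a filtered chain isomorphism, via a Neumann series inversion of $\Psi\Phi=I+N_C$. This is exactly the more direct argument the paper alludes to in its introduction (the discussion following Theorems A and B, citing the end of \cite[Proof of Lemma 3.8]{U11}), and it works. Two comparative remarks: your Neumann series argument quietly requires that the orthogonalizable $\Lambda$-spaces $C_k$ be complete so that $\sum_{j\geq 0}(-N_C)^j$ actually converges---this is true because $\Lambda=\Lambda^{\mathcal K,\Gamma}$ is complete with respect to $e^{-\nu}$ and each $C_k$ is finite-dimensional, but it deserves to be stated; the paper's mapping-cylinder argument avoids infinite series and completeness altogether and, more significantly, the same mapping cylinder (with a $\delta$-shifted filtration) is reused to prove the Stability Theorem \ref{stabthm}, which is the main structural reason the paper opts for that route.
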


\subsection{Classification up to filtered isomorphism}

We will assume the following important theorem first, and then the proof of Theorem A will follow quickly.

\begin{theorem} \label{indep} For any $k\in \Z$, the degree-$k$ verbose barcode of any Floer-type complex is independent of the choice of singular value decomposition for $\partial_{k+1}\co C_{k+1}\to Z_k$.\end{theorem}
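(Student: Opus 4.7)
The plan is to separately establish the invariance of the infinite bars and of the finite bars, by producing intrinsic formulas for their multiplicities that do not reference any particular SVD.

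For the infinite bars, Proposition \ref{spectral} exhibits $\{[x_{r+1}],\ldots,[x_m]\}$ as an orthogonal basis of the orthogonalizable $\Lambda$-space $(H_k(C_*),\rho)$, with $\rho([x_{r+i}])=\ell(x_{r+i})$. Hence the multiset of infinite bars $\{([\ell(x_{r+i})],\infty)\}_{i=1}^{m-r}$ coincides with the filtration spectrum of $(H_k(C_*),\rho)$, which by Proposition \ref{multi} is intrinsic to $(H_k(C_*),\rho)$, hence to $(C_*,\partial,\ell)$.

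For the finite bars, the multiset of lengths $L>0$ (with multiplicity) is already intrinsic by Theorem \ref{genebd}: these are the generalized boundary depths $\beta_j(\partial_{k+1})$, and Definition \ref{barlength} makes no reference to an SVD. It remains to show that for each $L>0$ and each $[t]\in\R/\Gamma$ the multiplicity of $([t],L)$ is intrinsic. For this, given $t\in\R$ and $L\geq 0$, introduce the manifestly intrinsic $\mathcal{K}$-subspaces
\[ A(t)=\Img(\partial_{k+1})\cap\{c\in Z_k : \ell(c)\leq t\},\qquad B(t,L)=A(t)\cap \partial_{k+1}\{y\in C_{k+1} : \ell(y)\leq t+L\}, \]
together with the strict analogues $A(t^-)$ (defined using $\ell(c)<t$) and $B(t,L^-)$ (defined using $\ell(y)<t+L$). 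The claim is that
\[ \dim_{\mathcal{K}}\frac{A(t)}{A(t^-)+B(t,L^-)}\;-\;\dim_{\mathcal{K}}\frac{A(t)}{A(t^-)+B(t,L)}\;=\;\#\{i\leq r : [\ell(x_i)]=[t],\ \ell(y_i)-\ell(x_i)=L\}. \]
The left-hand side is defined without reference to any SVD, and multiplication by $T^g$ for $g\in\Gamma$ yields $\mathcal{K}$-linear bijections between the quotients for $t$ and $t-g$, so each dimension depends only on $[t]\in\R/\Gamma$; together with the claim, this gives the desired invariance for every $L>0$.

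To verify the claim, write $v\in A(t)$ as $v=\sum_{i=1}^r\lambda_i x_i$; then $v\in A(t)$ iff $\nu(\lambda_i)\geq\ell(x_i)-t$ for every $i$. By Lemma \ref{checkingrobust} combined with the orthogonality of $(y_1,\ldots,y_n)$, the basis $(x_1,\ldots,x_r)$ is \emph{also} orthogonal with respect to the induced quotient filtration $\tilde\ell(v):=\inf\{\ell(y):\partial y=v\}$, with $\tilde\ell(x_i)=\ell(y_i)$; from this one deduces that $v\in B(t,L)$ iff additionally $\nu(\lambda_i)\geq\ell(y_i)-t-L$ for every $i$. A direct accounting of which leading coefficients $c_i$ in $\lambda_i=c_i T^{\ell(x_i)-t}+\text{(higher order)}$ survive each of the two quotients then yields both displayed dimensions. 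Finally, the length-$0$ bars at $[t]$ are counted as the intrinsic multiplicity of $[t]$ in the filtration spectrum of $(\Img(\partial_{k+1}),\ell|_{\Img(\partial_{k+1})})$, intrinsic by Proposition \ref{multi}, minus the now-intrinsic total of positive-length bars at $[t]$. The main obstacle is the dimension computation above, which is made tractable precisely by the simultaneous orthogonality of $(x_1,\ldots,x_r)$ with respect to both $\ell$ and $\tilde\ell$.
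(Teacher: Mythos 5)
Your proof is correct and takes a genuinely different route from the paper's. The paper proceeds by taking two singular value decompositions $((z_i),(w_i))$ and $((y_i),(x_i))$, partitioning the indices $\{1,\ldots,r\}$ into blocks on which the (manifestly intrinsic) generalized boundary depths $\beta_j$ are constant, and then inductively using $\delta$-robustness (via Lemmas \ref{or&pr} and \ref{induc1} and Corollary \ref{switch}) to construct a filtered isomorphism between $span_\Lambda\{w_{k_i},\ldots,w_{k_{i+1}-1}\}$ and $span_\Lambda\{x_{k_i},\ldots,x_{k_{i+1}-1}\}$ for each block; Proposition \ref{multi} then forces agreement of filtration spectra block by block. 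Your argument instead produces closed-form intrinsic expressions for every multiplicity directly: the infinite bars via the filtration spectrum of $(H_k(C_*),\rho)$ (using Proposition \ref{spectral}(ii) for the orthogonality and Proposition \ref{multi}); the lengths via Theorem \ref{genebd}; the positive finite bars via dimensions of the $\mathcal{K}$-quotients $A(t)/(A(t^-)+B(t,L^{\pm}))$, whose computation correctly exploits the simultaneous $\ell$- and $\tilde\ell$-orthogonality of $(x_1,\ldots,x_r)$ supplied by Lemma \ref{checkingrobust}; and the zero-length bars via the filtration spectrum of $(\Img\partial_{k+1},\ell)$ minus the already-counted positive bars. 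Checking the dimension identity: the leading-coefficient map identifies $A(t)/A(t^-)$ with $\mathcal{K}^s$ for $s=\#\{i\leq r:[\ell(x_i)]=[t]\}$, the image of $B(t,L)$ (resp. $B(t,L^-)$) therein is spanned by those $i$ with $\ell(y_i)-\ell(x_i)\leq L$ (resp. $<L$), and subtracting gives exactly $\#\{i:[\ell(x_i)]=[t],\,\ell(y_i)-\ell(x_i)=L\}$, as claimed. The paper's approach is self-contained within Section 7 and constructs explicit filtered isomorphisms between blocks, which is a slightly stronger structural conclusion; your approach leans on more of the earlier machinery (Propositions \ref{multi}, \ref{spectral}, Theorem \ref{genebd}) but yields a cleaner conceptual takeaway — that every coordinate of the verbose barcode is literally a numerical invariant of filtered pieces of $\Img\partial_{k+1}$, $\ker\partial_k$, and $H_k(C_*)$ — and, when $\Gamma=\{0\}$, recovers precisely the rank-of-inclusion formulas familiar from classical persistent homology.
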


\begin{proof}[Proof of Theorem A]
 On the one hand, a filtered chain isomorphism $C_*\to D_*$ maps a singular value decomposition for $(\partial_C)_{k+1}\co C_{k+1}\to \ker(\partial_C)_k$ to a singular value decomposition for $(\partial_D)_{k+1}\co D_{k+1}\to \ker(\partial_D)_k$, while keeping all filtration levels the same.    Therefore, the ``only if'' part of Theorem A is a direct consequence of Theorem \ref{indep}.

To prove the ``if'' part of Theorem A we begin by introducing some notation that will also be useful to us later.
Given a collection of Floer-type complexes $\mathcal{C}_{\alpha}=(C_{\alpha *},\partial_{\alpha},\ell_{\alpha})$ we define $\oplus_{\alpha}\mathcal{C}_{\alpha}$ to be the triple $(\oplus_{\alpha}C_{\alpha *},\oplus_{\alpha}\partial_{\alpha},\tilde{\ell})$ where $\tilde{\ell}\left((c_{\alpha})\right)=\max_{\alpha}\ell_{\alpha}(c_{\alpha})$.  Provided that, for each $k\in \Z$, only finitely many of the $C_{\alpha k}$ are nontrivial, $\oplus_{\alpha}\mathcal{C}_{\alpha}$ is also a Floer-type complex.

\begin{dfn}\label{elemdfn} Fix $\Gamma\leq \R$ and the associated Novikov field $\Lambda=\Lambda^{\mathcal{K},\Gamma}$.  For $a\in \R$, $L\in [0,\infty]$, and $k\in \Z$ define the \textbf{elementary Floer-type complex} $\mathcal{E}(a,L,k)$ to be the Floer-type complex $(E_*,\partial_E,\ell_E)$ given as follows: \begin{itemize}
\item If $L=\infty$ then $E_m=\left\{\begin{array}{ll}\Lambda & m=k \\ 0 & {\rm otherwise}\end{array}\right.$, $\partial_E=0$, and, for $\lambda\in E_m=\Lambda$, $\ell(\lambda)=a-\nu(\lambda)$.
\item If $L\in [0,\infty)$, then $E_{k}$ is the one-dimensional $\Lambda$-vector space generated by a symbol $x$, $E_{k+1}$ is the one-dimensional $\Lambda$-vector space generated by a symbol $y$, and $E_m=\{0\}$ for $m\notin \{k,k+1\}$. Also, $\partial_E\co E_*\to E_*$ is defined by $\partial_E(\lambda x+\mu y)=\mu x$, and $\ell_E(\lambda x+\mu y)=\max\{a-\nu(\lambda),(a+L)-\nu(\mu)\}$.  
\end{itemize}
\end{dfn}

\begin{remark}\label{redundantE}
If $b-a\in \Gamma$, then there is a filtered chain isomorphism $\mathcal{E}(a,L,k)\to \mathcal{E}(b,L,k)$ given by scalar multiplication by the element $T^{b-a}\in \Lambda$.
\end{remark}

\begin{prop} \label{ournormalform}  Let $(C_*,\partial,\ell)$ be a Floer-type complex and denote by $\tilde{\mathcal{B}}_{C,k}$ the degree-$k$ verbose barcode of $(C_*,\partial, \ell)$.  Then there is a filtered chain isomorphism \[ (C_*,\partial,\ell)\cong \bigoplus_{k\in\Z}\bigoplus_{([a],L)\in \tilde{\mathcal{B}}_{C,k}}\mathcal{E}(a,L,k) \] (where for each $([a],L)\in \tilde{\mathcal{B}}_{C,k}$ we choose an arbitrary representative $a\in \R$ of the coset $[a]\in \R/\Gamma$).
\end{prop}

\begin{proof}[Proof of Proposition \ref{ournormalform}]
For each $k$ let \[ \left((y_{1}^{k},\ldots,y_{r_{k}}^{k},\ldots,y_{r_k+m_{k+1}}^{k}),(x_{1}^{k},\ldots,x_{m_k}^{k})\right) \] be an arbitrary singular value decomposition for $(\partial_C)_{k+1}\co C_{k+1}\to \ker(\partial_C)_k$, where $r_k$ is the rank of $(\partial_{C})_{k+1}$ and $m_k = \dim(\ker(\partial_C)_k)$ for each degree $k \in \Z$.  We will first modify these singular value decompositions for various $k$ to be related to each other in a convenient way. Specifically, since $(x_{1}^{k+1},\ldots,x_{m_{k+1}}^{k+1})$ is an orthogonal ordered basis for $\ker(\partial_C)_{k+1}$, the tuple \[  \left((y_{1}^{k},\ldots,y_{r_{k}}^{k},x_{1}^{k+1},\ldots,x_{m_{k+1}}^{k+1}),(x_{1}^{k},\ldots,x_{m_k}^{k})\right) \] is also a singular value decomposition for $(\partial_C)_{k+1}\co C_{k+1}\to \ker(\partial_C)_k$.  So letting \[ (a_{i}^{k},L_{i}^{k})=\left\{\begin{array}{ll} (\ell(x_{i}^{k}),\ell(y_{i}^{k})-\ell(x_{i}^{k})) & 1\leq i\leq r_k\\ (\ell(x_{i}^{k}),\infty) & r_{k}+1\leq i\leq m_k \end{array}\right. , \] we have $\mathcal{B}_{C,k}=\{([a_{i}^{k}],L_{i}^{k})|1\leq i\leq m_k\}$ and the proposition states that $(C_*,\partial,\ell)$ is filtered chain isomorphic to $\oplus_k\oplus_{i=1}^{m_k}\mathcal{E}(a_{i}^{k},L_{i}^{k},k)$.  Now for each $i$ and $k$ there is an obvious embedding $\phi_{i,k}\co \mathcal{E}(a_{i}^{k},L_{i}^{k},k)\to C_*$ defined by: 
\begin{itemize}
\item when $L_i^k = \infty$, $\phi_{i,k}(\lambda) = \lambda x_{i}^k$;
\item when $L_i^k< \infty$, $\phi_{i,k}(\lambda x+\mu y)=\lambda x_{i}^{k}+\mu y_{i}^{k}$.
\end{itemize}
From the definition of the filtration and boundary operator on $\mathcal{E}(a_{i}^{k},L_{i}^{k},k)$ this embedding is a chain map which exactly preserves filtration levels.  Then \[ \oplus_{i,k}\phi_{i,k}\co \oplus_{i,k}\mathcal{E}(a_{i}^{k},L_{i}^{k},k)\to C_* \] is likewise a chain map. Finally, for each $k$, the fact that $(y_{1}^{k},\ldots,y_{r_{k}}^{k},x_{1}^{k+1},\ldots,x_{m_{k+1}}^{k+1})$ is an orthogonal ordered basis for $C_{k+1}$ readily implies that $\oplus_{i,k}\phi_{i,k}$ is in fact a filtered chain isomorphism.
\end{proof}

Since, by Remark \ref{redundantE}, the filtered isomorphism type of $\mathcal{E}(a,L,k)$ only depends on $[a],L,k$, and since quite generally filtered chain isomorphisms $\Phi_{\alpha}\co\mathcal{C}_{\alpha}\to \mathcal{D}_{\alpha}$ between Floer-type complexes induce a filtered chain isomorphism $\oplus_{\alpha}\co \oplus_{\alpha}\mathcal{C}_{\alpha}\to \oplus_{\alpha}\mathcal{D}_{\alpha}$, Proposition \ref{ournormalform} shows that the filtered chain isomorphism type of a Floer-type complex is determined by its verbose barcode, proving the ``if part'' of Theorem A.
\end{proof}

The remainder of this subsection is directed toward the proof of Theorem \ref{indep}.  We will repeatedly apply the following criterion for testing whether a subspace is an orthogonal complement of a given subspace.

\begin{lemma}\label{or&pr} Let $(C,\ell)$ be an orthogonalizable $\Lambda$-space, and let $U,U',V\leq C$ be subspaces such that $U$ is an orthogonal complement to $V$ and $\dim U'=\dim U$.  Consider the projection $\pi_U: C \rightarrow U$ associated to the direct sum decomposition $C=U\oplus V$. Then $U'$ is an orthogonal complement of $V$ if and only if $\ell(\pi_U x)=\ell(x)$ for all $x \in U'$.\end{lemma}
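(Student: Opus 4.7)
The plan is to exploit the fact that for every $x\in C$ the hypothesis $C=U\oplus V$ with $U\perp V$ gives the orthogonal decomposition $x=\pi_U x + (x-\pi_U x)$ with $x-\pi_U x\in V$, so that
\[ \ell(x)=\max\{\ell(\pi_U x),\,\ell(x-\pi_U x)\}, \]
which in particular always yields the automatic inequality $\ell(\pi_U x)\leq\ell(x)$. The whole lemma will then reduce to understanding when the reverse inequality holds and what that implies.

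For the ``only if'' direction I would assume $U'$ is orthogonal to $V$ and let $x\in U'$. Writing $-\pi_U x=-x+(x-\pi_U x)$, with $-x\in U'$ and $x-\pi_U x\in V$, the assumed orthogonality of $U'$ and $V$ together with (F2) gives
\[ \ell(\pi_U x)=\max\{\ell(x),\,\ell(x-\pi_U x)\}\geq \ell(x), \]
which combined with the automatic $\ell(\pi_U x)\leq\ell(x)$ gives equality.

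For the ``if'' direction I first need $C=U'\oplus V$; since $\dim U'+\dim V=\dim C$, it suffices to show $U'\cap V=\{0\}$, and this is immediate because any $x\in U'\cap V$ has $\pi_U x=0$ so by hypothesis $\ell(x)=\ell(\pi_U x)=-\infty$, hence $x=0$ by (F1). To verify orthogonality, let $u'\in U'$ and $v\in V$; writing $u'+v=\pi_U u' + \bigl((u'-\pi_U u')+v\bigr)$ as a sum in $U\oplus V$, the orthogonality of $U$ and $V$ gives
\[ \ell(u'+v)=\max\{\ell(\pi_U u'),\,\ell((u'-\pi_U u')+v)\}, \]
and by hypothesis $\ell(\pi_U u')=\ell(u')$. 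The problem thus reduces to showing
\[ \max\{\ell(u'),\,\ell((u'-\pi_U u')+v)\}=\max\{\ell(u'),\ell(v)\}. \]
This follows from two applications of (F3) and the bound $\ell(u'-\pi_U u')\leq\ell(u')$ (which is part of the automatic inequality from the first paragraph, applied in the form $\ell(\pi_U u')\leq\ell(u')$ plus $\ell(u')=\max\{\ell(\pi_U u'),\ell(u'-\pi_U u')\}$): the upper bound comes from $\ell((u'-\pi_U u')+v)\leq\max\{\ell(u'-\pi_U u'),\ell(v)\}\leq\max\{\ell(u'),\ell(v)\}$, and the lower bound comes from writing $v=\bigl((u'-\pi_U u')+v\bigr)-(u'-\pi_U u')$ to get $\ell(v)\leq\max\{\ell((u'-\pi_U u')+v),\ell(u')\}$.

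There is no genuine obstacle here; the proof is just a careful exercise in the non-Archimedean triangle inequality. The only place that requires attention is the reverse direction, where one must be disciplined about decomposing vectors along $U\oplus V$ (the reference splitting) rather than along the candidate $U'\oplus V$ (which is not yet known to exist).
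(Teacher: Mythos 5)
Your proof is correct and follows essentially the same route as the paper's: both directions decompose $x$ (resp.\ $u'+v$) along the reference splitting $C=U\oplus V$ and exploit orthogonality of $U$ and $V$, with the ``only if'' direction identical up to a sign. The only difference is in the final step of the ``if'' direction, where the paper splits into cases $\ell(v)>\ell(x)$ vs.\ $\ell(v)\leq\ell(x)$ and invokes Proposition~\ref{strict}, while you instead prove the two inequalities $\max\{\ell(u'),\ell((u'-\pi_U u')+v)\}\lessgtr\max\{\ell(u'),\ell(v)\}$ separately via (F3) — an equally valid, if slightly more uniform, bookkeeping of the same estimate.
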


\begin{proof}
Assume that $U'$ is an orthogonal complement to $V$.  Then for $x\in U'$, we of course have 
\[ \begin{array}{l}
x=\pi_Ux + (x-\pi_Ux)
\end{array} \]
where $\pi_Ux\in U$ and  $x-\pi_Ux\in V$. Because $U$ and $V$ are orthogonal, it follows that $\ell(x)=\max\{\ell(\pi_Ux),\ell(x-\pi_Ux)\}$. In particular, 
\begin{equation} \label{1}
\ell(x) \geq \ell(\pi_U x).
\end{equation}
Meanwhile since 
\[ \begin{array}{l}
\pi_Ux = x-(x-\pi_Ux)
\end{array} \]
where $x\in U'$, $x-\pi_Ux\in V$, and $U'$ and $V$ are orthogonal, we have $\ell(\pi_Ux)=\max\{\ell(x),\ell(x-\pi_Ux)\}$. In particular, $\ell(\pi_U x) \geq \ell(x)$. Combined with (\ref{1}), this shows $\ell(x)=\ell(\pi_Ux)$.

Conversely, suppose that $\ell(\pi_U x)=\ell(x)$ for all $x\in U'$.  To show that $U'$ is an orthogonal complement to $V$ we just need to show that $U'$ and $V$ are orthogonal, that is, for any $x\in U'$ and $v\in V$ we have $\ell(x+v)=\max\{\ell(x),\ell(v)\}$ (indeed if we show this, then by Lemma \ref{bsorprop} (i)  $U'$ and $V$ will have trivial intersection and  so  dimensional considerations will imply that $C=U'\oplus V$).  Now write $x\in U'$ as 
\[ \begin{array}{l}
x=\pi_U x + (x-\pi_U x)
\end{array} \]
where $\pi_Ux\in U$ and $x-\pi_Ux\in V$.  Because $U$ and $V$ are orthogonal, our assumption shows that $\ell(x)=\ell(\pi_Ux)\geq \ell(x-\pi_Ux)$. Now 
\[ \begin{array}{l}
x+v=\pi_Ux +\left(v+(x-\pi_Ux)\right)
\end{array} \]
where $\pi_Ux$ in $U$ and $v+(x-\pi_Ux)\in V$. Again, $U$ and $V$ are orthogonal, so we have 
\begin{align*} 
\ell(x+v) &= \max\{\ell(\pi_Ux),\ell\left(v+(x-\pi_Ux)\right)\} \\
& = \max\left\{\ell(x),\ell\left(v+(x-\pi_Ux)\right)\right\}. 
\end{align*}
Now if $\ell(v)>\ell(x)$ then $\ell(x+v) = \ell(v)=\max\{\ell(x),\ell(v)\}$, as desired.  On the other hand if $\ell(v)\leq\ell(x)$ then $\ell\left(v+(x-\pi_Ux)\right)\leq \max\{\ell(v),\ell(x-\pi_Ux)\}\leq \ell(x)$, and so $\ell(x+v)=\ell(x)=\max\{\ell(x),\ell(v)\}$.  So in any case we indeed have $\ell(x+v)=\max\{\ell(x),\ell(v)\}$ for any $x\in U',v\in V$, and so $U'$ and $V$ are orthogonal. \end{proof}

\begin{notation} \label{notation} Let $\left((y_1, \ldots, y_n),(x_1,\ldots, x_m)\right)$ be a singular value decomposition for a two-term Floer-type complex $(C_1 \xrightarrow{\partial} C_0)$, and let $r$ be the rank of $\partial$. Denote $k_1,\ldots,k_p\in\{1,\ldots,r\}$ to be the increasing finite sequence of integers defined by the property that $k_1=1$ and, for $i\in\{1,\ldots,p\}$, either $\beta_{k_i}(\partial)=\beta_{k_i+1}(\partial)=\cdots=\beta_r(\partial)$ (in which case $p=i$) or else $\beta_{k_i}(\partial)=\cdots=\beta_{k_{i+1}-1}(\partial)>\beta_{k_{i+1}}(\partial)$.  Also let $k_{p+1}=r+1$. We emphasize that the numbers $k_i$ are independent of choice of singular value decomposition (since the $\beta_k(\partial)$ are likewise independent thereof, see Definition \ref{barlength}). \end{notation}

The proof of Theorem \ref{indep} inductively uses the following lemma, which is an application of Lemma \ref{or&pr}.

\begin{lemma}\label{induc1} Let $\left((y_1,\ldots,y_n),(x_1,\ldots,x_m)\right)$ be a singular value decomposition for $(C_1 \xrightarrow{\partial} C_0)$ and $r=rank(\partial)$, and let $k_1,\ldots,k_{p+1}$ be the integers in  Notation \ref{notation}. Let $i\in\{1,\ldots,p\}$, and suppose that $V,W\leq {\rm Im} \partial\leq C_0$ obey:
\begin{itemize}
\item[(i)] $\dim V=k_{i}-1$, $V$ is $\delta$-robust for all $\delta<\beta_{k_{i-1}}(\partial)$, and $V$ is orthogonal to  $span_{\Lambda}\{x_{k_i},\ldots,x_{m}\}$.  (If $i=1$ these conditions mean $V=\{0\}$.)
\item[(ii)] $\dim W = k_{i+1}-k_{i}$, $W$ is orthogonal to $V$, and $V\oplus W$ is $\delta$-robust for all $\delta < \beta_{k_i}(\partial)$.\end{itemize}
Now let $X=span_{\Lambda}\{x_{k_i},\ldots,x_{k_{i+1}-1}\}$ and $X'=span_{\Lambda}\{x_{k_{i+1}},\ldots,x_m\}$. Then $V\oplus W$ is orthogonal to $X'$, and there is an isomorphism of filtered vector spaces $W\cong X$.\end{lemma}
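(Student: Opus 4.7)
The plan is to apply Lemma \ref{or&pr} with $U = X_V \oplus X = span_{\Lambda}\{x_1,\ldots,x_{k_{i+1}-1}\}$. Since the SVD provides the orthogonal decomposition $C_0 = U \oplus X'$ and we have $\dim(V \oplus W) = k_{i+1}-1 = \dim U$, establishing part (i) reduces via Lemma \ref{or&pr} to verifying the filtration-preservation identity $\ell(\pi_U z) = \ell(z)$ for every $z \in V \oplus W$, where $\pi_U : C_0 \to U$ is the projection along $X'$.

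For this key calculation I argue by contradiction. Suppose $z \in V \oplus W$ satisfies $\ell(\pi_U z) < \ell(\pi_{X'} z) = \ell(z)$. Using $V \oplus W \leq \Img(\partial)$, expand $z = \sum_{j=1}^{r} \mu_j x_j$ and let $y_z = \sum_{j=1}^r \mu_j y_j$ be the minimal preimage (Lemma \ref{checkingrobust}). Splitting $y_z = y'_z + y''_z$ according to whether $j < k_{i+1}$ or $j \geq k_{i+1}$, orthogonality of the $y_j$'s gives $\ell(y_z) = \max(\ell(y'_z), \ell(y''_z))$. The bound $\beta_j(\partial) \leq \beta_{k_{i+1}}(\partial) < \beta_{k_i}(\partial)$ for $j \geq k_{i+1}$ yields $\ell(y''_z) < \ell(z) + \beta_{k_i}(\partial)$, and the $\delta$-robustness of $V \oplus W$ gives $\ell(y_z) \geq \ell(z) + \beta_{k_i}(\partial)$, so $\ell(y'_z) \geq \ell(z) + \beta_{k_i}(\partial)$. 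Examining the maximizer $j^* < k_{i+1}$ of $\ell(y'_z) = \max_{j < k_{i+1}}(\ell(y_j) - \nu(\mu_j))$ and using $\ell(x_{j^*}) - \nu(\mu_{j^*}) \leq \ell(\pi_U z) < \ell(z)$ forces $\beta_{j^*}(\partial) > \beta_{k_i}(\partial)$, so $j^* < k_i$. To complete the contradiction I would invoke the orthogonality $V \perp X \oplus X'$ (which, via Lemma \ref{or&pr}, makes $\pi_{X_V}|_V : V \to X_V$ a filtration-preserving isomorphism) to lift $\mu_{j^*} x_{j^*}$ to a vector $v^* \in V$, and then apply the stronger robustness of $V$ (for $\delta < \beta_{k_{i-1}}(\partial)$) to the modified element $z - v^* \in V \oplus W$, whose $x_{j^*}$-coefficient has been cancelled. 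Iterating this cancellation across the low-index components of $\pi_{X_V}(z)$ should terminate in a contradiction.

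For part (ii), once (i) has established that $\pi_U|_{V \oplus W} : V \oplus W \to U$ is a filtration-preserving isomorphism, Proposition \ref{multi} yields that $V \oplus W$ and $U$ have identical filtration spectra. By Lemma \ref{bsorprop}(iii) the filtration spectrum of $V \oplus W$ equals the multiset sum of those of $V$ and $W$, and similarly for $U = X_V \oplus X$. Applying Lemma \ref{or&pr} to the orthogonal decomposition $C_0 = X_V \oplus (X \oplus X')$—with $V$ as an alternative orthogonal complement to $X \oplus X'$—identifies $\pi_{X_V}|_V$ as a filtration-preserving isomorphism $V \to X_V$, so $V$ and $X_V$ have equal filtration spectra. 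Cancelling, $W$ and $X$ have identical filtration spectra, hence (by Remark \ref{nu-t-univ}) are isomorphic as filtered $\Lambda$-vector spaces.

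The hard part will be the descent argument concluding part (i). The conclusion $\beta_{j^*}(\partial) > \beta_{k_i}(\partial)$ is precisely the behaviour exhibited by arbitrary nonzero elements of the more-robust subspace $V$, and so cannot itself yield a contradiction; the argument must genuinely combine the weak robustness of $V \oplus W$, the stronger robustness of $V$, and the orthogonality $V \perp X \oplus X'$, arranging the successive cancellations $z \mapsto z - v^*$ so that some combinatorial complexity measure of $z$ strictly decreases at each step.
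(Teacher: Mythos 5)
Your overall plan for part (i) can be made to work, but you've left its core — termination of the descent — as a guess, and your intuition about what drives it (``must genuinely combine\ldots the stronger robustness of $V$'') is not quite right: the stronger robustness of $V$ is in fact not used in the proof of this lemma at all (neither in your argument nor in the paper's; it is carried along only for the induction in Corollary \ref{switch}). The measure that decreases is simply the number of indices $j<k_i$ at which $\mu_j\neq 0$. Since $\pi_{X_V}|_V$ is the filtration-preserving isomorphism you identified, the lift $v^*$ of $\mu_{j^*}x_{j^*}$ is unique and its $X_V$-component is supported only on $x_{j^*}$; subtracting $v^*$ therefore zeroes the $j^*$ coefficient while leaving every other coefficient $\mu_j$ with $j<k_i$ untouched. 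Moreover $\ell(v^*)=\ell(\mu_{j^*}x_{j^*})\leq\ell(\pi_U z)<\ell(z)$, so by Proposition \ref{strict} both $\ell(z-v^*)=\ell(z)$ and $\ell(\pi_U(z-v^*))<\ell(z-v^*)$ persist, and the argument iterates. Once the low-index coefficients are exhausted the maximizer $j^*$ must lie in $\{k_i,\ldots,k_{i+1}-1\}$, forcing $\beta_{j^*}(\partial)=\beta_{k_i}(\partial)$ and the contradiction. These are the missing steps you should supply.

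That said, the paper bypasses the descent entirely by choosing a different decomposition. Rather than projecting an arbitrary $z\in V\oplus W$ onto $span_{\Lambda}\{x_1,\ldots,x_{k_{i+1}-1}\}$ along $X'$, it takes only $w\in W$ and projects onto $X$ along $V\oplus X'$, via the orthogonal splitting $C_0=X\oplus(V\oplus X')$ coming from $V\perp X\oplus X'$. Writing $w=v+x+x'$, the orthogonality $V\perp W$ gives $\ell(w-v)=\max\{\ell(w),\ell(v)\}\geq\ell(v)$, whence $\ell(w)=\ell(x+x')$, and — crucially — $w-v=x+x'$ has support only in $span_{\Lambda}\{x_{k_i},\ldots,x_r\}$, so the indices $j<k_i$ never enter and the robustness estimates close in one pass. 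This also yields $W\cong X$ directly from the injectivity of $\pi_X|_W$ and a dimension count, with no detour through filtration spectra as in your treatment of part (ii).
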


\begin{proof}  Since $V$ is orthogonal to $X\oplus X'$ and $X$ is orthogonal to $X'$,  by Lemma \ref{bsorprop}, we have an orthogonal direct sum decomposition $C_0=X\oplus(X'\oplus V)$.  We will first show that the projection $\pi_X: C_0 \rightarrow X$ associated to this direct sum decomposition has the property that $\pi_X|_W$ exactly preserves filtration levels.

Let $w\in W$, and write $w=v+x+x'$ where $v\in V$, $x\in X$, and $x'\in X'$, so our goal is to show that $\ell(w)=\ell(x)$. Of course this is trivial if $w=0$, so assume $w\neq 0$. Now \begin{equation}\ell(w)=\max\{\ell(x+x'),\ell(v)\} \nonumber\end{equation}  since $V$ is orthogonal to $X\oplus X'$.  Meanwhile since $x+x'=w-v$ and $V$ and $W$ are orthogonal we have $\ell(x+x')=\max\{\ell(v),\ell(w)\}\geq \ell(v)$.  So $\ell(w)=\ell(x+x')=\max\{\ell(x),\ell(x')\}$.  (In particular $x$ and $x'$ are not both zero.)
Now expand $w-v=x+x'$ in terms of the basis $\{x_j\}$ as \[ w-v=\sum_{j=k_i}^{r}\lambda_j x_j. \]  The fact that we can take the sum to start at $k_i$ follows from the definitions of $X$ and $X'$, and the sum terminates at $r$ because $w-v\in V\oplus W\leq {\rm Im} \partial$.  Then $\ell(w-v)=\max\{\ell(\lambda_j x_j)|j\in\{k_i,\ldots,r\}\}$.  By Lemma \ref{checkingrobust}, the infimal filtration level of any $\tilde{y}\in C_1$ such that $\partial \tilde{y}=x+x'$ is attained by $\tilde{y}=y+y'$ where $y={\textstyle \sum_{j=k_i}^{k_{i+1}-1}}\lambda_j y_j$ and $y'={\textstyle \sum_{j=k_{i+1}}^{r}}\lambda_j y_j$; by the assumption that $V\oplus W$ is $\delta$-robust for all $\delta<\beta_{k_i}(\partial)$, we will have  \begin{equation}\ell(y+y')\geq\ell(w-v)+\beta_{k_i}(\partial)=\ell(x+x')+\beta_{k_i}(\partial).\nonumber\end{equation}

Thus by the orthogonality of the bases $\{x_j\}$ and $\{y_j\}$, \begin{equation}\label{yyxx} 
\beta_{k_i}(\partial)\leq \ell(y+y')-\ell(x+x') = \max\{\ell(y),\ell(y')\}-\max\{\ell(x),\ell(x')\}.  \end{equation}

Now if we choose $j_0$ to maximize the quantity $\ell(\lambda_j y_j)$ over all $j\in\{k_{i+1},\ldots,r\}$ we will have \[ \ell(y')=\ell(\lambda_{j_0}y_{j_0})=\ell(\lambda_{j_0}x_{j_0})+\beta_{j_0}(\partial)\leq \ell(x')+\beta_{j_0}(\partial). \] So \[ \ell(y')-\max\{\ell(x),\ell(x')\}\leq \ell(y')-\ell(x')\leq \beta_{j_0}(\partial)<\beta_{k_i}(\partial) \] since $j_0\geq k_{i+1}$.  Thus in view of (\ref{yyxx}) we must have $\ell(y)>\ell(y')$ and so by Proposition \ref{strict} $\ell(y+y') =\ell(y)$.  Similarly, choose $i_0 \in \{k_i, \ldots, k_{i+1}-1\}$ to maximize the quantity $\ell(\lambda_jx_j)$, so that $\ell(x) = \ell(\lambda_{i_0} x_{i_0})$. Then 
\[ 
\ell(y) - \ell(x) \geq \ell(\lambda_{i_0} y_{i_0}) - \ell(\lambda_{i_0} x_{i_0}) = \beta_{i_0}(\partial). \]
Symmetrically, choose $i_1 \in \{k_i, \ldots, k_{i+1}-1\}$ to maximize the quantity $\ell(y) ={\textstyle \sum_{k_i}^{k_{i+1}-1}} \lambda_i y_i$, that is $\ell(y) = \ell(\lambda_{i_1} y_{i_1})$. Then 
\[ 
\ell(y) - \ell(x) \leq \ell(\lambda_{i_1} y_{i_1}) - \ell(\lambda_{i_1} x_{i_1}) = \beta_{i_1}(\partial).
 \]
Because $\beta_{k_i}(\partial) = \cdots = \beta_{k_{i+1}-1}(\partial)$ and $i_0,i_1\in\{k_i,\ldots,k_{i+1}-1\}$, the above inequalities imply that $ \beta_{i_0}(\partial) =  \beta_{i_i}(\partial) = \beta_{k_i}(\partial)$. Thus we necessarily have $\ell(y)-\ell(x)=\beta_{k_i}(\partial)$. So we cannot have $\ell(x')>\ell(x)$, since if this were the case then $\ell(y+y')-\ell(x+x')=\ell(y)-\max\{\ell(x),\ell(x')\}$ would be strictly smaller than $\beta_{k_i}(\partial)$, a contradiction to condition (ii).  Thus $\ell(x)\geq \ell(x')$.  So since we have seen that $\ell(w)=\max\{\ell(x),\ell(x')\}$ this proves that $\ell(w)=\ell(x)$.

Thus the projection $\pi_X: C_0 \rightarrow X$ associated to the direct sum decomposition $X\oplus(V\oplus X')$ has $\ell(\pi_Xw)=\ell(w)$ for all $w\in W$, and in particular it is injective because $0$ is the only element with filtration level $-\infty$. So dimensional considerations prove the last statement of the lemma.  By Lemma \ref{or&pr}, this also implies that $W$ is an orthogonal complement to $V\oplus X'$. Since $X'$ is orthogonal to $V$ and $V\oplus X'$ is orthogonal to $W$ it follows from Lemma \ref{bsorprop} (ii) that $V\oplus W$ is orthogonal to $X'$, which is precisely the remaining conclusion of the lemma. \end{proof}

\begin{cor} \label{switch} Let $\left((z_1,\ldots,z_n),(w_1,\ldots,w_m)\right)$ and $\left((y_1,\ldots,y_n),(x_1,\ldots,x_m)\right))$ be two singular value decompositions for $(C_1 \xrightarrow{\partial} C_0)$. Then for each $i\in\{1,\ldots,p\}$ there is a commutative diagram 
\begin{equation}\label{diag} \xymatrix{ span_{\Lambda}\{z_{k_i},\ldots,z_{k_{i+1}-1}\}\ar[r] \ar[d]^{\partial} & span_{\Lambda}\{y_{k_i},\ldots,y_{k_{i+1}-1}\} \ar[d]^{\partial} \\ span_{\Lambda}\{w_{k_i},\ldots,w_{k_{i+1}-1}\} \ar[r] & span_{\Lambda}\{x_{k_i},\ldots,x_{k_{i+1}-1}\} } \nonumber\end{equation} where the horizontal arrows are isomorphisms of filtered vector spaces. \end{cor}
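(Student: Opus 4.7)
My plan is to induct on $i \in \{1, \ldots, p\}$, applying Lemma \ref{induc1} with $((y_1,\ldots,y_n),(x_1,\ldots,x_m))$ as the reference singular value decomposition and the subspaces $V$, $W$ drawn from the second decomposition. Specifically, at stage $i$ I will take
$$V_i = span_{\Lambda}\{w_1,\ldots,w_{k_i - 1}\}, \qquad W_i = span_{\Lambda}\{w_{k_i},\ldots,w_{k_{i+1}-1}\},$$
so that $V_i \oplus W_i = span_{\Lambda}\{w_1,\ldots,w_{k_{i+1}-1}\}$. Orthogonality of $V_i$ and $W_i$ and the correct dimensions are immediate from orthogonality of $(w_1,\ldots,w_m)$; the robustness hypotheses of Lemma \ref{induc1} will be obtained by rerunning the argument in the proof of Theorem \ref{genebd} (which showed $span_{\Lambda}\{x_1,\ldots,x_k\}$ to be $\delta$-robust for every $\delta < \beta_k(\partial)$) on the SVD $((z_j),(w_j))$ instead. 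The one hypothesis that will require care is the orthogonality of $V_i$ with $span_{\Lambda}\{x_{k_i},\ldots,x_m\}$: this is vacuous at $i=1$, and for $i>1$ it should drop out of the induction via the first conclusion of Lemma \ref{induc1} at the previous stage, together with the evident identity $V_{i-1} \oplus W_{i-1} = V_i$.

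Once the hypotheses are verified, Lemma \ref{induc1} supplies the bottom horizontal arrow as an isomorphism of filtered vector spaces $W_i \to X_i$, where $X_i = span_{\Lambda}\{x_{k_i},\ldots,x_{k_{i+1}-1}\}$. Inspecting its proof, this isomorphism is $\pi_{X_i}|_{W_i}$, where $\pi_{X_i}$ is the projection onto $X_i$ associated with the orthogonal decomposition $C_0 = X_i \oplus \left(V_i\oplus span_{\Lambda}\{x_{k_{i+1}},\ldots,x_m\}\right)$. The top horizontal arrow then essentially defines itself: I will set $\Phi_i \colon span_{\Lambda}\{z_{k_i},\ldots,z_{k_{i+1}-1}\} \to span_{\Lambda}\{y_{k_i},\ldots,y_{k_{i+1}-1}\}$ to be the unique map making the diagram commute. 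This is well-defined and bijective because, by the defining properties of an SVD, $\partial$ restricts to a bijection from each of these two spans onto $W_i$ and $X_i$ respectively.

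To check that $\Phi_i$ preserves filtration, I plan to exploit the crucial observation that both $\ell(z_j)-\ell(w_j)$ and $\ell(y_j)-\ell(x_j)$ are constantly equal to $\beta_{k_i}(\partial)$ for $j \in \{k_i,\ldots,k_{i+1}-1\}$ (this is precisely the reason for dividing into blocks according to the level sets of $k\mapsto\beta_k(\partial)$). Orthogonality then gives, for any $z = \sum_j \lambda_j z_j$, the identity $\ell(z) = \ell(\partial z) + \beta_{k_i}(\partial)$, and symmetrically $\ell(\Phi_i(z)) = \ell(\partial \Phi_i(z)) + \beta_{k_i}(\partial)$. Combining these with the defining relation $\partial\Phi_i(z) = \pi_{X_i}(\partial z)$ and the filtration-preservation of $\pi_{X_i}|_{W_i}$ will yield $\ell(\Phi_i(z)) = \ell(z)$, as required.

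I expect the main obstacle to be the inductive bookkeeping in the first paragraph, particularly ensuring that the orthogonality condition in Lemma \ref{induc1}(i) propagates correctly from one stage to the next; everything else is then fairly mechanical once the block decomposition $\{k_1, \ldots, k_{p+1}\}$ has been set up compatibly with the level sets of $\beta_\bullet(\partial)$.
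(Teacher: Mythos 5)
Your proposal is correct and follows essentially the same line as the paper's proof: the same inductive use of Lemma \ref{induc1} (with the paper's $V_i = span\{w_1,\ldots,w_{k_{i+1}-1}\}$ corresponding to your $V_{i+1}$), the same appeal to the robustness argument from Theorem \ref{genebd} applied to the $(z,w)$-decomposition, and the same observation that the side arrows shift filtration by exactly $\beta_{k_i}(\partial)$ to conclude the top arrow is filtered. The only differences are cosmetic (index conventions for the $V_i$'s).
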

 
\begin{proof} Consider the following ascending sequence of subspaces of $\rm {Im}(\partial)$:
\[ \begin{array}{l}
\{0\} = V_0 \leq V_1 \leq V_2 \leq \ldots \leq V_p = {\rm{Im}}(\partial)
\end{array} \]
where $V_i = span\{w_1, \ldots, w_{k_{i+1}-1}\}$. Each $V_i$ is $\delta$-robust for all $\delta< \beta_{k_i}(\partial)$ by Lemma \ref{checkingrobust}.  Also let $W_i=span_{\Lambda}\{w_{k_i},\ldots,w_{k_{i+1}-1}\}$, so we have an orthogonal direct sum decomposition $V_i=V_{i-1}\oplus W_i$.  

We claim by induction on $i$ that
$V_i$ is orthogonal to $span_{\Lambda}\{x_{k_{i+1}},\ldots,x_m\}$.  Indeed for $i=0$ this is trivial, and assuming that it holds for the value $i-1$ then applying Lemma \ref{induc1} with $V=V_{i-1}$ and $W=W_i$ proves the claim for the value $i$. 
Given this fact, for any $i$ we may again apply Lemma \ref{induc1} to obtain a filtered isomorphism $W_i\to span_{\Lambda}\{x_{k_i},\ldots,x_{k_{i+1}-1}\}$, which serves as the bottom arrow in the diagram in the statement of the Corollary.

 Since the side arrows and the bottom arrow are all linear isomorphisms, there is a unique top arrow that makes the diagram commute.  Moreover the bottom arrow exactly preserves filtration, and the side arrows both decrease the filtration levels of all nonzero elements by \emph{exactly} $\beta_{k_i}(\partial)$, so it follows that the top arrow is an isomorphism of filtered vector spaces as well. \end{proof}

\begin{proof}[Proof of Theorem \ref{indep}] Let $\left((z_1,\ldots,z_n),(w_1,\ldots,w_m)\right)$, $\left((y_1,\ldots,y_n),(x_1,\ldots,x_m)\right)$ be two singular value decompositions. Both of  $span_{\Lambda}\{w_{r+1},\ldots,w_{m}\}$ and $span_{\Lambda}\{x_{r+1},\ldots,x_m\}$ are orthogonal complements to ${\rm Im} \partial$, where $r=rank(\partial)$, so they are filtered isomorphic by Lemma \ref{or&pr} and so they have the same filtration spectra by Proposition \ref{multi}. Meanwhile, the subspaces $span_{\Lambda}\{w_{k_i},\ldots,w_{k_{i+1}-1}\}$ and $span_{\Lambda}\{x_{k_i},\ldots,x_{k_{i+1}-1}\}$ are filtered isomorphic for each $i \in \{1, ..., p\}$ by Corollary \ref{switch}, so they likewise have the same filtration spectra. The conclusion now follows immediately from the description of verbose barcode, using Theorem \ref{genebd}. \end{proof}

\subsection{Classification up to filtered homotopy equivalence}

Now we move on to the classification of the filtered chain homotopy equivalence class of a Floer-type complex. First, we will prove the ``if part'', which is the easier direction. 

\begin{prop} \label{if-part-THB} For any Floer-type complex $(C_*,\partial_C,\ell_C)$, let $\mathcal{B}_{C,k}$ denote the degree-$k$ concise barcode of $(C_*,\partial_C,\ell_C)$. For each $([a],L)\in\mathcal{B}_{C,k}$, choose a representative $a$ of the coset $[a]\in\R/\Gamma$. Then $(C_*,\partial_C,\ell_C)$ is filtered homotopy equivalent to \[ \bigoplus_{k\in \Z}\bigoplus_{([a],L)\in\mathcal{B}_{C,k}}\mathcal{E}(a,L,k). \]
\end{prop}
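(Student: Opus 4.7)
The plan is to leverage Proposition \ref{ournormalform}, which already provides a filtered chain isomorphism
\[ (C_*,\partial_C,\ell_C)\cong \bigoplus_{k\in\Z}\bigoplus_{([a],L)\in \tilde{\mathcal{B}}_{C,k}}\mathcal{E}(a,L,k), \]
where $\tilde{\mathcal{B}}_{C,k}$ is the \emph{verbose} barcode. Since a filtered chain isomorphism is in particular a filtered chain homotopy equivalence, it suffices to show that the summands indexed by verbose-barcode elements $([a],L)$ with $L=0$ may be discarded without changing the filtered homotopy type. In other words, the entire content of the proposition reduces to verifying that each elementary complex $\mathcal{E}(a,0,k)$ is filtered homotopy equivalent to the zero complex.

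To see this, recall from Definition \ref{elemdfn} that $\mathcal{E}(a,0,k)$ has $E_{k+1}=\langle y\rangle_{\Lambda}$, $E_k=\langle x\rangle_{\Lambda}$, $\partial_E y = x$, and $\ell_E(\lambda x + \mu y) = \max\{a-\nu(\lambda), a-\nu(\mu)\}$. I would define $K\co E_*\to E_{*+1}$ by $K(x)=y$ and $K(y)=0$. A direct calculation gives $(\partial_E K + K\partial_E)(x) = \partial_E y = x$ and $(\partial_E K + K\partial_E)(y) = K(x) = y$, so $\partial_E K + K\partial_E = I_{E_*}$. Moreover $\ell_E(Kx)=\ell_E(y)=a=\ell_E(x)$ and $\ell_E(Ky)=-\infty \leq \ell_E(y)$, so $K$ preserves filtration. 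Together with the trivial zero maps to and from the zero complex, this exhibits $\mathcal{E}(a,0,k)$ as filtered homotopy equivalent to $0$.

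Finally I would observe that filtered homotopy equivalence is compatible with direct sums of Floer-type complexes: if $\Phi_{\alpha}\co \mathcal{C}_\alpha \to \mathcal{D}_\alpha$ and $\Psi_{\alpha}\co\mathcal{D}_\alpha \to \mathcal{C}_\alpha$ are filtration-preserving chain maps with filtration-preserving chain homotopies $K_\alpha^C,K_\alpha^D$ realizing $\Psi_\alpha\Phi_\alpha\simeq I$ and $\Phi_\alpha\Psi_\alpha \simeq I$, then $\oplus_\alpha\Phi_\alpha$, $\oplus_\alpha\Psi_\alpha$, $\oplus_\alpha K_\alpha^C$, $\oplus_\alpha K_\alpha^D$ assemble into the corresponding data for $\oplus_\alpha\mathcal{C}_\alpha$ and $\oplus_\alpha\mathcal{D}_\alpha$, using that the filtration on a direct sum is the maximum of the filtrations on the summands. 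Applying this with $\mathcal{D}_\alpha=\mathcal{E}(a,0,k)$ and $\mathcal{C}_\alpha=0$ for the $L=0$ summands, and the identity equivalence on all other summands, yields
\[ \bigoplus_{k}\bigoplus_{([a],L)\in \tilde{\mathcal{B}}_{C,k}}\mathcal{E}(a,L,k)\;\simeq\;\bigoplus_{k}\bigoplus_{([a],L)\in\mathcal{B}_{C,k}}\mathcal{E}(a,L,k), \]
and the proposition follows by composing with the isomorphism from Proposition \ref{ournormalform}. There is no real obstacle here; the only thing to be slightly careful about is confirming that the null-homotopy $K$ on $\mathcal{E}(a,0,k)$ is filtration-preserving, which is exactly what fails when $L>0$ (one would instead have $\ell_E(Kx)=\ell_E(y)=a+L>\ell_E(x)$), and this is precisely why elements with $L>0$ must be retained in the concise barcode.
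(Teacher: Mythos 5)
Your proof is correct and follows essentially the same route as the paper: invoke Proposition~\ref{ournormalform} to reduce to the verbose-barcode direct sum, exhibit each $\mathcal{E}(a,0,k)$ as filtered null-homotopic via an explicit degree-$(+1)$ map $K$, and assemble the summand-wise equivalences using compatibility of filtered homotopy equivalence with orthogonal direct sums. The only (immaterial) difference is a sign --- the paper takes $Kx=-y$ so that $\partial_E K + K\partial_E = -I_{E_*} = 0 - I_{E_*}$ matches the convention of Definition~\ref{dfnfht} on the nose, whereas your $K$ gives $+I_{E_*}$, which is fixed by replacing $K$ with $-K$.
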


\begin{proof}
For each $k$ let $\tilde{\mathcal{B}}_{C,k}$ denote the degree-$k$ verbose barcode of $(C_*,\partial_{C},\ell_C)$ and $\mathcal{B}_{C,k}$ the degree-$k$ concise barcode, so $\mathcal{B}_{C,k}=\{([a],L)\in\tilde{\mathcal{B}}_{C,k}\,|\,L>0\}$  

By Proposition \ref{ournormalform}, if for each $([a],L)\in \tilde{\mathcal{B}}_{C,k}$ we choose a representative $a$ of the coset $[a]\in\R/\Gamma$,  $(C_*,\partial_C,\ell_C)$ is filtered chain isomorphic to \begin{equation}\label{sep} \left(\bigoplus_{k}\bigoplus_{([a],L)\in \mathcal{B}_{C,k}}\mathcal{E}(a,L,k)\right)\oplus \left(\bigoplus_{k}\bigoplus_{([a],0)\in \tilde{\mathcal{B}}_{C,k}\setminus\mathcal{B}_{C,k}}\mathcal{E}(a,0,k)\right). \end{equation}
Recall the definition of $\mathcal{E}(a,0,k)$ as the triple $(E_*,\partial_E,\ell_E)$ where $E_*$ is spanned over $\Lambda$ by elements $y\in E_{k+1}$ and $x\in E_k$ with $\partial_Ey =x$ and $\ell_E(y)=\ell_E(x)=a$.  If we define $K\co E_*\to E_{*+1}$ to be the $\Lambda$-linear map defined by $Kx=-y$ and $K|_{E_m}=0$ for $m\neq k$, we see that $\ell_E(Ke)\leq \ell_E(e)$ for all $e\in E_*$, that $(\partial_E K + K\partial_E)x=-\partial_Ey=-x$, and that $(\partial_EK+K\partial_Ey)=Kx=-y$.  So $K$ defines a filtered chain homotopy between $0$ and the identity, in view of which $\mathcal{E}(a,0,k)$ is filtered homotopy equivalent to the zero chain complex.  Since a direct sum of filtered homotopy equivalences is a filtered homotopy  equivalence, the Floer-type complex in (\ref{sep}) (and hence also $(C_*,\partial_C,\ell_C)$) is filtered homotopy equivalent to $ \bigoplus_{k\in \Z}\bigoplus_{([a],L)\in\mathcal{B}_{C,k}}\mathcal{E}(a,L,k)$.
\end{proof}

Recalling from Remark \ref{redundantE} that the filtered isomorphism type of $\mathcal{E}(a,L,k)$ only depends on $([a],L,k)$, so that up to filtered chain isomorphism  $ \oplus_{k\in \Z}\oplus_{([a],L)\in\mathcal{B}_{C,k}}\mathcal{E}(a,L,k)$ is independent of the choices $a$ of representatives of the cosets $[a]$, the ``if'' part of Theorem B follows directly from Proposition \ref{if-part-THB}.

\subsubsection{Mapping cylinders}\label{cylsect}

We review here the standard homological algebra construction of the mapping cylinder of a chain map between two chain complexes; the special case where the chain map is a homotopy equivalence will be used both in the proof of the ``only if'' part of Theorem B and in the proof of the stability theorem.

For a chain complex $(C_*,\partial_C)$ we use $(C[1]_*,\partial_C)$ to denote the chain complex obtained by shifting the degree of $C_*$ by $1$: $C[1]_k=C_{k-1}$, with  boundary operator given tautologically by the boundary operator of $C_*$.

\begin{dfn} \label{dfnmpcyl0}  Let $(C_*,\partial_C)$ and $(D_*,\partial_D)$ be two chain complexes over an arbitrary ring, and let $\Phi\co C_*\to D_*$ be a chain map.  The \textbf{mapping cylinder} of $\Phi$ is the chain complex $(Cyl(\Phi)_*,\partial_{cyl})$ defined by $Cyl(\Phi)_* = C_* \oplus D_* \oplus C[1]_*$ and, for $(c,d,e)\in Cyl(\Phi)_*$, $\partial_{cyl}(c,d,e) = (\partial_Cc-e,\partial_D d+\Phi e, -\partial_Ce)$. Thus, in block form, \[ \partial_{cyl} = \left(\begin{array}{ccc}\partial_C & 0 & -I_{C_*} \\ 0 &\partial_D & \Phi \\ 0 & 0 & -\partial_C   \end{array}\right).
\]\end{dfn}

It is a routine matter to check that $\partial_{cyl}^{2}=0$, so $(Cyl(\Phi)_*,\partial_{cyl})$ as defined above is indeed a chain complex.  

For the moment we will work at the level of chain complexes, not of filtered chain complexes, the reason being that we will later use Lemma \ref{cylhtopy} below under a variety of different kinds of assumptions about filtration levels.

\begin{dfn}\label{htdef} Given two chain complexes $(C_*,\partial_C)$ and $(D_*,\partial_D)$, a homotopy equivalence between $(C_*,\partial_C)$ and $(D_*,\partial_D)$ is a quadruple $(\Phi,\Psi,K_C,K_D)$ such that $K_C: C_* \to C_{*+1}$, $K_D: D_* \to D_{*+1}$ are linear maps shifting degree by $+1$ and $\Phi\co C_*\to D_*$, $\Psi\co D_*\to C_*$ are chain maps, obeying $\Psi\Phi-I_{C_*}=\partial_CK_C+K_C\partial_C$ and $\Phi\Psi-I_{D_*}=\partial_DK_D+K_D\partial_D$.
\end{dfn}

(In particular our convention is to consider the homotopies part of the data of a homotopy equivalence.)

\begin{lemma}\label{cylhtopy} Let $(\Phi,\Psi,K_C,K_D)$ be a homotopy equivalence between $(C_*,\partial_C)$ and $(D_*,\partial_D)$.  Then:
\begin{itemize} \item[(i)] Suppose that $i_D\co D_*\to Cyl(\Phi)_*$ is the inclusion, $\alpha\co Cyl(\Phi)_*\to D_*$ is defined by $\alpha(c,d,e)=\Phi c+d$, and $K\co Cyl(\Phi)_*\to Cyl(\Phi)_{*+1}$ is defined by $K(c,d,e)=(0,0,c)$. Then the quadruple $(i_D,\alpha,0,K)$ is a homotopy equivalence between $(D_*,\partial_D)$ and $(Cyl(\Phi)_*,\partial_{cyl})$.
\item[(ii)] Suppose that $i_C\co C_*\to Cyl(\Phi)_*$ is the inclusion, $\beta\co Cyl(\Phi)_*\to C_*$ is defined by $\beta(c,d,e)=c+\Psi d+K_Ce$, and $L\co Cyl(\Phi)_*\to Cyl(\Phi)_{*+1}$ is defined by \[ L(c,d,e)=(-K_Cc,K_D(\Phi c+d),c-\Psi(\Phi c+d)). \]
Then the quadruple $(i_C,\beta,0,L)$ is a homotopy equivalence between $(C_*,\partial_C)$ and $(Cyl(\Phi)_*,\partial_{cyl})$.\end{itemize}
\end{lemma}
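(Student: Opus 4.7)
The proof is a purely algebraic verification: given the explicit formulas for all the maps, I simply need to check the identities required by Definition \ref{htdef} in each of the two parts. Since no filtration-level bookkeeping is required here, I will work strictly at the level of the underlying chain complexes and manipulate matrix-style expressions using the block form of $\partial_{cyl}$ given in Definition \ref{dfnmpcyl0}.

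My plan is to handle part (i) first, as it is considerably easier. Checking that $i_D$ and $\alpha$ are chain maps is immediate from the block form of $\partial_{cyl}$; in particular for $\alpha$ the computation gives $\alpha(\partial_{cyl}(c,d,e)) = \Phi(\partial_Cc-e) + \partial_Dd + \Phi e = \Phi\partial_Cc + \partial_Dd = \partial_D\alpha(c,d,e)$ using that $\Phi$ is a chain map. Plainly $\alpha\circ i_D = I_{D_*}$, which explains why the homotopy on the $D_*$-side is taken to be zero. It then remains to verify that $i_D\circ\alpha - I_{Cyl(\Phi)_*}$ equals $\partial_{cyl}K + K\partial_{cyl}$. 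Applied to $(c,d,e)$, the left-hand side gives $(-c,\Phi c,-e)$, while $K(c,d,e) = (0,0,c)$ and $K\partial_{cyl}(c,d,e) = (0,0,\partial_Cc-e)$, and $\partial_{cyl}(0,0,c) = (-c,\Phi c,-\partial_Cc)$; summing these two contributions yields $(-c,\Phi c,-e)$, matching exactly. Note that for part (i) we never use the homotopies $K_C,K_D$ themselves---indeed, $\Phi$ being a chain map suffices.

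The bulk of the work is part (ii), where the homotopy $L$ has a more elaborate definition and the homotopy equations $\Psi\Phi - I_{C_*} = \partial_CK_C + K_C\partial_C$ and $\Phi\Psi - I_{D_*} = \partial_DK_D + K_D\partial_D$ are essential. I will check first that $i_C$ and $\beta$ are chain maps: for $\beta$, the computation $\beta\partial_{cyl}(c,d,e) - \partial_C\beta(c,d,e)$ simplifies, after using that $\Psi$ is a chain map, to $-e + \Psi\Phi e - (\partial_CK_C + K_C\partial_C)e$, and this vanishes by the homotopy equation for $K_C$. Then $\beta\circ i_C = I_{C_*}$ is trivial. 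The key identity $i_C\circ\beta - I_{Cyl(\Phi)_*} = \partial_{cyl}L + L\partial_{cyl}$ must then be checked componentwise; on $(c,d,e)$ the left-hand side is $(\Psi d + K_Ce,-d,-e)$. The plan is to organize the computation of $\partial_{cyl}L(c,d,e) + L\partial_{cyl}(c,d,e)$ into three components and simplify each separately, using at each step one of the two homotopy relations together with the chain map property of $\Phi$ and $\Psi$. The first component reduces after cancellations to $-(\partial_CK_C + K_C\partial_C)c - c + \Psi\Phi c + \Psi d + K_Ce$, which collapses to $\Psi d + K_Ce$ via the $K_C$-equation. The second component groups into $(\partial_DK_D + K_D\partial_D)(\Phi c + d) + \Phi c - \Phi\Psi(\Phi c + d)$ and collapses to $-d$ via the $K_D$-equation. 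The third component only requires the chain map property of $\Psi$ together with straightforward cancellations to give $-e$.

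The ``hard part'' is essentially bookkeeping: keeping track of signs and applying $\Phi, \Psi, \partial_C, \partial_D$ commutation rules in the right order so that the homotopy relations can be applied cleanly. No conceptual obstacle arises—everything follows from the definitions, the chain-map conditions, and the two homotopy identities. Once the three componentwise identities in part (ii) are verified, both parts of the lemma are established.
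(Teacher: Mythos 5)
Your proof is correct and follows essentially the same approach as the paper: direct computation verifying the chain-map conditions and the homotopy identities, with the only substantive check being $i_C\beta-I_{Cyl(\Phi)_*}=\partial_{cyl}L+L\partial_{cyl}$. The paper writes out this last computation in the same componentwise fashion and explicitly leaves the rest to the reader; your organization into three components using the $K_C$-equation, the $K_D$-equation, and the chain-map property of $\Psi$ matches the structure of the paper's calculation.
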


\begin{proof} The proof requires only a series of routine computations to show that $i_D,\alpha,i_C,\beta$ are all chain maps and that the various chain homotopy equations hold.  We will do only the most nontrivial of these, namely the proof of the identity 
$i_C\beta-I_{Cyl(\Phi)_*}=\partial_{Cyl}L+L\partial_{Cyl}$, leaving the rest to the reader.  We see that, for $(c,d,e)\in Cyl(\Phi)_*$, 
\[ (i_C\beta-I_{Cyl(\Phi)_*})(c,d,e)=\left(\Psi d+K_Ce,-d,-e\right)\] while \begin{align*} \partial_{cyl}L(c,d,e)&= \partial_{cyl}\left(-K_Cc,K_D(\Phi c+d),c-\Psi(\Phi c+d)\right) 
\\ &= \left(-\partial_CK_Cc-c+\Psi\Phi c+\Psi d,\partial_DK_D(\Phi c+d)+\Phi c-\Phi\Psi(\Phi c+d),-\partial_C c+\partial_C\Psi(\Phi c+d)\right)
\\ &= \left(K_C\partial_C c+\Psi d,-K_D\partial_D\Phi c+(\partial_DK_D-\Phi\Psi)d, -\partial_C c+\partial_C\Psi(\Phi c+d)\right)\end{align*}
where we have used the facts that $\Psi\Phi-I_{C_*}=\partial_CK_C+K_C\partial_C$ and $\Phi\Psi-I_{D_*}=\partial_DK_D+K_D\partial_D$.  Meanwhile \begin{align*}
L\partial_{cyl}(c,d,e)&=L\left(\partial_Cc-e,\partial_Dd+\Phi e,-\partial_Ce\right)
\\& = \left(-K_C\partial_Cc+K_Ce,K_D(\Phi\partial_Cc+\partial_Dd),\partial_C c-e-\Psi(\Phi\partial_C c+\partial_Dd)\right).
\end{align*}
So \begin{align*}
\left(\partial_{cyl}L+L\partial_{cyl}\right)&(c,d,e) = \left(\Psi d+K_Ce,(\partial_DK_D-\Phi\Psi+K_D\partial_D)d,-e\right)
\\&= (\Psi d+K_Ce,-d,-e) = (i_C\beta-I_{Cyl(\Phi)_*})(c,d,e)
\end{align*} where in the first equation we have used the fact that $\Phi$ and $\Psi$ are chain maps and in the second equation we have again used that $\Phi\Psi-I_{D_*}=\partial_DK_D+K_D\partial_D$.  So indeed $i_C\beta-I_{Cyl(\Phi)_*}=\partial_{Cyl}L+L\partial_{Cyl}$; as mentioned earlier the remaining identities are easier to prove and so are left to the reader.
\end{proof}

We can now fill in the last part of our proofs of the main classification results.
\begin{proof}[Proof of Theorem B]
One implication has already been proven in Proposition \ref{if-part-THB}.  For the other direction, let $(C_*,\partial_C,\ell_C)$ and $(D_*,\partial_D,\ell_D)$ be two filtered homotopy equivalent Floer-type complexes.  Thus there is a homotopy equivalence $(\Phi,\Psi,K_C,K_D)$ satisfying the additional properties that, for all $c\in C_*$ and $d\in D_*$, we have \begin{equation}\label{filtsame} \ell_D(\Phi c)\leq \ell_C(c)\quad \ell_C(\Psi d)\leq \ell_D(d) \quad \ell_C(K_Cc)\leq \ell_C(c)\quad \ell_D(K_Dd)\leq \ell_D(d). \end{equation}  Now form the mapping cylinder $(Cyl(\Phi)_*,\partial_{cyl})$ as described earlier, and define $\ell_{cyl}\co Cyl(\Phi)_{*}\to \R\cup\{-\infty\}$ by \[ \ell_{cyl}(c,d,e)=\max\{\ell_C(c),\ell_D(d),\ell_C(e)\} \] It is easy to see that $(Cyl(\Phi)_{*},\partial_{cyl},\ell_{cyl})$ is then a Floer-type complex.\footnote{For comparison with what we do later it is worth noting that the fact that $\ell_{cyl}(\partial_{cyl} x)\leq \ell_{cyl}(x)$ for all $x$ is crucially dependent on the first inequality of (\ref{filtsame}).} Now  $(Cyl(\Phi)_*,\partial_{cyl},\ell_{cyl})$ has a concise barcode in each degree; we will show that this concise barcode is both the same as that of $(C_*,\partial_C,\ell_C)$ and the same as that of $(D_*,\partial_D,\ell_D)$, which will suffice to prove the result.

Using the notation of Lemma \ref{cylhtopy}, since $\alpha\co Cyl(\Phi)_*\to D_*$ is a chain map with $\alpha i_D=I_{D_*}$, we have a direct sum decomposition \emph{of chain complexes} $Cyl(\Phi)_*=D_*\oplus\ker\alpha$.  We claim that $D_*$ and $\ker\alpha$ are orthogonal (with respect to the filtration function $\ell_{cyl}$).  Now \[ \ker\alpha=\{(c,d,e)\in Cyl(\Phi)_*\,|\,d=-\Phi c\}=\left\{(c,-\Phi c,e)\,|\,(c,e)\in C_*\oplus C[1]_*\right\}. \]  Since $D_*$ is  an orthogonal complement to $C_*\oplus C[1]_{*}$ in $Cyl(\Phi)_*$, and since in each grading $k$ the dimensions of the degree-$k$ part of $\ker\alpha$ and of $C_k\oplus C[1]_k$ are the same, by Lemma \ref{or&pr} in order to show that $\ker\alpha$ is orthogonal to   $D_*$ it suffices to show that, writing $\pi\co Cyl(\Phi)_*\to C_*\oplus C[1]_*$ for the orthogonal projection $(c,d,e)\mapsto (c,e)$, one has $\ell_{cyl}(\pi x)=\ell_{cyl}(x)$ for all $x\in \ker\alpha$.  But any $x\in \ker\alpha$ has $x=(c,-\Phi c,e)$ for some $(c,e)\in C_*\oplus C[1]_*$, and $\ell_D(- \Phi c)\leq \ell_C(c)$, so we indeed have $\ell_{cyl}(\pi x)=\max\{\ell_C(c),\ell_C(e)\}=\ell_{cyl}(x)$.  So indeed $D_*$ and $\ker\alpha$ are orthogonal. 

In view of the orthogonal direct sum decomposition of chain complexes $Cyl(\Phi)_*=D_*\oplus \ker\alpha$, for every degree $k$ we can obtain a singular value decomposition for $(\partial_{cyl})_{k+1}\co Cyl(\Phi)_{k+1}\to \ker(\partial_{cyl})_k$ by simply combining singular value decompositions for the restrictions of $(\partial_{cyl})_{k+1}$ to $D_{k+1}$ and to $(\ker\alpha)_{k+1}$.  Then by Theorem \ref{indep}, the verbose barcode of $Cyl(\Phi)_{*}$ is the union of the verbose barcodes of $D_*$ and of $\ker\alpha$.

To describe the latter of these, we will show presently that every element in $\ker(\partial_{cyl}|_{\ker\alpha})$ is the boundary of an element having the same filtration level. In fact, for any $x \in \ker(\partial_{cyl}|_{\ker\alpha})$, the equation $i_D\alpha-I_{Cyl(\Phi)_{*}}=\partial_{cyl}K+K\partial_{cyl}$ shows that $x=\partial_{cyl}(-Kx)$. Moreover, 
\[ \ell_{cyl}(x) = \ell_{cyl} (\partial_{cyl}(-Kx)) \leq \ell_{cyl}(-Kx) \leq \ell_{cyl}(x), \]
where the last inequality comes from the formula for $K$ in Lemma \ref{cylhtopy}. Therefore $\ell_{cyl}(x) = \ell_{cyl}(-Kx)$. 

Consequently, every element $([a],s)$ of the verbose barcode of $\ker\alpha$  has $s=0$ (or, said differently, the concise barcode of $\ker\alpha$ is empty in every degree).  Thus the verbose barcode of $Cyl(\Phi)_{*}$ may be obtained from the verbose barcode of $D_*$ by adding elements with second coordinate equal to zero; consequently the concise barcodes of $Cyl(\Phi)_{*}$ and of $D_*$ are equal.

The proof that the concise barcodes of $Cyl(\Phi)_{*}$ and $C_*$ are likewise equal is very similar.  We have a direct sum decomposition of chain complexes $Cyl(\Phi)_{*}=C_*\oplus \ker\beta$, where $\ker\beta = \{(-\Psi d-K_Ce,d,e)|(d,e)\in D_{*}\oplus C[1]_{*}\}$.  Let $\pi'\co Cyl(\Phi)_{*}\to D_{*}\oplus C[1]_{*}$ be the projection associated to the orthogonal direct sum decomposition $Cyl(\Phi)_{*}=C_*\oplus(D_*\oplus C[1]_{*})$. The inequalities (\ref{filtsame}) imply that $\ell_{cyl}(\pi'x)=\ell_{cyl}(x)$ for all $x\in \ker\beta$.  Hence by applying Lemma \ref{or&pr} degree-by-degree we see that $Cyl(\Phi)_{*}=C_*\oplus \ker\beta$ is an \emph{orthogonal} direct sum decomposition of chain complexes, and hence that in any degree $k$ the verbose barcode of $Cyl(\Phi)_{*}$ is the union of the degree-$k$ verbose barcodes of $C_*$ and of $\ker\beta$.  Any cycle $x$ in $\ker\beta$ obeys $x=-\partial_{cyl} Lx$, where the formula for $L$ (together with (\ref{filtsame})) shows that $\ell_{cyl}(-Lx)\leq\ell_{cyl}(x)$.  While $Lx$ might not be an element of $\ker\beta$, the orthogonality of $C_{*}$ and $\ker\beta$ together with 
Lemma \ref{checkingrobust} allow one to find $y\in\ker\beta$ with $\partial y=x$ and $\ell_{cyl}(y)\leq \ell_{cyl}(-Lx)\leq \ell_{cyl}(x)$.  Just as above, this proves that all elements $([a],s)$ of the verbose barcode of $\ker\beta$ have second coordinate $s$ equal to zero, and so once again the concise barcode of $Cyl(\Phi)_{*}$ coincides with that of $C_*$. 
\end{proof}

\section{The Stability theorem}

The Stability Theorem (or a closely related statement sometimes called the  Isometry Theorem) is the one of the most important theorems in the theory of persistent homology. It successfully transfers the problem of relating the filtered homology groups constructed by different methods
(e.g., different Morse functions on a given manifold) to a combinatorial problem based on the associated barcodes. The result was originally established for the persistence modules associated to ``tame'' functions on topological spaces in \cite{CEH}; since then a variety of different proofs and generalizations have appeared (see e.g. \cite{CCGGO}, \cite{BL}), and it now generally understood as an algebraic statement in the abstract context of persistence modules. In this section, we will introduce  some basic notations and definitions in order to state our version of the stability theorem, which unlike previous versions applies to Floer-type complexes over general Novikov fields $\Lambda^{\mathcal{K},\Gamma}$.  In the special case that $\Gamma=\{0\}$ the result follows from recent more algebraic formulations of the stability theorem like that in \cite{BL}, though we would say that our proof is conceptually rather different.

The following is an abstraction of the filtration-theoretic properties satisfied by the ``continuation maps''
in Hamiltonian Floer theory that relate the Floer-type complexes associated to different Hamiltonian functions; namely such maps are homotopy equivalences which shift the filtration by a certain amount which is related to an appropriate distance (the Hofer distance) between the Hamiltonians  (see \cite[Propositions 5.1, 5.3 and 6.1]{U13}).

\begin{dfn} \label{dfnqusie} Let $(C_*,\partial_C,\ell_C)$ and $(D_*,\partial_D,\ell_D)$ be two Floer-type complexes over $\Lambda$, and $\delta\geq 0$.  A $\delta$-{\bf quasiequivalence} between $C_*$ and $D_*$ is a quadruple $(\Phi,\Psi,K_C,K_D)$ where:
\begin{itemize}
\item[(i)] $(\Phi,\Psi,K_C,K_D)$ is a homotopy equivalence (see Definition \ref{htdef}).
\item[(ii)] For all $c\in C_*$ and $d\in D_*$ we have \begin{equation}\label{qeshift} \ell_D(\Phi c)\leq \ell_C(c)+\delta \quad  \ell_C(\Psi d)\leq \ell_D(d)+\delta \quad  \ell_C(K_Cc)\leq \ell_C(c)+2\delta \quad \ell_D(K_Dd)\leq \ell_D(d)+2\delta. \end{equation}
\end{itemize}
The {\bf quasiequivalence distance} between   $(C_*,\partial_C,\ell_C)$ and $(D_*,\partial_D,\ell_D)$ is then defined to be \[ d_Q((C_*,\partial_C,\ell_C),(D_*,\partial_D,\ell_D))=\inf\left\{\delta\geq 0\left|\begin{array}{cc}\mbox{There exists a $\delta$-quasiequivalence between }\\(C_*,\partial_C,\ell_C)\mbox{ and }(D_*,\partial_D,\ell_D)\end{array}\right.\right\}. \]\end{dfn}

Of course, $(C_*,\partial_C,\ell_C)$ and $(D_*,\partial_D,\ell_D)$ are said to be $\delta$-quasiequivalent provided that there exists a $\delta$-quasiequivalence between them.  Note that a $0$-quasiequivalence is the same thing as a filtered homotopy equivalence.

\begin{remark}\label{dqtri}
It is easy to see that if $(C_*,\partial_C,\ell_C)$ and $(D_*,\partial_D,\ell_D)$ are $\delta_0$-quasiequivalent and $(D_*,\partial_D,\ell_D)$ and $(E_*,\partial_E,\ell_E)$ are $\delta_1$-quasiequivalent then $(C_*,\partial_C,\ell_C)$ and $(E_*,\partial_E,\ell_E)$ are $(\delta_0+\delta_1)$-quasiequivalent.  Thus $d_Q$ satisfies the triangle inequality.  In particular, if 
$(C_*,\partial_C,\ell_C)$ and $(D_*,\partial_D,\ell_D)$ are $\delta$-quasiequivalent then $(C_*,\partial_C,\ell_C)$ is also $\delta$-quasiequivalent to any Floer-type complex that is filtered homotopy equivalent to $(D_*,\partial_D,\ell_D)$.  
\end{remark}

\begin{ex} \label{morqe} Take $(F_1, g_1)$ and $(F_2, g_2)$ to be two  Morse functions together with suitably generic Riemannian metrics on a closed manifold $X$. Let $\delta = \|F_1-F_2\|_{L^{\infty}}$. Then it is well-known (and can be deduced from constructions in \cite{Sc93}, for instance) that the associated Morse chain complexes, over the ground field $\mathcal K=\Lambda^{\mathcal{K},\{0\}}$, $CM_*(X; F_1, g_1)$ and $CM_*(X; F_2, g_2)$ are $\delta$-quasiequivalent.\end{ex}

\begin{ex} \label{hfqe} Take $(H_1, J_1)$ and $(H_2, J_2)$ to be two generic Hamiltonian functions together with compatible almost complex structures on a closed symplectic manifold $(M, \omega)$. Then, as is recalled in greater detail at the start of Section \ref{hamsect}, one has Hamiltonian Floer complexes $CF_*(M; H_1, J_1)$ and $CF_*(M; H_2, J_2)$ over the Novikov field $\Lambda^{\mathcal K, \Gamma}$ where  $\Gamma\leq\R$ is defined in (\ref{hamgamma}). Define 
\[ \begin{array}{l}
E_{+}(H) = \int_0^1 \max_M H(t, \cdot) dt \,\,\,\,\,\,\,$and$\,\,\,\,\,\,\,\,E_{-}(H) = - \int_0^1 \min_M H(t, \cdot) dt \
\end{array} \]
and let $\delta = \max\{E_{+}(H_2 - H_1), E_{-}(H_2 - H_1)\}$.  Then $(CF_*(M; H_1, J_1))$ and $(CF_*(M; H_2, J_2))$ are $\delta$-quasiequivalent. The maps in the corresponding quadruple $(\Phi, \Psi, K_1, K_2)$ are constructed by counting solutions of certain partial differential equations (see \cite[Chapter 11]{AD}). \end{ex}

\begin{remark} One could more generally define, for $\delta_1,\delta_2\in \R$, a $(\delta_1,\delta_2)$-quasiequivalence by replacing (\ref{qeshift}) by the conditions $\ell_D(\Phi c)\leq \ell_C(c)+\delta_1$, $\ell_C(\Psi d)\leq \ell_D(d)+\delta_2$, $\ell_C(K_Cc)\leq \ell_C(c)+\delta_1+\delta_2$, and $\ell_D(K_Dd)\leq \ell_D(d)+\delta_1+\delta_2$.  (So in this language a $\delta$-quasiequivalence is the same thing as a $(\delta,\delta)$-quasiequivalence.)  Then in Example \ref{hfqe} one has the somewhat sharper statement that
$(CF_*(M; H_1, J_1))$ and $(CF_*(M; H_2, J_2))$ are $(E_+(H_2-H_1),E_-(H_2-H_1))$-quasiequivalent.  However since adding a suitable constant to $H_1$ has the effect of reducing to the case that $E_+(H_2-H_1)$ and $E_-(H_2-H_1)$ are equal to each other while changing the filtration on the Floer complex (and hence changing the barcode) by a simple uniform shift, for ease of exposition we will restrict attention to the more symmetric case of a $\delta$-quasiequivalence.
\end{remark}

\begin{remark} We will explain in Appendix \ref{intsect} that quasiequivalence is closely related with the notion of {\it interleaving} of persistent homology from \cite{BL}. In particular, the quasiequivalence distance $d_Q$ is equal to a natural chain-level version of the interleaving distance from \cite{BL}.\end{remark}

Our first step toward the stability theorem will be a continuity result for 
the quantities $\beta_k$ from Definition \ref{barlength}.  Recall that for $i\in\Z$ the degree-$i$ part of the (verbose or concise) barcode of $(C_*,\partial_C,\ell_C)$ is obtained from a singular value decomposition of the map $(\partial_C)_{i+1}\co C_{i+1}\to \ker(\partial_C)_i$. 

\begin{lemma}\label{qepre} Let $(\Phi,\Psi,K_C,K_D)$ be a $\delta$-quasiequivalence and let $\eta\geq 2\delta$.  If $V\leq \ker(\partial_C)_i$ is $\eta$-robust then $\Phi|_V$ is injective and $\Phi(V)$ is $(\eta-2\delta)$-robust. 
\end{lemma}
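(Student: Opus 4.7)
The plan is to exploit the homotopy equation $\Psi\Phi - I_{C_*} = \partial_C K_C + K_C\partial_C$ together with the filtration bounds (\ref{qeshift}) and the fact that elements of $V$ are cycles. These are essentially the only available ingredients, and they turn out to suffice.

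For injectivity, suppose $x \in V$ satisfies $\Phi(x) = 0$. Since $\partial_C x = 0$, the homotopy equation simplifies to $x = \Psi\Phi(x) - \partial_C(K_C x) = -\partial_C(K_C x)$. If $x \neq 0$, then applying the $\eta$-robustness of $V$ to $x$ (with the preimage $-K_C x$ under $\partial_C$) forces $\ell_C(K_C x) > \ell_C(x) + \eta$, contradicting the bound $\ell_C(K_C x) \leq \ell_C(x) + 2\delta \leq \ell_C(x) + \eta$ from (\ref{qeshift}). Hence $\Phi|_V$ is injective.

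For the robustness claim, let $x \in V \setminus \{0\}$ and let $z \in D_{i+1}$ satisfy $\partial_D z = \Phi(x)$. Applying $\Psi$ and using the homotopy equation together with $\partial_C x = 0$ gives
\[ \partial_C\bigl(\Psi(z) - K_C x\bigr) \;=\; \Psi\Phi(x) - \partial_C K_C x \;=\; x. \]
Since $x$ lies in the $\eta$-robust subspace $V$, the element $\Psi(z) - K_C x$ is a preimage of $x$ under $\partial_C$ and must satisfy $\ell_C(\Psi(z) - K_C x) > \ell_C(x) + \eta$. On the other hand, the non-Archimedean triangle inequality (F3) gives $\ell_C(\Psi(z) - K_C x) \leq \max\{\ell_C(\Psi z), \ell_C(K_C x)\}$, and the second term is bounded by $\ell_C(x) + 2\delta \leq \ell_C(x) + \eta$. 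The strict inequality therefore must come from the first term: $\ell_C(\Psi z) > \ell_C(x) + \eta$. Combining this with the bounds $\ell_D(z) \geq \ell_C(\Psi z) - \delta$ and $\ell_C(x) \geq \ell_D(\Phi x) - \delta$ from (\ref{qeshift}) yields $\ell_D(z) > \ell_D(\Phi(x)) + (\eta - 2\delta)$, which is exactly the required $(\eta - 2\delta)$-robustness of $\Phi(x)$.

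The main subtlety to watch is the interaction between the strict inequality demanded by the definition of robustness and the non-strict inequalities coming from (\ref{qeshift}); this is precisely where the hypothesis $\eta \geq 2\delta$ is used, ensuring that $\ell_C(K_C x) \leq \ell_C(x) + \eta$ and hence that the strict inequality propagates through (F3) onto $\ell_C(\Psi z)$. Once this is set up, no further machinery is needed.
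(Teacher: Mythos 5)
Your proof is correct and follows essentially the same route as the paper: both parts rest on the homotopy identity $\Psi\Phi - I = \partial_C K_C + K_C\partial_C$ restricted to cycles, combined with the $\eta$-robustness of $V$ and the filtration bounds on $\Phi,\Psi,K_C$. The step where the strict inequality $\ell_C(\Psi z - K_C x) > \ell_C(x)+\eta$ is pushed onto $\ell_C(\Psi z)$ via (F3) and the bound $\ell_C(K_C x)\leq\ell_C(x)+\eta$ is exactly the argument the paper uses.
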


\begin{proof}
If $v\in V$ and $\Phi v=0$ then \[ v=v-\Psi\Phi v=\partial_C(-K_Cv) \] where $\ell_C(-K_Cv)\leq \ell_C(v)+2\delta$; by the definition of $\eta$-robustness  (see Definition \ref{dfn-robust}) this implies that  $v=0$ since $\eta\geq 2\delta$.  So indeed $\Phi|_V$ is injective.

Now suppose that $0\neq w=\Phi v\in \Phi(V)$ with $\partial_D y=w$.  Then \[ \partial_C\Psi y= \Psi \partial_D y = \Psi\Phi v=v+\partial_CK_Cv \] (where we've used the fact that $V\leq \ker\partial_C$).  So $v=\partial_C(\Psi y-K_Cv)$.  By the definition of $\eta$-robustness we have $\ell_C(\Psi y-K_Cv)> \ell_C(v)+\eta$.  Since $\ell_C(K_C v)\leq \ell_C(v)+2\delta\leq \ell_C(v)+\eta$ this implies that \[ \ell_C(\Psi y)> \ell_C(v)+\eta.\] But   
$\ell_D(y)\geq \ell_C(\Psi y)-\delta$, and $\ell_D(w)=\ell_D(\Phi v)\leq \ell_C(v)+\delta$, which combined with the displayed inequality above shows that $\ell_D(y)>\ell_D(w)+(\eta-2\delta)$.  Since $w$ was an arbitrary nonzero element of $\Phi(V)$ this proves that $\Phi(V)$ is $(\eta-2\delta)$-robust.
\end{proof}

\begin{cor} \label{qegenebd} Suppose that $(C_*, \partial_C, \ell_C)$ and $(D_*, \partial_D, \ell_D)$ are $\delta$-quasiequivalent.  Then for all $i\in \Z$ and $k \in \mathbb N$, we have $|\beta_k((\partial_C)_{i+1}) - \beta_k((\partial_D)_{i+1})| \leq 2 \delta$.\end{cor}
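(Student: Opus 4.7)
The plan is to deduce Corollary \ref{qegenebd} almost immediately from Lemma \ref{qepre} together with the extremal characterization of $\beta_k$ given in Definition \ref{barlength}. First I would unpack $\beta_k((\partial_C)_{i+1})$ as the supremum of those $\eta\ge 0$ for which there exists a $k$-dimensional $\eta$-robust subspace $V\leq\mathrm{Im}(\partial_C)_{i+1}$. The key observation is that such a $V$ automatically lies in $\ker(\partial_C)_i$ (since $\mathrm{Im}(\partial_C)_{i+1}\leq\ker(\partial_C)_i$), so Lemma \ref{qepre} is applicable, provided $\eta\geq 2\delta$.

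Next I would note that the chain map $\Phi$ sends $\mathrm{Im}(\partial_C)_{i+1}$ into $\mathrm{Im}(\partial_D)_{i+1}$: if $v=\partial_Cc$ then $\Phi v=\partial_D\Phi c$. So for any $\eta\geq 2\delta$ and any $k$-dimensional $\eta$-robust subspace $V\leq \mathrm{Im}(\partial_C)_{i+1}$, the image $\Phi(V)$ is a subspace of $\mathrm{Im}(\partial_D)_{i+1}$ that by Lemma \ref{qepre} has dimension $k$ and is $(\eta-2\delta)$-robust. By the definition of $\beta_k$ this yields $\beta_k((\partial_D)_{i+1})\geq\eta-2\delta$.

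From here the argument splits into two trivial cases. If $\beta_k((\partial_C)_{i+1})\leq 2\delta$, then the inequality $\beta_k((\partial_C)_{i+1})-\beta_k((\partial_D)_{i+1})\leq 2\delta$ is automatic from $\beta_k((\partial_D)_{i+1})\geq 0$. If $\beta_k((\partial_C)_{i+1})>2\delta$, then letting $\eta$ run over values in $[2\delta,\beta_k((\partial_C)_{i+1}))$ and taking the supremum gives $\beta_k((\partial_D)_{i+1})\geq\beta_k((\partial_C)_{i+1})-2\delta$. Swapping the roles of $C$ and $D$ (using the other half of the $\delta$-quasiequivalence data $(\Psi,\Phi,K_D,K_C)$) produces the reverse inequality and hence the desired absolute-value bound.

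There is really no significant obstacle here: the substantive content was packed into Lemma \ref{qepre}, and the only care required is in verifying that $\Phi$ carries $\mathrm{Im}(\partial_C)_{i+1}$ into $\mathrm{Im}(\partial_D)_{i+1}$ and in the routine handling of the small-$\beta_k$ case where the robustness threshold $\eta\geq 2\delta$ required by Lemma \ref{qepre} is not available.
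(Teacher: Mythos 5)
Your proof is correct and follows essentially the same route as the paper's: apply Lemma \ref{qepre} to nearly-optimal robust subspaces of $\mathrm{Im}(\partial_C)_{i+1}$, handle the $\beta_k\leq 2\delta$ case trivially, and symmetrize. Your explicit check that $\Phi$ carries $\mathrm{Im}(\partial_C)_{i+1}$ into $\mathrm{Im}(\partial_D)_{i+1}$ is a small point the paper leaves implicit; otherwise the two arguments are identical.
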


\begin{proof}
By definition $\beta_k((\partial_C)_{i+1})$ is the supremal $\eta\geq 0$ such that there exists a $k$-dimensional $\eta$-robust subspace of $\mathrm{Im}((\partial_D)_{i+1})$, or is zero if no such subspace exists for any $\eta$.  If $\beta_k((\partial_C)_{i+1})>2\delta$, then given $\ep>0$ there is a $k$-dimensional subspace $V\leq {\rm Im} (\partial_C)_{i+1}$ which is $(\beta_k((\partial_C)_{i+1})-\ep)$-robust, and then (for small enough $\ep$) Lemma \ref{qepre} shows that $\Phi(V)\leq \mathrm{Im}((\partial_D)_{i+1})$ is $k$-dimensional and $(\beta_k((\partial_C)_{i+1})-\ep-2\delta)$-robust.  Since this construction applies for all sufficiently small $\ep>0$ it follows that \begin{equation}\label{onesidedbeta} \beta_k((\partial_D)_{i+1})\geq \beta_k((\partial_C)_{i+1})-2\delta\end{equation} provided that $\beta_k((\partial_C)_{i+1})>2\delta$.  But of course if $\beta_k((\partial_C)_{i+1})\leq 2\delta$ then (\ref{onesidedbeta}) still holds for the trivial reason that $\beta_k((\partial_D)_{i+1})$ is by definition nonnegative.  So (\ref{onesidedbeta}) holds in any case.  But this argument may equally well be applied with the roles of the complexes $(C_*,\partial_C,\ell_C)$ and $(D_*,\partial_D,\ell_D)$ reversed (as the relation of $\delta$-quasiequivalence is symmetric), yielding $\beta_k((\partial_C)_{i+1})\geq \beta_k((\partial_D)_{i+1})-2\delta$, which together with (\ref{onesidedbeta}) directly implies the corollary.
\end{proof}

In order to state our stability theorem we must explain the bottleneck distance, which is a measurement of the distance between two barcodes in common use at least since \cite{CEH}.  First we will define some notions related to matchings between multisets, similar to what can be found in, e.g., \cite{CSGO}.  We initially express this in rather general terms in order to make clear that our notion of a partial matching can be identified with corresponding notions found elsewhere in the literature. Recall below that a pseudometric space is a generalization of a metric space in which two distinct points are allowed to be a distance zero away from each other, and an extended pseudometric space is a generalization of a pseudometric space in which the distance between two points is allowed to take the value $\infty$.

\begin{dfn} Let $(X,d)$ be an extended pseudometric space equipped with a ``length function'' $\lambda\co X\to [0,\infty]$, and let $\mathcal{S}$ and $\mathcal{T}$ be two multisets of elements of $X$.  
\begin{itemize} 
\item A \textbf{partial matching} between $\mathcal{S}$ and $\mathcal{T}$ is a triple $\mathfrak{m}=(\mathcal{S}_{short},\mathcal{T}_{short},\sigma)$ where $\mathcal{S}_{short}$ and $\mathcal{T}_{short}$ are submultisets of $\mathcal{S}$ and $\mathcal{T}$, respectively, and $\sigma\co\mathcal{S}\setminus\mathcal{S}_{short}\to \mathcal{T}\setminus\mathcal{T}_{short}$ is a bijection.  (The elements of $\mathcal{S}_{short}$ and $\mathcal{T}_{short}$ will sometimes be called ``unmatched.'')
\item For $\delta\in [0,\infty]$, a \textbf{$\delta$-matching} between $\mathcal{S}$ and $\mathcal{T}$ is a partial matching $(\mathcal{S}_{short},\mathcal{T}_{short},\sigma)$ such that for all $x\in\mathcal{S}_{short}\cup\mathcal{T}_{short}$ we have $\lambda(x)\leq \delta$
and for all $x$ in $\mathcal{S}\setminus\mathcal{S}_{short}$ we have $d(\sigma(x),x)\leq \delta$.

\item If $\mathfrak{m}$ is a partial matching between $\mathcal{S}$ and $\mathcal{T}$, the \textbf{defect} of $\mathfrak{m}$ is \[ \delta(\mathfrak{m})=\inf\{\delta\geq 0\,|\,\mathfrak{m}\mbox{ is a $\delta$-matching}\}. \]
\end{itemize}
\end{dfn}


\begin{ex}
Let $\mathcal{H}=\{(x,y)\in (-\infty,\infty]^2\,|\,x< y\}$ with extended metric $d_{\mathcal{H}}((a,b),(c,d))=\max\{|c-a|,|d-b|\}$ and $\lambda_{\mathcal{H}}((a,b))=\frac{b-a}{2}$.  Then our notion of a $\delta$-matching between multisets of elements of $\mathcal{H}$ is readily verified to be the same as that used in \cite[Section 4]{CSGO} or \cite[Section 3.2]{BL}.
\end{ex}

\begin{ex}\label{G0match} Consider $\R\times (0,\infty]$ with the extended metric $d((a,L),(a', L'))=\max\{|a-a'|,|(a+L)-(a'+L')|\}$ and the length function $\lambda(a,L)=L/2$.  
Then the bijection $f\co \R\times (0,\infty]\to\mathcal{H}$ defined by $f(a,L)=(a,a+L)$ pulls back $d_{\mathcal{H}}$ and $\lambda_{\mathcal{H}}$ from the previous example to $d$ and  $\lambda$, respectively, so giving a $\delta$-matching $\frak{m}$ between multisets of elements of $\R\times (0,\infty]$ is equivalent to giving a $\delta$-matching $f_*\frak{m}$ between the corresponding multisets of elements of $\mathcal{H}$.
\end{ex}

\begin{ex}\label{Gmatch} Our main concern will be $\delta$-matchings between concise barcodes of Floer-type complexes, which are by definition multisets of elements of $(\R/\Gamma)\times (0,\infty]$ for a subgroup $\Gamma\leq \R$.  For this purpose we use the length function $\lambda\co (\R/\Gamma)\times (0,\infty]\to \R$ defined by $\lambda([a],L)=\frac{L}{2}$ and the extended pseudometric \[ d\left(([a],L),([a'],L')\right)=\inf_{g\in \Gamma}\max\{|a+g-a'|,|(a+g+L)-(a'+L')|\}. \]  In the case that $\Gamma=\{0\}$ this evidently reduces to Example \ref{G0match}.
\end{ex}

For convenience, we rephrase the definition of a $\delta$-matching between concise barcodes:

\begin{dfn} \label{dfnmatching} Consider two concise barcodes $\mathcal{S}$ and $\mathcal{T}$ (viewed as multisets of elements of $(\mathbb R/\Gamma) \times (0, \infty]$). A {\bf $\delta$-matching} between $\mathcal{S}$ and $\mathcal{T}$ consists of the following data:
\begin{itemize} \item[(i)] submultisets $\mathcal{S}_{short}$ and $\mathcal{T}_{short}$ such that the second coordinate $L$ of every element $([a],L)\in \mathcal{S}_{short}\cup\mathcal{T}_{short}$ obeys $L\leq 2\delta$.
\item[(ii)] A bijection $\sigma\co \mathcal{S}\setminus\mathcal{S}_{short}\to \mathcal{T}\setminus\mathcal{T}_{short}$ such that, for each $([a],L)\in \mathcal{S}\setminus\mathcal{S}_{short}$ (where $a\in \R$, $L\in [0,\infty]$) we have $\sigma([a],L)=([a'],L')$ where for all $\ep>0$ the representative $a'$ of the coset $[a']\in \R/\Gamma$ can be chosen such that both $|a'-a|\leq \delta+\ep$ and either $L=L'=\infty$ or $|(a'+L')-(a+L)|\leq \delta+\ep$.
\end{itemize} \end{dfn}

 It follows from the discussion in Example \ref{G0match} that our definition agrees  in the case that $\Gamma=\{0\}$ (via the map $(a,L)\mapsto (a,a+L)$) to the definitions in, for example, \cite{CSGO} or \cite{BL}.

\begin{dfn} \label{dfnbnd} If $\mathcal{S}$ and $\mathcal{T}$ are two multisets of elements of $(\R/\Gamma)\times (0,\infty]$ then the {\bf bottleneck distance} between $\mathcal{S}$ and $\mathcal{T}$ is \[ d_{B}(\mathcal{S},\mathcal{T})=\inf\{\delta\geq 0\,|\,\mbox{There exists a $\delta$-matching between }\mathcal{S}\mbox{ and }\mathcal{T}\}. \]
\end{dfn}

Our constructions associate to a Floer-type complex a concise barcode \emph{for every $k\in \Z$}, so the appropriate notion of distance for this entire collection of data is:

\begin{dfn}\label{dfnbndseq}
Let $\mathcal{S}=\{\mathcal{S}_k\}_{k\in \Z}$ and $\mathcal{T}=\{\mathcal{T}_k\}_{k\in \Z}$ be two families of multisets of elements of $(\R/\Gamma)\times(0,\infty]$.  The bottleneck distance between $\mathcal{S}$ and $\mathcal{T}$ is then \[ d_{B}(\mathcal{S},\mathcal{T})=\sup_{k\in \Z}d_B(\mathcal{S}_k,\mathcal{T}_k).\]
\end{dfn}

\begin{remark}\label{bndtri} It is routine to check that $d_B$ is indeed an extended pseudometric. In particular, it satisfies the triangle inequality.\end{remark}

We can now formulate another of this paper's main results, the Stability Theorem.

\begin{theorem} \label{stabthm} (Stability Theorem). Given a Floer-type complex $(C_*,\partial_C,\ell_C)$ and $k\in \Z$, denote its degree-$k$ concise barcode by $\mathcal{B}_{C,k}$; moreover let $\mathcal{B}_C=\{\mathcal{B}_{C,k}\}_{k\in\Z}$ denote the indexed family of concise barcodes for all gradings $k$. Then the bottleneck and quasiequivalence distances obey, for any two Floer-type complexes $(C_*,\partial_C,\ell_C)$ and $(D_*,\partial_D,\ell_D)$:
\begin{equation}\label{mainstability} \begin{array}{l}
 d_B(\mathcal{B}_C,\mathcal{B}_D)\leq 2d_Q((C_*,\partial_C,\ell_C),(D_*,\partial_D,\ell_D)).
\end{array} \end{equation}
Moreover, for any $k\in\Z$, if we let $\Delta_{D,k}>0$ denote the smallest second coordinate $L$ of all of the elements of $\mathcal{B}_{D,k}$, and if $d_Q((C_*,\partial_C,\ell_C),(D_*,\partial_D,\ell_D))<{\textstyle \frac{\Delta_{D,k}}{4}}$, then 
\begin{equation}\label{strongstability} \begin{array}{l}
d_B(\mathcal{B}_{C,k},\mathcal{B}_{D,k})\leq  d_Q((C_*,\partial_C,\ell_C),(D_*,\partial_D,\ell_D)).
\end{array} \end{equation}
\end{theorem}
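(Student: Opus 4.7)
The strategy is to first establish a sharp inequality in the special case when the two Floer-type complexes share the same underlying chain complex and differ only in the filtration, and then reduce the general case to this via a mapping cylinder construction.  For the special case (Proposition \ref{twofilt}), suppose $(C_*,\partial,\ell_1)$ and $(C_*,\partial,\ell_2)$ satisfy $|\ell_1(x) - \ell_2(x)|\leq \delta$ for all $x \in C_*$, so the identity map realizes a $\delta$-quasiequivalence; I would construct an explicit $\delta$-matching between the two concise barcodes by descending induction on bar length, using the flexibility in the choice of singular value decomposition established in Lemma \ref{induc1}, Corollary \ref{switch}, and the proof of Theorem \ref{indep}.  At each stage, Corollary \ref{qegenebd} ensures that the $\beta_k$ for the two filtrations agree to within $2\delta$, which is precisely what is needed to match corresponding orthogonal subspaces level by level; the filtration spectra of the orthogonal complement pieces produced at each stage then determine the matching coset-by-coset in $\R/\Gamma$.

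Given a $\delta$-quasiequivalence $(\Phi,\Psi,K_C,K_D)$, form the mapping cylinder $Cyl(\Phi)_*$ of Section \ref{cylsect} and equip it with two filtration functions.  The first, $\ell_{cyl}^D$, is chosen so that the deformation retract onto $D_*$ via the maps $(i_D,\alpha,0,K)$ of Lemma \ref{cylhtopy}(i) is a filtered homotopy equivalence; following the proof of Theorem B the concise barcode of $(Cyl(\Phi)_*,\partial_{cyl},\ell_{cyl}^D)$ equals $\mathcal{B}_D$ (Proposition \ref{decompcyl}).  The second filtration $\ell_{cyl}^C$ is chosen so that the chain-level direct sum decomposition $Cyl(\Phi)_*=C_*\oplus\ker\beta$ from Theorem B's proof becomes orthogonal and $i_C$ embeds $(C_*,\partial_C,\ell_C)$ isometrically.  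The complement $\ker\beta$, with its induced filtration, is naturally identified with (a shift of) the algebraic mapping cone of $\Phi$ and admits a contraction built from $\Psi$, $K_C$, $K_D$ whose filtration shift is at most $2\delta$; arguments paralleling the proof of Theorem B then show that every element of the verbose barcode of $\ker\beta$ has second coordinate at most $2\delta$, so the concise barcode of $(Cyl(\Phi)_*,\partial_{cyl},\ell_{cyl}^C)$ is $\mathcal{B}_C\sqcup\mathcal{E}$ for some multiset $\mathcal{E}$ of elements $([a],L)$ with $0<L\leq 2\delta$ (Proposition \ref{changefil}).

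Because the two filtrations satisfy $|\ell_{cyl}^D(x)-\ell_{cyl}^C(x)|\leq\delta$ uniformly, Proposition \ref{twofilt} yields a $\delta$-matching $\sigma$ between $\mathcal{B}_D$ and $\mathcal{B}_C\sqcup\mathcal{E}$.  To deduce (\ref{mainstability}), restrict $\sigma$ by leaving unmatched all bars of $\mathcal{E}$ together with those bars of $\mathcal{B}_D$ that had been paired under $\sigma$ with some element of $\mathcal{E}$: the former have length $\leq 2\delta$ by construction, and the latter have length at most $2\delta+2\delta=4\delta$ since a $\delta$-matching alters bar length by at most $2\delta$.  Thus the restriction defines a $2\delta$-matching between $\mathcal{B}_C$ and $\mathcal{B}_D$.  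For (\ref{strongstability}), under the hypothesis $4\delta<\Delta_{D,k}$ every bar in $\mathcal{B}_{D,k}$ has length exceeding $4\delta$ while every bar in $\mathcal{E}_k$ has length at most $2\delta$; since $\sigma$ alters bar length by at most $2\delta$, no bar of $\mathcal{B}_{D,k}$ can be matched to any bar of $\mathcal{E}_k$, and the restriction of $\sigma$ in degree $k$ is already a genuine $\delta$-matching between $\mathcal{B}_{C,k}$ and $\mathcal{B}_{D,k}$.

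The hardest step is the sharp special case of Proposition \ref{twofilt}: intertwining singular value decompositions for two different filtrations on the same chain complex to produce a $\delta$-matching on the nose, rather than merely with some additive error, requires a delicate inductive argument leveraging the filtration-spectrum uniqueness underlying Theorem \ref{indep}.  Choosing $\ell_{cyl}^C$ correctly---so as to simultaneously ensure $|\ell_{cyl}^D-\ell_{cyl}^C|\leq\delta$, orthogonally decompose $C_*\oplus\ker\beta$, and bound the bar lengths arising from $\ker\beta$ by $2\delta$---also requires careful bookkeeping against the filtration inequalities (\ref{qeshift}) governing $\Phi,\Psi,K_C,K_D$.
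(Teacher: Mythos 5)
Your high-level architecture is exactly the paper's: prove the sharp inequality $d_B\leq d_Q$ when the two complexes share an underlying chain complex (Proposition \ref{twofilt}), then reduce the general case to this one by putting the two filtrations $\ell_0$, $\ell_1$ on the mapping cylinder $Cyl(\Phi)_*$, with the first making $i_D$ a filtered homotopy equivalence and the second making $C_*\oplus\ker\beta$ an orthogonal splitting. The final extraction of a $2\delta$-matching (and the refinement when $4\delta<\Delta_{D,k}$) is also done as you describe. However, two ingredients of your proposal do not hold up.

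First, your sketch of Proposition \ref{twofilt} would not produce a $\delta$-matching. Knowing that $\beta_k(\partial)$ for the two filtrations differ by at most $2\delta$ (Corollary \ref{qegenebd}) controls only the \emph{lengths} $L$ of bars, not the positions of the endpoints modulo $\Gamma$, and the filtration spectra of the orthogonal-complement blocks that you would extract from \ref{induc1}/\ref{switch} are only multisets in $\R/\Gamma$; nothing in that machinery pairs an element of one spectrum with an element of the other that is within $\delta$ of it. The paper instead proves \ref{twofilt} by a different device: starting from an $\ell_0$-singular value decomposition, the algorithm of Theorem \ref{algsvd} produces an $\ell_1$-singular value decomposition whose $y_i$ filtration levels are unchanged and whose $x_i$ levels move by at most $\delta$ (Lemma \ref{shiftsvd}), thereby controlling the \emph{left} endpoints; then it dualizes via Proposition \ref{dualsvd} and runs the algorithm again to control the \emph{right} endpoints. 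Some discrete bookkeeping (Lemma \ref{permute}) reconciles the two orderings. Without this or some substitute, a genuine $\delta$-matching is not obtained.

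Second, the claim that $\ker\beta$ ``admits a contraction built from $\Psi$, $K_C$, $K_D$ whose filtration shift is at most $2\delta$'' is not true for an arbitrary $\delta$-quasiequivalence. If one just plugs the homotopy $L$ of Lemma \ref{cylhtopy}(ii) into the bounds (\ref{qeshift}) and the definition of $\ell_1$, the shift one gets can be as large as $4\delta$, which would give the constant $3$ rather than $2$ in (\ref{mainstability}) and would break the refined statement (\ref{strongstability}). The paper avoids this by first invoking Corollary \ref{dqsplit} to replace the quasiequivalence by one in which $\Phi$ and $\Psi$ are \emph{split} with respect to chosen splittings $F_*^C$, $F_*^D$, and only then proves the $2\delta$ bound on bars of the mapping cone (Proposition \ref{bdmpc}) by a direct calculation that uses the split hypothesis in an essential way. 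You should incorporate this reduction to the split case before the cone estimate.
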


We will also prove an inequality in the other direction, analogous to \cite[(4.11'')]{CSGO}.

\begin{theorem} \label{convstab} (Converse Stability Theorem)  With the same notation as in Theorem \ref{stabthm}, we have an inequality \[
d_Q((C_*,\partial_C,\ell_C),(D_*,\partial_D,\ell_D))\leq d_B(\mathcal{B}_C,\mathcal{B}_D). \]
\end{theorem}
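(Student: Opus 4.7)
The plan is to reduce the problem to an explicit construction on direct sums of elementary complexes. By Proposition \ref{if-part-THB}, each of $(C_*,\partial_C,\ell_C)$ and $(D_*,\partial_D,\ell_D)$ is filtered homotopy equivalent to the direct sum $\bigoplus_k\bigoplus_{([a],L)\in\mathcal{B}_{C,k}}\mathcal{E}(a,L,k)$ of elementary complexes indexed by its own concise barcode (with some representative $a$ chosen for each coset $[a]$). Since a filtered homotopy equivalence is a $0$-quasiequivalence and $d_Q$ satisfies the triangle inequality (Remark \ref{dqtri}), it suffices to exhibit, for each $\delta'>d_B(\mathcal{B}_C,\mathcal{B}_D)$, a $\delta'$-quasiequivalence between the two decomposed complexes and then let $\delta'\searrow d_B$.

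Given such a $\delta'$, I would pick $\delta''\in(d_B,\delta')$ and a $\delta''$-matching between $\mathcal{B}_{C,k}$ and $\mathcal{B}_{D,k}$ in each degree $k$. The $\ep$-slack in the definition of matching, applied with $\ep=\delta'-\delta''$, lets me choose representatives $a,a'$ in each matched pair $[a]\mapsto[a']$ with $|a'-a|\leq\delta'$ and, when $L<\infty$, $|(a+L)-(a'+L')|\leq\delta'$. I then build the quasiequivalence componentwise. On each matched pair $(\mathcal{E}(a,L,k),\mathcal{E}(a',L',k))$, I would take $\Phi$ and $\Psi$ to be the generator-swapping isomorphisms $\Phi(x)=x'$, $\Phi(y)=y'$ and $\Psi(x')=x$, $\Psi(y')=y$ (dropping the $y$-terms when $L=L'=\infty$), with zero homotopies; the required filtration shifts are bounded by $\delta'$ directly from the matching estimates. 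On each unmatched summand $\mathcal{E}(a,L,k)$, which necessarily has $L\leq 2\delta''\leq 2\delta'$, I would exhibit a $\delta'$-quasiequivalence with the zero complex by setting $\Phi=\Psi=0$ and using the chain homotopy $K(x)=-y$, $K(y)=0$, for which $\partial K+K\partial=-I$ and $\ell(Kx)=a+L\leq\ell(x)+2\delta'$.

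Assembling these componentwise maps into a single quadruple $(\Phi,\Psi,K_C,K_D)$ is then routine: $\Phi\co C_*\to D_*$ acts by the matched-pair isomorphisms on matched summands of $C_*$ and by zero on the unmatched summands of $C_*$; similarly for $\Psi\co D_*\to C_*$; and $K_C$, $K_D$ act as the constructed contracting homotopies on unmatched summands of $C_*$ and $D_*$, respectively, and trivially elsewhere. Because the filtration on a direct sum is the pointwise maximum, all filtration estimates transfer summand by summand, and because the homotopy identities $\Psi\Phi-I=\partial K_C+K_C\partial$ and $\Phi\Psi-I=\partial K_D+K_D\partial$ hold componentwise, they hold on the direct sum, yielding the desired $\delta'$-quasiequivalence. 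The main subtlety is just navigating the $\ep$-slack in the matching definition, which is the reason for the auxiliary constant $\delta''$; no real analysis enters, since once the classification Proposition \ref{if-part-THB} is in hand, Theorem \ref{convstab} reduces to a bookkeeping argument.
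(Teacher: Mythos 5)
Your proposal is correct and follows essentially the same route as the paper: reduce via Proposition \ref{if-part-THB} to direct sums of elementary complexes $\mathcal{E}(a,L,k)$, pick a near-optimal matching in each degree, and assemble a quasiequivalence summand by summand, contracting short/unmatched bars to zero and pairing matched bars by the obvious generator-swapping isomorphisms. The only cosmetic difference is that you spell out the elementary quasiequivalences inline rather than citing Lemma \ref{elemqe}, and you introduce an auxiliary $\delta''$ where the paper works directly with $\delta=d_B$ plus an $\ep$; these are equivalent bookkeeping devices.
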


Thus, with respect to the quasiequivalence and bottleneck distances, the map from Floer-type complexes to concise barcodes is globally at least bi-Lipschitz, and moreover is a local isometry (at least among complexes having a uniform positive lower bound on the parameters $\Delta_{D,k}$ as $k$ varies through $\Z$; for instance this is true for the Hamiltonian Floer complexes).  We expect that the factor of two in (\ref{mainstability}) is unnecessary so that the map is always a global isometry (as is the case when $\Gamma$ in trivial by \cite[Theorem 4.11]{CSGO}). In Section \ref{interp}, we will see this becomes true if the quasiequivalence distance $d_{Q}$ is replaced by more complicated distance called the {\bf interpolating distance}.

We prove the Stability Theorem in the following section, and the (easier) Converse Stability Theorem in Section \ref{convsect}.

\section{Proof of the Stability Theorem} \label{stabproof}

\subsection{Varying the filtration}
The proof of the stability theorem will involve first estimating the bottleneck distance between two Floer-type complexes having the same underlying chain complex but different filtration functions, and then using a mapping cylinder construction to reduce the general case to this special case.  
We begin  with  a simple combinatorial lemma:

\begin{lemma}\label{permute}  Suppose that $A$ and $B$ are finite sets and that $\sigma,\tau\co A\to B$ are bijections and $f\co A\to \R$ and $g\co B\to \R$ are functions such that, for some $\delta\geq 0$, we have $f(a)-g(\sigma(a))\leq \delta$ and $g(\tau(a))-f(a)\leq \delta$ for all $a\in A$.  Then there is a bijection $\eta\co A\to B$ such that $|f(a)-g(\eta(a))|\leq \delta$ for all $a\in A$.
\end{lemma}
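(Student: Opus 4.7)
My plan is to show that the desired bijection $\eta$ can be taken to be the ``sorted matching,'' obtained by pairing the $i$-th smallest value of $f$ on $A$ with the $i$-th smallest value of $g$ on $B$. Writing $n = |A| = |B|$ and sorting the multisets $\{f(a) : a \in A\}$ and $\{g(b) : b \in B\}$ in increasing order as $p_{(1)} \leq \cdots \leq p_{(n)}$ and $q_{(1)} \leq \cdots \leq q_{(n)}$, it suffices to verify the inequality $|p_{(i)} - q_{(i)}| \leq \delta$ for every $i \in \{1,\ldots,n\}$, since any bijection $\eta\co A\to B$ that sends an element attaining $p_{(i)}$ to one attaining $q_{(i)}$ will then satisfy $|f(a)-g(\eta(a))|\leq \delta$ for all $a\in A$.

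The upper bound $q_{(i)} \leq p_{(i)} + \delta$ will come from the $\tau$-hypothesis. Consider the set $S = \{a \in A : f(a) \leq p_{(i)}\}$, which has $|S|\geq i$ by the definition of $p_{(i)}$. For every $a \in S$, the hypothesis gives $g(\tau(a)) \leq f(a) + \delta \leq p_{(i)} + \delta$, and since $\tau$ is a bijection the set $\tau(S)\subset B$ has cardinality at least $i$ and consists entirely of elements with $g$-value at most $p_{(i)}+\delta$. Hence the $i$-th smallest $g$-value is at most $p_{(i)} + \delta$, as claimed. The lower bound $q_{(i)} \geq p_{(i)} - \delta$ is symmetric, using the $\sigma$-hypothesis: take $S' = \{a\in A : f(a) \geq p_{(i)}\}$, which has $|S'|\geq n-i+1$; for every $a \in S'$ we get $g(\sigma(a)) \geq f(a)-\delta \geq p_{(i)}-\delta$, and bijectivity of $\sigma$ forces at least $n-i+1$ elements of $B$ to have $g$-value at least $p_{(i)}-\delta$, leaving at most $i-1$ elements with $g$-value strictly below $p_{(i)}-\delta$.

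There is no serious obstacle here; the only subtlety is the symmetric bookkeeping, and even that is routine. The argument uses $\sigma$ and $\tau$ only to constrain the sorted sequences of $f$- and $g$-values, never requiring anything about how $\sigma$ and $\tau$ interact as maps. In particular, one never has to analyze the cycle structure of $\tau\circ\sigma^{-1}$, which at first glance might appear to be the natural object of study.
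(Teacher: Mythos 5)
Your proof is correct and takes essentially the same route as the paper: both construct the sorted matching $\eta(a_i)=b_i$ and verify the bound by a pigeonhole argument. Your version phrases the pigeonhole step as a cardinality count on the sublevel/superlevel sets $\{f\leq p_{(i)}\}$ and $\{f\geq p_{(i)}\}$ and their images under $\tau$ and $\sigma$, whereas the paper runs the pigeonhole directly on sorted indices (if $\tau(a_i)$ lands at index $<i$, some $a_j$ with $j<i$ must have $\tau(a_j)$ at index $\geq i$); these are the same idea at slightly different levels of granularity, and your formulation is arguably a touch cleaner.
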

\begin{proof}
Denote the elements of $A$ as $a_1,\ldots,a_n$, ordered in such a way that $f(a_1)\leq\cdots\leq f(a_n)$; likewise denote the elements of $B$ as $b_1,\ldots,b_n$, ordered such that $g(b_1)\leq\cdots\leq g(b_n)$.  Our bijection $\eta\co A\to B$ will then be given by $\eta(a_i)=b_i$ for $i=1,\ldots,n$.

Given $i\in \{1,\ldots,n\}$, write $\tau(a_i)=b_m$ and suppose first that $m\geq i$.  Then $g(b_m)\geq g(b_{i})$, so $g(b_i)-f(a_i)\leq g(b_{m})-f(a_i)\leq\delta$ by the hypothesis on  $\tau$.  On the other hand if $m<i$ then there must be some $j\in\{1,\ldots,i-1\}$ such that $\tau(a_j)=b_k$ where $k\geq i$ (for otherwise $\tau$ would give a bijection between $\{a_1,\ldots,a_{i}\}$ and a subset of $\{b_1,\ldots,b_{i-1}\}$).  In this case since $j<i\leq k$ we have \[ g(b_i)-f(a_i)
\leq g(b_k)-f(a_j)=g(\tau(a_j))-f(a_j)\leq \delta. \]  So in any event $g(b_i)-f(a_i)\leq \delta$ for all $i$.  A symmetric argument (using $\sigma^{-1}$ in place of $\tau$) shows that likewise 
$f(a_i)-g(b_i)\leq \delta$ for all $i$.  So indeed our permutation $\eta$ defined by $\eta(a_i)=b_i$ obeys $|f(a)-g(\eta(a))|\leq \delta$ for all $a\in A$.
\end{proof}

\begin{lemma}\label{shiftsvd}
Let $(C,\ell_C)$ and $(D,\ell_D)$ be orthogonalizable $\Lambda$-spaces and let $A\co C\to D$ be a $\Lambda$-linear map with unsorted singular value decomposition $\left((y_1,\ldots,y_n),(x_1,\ldots,x_m)\right)$.  Let $\ell'_D\co D\to \mathbb{R}\cup\{-\infty\}$ be another filtration function such that $(D,\ell'_D)$ is an orthogonalizable $\Lambda$-space, and let $\delta>0$ be such that $|\ell_D(d)-\ell'_D(d)|\leq \delta$ for all $d\in D$.  Then there is an unsorted singular value decomposition $\left((y'_1,\ldots,y'_n),(x'_1,\ldots,x'_m)\right)$ for the map $A$ with respect to $\ell_C$ and the new filtration function $\ell'_D$, such that:
\begin{itemize}
\item[(i)] $\ell_C(y'_i)=\ell_C(y_i)$ for each $i$.
\item[(ii)] $|\ell'_D(x'_i)-\ell_D(x_i)|\leq \delta$ for each $i\leq rank(A)$.
\end{itemize}
\end{lemma}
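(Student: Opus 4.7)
The plan is to invoke Theorem \ref{algsvd} applied to the map $A$ with codomain filtration replaced by $\ell'_D$, taking the given orthogonal basis $(y_1,\ldots,y_n)$ as input. This produces an orthogonal basis $(z_1,\ldots,z_n)$ of $(C,\ell_C)$ with $\ell_C(z_i)=\ell_C(y_i)$ for each $i$, together with a subset $\mathcal{U}\subseteq\{1,\ldots,n\}$ of size $r=\mathrm{rank}(A)$ such that the family $(Az_j)_{j\in\mathcal{U}}$ is orthogonal in $(D,\ell'_D)$. I will set $y'_i=z_i$ for every $i$, and, after verifying that $\mathcal{U}=\{1,\ldots,r\}$, set $x'_i=Az_i$ for $1\leq i\leq r$, extending $(x'_1,\ldots,x'_r)$ to an orthogonal ordered basis $(x'_1,\ldots,x'_m)$ of $(D,\ell'_D)$ via Corollaries \ref{orsub} and \ref{orext}. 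Property (i) of the lemma is then immediate from the algorithm's filtration-preserving guarantee.

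A preliminary step is to verify that $\mathcal{U}\subseteq\{1,\ldots,r\}$. Since $(y_{r+1},\ldots,y_n)$ is a basis for $\ker A$, if we represent $A$ by a matrix $(A_{ij})$ with respect to $(y_1,\ldots,y_n)$ and any fixed orthogonal basis $(w_1,\ldots,w_m)$ of $(D,\ell'_D)$, then every column with index $j>r$ vanishes initially. The pivot-selection rule requires $A_{i_0j_0}\neq 0$, so no pivot can be chosen from $\{r+1,\ldots,n\}$ at the first iteration; by induction, the column update for $j>r$ is trivial (its $A_{i_0 j}$ entry is zero throughout), so such $v_j$ are never modified and never selected as pivots. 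Consequently $\mathcal{U}\subseteq\{1,\ldots,r\}$, and since $|\mathcal{U}|=r$ we have equality; moreover $z_j=y_j$ for $j>r$.

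For the inequalities in (ii), the upper bound follows directly from Theorem \ref{algsvd}(i): $\ell'_D(Az_i)\leq\ell'_D(Ay_i)=\ell'_D(x_i)\leq\ell_D(x_i)+\delta$. For the lower bound, I will exploit the fact (readable from tracing the algorithm's column operations) that each $z_i$ for $i\leq r$ has the triangular form $z_i=y_i-\sum_k\lambda_k y_{j_k}$, where the indices $j_k$ are pivots chosen strictly before $i$ and hence all satisfy $j_k\leq r$. This yields $Az_i=x_i-\sum_k\lambda_k x_{j_k}$, so the orthogonality of $(x_1,\ldots,x_r)$ with respect to $\ell_D$ gives $\ell_D(Az_i)\geq\ell_D(x_i)$. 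Applying the assumed pointwise bound $\ell'_D\geq\ell_D-\delta$ then produces $\ell'_D(x'_i)=\ell'_D(Az_i)\geq\ell_D(Az_i)-\delta\geq\ell_D(x_i)-\delta$, completing (ii). The main obstacle is the bookkeeping in the previous paragraph: once the structural claim that all pivots lie in $\{1,\ldots,r\}$ and give a triangular expansion over $\{y_1,\ldots,y_r\}$ is established, both filtration inequalities of (ii) fall out directly from orthogonality of the original SVD and the hypothesis on $|\ell_D-\ell'_D|$.
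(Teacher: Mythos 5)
Your proof is correct, and it takes a genuinely different and in fact simpler route than the paper's. Both proofs begin identically: you run the algorithm from Theorem \ref{algsvd} on $A\co(C,\ell_C)\to(D,\ell'_D)$ seeded with $(y_1,\ldots,y_n)$, observe that the zero columns $j>r$ are never touched so the pivot set is $\{1,\ldots,r\}$, set $y'_i=z_i$ and $x'_i=Az_i$, and read off (i) and the upper bound $\ell'_D(x'_i)\leq\ell'_D(x_i)\leq\ell_D(x_i)+\delta$ from Theorem \ref{algsvd}(i). The divergence is in obtaining the matching lower bound. The paper does \emph{not} establish the lower bound for the same index $i$ directly: instead it runs the algorithm a second time on $A\co(C,\ell_C)\to(D,\ell_D)$ seeded with $(z_1,\ldots,z_n)$ to produce a third SVD $((y''_i),(x''_i))$, invokes Theorem \ref{indep} (independence of the verbose barcode from the choice of SVD, with a normalization so that the mod-$\Gamma$ equality of $\ell_C$-levels is actually equality in $\R$) to identify the multisets $\{(\ell_C(y_i),\ell_D(x_i))\}$ and $\{(\ell_C(y''_i),\ell_D(x''_i))\}$, and then applies a combinatorial fact (Lemma \ref{permute}) to produce a permutation $\eta$ of each level set $\{i:\ell_C(y_i)=z\}$ with $|\ell'_D(x'_i)-\ell_D(x_{\eta(i)})|\leq\delta$; the claimed SVD is then the reindexed one. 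You instead exploit the triangular structure of the algorithm's output: each $z_i$ with $i\leq r$ equals $y_i$ minus a combination of $y_{j_l}$ for earlier pivots $j_l\in\{1,\ldots,r\}\setminus\{i\}$, so $Az_i=x_i-\sum_l\lambda_l x_{j_l}$ has coefficient exactly $1$ on $x_i$, and $\ell_D$-orthogonality of $(x_1,\ldots,x_m)$ gives $\ell_D(Az_i)\geq\ell_D(x_i)$ outright. From there the lower bound $\ell'_D(x'_i)\geq\ell_D(x_i)-\delta$ follows from the pointwise bound on $|\ell_D-\ell'_D|$. Your argument avoids the second pass through the algorithm, avoids Theorem \ref{indep} and Lemma \ref{permute} entirely, and avoids the need to reindex (the lemma is established with the identity matching on $\{1,\ldots,r\}$). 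The only thing you rely on that is not literally in the \emph{statement} of Theorem \ref{algsvd} is the triangular form of its output, but this is immediate from inspecting its proof, and you state it carefully. In short: correct, and a cleaner route.
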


\begin{proof}
To simplify matters later, we shall assume that: \begin{equation}\label{levelcoset} \mbox{For all $i,j$, if $\ell_C(y_i)\equiv \ell_C(y_j) \mod\Gamma$\quad then $\ell_C(y_i)=\ell_C(y_j)$.}\qquad  \end{equation} There is no loss of generality in this assumption, as it may be arranged to hold by multiplying the various $y_i,x_i$ by appropriate field elements $T^{g_i}$ (and then correspondingly multiplying the elements $y'_i,x'_i$ constructed in the proof of the lemma by $T^{-g_i}$).

Let us first apply the algorithm described in Theorem \ref{algsvd} to $A$, viewed as a map between the non-Archimedean normed vector spaces $(C,\ell_C)$ and $(D,\ell'_D)$.  That algorithm takes as input orthonormal bases for both the domain and the codomain of $A$; for the domain $(C,\ell_C)$ we use the ordered basis $(y_1,\ldots,y_n)$ from the given singular value decomposition (for $A$ as a map from $(C,\ell_C)$ to $(D,\ell_D)$), while we use an arbitrary  orthogonal basis for the codomain. 

Denote the rank of $A$ by $r$.  Since $Ay_i=0$ for $i=r+1,\ldots,n$, inspection of the algorithm in the proof of Theorem \ref{algsvd} shows that, for $i=r+1,\ldots,m$, the element $y_i$ is unchanged throughout the running of the algorithm.  Thus the ordered basis $(y'_1,\ldots,y'_n)$ for $C$ that is output by the algorithm has $y'_i=y_i$ for $i=r+1,\ldots,m$.  So since $r$ is the rank of $A$ and $Ay'_i=Ay_i=0$ for $i>r$, it follows that $Ay'_i\neq 0$ for $i\in \{1,\ldots,r\}$. In fact, setting $x'_i=Ay'_i$ for $i\in\{1,\ldots,r\}$, the tuple $(x'_1,\ldots,x'_r)$ gives an orthogonal ordered basis for $\Img(A)$.  Moreover, according to Theorem \ref{algsvd}, we have $\ell_C(y'_i)=\ell_C(y_i)$ for all $i$, while \begin{equation} \label{prime}\ell'_D(x'_i)\leq \ell'_D(x_i)\mbox{ for } i\in \{1,\ldots,r\}.\end{equation}  Taking $(x'_{r+1},\ldots, x'_{m})$ to be an arbitrary $\ell'_D$-orthogonal basis for an orthogonal complement to $\Img(A)$, it follows that $\left((y'_1,\ldots,y'_n),(x'_1,\ldots,x'_m)\right)$ is an unsorted singular value decomposition for $A$ considered as a map from $(C,\ell_C)$ to $(D,\ell'_D)$, which moreover satisfies property (i) in the statement of the lemma.

We will show that, possibly after replacing $y'_i,x'_i$ by $y'_{\eta(i)},x'_{\eta(i)}$ for some permutation $\eta$ of $\{1,\ldots,r\}$ having $\ell_{C}(y_i)=\ell_C(y_{\eta(i)})$ for each $i$, this singular value decomposition also satisfies property (ii).  In this direction, symmetrically to the previous paragraph, apply the algorithm from Theorem \ref{algsvd} to $A$ as a map from $(C,\ell_C)$ to $(D,\ell_D)$, using as input the basis $(y'_1,\ldots,y'_n)$ for $C$ that we obtained above.  This yields a new unsorted singular value decompositon $\left((y''_1,\ldots,y''_n),(x''_1,\ldots,x''_m)\right)$ for $A$ as a map from $(C,\ell_C)$ to $(D,\ell_D)$, having \[ \ell_C(y''_i)=\ell_C(y'_i)=\ell_C(y_i)\mbox{ for all } i\] and \begin{equation}\label{2prime} \ell_D(x''_i)\leq\ell_D(x'_i)\mbox{ for }i\in\{1,\ldots,r\}.\end{equation}

Now by Theorem \ref{indep} and our assumption (\ref{levelcoset}), there is an equality of multisets of elements of $\R^2$: \begin{equation}\label{equalprime} \{(\ell_C(y_i),\ell_D(x_i))|i=1,\ldots,r\} = \{(\ell_C(y''_i),\ell_D(x''_i))|i=1,\ldots,r\}. \end{equation} Indeed, each of these multisets corresponds to the finite-length bars in the verbose barcode of the two-term Floer-type complex $(C\xrightarrow{A} D)$, and the condition (\ref{levelcoset}) and the fact that $\ell_C(y''_i)=\ell_C(y_i)$ ensure that an equality of some $\ell_C(y_i)$ and $\ell_C(y''_j)$ modulo $\Gamma$ implies an equality in $\R$.  For any $z\in \{\ell_C(y_1),\ldots,\ell_C(y_r)\}$, let \[ I_z = \left\{i\in\{1,\ldots,r\}|\ell_C(y_i)=z\right\} \]  and define functions $f,g\co I_z\to \R$ by $f(i)=\ell'_D(x'_i)$ and $g(i)=\ell_D(x_i)$.  Using (\ref{prime}), for each $i\in I_z$ we then have,  \[ f(i)\leq \ell'_D(x_i)\leq \ell_D(x_i)+\delta=g(i)+\delta.\]  Meanwhile by (\ref{equalprime}) there is a permutation $\tau$ of $I_z$ such that $\ell_D(x_{\tau(i)})=\ell_D(x''_{i})$ for all $i\in I_z$, and so by (\ref{2prime}) \[ g(\tau(i))=\ell_D(x_{\tau(i)})=\ell_D(x''_i)\leq \ell_D(x'_i)\leq \ell'_D(x'_i)+\delta=f(i)+\delta. \]

So we can apply Lemma \ref{permute} to obtain a permutation $\eta_z$ of $I_z$ such that \[ |\ell'_{D}(x_i)-\ell_D(x_{\eta_z(i)})|=|f(i)-g(\eta_z(i))|\leq \delta \] for all $i$.  Repeating this process for each $z\in\{\ell_C(y_1),\ldots,\ell_C(y_r)\}$, and reordering the tuples $(y'_1,\ldots,y'_r)$ and $(x'_i,\ldots,x'_r)$ using the permutation $\eta$ of $\{1,\ldots,r\}$ that restricts to each $I_z$ as  $\eta_z$, we obtain a singular value decomposition for $A$ as a map $(C,\ell_C)\to (D,\ell'_D)$ satisfying the desired properties.  

\end{proof}

We now prove a version of the stability theorem in the case that the Floer-type complexes in question arise from the same underlying chain complex, with different filtration functions.

\begin{prop}\label{twofilt} Let $(C_{*},\partial)$ be a chain complex of $\Lambda$-vector spaces and let $\ell_0,\ell_1\co C_*\to \R\cup\{-\infty\}$ be two filtration functions such that both $(C_*,\partial,\ell_0)$ and $(C_*,\partial,\ell_1)$ are Floer-type complexes. Assume that $\delta\geq 0$ is such that $|\ell_1(c)-\ell_0(c)|\leq \delta$ for all $c\in C_*$.   Then denoting by $\mathcal{B}_{C}^{0}$ and $\mathcal{B}_{C}^{1}$ the concise barcodes of $(C_*,\partial,\ell_0)$ and $(C_*,\partial,\ell_1)$, respectively, we have $d_{B}(\mathcal{B}_{C}^{0},\mathcal{B}_{C}^{1})\leq \delta$.
\end{prop}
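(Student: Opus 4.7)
The plan is to produce, for each degree $k$, a $\delta$-matching between the degree-$k$ concise barcodes $\mathcal{B}^{0}_{C,k}$ and $\mathcal{B}^{1}_{C,k}$. Fix $k$ and let $A=\partial|_{C_{k+1}}\co C_{k+1}\to Z_k$ where $Z_k=\ker(\partial|_{C_k})$. The underlying vector spaces $C_{k+1}$ and $Z_k$ are independent of the choice of filtration; only the two orthogonalizable structures $\ell_0$ and $\ell_1$ on them change. In particular the rank $r$ of $A$ and the dimensions of $C_{k+1}$ and $Z_k$ are the same whether we use $\ell_0$ or $\ell_1$, so any SVD of $A$ contains the same number of basis elements in both settings.

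Starting from an SVD $((y_1,\ldots,y_n),(x_1,\ldots,x_m))$ for $A\co (C_{k+1},\ell_0)\to (Z_k,\ell_0)$, the strategy is to deform it in two steps into an SVD for $A\co (C_{k+1},\ell_1)\to (Z_k,\ell_1)$, tracking how each basis element's filtration level shifts. In Step 1, apply Lemma \ref{shiftsvd} to change only the codomain filtration on $Z_k$ from $\ell_0$ to $\ell_1$ (using the hypothesis $|\ell_1-\ell_0|\leq\delta$), producing an unsorted SVD $((y'_i),(x'_i))$ for $A\co (C_{k+1},\ell_0)\to(Z_k,\ell_1)$ with $\ell_0(y'_i)=\ell_0(y_i)$ and $|\ell_1(x'_i)-\ell_0(x_i)|\leq\delta$ for each $i\leq r$. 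In Step 2, invoke duality: by Proposition \ref{dualsvd}, the tuple $((x'_i)^{*},(y'_i)^{*})$ is an SVD for $A^{*}\co (Z_k^{*},\ell_1^{*})\to (C_{k+1}^{*},\ell_0^{*})$. A direct computation from the definition of dual filtration shows that $|\ell_1^{*}(\phi)-\ell_0^{*}(\phi)|\leq\delta$ for every $\phi\in C_{k+1}^{*}$, so Lemma \ref{shiftsvd} applies again to change the codomain filtration from $\ell_0^{*}$ to $\ell_1^{*}$. Dualizing back via Proposition \ref{dualsvd} (and the natural identification of $\ell^{**}$ with $\ell$ coming from Proposition \ref{dualor}) yields an SVD $((y''_i),(x''_i))$ for $A\co (C_{k+1},\ell_1)\to(Z_k,\ell_1)$ with $\ell_1(x''_i)=\ell_1(x'_i)$ (so $|\ell_1(x''_i)-\ell_0(x_i)|\leq\delta$) and $|\ell_1(y''_i)-\ell_0(y_i)|\leq\delta$.

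Reading off the two verbose barcodes from these SVDs via Definition \ref{dfnbc}, the bijection $i\leftrightarrow i$ matches $([\ell_0(x_i)],\ell_0(y_i)-\ell_0(x_i))$ with $([\ell_1(x''_i)],\ell_1(y''_i)-\ell_1(x''_i))$ (finite-length part) and $([\ell_0(x_i)],\infty)$ with $([\ell_1(x''_i)],\infty)$ (infinite-length part). The left endpoint moves by at most $\delta$ from $\ell_0(x_i)$ to $\ell_1(x''_i)$, and the right endpoint moves by at most $\delta$ from $\ell_0(y_i)$ to $\ell_1(y''_i)$; this is exactly what Definition \ref{dfnmatching}(ii) demands. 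To pass to the concise barcodes, observe that whenever the partner under this bijection has length $0$, the original bar has length at most $2\delta$ (since both endpoints shifted by at most $\delta$); placing such bars into $\mathcal{S}_{short}$ and $\mathcal{T}_{short}$ yields a bona fide $\delta$-matching between $\mathcal{B}^{0}_{C,k}$ and $\mathcal{B}^{1}_{C,k}$. Taking the supremum over $k$ gives $d_B(\mathcal{B}^{0}_C,\mathcal{B}^{1}_C)\leq\delta$.

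The main obstacle is that Lemma \ref{shiftsvd}, as stated, only permits modifying the \emph{codomain} filtration, whereas the barcode comparison requires shifting the filtrations on both $C_{k+1}$ and $Z_k$ simultaneously. The dualization trick in Step 2 converts the domain-side shift into a second codomain-side shift on the adjoint map; the verification that this roundtrip is legitimate hinges on the compatibility of $(\,\cdot\,)^{*}$ with orthogonal bases and with singular value decompositions (Propositions \ref{dualor} and \ref{dualsvd}), together with the observation that the dual filtration inherits the $\delta$-closeness property. No new estimates are needed beyond those already supplied by Lemma \ref{shiftsvd}; the work lies entirely in assembling the two applications correctly and then converting the verbose matching into a concise one.
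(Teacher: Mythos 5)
Your argument for the finite-length bars (those with indices $i\leq r$) is correct and essentially identical to the paper's: the same two applications of Lemma \ref{shiftsvd}, one on the original map and one on its adjoint after dualizing via Proposition \ref{dualsvd}, followed by the observation that the verbose-to-concise passage costs nothing because a bar matched to a zero-length bar must itself have length at most $2\delta$.

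However, there is a genuine gap in your treatment of the infinite-length bars. You assert that $|\ell_1(x''_i)-\ell_0(x_i)|\leq\delta$ also for $r<i\leq m$, but Lemma \ref{shiftsvd} gives the estimate $|\ell'_D(x'_i)-\ell_D(x_i)|\leq\delta$ \emph{only for} $i\leq \mathrm{rank}(A)$. In the proof of that lemma, the elements $x'_{r+1},\ldots,x'_m$ are simply taken to be an arbitrary $\ell'_D$-orthogonal basis for an $\ell'_D$-orthogonal complement of $\mathrm{Im}(A)$; nothing is said about how their $\ell'_D$-levels compare to the $\ell_D$-levels of the original $x_{r+1},\ldots,x_m$. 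The difficulty is real: the $\ell_0$-orthogonal complement of $\mathrm{Im}(A)$ in $Z_k$ and the $\ell_1$-orthogonal complement of $\mathrm{Im}(A)$ in $Z_k$ are \emph{different subspaces} in general, so there is no canonical identification between the $x_i$ ($i>r$) and the $x''_i$ ($i>r$) that one could hope to plug into the hypothesis $|\ell_1-\ell_0|\leq\delta$.

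The paper resolves this by a separate argument: writing $Z_k=\mathrm{Im}(A)\oplus V_0=\mathrm{Im}(A)\oplus V_1$ with $V_j$ an $\ell_j$-orthogonal complement, it considers the projection $\pi_{V_1}|_{V_0}\colon V_0\to V_1$ (a linear isomorphism), proves $|\ell_1(\pi_{V_1}v)-\ell_0(v)|\leq\delta$ for $v\in V_0$ using orthogonality and the $\delta$-closeness hypothesis, and then takes a singular value decomposition of this isomorphism to simultaneously produce $\ell_0$-orthogonal and $\ell_1$-orthogonal bases of $V_0$ and $V_1$ whose filtration spectra match within $\delta$. Your proof needs to incorporate an argument of this type for the infinite bars; the SVD-shifting machinery alone does not deliver it.
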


\begin{proof}
Fix a grading $k$, let $r$ denote the rank of $\partial|_{C_{k+1}}$, and let $((y_1,\ldots,y_n),(x_1,\ldots,x_m))$ be a singular value decomposition for $\partial|_{C_{k+1}}$, considered as a map $(C_{k+1},\ell_0)\to (C_k,\ell_0)$.  In particular, the finite-length bars of the degree-$k$ part of $\mathcal{B}_{C}^{0}$ are given by $([\ell_0(x_i)],\ell_0(y_i)-\ell_0(x_i))$ for $1\leq i\leq r$, and the infinite-length bars of the degree-$(k+1)$ part of $\mathcal{B}_{C}^{0}$ are given by $([\ell_0(y_i)],\infty)$ for $r+1\leq i\leq n$.

We may then apply Lemma \ref{shiftsvd} to obtain an unsorted singular value decomposition $((y'_1,\ldots,y'_n),(x'_1,\ldots,x'_m))$ for $\partial|_{C_{k+1}}$, considered as a map $(C_{k+1},\ell_0)\to (C_{k},\ell_1)$, such that $\ell_0(y'_i)=\ell_0(y_i)$ for all $i$ and $|\ell_1(x'_i)-\ell_0(x_i)|\leq \delta$.  

Now consider the adjoint $\partial^*\co (C_{k})^{*}\to (C_{k+1})^{*}$ and the dual filtration functions $\ell_{0}^{*},\ell_{1}^{*}$ as defined in Section \ref{dualss}.  It follows immediately from the definitions of $\ell_{0}^{*},\ell_{1}^{*}$ and the assumption that $|\ell_1(c)-\ell_0(c)|\leq \delta$ for all $c\in C_*$ that, likewise, $|\ell_{1}^{*}-\ell_{0}^{*}|$ is uniformly bounded above by $\delta$.  Moreover by Proposition \ref{dualsvd}, the collection of dual basis elements $((x'^{*}_{1},\ldots,x'^{*}_{m}), (y'^{*}_{1},\ldots,y'^{*}_{n}))$ gives an unsorted singular value decomposition for $\partial^*$ considered as a map from $((C_{k})^{*},\ell_{1}^{*})$ to $((C_{k+1})^{*},\ell_{0}^{*})$.  Thus Lemma \ref{shiftsvd} applies to give an unsorted singular value decomposition $((\xi_1,\ldots,\xi_m),(\eta_1,\ldots,\eta_n))$ for $\partial^*$ considered as a map $((C_{k})^{*},\ell_{1}^{*})\to ((C_{k+1})^{*},\ell_{1}^{*})$, with $\ell_{1}^{*}(\xi_i)=\ell_{1}^{*}(x'^{*}_{i})$ for all $i$ and $|\ell_{1}^{*}(\eta_i)-\ell_{0}^{*}(y'^{*}_{i})|\leq \delta$ for all $i\in \{1,\ldots,r\}$.  Again using Proposition \ref{dualsvd} (and using the canonical identification of $(C_i)^{**}$ with $C_i$ for $i=k,k+1$),  it follows that $((\eta_{1}^{*},\ldots,\eta_{n}^{*}),(\xi_{1}^{*},\ldots,\xi_{m}^{*}))$ is a singular value decomposition for $\partial$ considered as a map $(C_{k+1},\ell_{1}^{**})\to (C_k,\ell_{1}^{**})$.  It is easy to see (for instance by using (\ref{flip}) twice) that $\ell_{1}^{**}=\ell_1$.  Thus the finite-length bars in the degree-$k$ part of $\mathcal{B}_{C}^{1}$ are given by $([\ell_1(\xi_{i}^{*})],\ell_1(\eta_{i}^{*})-\ell_1(\xi_{i}^{*}))$.

Now  using (\ref{flip}) we have \[ |\ell_1(\xi_{i}^{*})-\ell_0(x_i)|\leq |-\ell_{1}^{*}(\xi_i)-\ell_{1}(x'_{i})| + |\ell_{1}(x'_i)-\ell_0(x_i)|\leq |-\ell_{1}^{*}(\xi_i)+\ell_{1}^{*}(x'^{*}_{i})|+\delta=\delta\] and similarly \[ |\ell_1(\eta_{i}^{*})-\ell_0(y_i)|=|-\ell_{1}^{*}(\eta_{i})-\ell_0(y'_i)|=|-\ell_{1}^{*}(\eta_{i})+\ell_{0}^{*}(y'^{*}_{i})|\leq \delta. \]

Thus we obtain a $\delta$-matching between the finite-length bars in the degree-$k$ parts of  $\mathcal{B}_{C}^{0}$ and $\mathcal{B}_{C}^{1}$ by pairing each $([\ell_0(x_i)],\ell_0(y_i)-\ell_0(x_i))$ with $([\ell_1(\xi_{i}^{*})],\ell_{1}(\eta_{i}^{*})-\ell_{1}(\xi_{i}^{*}))$ for $i=1,\ldots,r$.

It now remains to similarly match the infinite-length bars in the degree-$k$ parts of the $\mathcal{B}_{C}^{i}$.  Let us write \[ \ker(\partial|_{C_k})=\Img(\partial|_{C_{k+1}})\oplus V_0 = \Img(\partial|_{C_{k+1}})\oplus V_1 \] where 
$\Img(\partial|_{C_{k+1}})$ is orthogonal to $V_0$ with respect to $\ell_0$ and $\Img(\partial|_{C_{k+1}})$ is orthogonal to $V_1$ with respect to $\ell_1$.  For $i=0,1$, the infinite-length bars in the degree-$k$ parts of $\mathcal{B}_{C}^{i}$ are then given by $(c,\infty)$ as $c$ varies through the filtration spectrum of $V_i$.  

For $i=0,1$, let $\pi_i\co \ker(\partial|_{C_k})\to V_i$ denote the projections associated to the above direct sum decompositions.  Note that $\pi_1|_{V_0}\co V_0\to V_1$ is a linear isomorphism, with inverse given by $\pi_0|_{V_1}$.  So for $v_0\in V_0$ we obtain \[ \ell_1(\pi_{V_1}v)\leq \ell_1(v)\leq \ell_0(v)+\delta \] while \[ \ell_0(v)=\ell_0(\pi_{V_0}\pi_{V_1}(v))\leq  \ell_0(\pi_{V_1}v)\leq \ell_1(\pi_{V_1}v)+\delta. \]  So the linear isomorphism $\pi_{V_1}|_{V_0}\co V_0\to V_1$ obeys $|\ell_1(\pi_{V_1}v)-\ell_0(v)|\leq \delta$ for all $v\in V$. A singular value decomposition for the map $\pi_{V_1}|_{V_0}\co (V_0,\ell_0|_{V_0})\to (V_1,\ell_1|_{V_1})$ precisely gives orthogonal ordered bases $(w_1,\ldots,w_{m-r})$ and $(\pi_{V_1}w_1,\ldots\pi_{V_1}w_{m-r})$ for $(V_0,\ell_0|_{V_0})$ and $(V_1,\ell_1|_{V_1})$, respectively, and the matching which sends $([\ell_0(w_i)],\infty)$ to $([\ell_1(\pi_{V_1}w_i)],\infty)$ then has defect at most $\delta$.  Combining this matching of the infinite-length bars in the degree-$k$ parts of $\mathcal{B}_{C}^{0}$ and $\mathcal{B}_{C}^{1}$ with the matching of the finite-length bars that we constructed earlier, and letting $k$ vary through $\Z$, we conclude that indeed $d_B(\mathcal{B}_{C}^{0},\mathcal{B}_{C}^{1})\leq \delta$.
\end{proof}

\subsection{Splittings}\label{splitsect}

Our proof of Theorem \ref{stabthm} will involve, given a $\delta$-quasiequivalence $(\Phi,\Psi,K_C,K_D)$,  applying Proposition \ref{twofilt} to a certain pair of filtrations on the mapping cylinder $Cyl(\Phi)_*$.  It turns out that our arguments can be made sharper if we assume that the quasiequivalence $(\Phi,\Psi,K_C,K_D)$ satisfies a certain condition;  in this subsection we introduce this condition and prove that there is no loss of generality in asking for it to be satisfied.

\begin{dfn}
Let $(C_*,\partial,\ell)$ be a Floer-type complex.  A \textbf{splitting} of $C_*$ is a graded vector space $F^{C}_{*}=\oplus_{k\in \Z}F^{C}_{k}$ such that each  $F^{C}_{k}$ is an orthogonal complement in $C_k$ to $\ker \partial_k (=\ker\partial|_{C_k})$.
\end{dfn} 

Clearly splittings always exist, as already follows from Corollary \ref{orcomple}.  One can read off a splitting from  singular value decompositions of the boundary operator in various degrees: if $((y_{1}^{k-1},\ldots,y_{n}^{k-1}),(x_{1}^{k-1},\ldots,x_{m}^{k-1}))$ is a singular value decomposition for $\partial_k\co C_k\to \ker\partial_{k-1}$ and if $r_k$ is the rank of $\partial_k$ then we may take $F_{k}^{C}=span_{\Lambda}\{y_{1}^{k-1},\ldots,y_{r_k}^{k-1}\}$.

\begin{dfn} 
If $(C_*,\partial_C,\ell_C)$ and $(D_*,\partial_D,\ell_D)$ are Floer-type complexes with splittings $F_{*}^{C}$ and $F_{*}^{D}$, respectively, a chain map $\Phi\co C_*\to D_*$ is said to be \textbf{split} provided that $\Phi(F_{*}^{C})\subset F_{*}^{D}$.
\end{dfn}

\begin{lemma} \label{repcomp} Let $\Phi \co C_* \to D_*$ be a chain map between two Floer-type complexes $(C_*, \partial_C, \ell_C)$ and $(D_*, \partial_D, \ell_D)$ having splittings $F_{*}^{C}$ and $F_{*}^{D}$, and let $\pi_C: C_* \rightarrow F_*^C$ and $\pi_D: D_* \rightarrow F_*^D$ be the projections associated to the direct sum decompositions $C_*=F_{*}^{C}\oplus \ker(\partial_C)_*$ and $D_*=F_{*}^{D}\oplus \ker(\partial_D)_*$. Define 
\[ \begin{array}{l}
\Phi^{\pi} = \pi_D \Phi \pi_C + \Phi(I_C - \pi_C).
\end{array} \]
Then this map satisfies following properties:
\begin{itemize}
\item[(i)] $\Phi^{\pi}$ is a chain map;
\item[(ii)] $\Phi^{\pi}$ is split, and $\Phi^{\pi}|_{\ker\partial_C}=\Phi|_{\ker\partial_C}$;
\item[(iii)] If $\delta\geq 0$ and $\ell_D(\Phi(x)) \leq \ell_C(x) + \delta$   for all $x \in C_*$, then likewise $\ell_D(\Phi^{\pi}(x)) \leq \ell_C(x) + \delta$ for all $x \in C_*$. \end{itemize} \end{lemma}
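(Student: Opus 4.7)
The plan is to establish three preliminary observations about the projections $\pi_C$ and $\pi_D$, and then show that each of the three conclusions (i), (ii), (iii) follows in a short computation. The three preliminaries are: (a) since $\partial_C^2 = 0$, the image of $\partial_C$ lies in $\ker\partial_C$, so $\pi_C \partial_C = 0$; symmetrically, $I_D - \pi_D$ is the projection onto $\ker\partial_D$, so $\partial_D \pi_D = \partial_D$; (b) because $F_*^C$ and $\ker\partial_C$ are orthogonal subspaces (by definition of a splitting), the projections $\pi_C$ and $I_C - \pi_C$ satisfy $\ell_C(\pi_C x) \leq \ell_C(x)$ and $\ell_C((I_C-\pi_C)x)\leq \ell_C(x)$ for every $x \in C_*$; the analogous statement holds for $\pi_D$ on $D_*$.

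Given these, statement (i) is a one-line computation: applying $\partial_D \pi_D = \partial_D$ and the chain-map property of $\Phi$ gives $\partial_D \Phi^{\pi} = \partial_D\Phi \pi_C + \partial_D \Phi (I_C - \pi_C) = \partial_D \Phi = \Phi \partial_C$, while applying $\pi_C \partial_C = 0$ gives $\Phi^{\pi}\partial_C = \Phi(I_C - \pi_C)\partial_C = \Phi \partial_C$, so both sides agree. Statement (ii) is immediate from the definition: for $x \in F_*^C$ we have $\pi_C x = x$ and $(I_C - \pi_C)x = 0$, so $\Phi^{\pi}(x) = \pi_D \Phi(x) \in F_*^D$; for $x \in \ker\partial_C$ we have $\pi_C x = 0$ and $(I_C - \pi_C)x = x$, so $\Phi^{\pi}(x) = \Phi(x)$.

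For statement (iii), using preliminary (b) together with the hypothesized bound $\ell_D(\Phi(y)) \leq \ell_C(y) + \delta$, each of the two summands in $\Phi^{\pi}(x) = \pi_D\Phi\pi_C x + \Phi(I_C - \pi_C)x$ has filtration at most $\ell_C(x) + \delta$, so the same bound applies to their sum by the non-Archimedean triangle inequality (F3).

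I expect no real obstacle; the entire content of the lemma is to arrange the definition of $\Phi^{\pi}$ so that the cancellations $\pi_C \partial_C = 0$ and $\partial_D \pi_D = \partial_D$ make the chain-map check automatic, while the orthogonality of the splittings keeps the filtration estimate from degrading. The only point that requires a moment's thought is verifying that the two correction terms fit together to give a genuine chain map --- if one instead tried to define $\Phi^\pi$ as $\pi_D \Phi$, the chain map condition would fail, so the additive correction $\Phi(I_C - \pi_C)$ is essential and is the conceptual step that makes the whole construction work.
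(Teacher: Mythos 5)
Your proof is correct and follows essentially the same route as the paper's: parts (i) and (ii) are verbatim the same computations. In part (iii) you take a mild shortcut: instead of first decomposing $\Phi^{\pi}x$ into its $F^{D}_{*}$ and $\ker\partial_D$ components (using $\pi_D^2 = \pi_D$) as the paper does, you bound the two summands of the defining formula $\pi_D\Phi\pi_C x + \Phi(I_C-\pi_C)x$ directly, using orthogonality to see that $\pi_C$, $I_C-\pi_C$, $\pi_D$ are all filtration-non-increasing and then applying (F3); this is a small simplification but rests on exactly the same facts.
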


\begin{proof} For (i), since $\partial_C(I_C - \pi_C) = 0$, we see that $\partial_C \pi_C = \partial_C$ and similarly, $\partial_D \pi_D = \partial_D$. Then using that $\Phi$ is a chain map, we get 
\[ \begin{array}{l}
\partial_D \Phi^{\pi} = \partial_D \pi_D \Phi \pi_C + \partial_D \Phi(I_C - \pi_C) = \Phi \partial_C \pi_C + \Phi \partial_C (I_C - \pi_C) = \Phi \partial_C.
\end{array} \]
Moreover, ${\rm Im} \partial_C \leq \ker \partial_C$, so $\pi_C \partial_C =0$, and 
\[ \begin{array}{l}
\Phi^{\pi} \partial_C = \pi_D \Phi \pi_C \partial_C + \Phi(I_C - \pi_C)\partial_C = \Phi \partial_C.
\end{array} \]
So $\Phi^{\pi}$ is a chain map.

For (ii), for $x \in F^C_k$, $\pi_C x =x$ and so $(I_C - \pi_C)x = 0$. So $\Phi^{\pi} x = \pi_D \Phi \pi_C x= \pi_D \Phi x \in F^D_k$, proving that $\Phi^{\pi}$ is split. Meanwhile for $x \in \ker(\partial_C)_k$, we have $\pi_C x =0$ and so $\Phi^{\pi} x= \pi_D \Phi \pi_C x + \Phi ( I_C - \pi_C) x = \Phi x$.

For (iii), note first that since $\pi_D$ (being a projection) obeys $\pi_{D}^{2}=\pi_{D}$, we have \[ \pi_D\Phi^{\pi}=\pi_D\Phi\pi_C+\pi_D\Phi(I_C-\pi_C)=\pi_D\Phi \] while \[ (I_D-\pi_D)\Phi^{\pi}=(I_D-\pi_D)\Phi(I-\pi_C).\] So since $F_k^D$ and $\ker (\partial_D)_k$ are orthogonal, for all $x \in C_k$ we have 
\begin{align*}
\ell_D(\Phi^{\pi} x) &= \max\{ \ell_D(\pi_D \Phi^{\pi} x), \ell_D((I_D - \pi_D) \Phi^{\pi} x\}\\
& = \max\{\ell_D(\pi_D \Phi x), \ell_D((I_D - \pi_D) \Phi (I_C- \pi_C) x\} \leq \max \{\ell_D(\Phi x), \ell_D(\Phi(I_C - \pi_C)x) \}.
\end{align*}

But, assuming that $\ell_D(\Phi x) \leq \ell_C(x) + \delta$ for any $x \in C_k$, the orthogonality of $F_{k}^{C}$ and $\ker(\partial_C)_k$ implies that
\[ \begin{array}{l}
\ell_D(\Phi(I_C - \pi_C)x) \leq \ell_C(x - \pi_C x) + \delta \leq \ell_C(x) + \delta.
\end{array}\]
Thus $\ell_D(\Phi^{\pi}x)\leq \ell_C(x)+\delta$ for all $x\in C_k$.\end{proof}

\begin{prop}\label{splithtopy} Let $(C_*,\partial,\ell)$ be a Floer-type complex with a splitting $F_{*}^{C}$ and let $\pi\co C_*\to F_{*}^{C}$ be the projection associated to the direct sum decomposition $C_*=F_{*}^{C}\oplus\ker\partial_*$.  Suppose that $A,A'\co C_{*}\to C_{*}$ are two chain maps such that:
\begin{itemize} \item[(i)] $A=\partial K+K\partial$ for some $K\co C_*\to C_{*+1}$ such that there is $\ep\geq 0$ with the property that $\ell(Kx)\leq \ell(x)+\ep$ for all $x\in C_*$.
\item[(ii)] $A'$ is split.
\item[(iii)] $A|_{\ker\partial}= A'|_{\ker\partial}$.  
\end{itemize}
Then for $K'=\pi K(I_C-\pi)$, we have $A'=\partial K'+K'\partial$ and $\ell(x)\leq \ell(K'x)+\ep$ for all $x\in C_*$.
\end{prop}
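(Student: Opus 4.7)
The proposition requires two conclusions about $K' = \pi K(I_C - \pi)$: the chain homotopy identity $A' = \partial K' + K'\partial$ and the filtration bound (which I read as $\ell(K'x) \leq \ell(x) + \epsilon$). The filtration bound is essentially immediate: since $F_*^C$ and $\ker\partial_*$ are orthogonal and every $x \in C_*$ decomposes as $\pi x + (I_C - \pi)x$ across this orthogonal direct sum, both projections $\pi$ and $I_C - \pi$ are filtration-nonincreasing, so $\ell(K'x) = \ell(\pi K(I_C - \pi)x) \leq \ell(K(I_C-\pi)x) \leq \ell((I_C-\pi)x) + \epsilon \leq \ell(x) + \epsilon$, where the middle inequality uses hypothesis (i).

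For the chain homotopy identity, I will first exploit two simple features of $\pi$ relative to $\partial$: since $x - \pi x \in \ker\partial$ we have $\partial\pi = \partial$, and since $\mathrm{Im}\partial \subset \ker\partial$ we have $\pi\partial = 0$. These immediately give $\partial K' = \partial\pi K(I_C - \pi) = \partial K(I_C - \pi)$ and $K'\partial = \pi K(I_C - \pi)\partial = \pi K \partial$. Substituting the homotopy $A = \partial K + K\partial$ from hypothesis (i), the first becomes $\partial K(I_C - \pi) = (A - K\partial)(I_C - \pi) = A(I_C - \pi)$ (using $\partial(I_C - \pi) = 0$), and the second becomes $\pi K\partial = \pi(A - \partial K) = \pi A$ (using $\pi\partial = 0$). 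Thus $\partial K' + K'\partial = A(I_C - \pi) + \pi A$, and the task reduces to showing $A(I_C - \pi) = A'(I_C - \pi)$ and $\pi A = A'\pi$.

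The first identity is immediate from hypothesis (iii), since $(I_C - \pi)x \in \ker\partial$. The identity $\pi A = A'\pi$, which I expect to be the main obstacle, is where all three assumptions on $A'$ must be combined. The key observation is that for any $y \in F_*^C$, $A'y - Ay$ lies in $\ker\partial$: indeed both $A$ and $A'$ are chain maps and they agree on $\mathrm{Im}\partial \subset \ker\partial$ by (iii), so $\partial(A'y - Ay) = A'\partial y - A\partial y = 0$. Applying $\pi$ and using that $A'y \in F_*^C$ by the splitness hypothesis (ii), one finds $\pi Ay = \pi A'y = A'y = A'\pi y$ for all $y \in F_*^C$. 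On $\ker\partial_*$ the identity $\pi Ax = A'\pi x$ is trivial since $\pi x = 0$ while $Ax = A'x \in \ker\partial$ yields $\pi Ax = 0$ as well. Decomposing an arbitrary $x$ as $\pi x + (I_C - \pi)x$ then delivers $\pi A = A'\pi$ globally, and assembling the pieces gives $\partial K' + K'\partial = A'(I_C - \pi) + A'\pi = A'$, completing the proof.
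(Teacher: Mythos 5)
Your proof is correct. The statement's displayed inequality $\ell(x)\leq\ell(K'x)+\ep$ is a typo (it appears verbatim in the paper's own one-line justification too), and you correctly read it as $\ell(K'x)\leq\ell(x)+\ep$; your orthogonality argument for this is the same as the paper's. For the homotopy identity, your route differs mildly from the paper's. The paper verifies $A'=\partial K'+K'\partial$ directly on each summand: on $\ker\partial_*$ by a short chain of equalities, and on $F_*^C$ via the computation $\partial A'x = \partial K'\partial x$ followed by an appeal to the injectivity of $\partial|_{F_*^C}$ (since both $A'$ and $K'$ have image in $F_*^C$). You instead first carry out the clean algebraic reduction $\partial K' + K'\partial = A(I_C-\pi)+\pi A$, and then reduce the problem to the two identities $A(I_C-\pi)=A'(I_C-\pi)$ (immediate from hypothesis (iii)) and $\pi A = A'\pi$ (checked on each summand, using the split hypothesis (ii) for the $F_*^C$ case). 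Your approach avoids invoking the injectivity of $\partial$ on $F_*^C$ and makes the role of each hypothesis more transparent, at the modest cost of one extra intermediate identity; both are equally valid and about the same length.
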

\begin{proof} The statement that $\ell(x)\leq \ell(K'x)+\ep$ follows directly from the corresponding assumption on $K$ and the fact that $\pi$ and $I_C-\pi$ are orthogonal projections.  So we just need to check that $A'=\partial K'+K'\partial$; we will check this separately on elements of $\ker\partial_*$ and elements of $F_{*}^{C}$.

For the first of these, note that just as in the proof of the preceding lemma we have $\partial \pi=\partial$, and if $x\in\ker\partial_*$ then $(I_C-\pi)x=x$.  Hence, by assumption (iii), \[ A'x=Ax=\partial Kx+K\partial x=\partial Kx=\partial \pi Kx=\partial K'x=\partial K'x+K'\partial x,\] as desired.

On the other hand if $x\in F_{*}^{C}$ we first observe that \[ \partial A'x=A'\partial x=A\partial x=\partial Ax=\partial K\partial x \] where the second equality again follows from (iii).  
Now since $\partial \pi=\partial$ and since $I_C-\pi$ is the identity on ${\rm Im} \partial$ we have \[ \partial K\partial x=\partial\pi K(I-\pi)\partial x=\partial K'\partial x.\]  
Thus $\partial A'x=\partial K'\partial x$.  But both $A'$ and $K'$ have image in $F_{*}^{C}$, on which $\partial$ is injective, so $A'x=K'\partial x$.  Meanwhile (since we are assuming in this paragraph that $x\in F_{*}^{C}$) we have $(I_C-\pi)x=0$ and so $K'x=0$.  So indeed $A'x=(\partial K'+K'\partial)x$.

Since $A'$ and $\partial K'+K'\partial$ coincide on both summands $\ker\partial_C$ and $F_{*}^{C}$ of $C_*$ we have shown that they are equal.
\end{proof}

\begin{cor} \label{dqsplit}
Given two Floer-type complexes $(C_*,\partial_C,\ell_C)$ and $(D_*,\partial_D,\ell_D)$ with splittings $F_{*}^{C}$ and $F_{*}^{D}$, the quasiequivalence distance $d_Q((
(C_*,\partial_C,\ell_C), (D_*,\partial_D,\ell_D))$ is equal to 
\[ \inf\left\{\delta\geq 0\left|\begin{array}{c}\mbox{There exists a $\delta$-quasiequivalence $(\Phi,\Psi,K_C,K_D)$}\\\mbox{between   $(C_*,\partial_C,\ell_C)$ and $(D_*,\partial_D,\ell_D)$ such that}\\
\mbox{$\Phi$ and $\Psi$ are split}\end{array}\right.\right\}.
\]
\end{cor}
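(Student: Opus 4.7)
The inequality ``$\geq$'' is immediate: every $\delta$-quasiequivalence $(\Phi,\Psi,K_C,K_D)$ between $(C_*,\partial_C,\ell_C)$ and $(D_*,\partial_D,\ell_D)$ in which $\Phi$ and $\Psi$ are split is, a fortiori, a $\delta$-quasiequivalence, so the infimum on the right is at least $d_Q$. For the reverse inequality, the plan is to take an arbitrary $\delta$-quasiequivalence and modify it to one whose chain maps are split, while preserving all the filtration estimates in Definition~\ref{dfnqusie}; the tools already in hand, namely Lemma~\ref{repcomp} and Proposition~\ref{splithtopy}, are essentially designed for this purpose.

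Given $\delta>d_Q$, fix a $\delta$-quasiequivalence $(\Phi,\Psi,K_C,K_D)$. Let $\pi_C\co C_*\to F^{C}_*$ and $\pi_D\co D_*\to F^{D}_*$ be the projections associated to the orthogonal direct sum decompositions $C_*=F^{C}_*\oplus\ker\partial_C$ and $D_*=F^{D}_*\oplus\ker\partial_D$, and define
\[
\Phi^\pi=\pi_D\Phi\pi_C+\Phi(I_C-\pi_C),\qquad \Psi^\pi=\pi_C\Psi\pi_D+\Psi(I_D-\pi_D).
\]
By Lemma~\ref{repcomp}, both $\Phi^\pi$ and $\Psi^\pi$ are split chain maps, they agree with $\Phi$ and $\Psi$ respectively on $\ker\partial_C$ and $\ker\partial_D$, and they satisfy the filtration bounds $\ell_D(\Phi^\pi c)\leq\ell_C(c)+\delta$ and $\ell_C(\Psi^\pi d)\leq\ell_D(d)+\delta$. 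So conditions (i) and (ii) of Definition~\ref{dfnqusie} are already in place for $(\Phi^\pi,\Psi^\pi,*,*)$; it remains only to produce the two chain homotopies with the required filtration bounds.

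Consider $A=\Psi\Phi-I_C$ and $A'=\Psi^\pi\Phi^\pi-I_C$. Since $\Psi^\pi\Phi^\pi$ carries $F^{C}_*$ into $F^{C}_*$ and $\ker\partial_C$ into $\ker\partial_C$, and since $I_C$ is split for the same reason, $A'$ is split. Moreover, on $\ker\partial_C$ we have $\Phi^\pi|_{\ker\partial_C}=\Phi|_{\ker\partial_C}$ which takes values in $\ker\partial_D$ (because $\Phi$ is a chain map), so $\Psi^\pi\Phi^\pi|_{\ker\partial_C}=\Psi|_{\ker\partial_D}\Phi|_{\ker\partial_C}=\Psi\Phi|_{\ker\partial_C}$ and hence $A'|_{\ker\partial_C}=A|_{\ker\partial_C}$. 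Since $A=\partial_CK_C+K_C\partial_C$ with $\ell_C(K_Cc)\leq\ell_C(c)+2\delta$, Proposition~\ref{splithtopy} applied with $\epsilon=2\delta$ produces $K'_C=\pi_CK_C(I_C-\pi_C)$ satisfying $A'=\partial_CK'_C+K'_C\partial_C$ and $\ell_C(K'_Cc)\leq\ell_C(c)+2\delta$. The identical argument, with the roles of $C$ and $D$ interchanged, produces $K'_D=\pi_DK_D(I_D-\pi_D)$ with $\Phi^\pi\Psi^\pi-I_D=\partial_DK'_D+K'_D\partial_D$ and $\ell_D(K'_Dd)\leq\ell_D(d)+2\delta$. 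Thus $(\Phi^\pi,\Psi^\pi,K'_C,K'_D)$ is a $\delta$-quasiequivalence with $\Phi^\pi$ and $\Psi^\pi$ split, so the infimum on the right is at most $\delta$. Letting $\delta\searrow d_Q$ completes the proof.

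The routine verifications are the two claims that $A'|_{\ker\partial_C}=A|_{\ker\partial_C}$ (and its $D$-counterpart) and that $A'$ is split; there is no genuine obstacle, since all the nontrivial work has been absorbed into Lemma~\ref{repcomp} and Proposition~\ref{splithtopy}.
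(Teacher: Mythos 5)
Your proof is correct and follows essentially the same route as the paper: produce split replacements $\Phi^\pi,\Psi^\pi$ via Lemma~\ref{repcomp}, then build the new homotopies $K'_C,K'_D$ via Proposition~\ref{splithtopy} applied to $A=\Psi\Phi-I_C$, $A'=\Psi^\pi\Phi^\pi-I_C$ and the $D$-side analogues. You spell out the hypothesis checks (that $A'$ is split and agrees with $A$ on $\ker\partial_C$) slightly more explicitly than the paper does, but the argument is the same.
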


\begin{proof}
It suffices to show that if $(\Phi,\Psi,K_C,K_D)$ is a $\delta$-quasiequivalence then there is another $\delta$-quasiequivalence $(\Phi',\Psi',K'_C,K'_D)$ such that $\Phi'$ and $\Psi'$ are split.  For this purpose we can take $\Phi'=\Phi^{\pi}$ and $\Psi'=\Psi^{\pi}$ to be the maps
provided by Lemma \ref{repcomp}.  We can then apply 
Proposition \ref{splithtopy} with $A=\Psi\Phi-I_C$ and $A'=\Psi'\Phi'-I_C$ to obtain $K'_C\co C_*\to C_{*+1}$ with $\Psi'\Phi'-I_C=\partial_CK'_C+K'_C\partial_C$ and $\ell_C(K'_Cx)\leq \ell_C(x)+2\delta$.  Similarly applying Proposition \ref{splithtopy} with $A=\Phi\Psi-I_D$ and $A'=\Phi'\Psi'-I_D$ yields a map $K'_D\co D_*\to D_{*+1}$, and the conclusions of Lemma \ref{repcomp} and Proposition \ref{splithtopy} readily imply that $(\Phi',\Psi',K'_C,K'_D)$ is, like $(\Phi,\Psi,K_C,K_D)$, a $\delta$-quasiequivalence.
\end{proof}

Let us briefly describe the strategy of the rest of the proof of Theorem \ref{stabthm}. In the following two subsections we will introduce a filtration function $\ell_{co}$ on the mapping cone $Cone(\Phi)_{*}$ of a $\delta$-quasiequivalence  $\Phi\co C_*\to D_*$, and two filtration functions $\ell_{0},\ell_{1}$ on the mapping cylinder $Cyl(\Phi)_{*}$, with $\ell_0$ and $\ell_1$ obeying a uniform bound $|\ell_1-\ell_0|\leq \delta$.  Moreover $(Cyl(\Phi)_{*},\partial_{cyl},\ell_0)$ will be filtered homotopy equivalent to $D_*$, while $(Cyl(\Phi)_{*},\partial_{cyl},\ell_1)$ will be filtered homotopy equivalent to $C_*\oplus Cone(\Phi)_{*}$.  Combined with Proposition \ref{bdmpc} below which places bounds on the barcode of $Cone(\Phi)_{*}$ when $\Phi$ is split, these constructions will quickly yield Theorem \ref{stabthm} in Section \ref{endproof}.

\subsection{Filtered mapping cones}

  Fix throughout this section a nonnegative real number $\delta$. We will make use of the following algebraic structure, related to the mapping cylinder introduced earlier. 

\begin{dfn} \label{dfnmpc} Given two chain complexes  $(C_*,\partial_C)$ and $(D_*,\partial_D)$ and a chain map $\Phi\co C_*\to D_*$ define the {\bf mapping cone} of $\Phi$,  $(Cone(\Phi)_*, \partial_{co})$ by 
\[ \begin{array}{l}
Cone(\Phi)_*= D_* \oplus C[1]_*
\end{array} \]
with boundary operator $\partial_{co}(d,e)=(\partial_Dd-\Phi e, -\partial_Ce)$ i.e., in block form, \[ \partial_{co} = \left(\begin{array}{cc}\partial_D & -\Phi \\ 0 &- \partial_C   \end{array}\right).
\] 

Assuming additionally that $\ell_D(\Phi x)\leq \ell_C(x)+\delta$ for all $x\in C_*$, define the {\bf filtered mapping cone} $(Cone(\Phi)_*, \partial_{co}, \ell_{co})$ where the filtration function $\ell_{co}$ is given by $\ell_{co}(d,e)=\max\{\ell_D(d) + \delta, \ell_C(e)+2\delta\}$.\footnote{One could equally well define $\ell_{co}(d,e) = \max\{\ell_D(d) +t, \ell_C(e) + t + \delta\}$ for any $t \in \R$ (the  $\delta$  is included to ensure that $\ell_{co}$ does not increase under $\partial_{co}$). Although $t=0$ might seem to be the most natural choice, we use $t = \delta$ here in order to make the proofs of Propositions \ref{bdmpc} and \ref{changefil} more reader-friendly.} \end{dfn}

It is routine to check that $\partial_{co}^2 = 0$ and that $\ell_{co}(\partial_{co}(d,e))\leq \ell_{co}(d,e)$ for all $(d,e)\in Cone_*(\Phi)$. In the case that $\Phi$ is part of a $\delta$-quasiequivalence $(\Phi,\Psi,K_C,K_D)$, we will require some information about the concise barcode of $Cone(\Phi)_*$; we will be able to make an especially strong statement when $\Phi$ is split in the sense of the previous subsection.  Specifically:

\begin{prop} \label{bdmpc} Let  $(C_*, \partial_C, \ell_C)$ and $(D_*, \partial_D, \ell_D)$ be two Floer-type complexes with splittings $F_{*}^{C}$ and $F_{*}^{D}$, and let $(\Phi, \Psi, K_C, K_D)$ be a $\delta$-quasiequivalence such that $\Phi$ and $\Psi$ are split.  Then all elements $([a],L)$ of the concise  barcode of $(Cone(\Phi)_*, \partial_{co}, \ell_{co})$ have second coordinate obeying $L\leq 2 \delta$. \end{prop}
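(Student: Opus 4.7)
The plan is to reduce Proposition \ref{bdmpc} to two statements: (i) $Cone(\Phi)_*$ is acyclic, ruling out any elements $([a],\infty)$ in the concise barcode; and (ii) $\beta_1\bigl(\partial_{co}|_{Cone(\Phi)_{k+1}}\bigr)\leq 2\delta$ for every $k$, which by Theorem \ref{genebd} (combined with the monotonicity $\beta_1\geq\beta_2\geq\cdots$) will force every finite bar $([a],L)$ of $Cone(\Phi)_*$ to satisfy $L\leq 2\delta$.  Acyclicity is a standard consequence of $\Phi$ being a chain homotopy equivalence (it can also be read off Lemma \ref{cylhtopy} together with the cofibration $0\to D_*\to Cyl(\Phi)_*\to Cone(\Phi)_*\to 0$).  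So the heart of the matter is to prove that for every cycle $(d,e)\in Cone(\Phi)_k$ there exists $(d',e')\in Cone(\Phi)_{k+1}$ with $\partial_{co}(d',e')=(d,e)$ and $\ell_{co}(d',e')\leq \ell_{co}(d,e)+2\delta$.

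To construct such a $(d',e')$, I would exploit the splittings heavily.  Given a cycle $(d,e)$, we have $\partial_C e=0$ and $\partial_D d=\Phi e$.  Using the orthogonal decomposition $D_k=F^D_k\oplus\ker(\partial_D)_k$, write $d=d_F+d_K$; since $\Phi$ is split and $e\in\ker\partial_C$ we have $\Phi e\in\ker\partial_D$, and since $\partial_D|_{F^D}$ is injective we get $d_F=\Phi f$ where $f\in F^C$ is the unique element satisfying $\partial_C f=e$ (which exists because $Cone(\Phi)$ is acyclic, so $\Phi e$ being a boundary forces $[e]=0$ in $H_*(C)$).  I would then set
\[
d'=-K_D d_K,\qquad e'=-f-\Psi d_K,
\]
and verify directly, using the homotopy identity $\Phi\Psi-I_D=\partial_D K_D+K_D\partial_D$ and $d_K\in\ker\partial_D$, that $\partial_{co}(d',e')=(d,e)$.

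The hardest step is bounding $\ell_C(f)$, since $f$ is given only implicitly as $\partial_C^{-1}(e)\cap F^C$; the naive bounds (e.g.\ applying $\Psi$ to $\Phi f=d_F$ via $\Psi\Phi=I_C+\partial_C K_C+K_C\partial_C$) produce self-referential estimates.  The trick I would use is to observe that $\Psi d_F\in F^C$ (by split-ness of $\Psi$), decompose $K_C e=(K_C e)_F+(K_C e)_K$ via $C=F^C\oplus\ker\partial_C$, and then compare $\partial_C(\Psi d_F-f)$ with $\partial_C(K_C e)_F$: both lie in $F^C$ and have the same image under $\partial_C$, so injectivity of $\partial_C|_{F^C}$ yields the identity $f=\Psi d_F-(K_C e)_F$.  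Since orthogonal projections never increase filtration, this immediately gives
\[
\ell_C(f)\leq\max\bigl\{\ell_C(\Psi d_F),\ell_C(K_C e)\bigr\}\leq\max\{\ell_D(d)+\delta,\ell_C(e)+2\delta\}=\ell_{co}(d,e).
\]

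With $\ell_C(f)\leq\ell_{co}(d,e)$ in hand, the remaining estimates are routine: orthogonality of $d_F,d_K$ gives $\ell_D(d_K)\leq\ell_D(d)$, so $\ell_D(d')\leq\ell_D(d_K)+2\delta\leq\ell_D(d)+2\delta$, and $\ell_C(e')\leq\max\{\ell_C(f),\ell_D(d_K)+\delta\}\leq\ell_{co}(d,e)$.  Plugging into the definition $\ell_{co}(d',e')=\max\{\ell_D(d')+\delta,\ell_C(e')+2\delta\}$ yields $\ell_{co}(d',e')\leq\ell_{co}(d,e)+2\delta$, which combined with the acyclicity of $Cone(\Phi)_*$ completes the proof.
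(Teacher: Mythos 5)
Your proposal is correct and in fact constructs the identical bounding chain $y=(d',e')$ that the paper does: since your key identity $f=\Psi d_F-(K_Ce)_F$ gives $-f-\Psi d_K=(K_Ce)_F-\Psi d_F-\Psi d_K=(K_Ce)_F-\Psi d$, your $(d',e')=(-K_Dd_K,\,-f-\Psi d_K)$ equals the paper's $y=(-b,\,a-\Psi d)$ with $b=K_Dd_K$ and $a=(K_Ce)_F$. The difference is organizational rather than substantive: the paper directly sets $a=(K_Ce)_F$ and proves the auxiliary relation $\Phi a=\Phi\Psi d_F-d_F$ to verify $\partial_{co}y=x$, deriving acyclicity and the length bound simultaneously from the single statement that every cycle has a primitive with filtration shift at most $2\delta$, whereas you first dispose of acyclicity by standard homological algebra and then focus on bounding $\beta_1$. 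Your route of introducing the canonical $F^C$-primitive $f$ of $e$, establishing $d_F=\Phi f$, and deriving $f=\Psi d_F-(K_Ce)_F$ via injectivity of $\partial_C|_{F^C}$ is a bit more conceptual, as it packages the crucial (and a priori non-obvious) estimate $\ell_C(f)\leq\ell_{co}(d,e)$ as an immediate consequence of projections being filtration-nonincreasing; in exchange, the paper's version avoids any appeal to acyclicity when producing its primitive and verifies the chain equation more directly. One minor quibble: when you write that ``since $\Phi$ is split and $e\in\ker\partial_C$ we have $\Phi e\in\ker\partial_D$,'' the split hypothesis is irrelevant at that step (it is just $\Phi$ being a chain map); splitness is what you actually use a clause later, to get $\Phi f\in F^D_*$. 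Also, you could avoid invoking acyclicity to produce $f$ at all: the explicit computation $e=\partial_C(\Psi d-K_Ce)$ exhibits $e$ as a boundary without reference to the cone's homology.
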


\begin{proof}  The desired conclusion is an easy consequence of the following statement:
\begin{equation}\label{2}
\forall x\in \ker(\partial_{co}),\, \exists y\in Cone(\Phi)_{*} \, \,\mbox{such that}\,\,\partial_{co}y=x\,\, \mbox{and} \,\, \ell_{co}(y)\leq \ell_{co}(x)+2\delta.
\end{equation}  Indeed, by definition, the the elements $([a],L)$ 
of the concise barcode with $L<\infty$ each correspond to pairs $y_i,x_i=\partial y_i$ from a singular value decomposition for $\partial_{co}$, with $a=\ell_{co}(x)$ and $L=\ell_{co}(y_i)-\ell_{co}(x_i)$,  and by Lemma \ref{checkingrobust} any element $y$ with $\partial y=x_i$ has $\ell(y)\geq \ell(y_i)$. Thus  (\ref{2}) implies that $L\leq 2\delta$ provided that $L<\infty$.  Meanwhile there can be no bars with $L=\infty$ since such bars arise from elements of an orthogonal
complement to $\Img(\partial_{co})$ in $\ker(\partial_{co})$ but (\ref{2}) implies that $\Img(\partial_{co})=\ker(\partial_{co})$.


We now prove (\ref{2}). Let $x = (d,e) \in \ker (\partial_{co})_*$; thus  $\partial_{co}(d,e) = (\partial_D d- \Phi e, - \partial_C e) =0$. Therefore, 
\[ \begin{array}{l}
\partial_D d = \Phi e \,\,\,\,\,\,\,\,\,$and$\,\,\,\,\,\,\,\,\, \partial_C e =0.
\end{array} \]
Split $d$ according to the direct sum decomposition $D_*=F_{*}^{D}\oplus \ker(\partial_D)_*$ as $d = d_F + d_K$ and let $\lambda = \ell_{co} (x)$. Then $\ell_D(d) \leq \lambda - \delta$ and $\ell_C(e) \leq \lambda - 2\delta$.  So since $F^{D}_{*}$ and $\ker(\partial_D)_*$ are orthogonal, $\ell_D(d_K) \leq \lambda - \delta $ and $\ell_D(d_F) \leq \lambda- \delta $. Moreover, since $\partial_C e=0$, the equation $\Psi\Phi-I_C=\partial_C K_C+K_C\partial_C$ implies that  $\partial (K_C e)=\Psi\Phi e - e$, where $\ell_C(K_Ce)\leq \ell_C(e)+2\delta\leq \lambda$.

Write $K_Ce=a+a'$ where $a\in F_{C}^{*}$ and $a'\in \ker(\partial_C)_*$.  Then by the orthogonality of $F_{C}^{*}$ and $\ker(\partial_C)_*$ we have $\ell_C(a)\leq \ell_C(K_Ce)\leq \lambda$, and $\partial_Ca=\partial_CK_Ce= (\Psi\Phi -I_D) e$.

We then find that \begin{equation}\label{aeq} \partial_D(\Phi\Psi d_F-d_F-\Phi a)=\Phi\Psi\partial_Dd_F-\partial_Dd_F-\Phi\partial_Ca=(\Phi\Psi-I_D)\Phi e -\Phi\partial_Ca=0. \end{equation}

On the other hand, because $\Phi$ and $\Psi$ are split we have $\Phi \Psi d_F - d_F - \Phi a \in F^D_*$, so since $\partial_D|_{F^{D}_{*}}$ is injective (\ref{aeq}) implies that \[ \Phi a = \Phi\Psi d_F - d_F.\] 

Meanwhile since $\partial_D d_K = 0$, the element $b=K_Dd_K\in D_{*+1}$ obeys \[ \partial_D b=(\Phi\Psi-I_D)d_K\] and 
 $\ell_D(b) \leq \ell_D(d_K) + 2 \delta \leq \lambda - \delta + 2 \delta = \lambda + \delta$. Let $y = (-b, a - \Psi d)$. We claim that this $y$ obeys the desired conditions stated at the start of the proof. In fact, 
\begin{align*}
\partial_{co}(y) &= (\partial_D (-b)  - \Phi (a - \Psi d), -\partial_C(a - \Psi d)) \\
& = (-\partial_D b - \Phi a + \Phi\Psi d, - \partial_C a + \partial_C \Psi d)\\
& = (d_K - \Phi\Psi d_K - \Phi a + \Phi \Psi d, e - \Psi \Phi e + \Psi \partial_D d)\\
& = (d_K - \Phi \Psi d_K - \Phi \Psi d_F + d_F + \Phi \Psi d,e) \\
& = (d,e) = x.
\end{align*}
Moreover, the filtration level of $y$ obeys 
\begin{align*}
\ell_{co} (y) &= \ell_{co} ((-b, a - \Psi d)) \\
& = \max\{ \ell_D(-b) + \delta, \ell_C(a - \Psi d) +  2\delta \}\\
& \leq \max\{ \lambda + 2 \delta, \max\{\ell_C(a), \ell_C(d) + \delta\} + 2\delta\}\\
& = \lambda + 2 \delta = \ell_{co}(x) + 2 \delta.
\end{align*}
So $\partial_{co}y=x$ and $\ell_{co}(y)\leq \ell_{co}(x)+2\delta$, as desired.  Since $x$ was an arbitrary element of $\ker(\partial_{co})_*$ this implies the result. \end{proof}

\begin{remark}
If one drops the hypothesis that $\Phi$ and $\Psi$ are split, then it is possible to construct examples showing that the largest second coordinate in an element of the concise barcode of $Cone(\Phi)_*$ can be as large as $4\delta$.
\end{remark}

\subsection{Filtered mapping cylinders}

Recall the definition of the mapping cylinder $Cyl(\Phi)_{*}$ of a chain map $\Phi\co C_*\to D_*$ from Section \ref{cylsect}, and the homotopy equivalences $(i_D,\alpha,0,K)$ between $D_*$ and $Cyl(\Phi)_*$ and $(i_C,\beta,0,L)$ between $C_*$ and $Cyl(\Phi)_*$ from Lemma \ref{cylhtopy} (the first of these exists for any chain map $\Phi$, while the second requires $\Phi$ to be part of a homotopy equivalence, as is indeed the case in our present context). The ``only if'' direction of Theorem B was proven by, in the case that $(\Phi,\Psi,K_C,K_D)$ is a filtered homotopy equivalence, exploiting the behavior of a suitable filtration function on $Cyl(\Phi)_*$ with respect to $(i_D,\alpha,0,K)$ and $(i_C,\beta,0,L)$.  In the case that $(\Phi,\Psi,K_C,K_D)$ is instead a $\delta$-quasiequivalence, we will follow a similar strategy, but using different filtration functions on $Cyl(\Phi)_*$ for the two homotopy equivalences.

\begin{prop} \label{decompcyl} Given two Floer-type complexes $(C_*, \partial_C, \ell_C)$ and $(D_*, \partial_D, \ell_D)$ and a $\delta$-quasiequivalence $(\Phi, \Psi, K_C, K_D)$ between them, define a filtration function $\ell_0\co Cyl(\Phi)_*\to \mathbb{R}\cup\{-\infty\}$ by \[ \ell_0(c,d,e) = \max\{\ell_C(c)+\delta,\ell_D(d),\ell_C(e)+\delta\}.\]  Then:
\begin{itemize}
\item[(i)]$ \ell_0(\partial_{cyl}x)\leq \ell_0(x)$ for all $x\in Cyl(\Phi)_{*}$. Thus $(Cyl(\Phi)_*,\partial_{cyl},\ell_0)$ is a Floer-type complex.

\item[(ii)] Let $(i_D,\alpha,0,K)$ be as defined in Lemma \ref{cylhtopy}. Then   $(i_D,\alpha,0,K)$ is a filtered  homotopy equivalence between $(D_{*},\partial_D,\ell_D)$ and $(Cyl(\Phi)_{*},\partial_{cyl},\ell_0)$.

\end{itemize} \end{prop}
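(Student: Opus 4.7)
Both parts of this proposition reduce to elementary computations using the definitions of the filtration functions together with the filtration-shift bounds built into a $\delta$-quasi\-equivalence. The only nontrivial input is the inequality $\ell_D(\Phi c)\leq \ell_C(c)+\delta$ from (\ref{qeshift}); the homotopies $K_C, K_D$ play no role here, and the analysis of the other homotopy equivalence $(i_C,\beta,0,L)$ (which will need them) is deferred to a subsequent proposition.

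For part (i), the plan is to unpack $\partial_{cyl}(c,d,e)=(\partial_Cc-e,\partial_Dd+\Phi e,-\partial_Ce)$ and estimate $\ell_0$ coordinate-by-coordinate. Using $\ell_C(\partial_C x)\leq \ell_C(x)$ and the non-Archimedean triangle inequality on the $C$-component, one obtains
\[
\ell_C(\partial_Cc-e)+\delta\leq\max\{\ell_C(c)+\delta,\ell_C(e)+\delta\}\leq\ell_0(c,d,e).
\]
Using the bound on $\Phi$ on the $D$-component,
\[
\ell_D(\partial_Dd+\Phi e)\leq\max\{\ell_D(d),\ell_C(e)+\delta\}\leq\ell_0(c,d,e),
\]
and trivially $\ell_C(-\partial_Ce)+\delta\leq\ell_C(e)+\delta\leq\ell_0(c,d,e)$. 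Together these give $\ell_0(\partial_{cyl}x)\leq \ell_0(x)$, so $(Cyl(\Phi)_*,\partial_{cyl},\ell_0)$ is a Floer-type complex (that each graded piece is orthogonalizable is immediate from the direct-sum description $Cyl(\Phi)_*=C_*\oplus D_*\oplus C[1]_*$, since each summand is).

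For part (ii), recall that Lemma \ref{cylhtopy}(i) already supplies the algebraic data of a homotopy equivalence (both chain maps, both homotopy identities, with one homotopy equal to $0$); what remains is to verify the four filtration conditions of Definition \ref{dfnfht}. Since $i_D(d)=(0,d,0)$ we have $\ell_0(i_D d)=\ell_D(d)$ exactly, so $i_D$ preserves filtration. For $\alpha(c,d,e)=\Phi c+d$, the inequality
\[
\ell_D(\Phi c+d)\leq\max\{\ell_C(c)+\delta,\ell_D(d)\}\leq\ell_0(c,d,e)
\]
uses only the quasiequivalence bound on $\Phi$. The zero homotopy between $\alpha i_D$ and $I_{D_*}$ trivially preserves filtration. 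Finally, the homotopy $K(c,d,e)=(0,0,c)$ satisfies $\ell_0(K(c,d,e))=\ell_C(c)+\delta\leq \ell_0(c,d,e)$.

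There is essentially no obstacle; the proof is a matter of writing out these four inequalities. The design of the filtration $\ell_0$---in particular the choice to shift the $C_*$ and $C[1]_*$ summands by $\delta$ while leaving $D_*$ unshifted---is engineered precisely so that (a) the $\Phi$-term in $\partial_{cyl}$ does not violate the Floer-type inequality in the $D$-component, and (b) both $i_D$ and $\alpha$ become filtration-preserving. The more substantive work lies ahead, in constructing a companion filtration $\ell_1$ that differs from $\ell_0$ by at most $\delta$ and makes $(Cyl(\Phi)_*,\partial_{cyl},\ell_1)$ filtered homotopy equivalent to $C_*\oplus Cone(\Phi)_*$; only after combining that with the present proposition and Proposition \ref{twofilt} will the stability estimate emerge.
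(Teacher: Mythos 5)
Your proposal is correct and follows essentially the same approach as the paper: for (i), the same coordinate-by-coordinate estimates on $\partial_{cyl}(c,d,e)$, and for (ii), citing Lemma \ref{cylhtopy} for the algebraic homotopy equivalence and then verifying the three filtration inequalities for $i_D$, $\alpha$, and $K$.
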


\begin{proof}
For (i), if $(c,d,e)\in Cyl(\Phi)_{*}$ we have \[ \ell_0(\partial_{cyl}(c,d,e))=\max\left\{\ell_C(\partial_Cc-e)+\delta,\ell_D(\partial_D d+\Phi e), \ell_C(\partial_C e)+\delta \right\} \] while $\ell_0(c,d,e) = \max\left\{\ell_C(c)+\delta,\ell_D(d),\ell_C(e)+\delta \right\}$.  So (i) follows from the facts that:
\begin{itemize}
\item $\ell_C(\partial_Cc-e)+\delta \leq \max\{\ell_C(c)+\delta,\ell_C(e)+\delta\}$;
\item $\ell_D(\partial_Dd+\Phi e)\leq \max\{\ell_D(d),\ell_D(\Phi e)\}\leq \max\{\ell_D(d),\ell_C(e)+\delta\} $;
\item $\ell_C(\partial_Ce)+\delta \leq \ell_C(e)+\delta$. \end{itemize}

By Lemma \ref{cylhtopy}, $(i_D,\alpha,0,K)$ is a homotopy equivalence, so to prove (ii) we just need to check that each of the maps perserves filtration. We see that: \begin{itemize}
\item Clearly $\ell_0(i_Dd)=\ell_D(d)$ for all $d\in D_*$, by definition of $\ell_0$;
\item For $(c,d,e)\in Cyl(\Phi)_{*}$, \[ \ell_D(\alpha(c,d,e))=\ell_D(\Phi c+d)\leq\max\{\ell_C(c)+\delta,\ell_D(d)\}\leq \ell_0(c,d,e);\]
\item For $(c,d,e)\in Cyl(\Phi)_{*}$, $\ell_0(K(c,d,e))=\ell_0(0,0,c)=\ell_C(c)+\delta \leq \ell_0(c,d,e)$. \end{itemize}
Thus $(i_D, \alpha,0, K)$ is indeed a filtered  homotopy equivalence.
\end{proof}

\begin{prop} \label{changefil} Given two Floer-type complexes $(C_*, \partial_C, \ell_C)$ and $(D_*, \partial_D, \ell_D)$ having splittings $F_{*}^{C}$ and $F_{*}^{D}$ and a $\delta$-quasiequivalence $(\Phi, \Psi, K_C, K_D)$ where $\Phi$ and $\Psi$ are split, define a new filtration function $\ell_1$ on $Cyl(\Phi)_*$ by \[ \ell_1(c,d,e) = \max\{\ell_C(c),\ell_D(d)+\delta,\ell_C(e)+2\delta\}. \] Then, with notation as in Proposition \ref{decompcyl}:
\begin{itemize}
\item[(i)] $\ell_1(\partial_{cyl}(c,d,e))\leq \ell_1(c,d,e)$ for all $(c,d,e)\in Cyl(\Phi)_{*}$, so $(Cyl(\Phi)_*,\partial_{cyl},\ell_1)$ is a Floer-type complex.
\item[(ii)] $i_C(C_*)$ and $\ker\beta$ are {\it orthogonal complements} with respect to $\ell_1$.
\item[(iii)] The second coordinates of all elements of the concise barcode of $(\ker\beta,\partial_{cyl},\ell_1)$ are at most $2\delta$.
\end{itemize} \end{prop}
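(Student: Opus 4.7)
For part (i), the argument is a direct verification. Writing
\[ \partial_{cyl}(c,d,e) = (\partial_C c - e,\;\partial_D d + \Phi e,\;-\partial_C e), \]
I will bound each of the three defining terms of $\ell_1(\partial_{cyl}(c,d,e))$: the first is at most $\max\{\ell_C(c),\ell_C(e)\}$, which is majorized by $\max\{\ell_C(c),\ell_C(e)+2\delta\}$; the second, after adding $\delta$, is at most $\max\{\ell_D(d)+\delta,\ell_D(\Phi e)+\delta\}$, which by the quasiequivalence bound $\ell_D(\Phi e)\leq\ell_C(e)+\delta$ is at most $\max\{\ell_D(d)+\delta,\ell_C(e)+2\delta\}$; and the third is trivially at most $\ell_C(e)+2\delta$. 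All three are therefore bounded by $\ell_1(c,d,e)$.

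For part (ii), observe first that the direct-sum decomposition of vector spaces $Cyl(\Phi)_*=i_C(C_*)\oplus(D_*\oplus C[1]_*)$ is orthogonal with respect to $\ell_1$, simply because $\ell_1$ is defined as a maximum over the three factors. Next, since $\beta\circ i_C=I_{C_*}$, the two subspaces $i_C(C_*)$ and $\ker\beta$ also give a vector-space direct sum decomposition of $Cyl(\Phi)_*$, with $\ker\beta=\{(-\Psi d-K_Ce,d,e)\,|\,(d,e)\in D_*\oplus C[1]_*\}$ (so in particular $\dim\ker\beta=\dim(D_*\oplus C[1]_*)$ in each degree). I will then invoke Lemma \ref{or&pr} with $U=D_*\oplus C[1]_*$, $V=i_C(C_*)$, and $U'=\ker\beta$: the projection $\pi_U$ sends $(-\Psi d-K_Ce,d,e)$ to $(0,d,e)$, and the equality $\ell_1(\pi_Ux)=\ell_1(x)$ for $x\in\ker\beta$ reduces to showing $\ell_C(-\Psi d-K_Ce)\leq\max\{\ell_D(d)+\delta,\ell_C(e)+2\delta\}$, which follows from the quasiequivalence bounds $\ell_C(\Psi d)\leq\ell_D(d)+\delta$ and $\ell_C(K_Ce)\leq\ell_C(e)+2\delta$ together with axiom (F3).

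For part (iii), the strategy is to identify $(\ker\beta,\partial_{cyl},\ell_1)$ with the filtered mapping cone $(Cone(\Phi)_*,\partial_{co},\ell_{co})$ up to a filtered chain isomorphism, and then apply Proposition \ref{bdmpc}. Concretely I will define $\varphi\co D_*\oplus C[1]_*\to\ker\beta$ by $\varphi(d,e)=(-\Psi d-K_Ce,d,e)$; this is a linear bijection. The key computation is
\[ \partial_{cyl}\varphi(d,e)=\bigl(-\Psi\partial_Dd-\partial_CK_Ce-e,\;\partial_Dd+\Phi e,\;-\partial_Ce\bigr), \]
where the homotopy equation $\Psi\Phi-I_C=\partial_CK_C+K_C\partial_C$ lets me rewrite the first entry as $-\Psi(\partial_Dd+\Phi e)-K_C(-\partial_Ce)$. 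Thus $\varphi$ intertwines $\partial_{cyl}$ with the modified mapping-cone boundary $\widetilde\partial_{co}(d,e)=(\partial_Dd+\Phi e,-\partial_Ce)$. Since $(d,e)\mapsto(-d,e)$ is a filtered chain isomorphism between $(Cone(\Phi)_*,\partial_{co},\ell_{co})$ and $(D_*\oplus C[1]_*,\widetilde\partial_{co},\ell_{co})$ (it preserves $\ell_{co}$ since $\ell_D(-d)=\ell_D(d)$), it suffices to show $\varphi$ is a filtered isomorphism onto $\ker\beta$, i.e.\ $\ell_1(\varphi(d,e))=\ell_{co}(d,e)$. Using once more that $\ell_C(-\Psi d-K_Ce)\leq\max\{\ell_D(d)+\delta,\ell_C(e)+2\delta\}=\ell_{co}(d,e)$, we get $\ell_1(\varphi(d,e))=\max\{\ell_C(-\Psi d-K_Ce),\ell_{co}(d,e)\}=\ell_{co}(d,e)$. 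Hence $(\ker\beta,\partial_{cyl},\ell_1)$ and $(Cone(\Phi)_*,\partial_{co},\ell_{co})$ are filtered chain isomorphic, and by Proposition \ref{bdmpc} all elements $([a],L)$ of the concise barcode of the latter have $L\leq 2\delta$; the same therefore holds for $\ker\beta$.

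The main obstacle will be part (iii): the construction and verification of the isomorphism $\varphi$ requires carefully tracking the sign conventions in $\partial_{cyl}$ versus $\partial_{co}$ and invoking the full homotopy relation satisfied by $K_C$. Parts (i) and (ii) are essentially bookkeeping given the quasiequivalence bounds on $\Phi,\Psi,K_C$, whereas (iii) is the structural content of the proposition.
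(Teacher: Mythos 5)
Your proof is correct and follows essentially the same route as the paper's: a direct verification for (i), Lemma \ref{or&pr} with the same choice of $U$, $V$, $U'$ for (ii), and for (iii) an identification of $\ker\beta$ with a mapping cone followed by Proposition \ref{bdmpc}. The only (cosmetic) difference is in (iii): the paper maps $\ker\beta$ directly onto $Cone(-\Phi)$ and then notes that $(-\Phi,-\Psi,K_C,K_D)$ is still a split $\delta$-quasiequivalence so Proposition \ref{bdmpc} applies, whereas you factor through the sign-flip isomorphism $(d,e)\mapsto(-d,e)$ to land on $Cone(\Phi)$ and apply Proposition \ref{bdmpc} to $(\Phi,\Psi,K_C,K_D)$ unchanged; both resolutions of the sign mismatch are valid and interchangeable.
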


\begin{proof} Part (i) follows just as in the proof of Proposition \ref{decompcyl} (i) (which only depended on the fact that the shift $\ell_0(0,0,e)-\ell_C(e)$ in the filtration level of $\ell_C(e)$ in the definition of $\ell_0$ was greater than or equal to both $\ell_0(c,0,0)-\ell_C(c)$ and $\delta+\ell_0(0,d,0)-\ell_D(d)$; this condition also holds with $\ell_1$ in place of $\ell_0$).

For part (ii), first note that $\ker\beta$ consists precisely of elements of the form $(-\Psi d-K_C e,d,e)$ for $(d,e)\in D_*\oplus C[1]_{*}$. We will apply Lemma \ref{or&pr} with $V=i_C(C_*)$, $U=\{0\}\oplus D_*\oplus C[1]_{*}$, and $U'=\ker\beta$. Clearly $U$ and $V$ are orthogonal with respect to $\ell_1$, and the projection $\pi_U\co Cyl(\Phi)_*\to U$ is given by $(c,d,e)\mapsto (0,d,e)$, so \[\ell_1(-\Psi d-K_Ce,d,e) = \max\{\ell_D(d)+ \delta, \ell_C(e)+ 2\delta\} = \ell_1(0, d,e)\]
which shows that $\ell_1(\pi_U x)=\ell_1(x)$ for all $x\in \ker\beta$. Thus $\ker\beta$ is indeed an orthogonal complement to $V=i_C(C_*)$.

For part (iii), define a map $f: \ker\beta \rightarrow Cone_*(-\Phi)$ by 
\[ \begin{array}{l}
f (-\Psi d-K_C e,d,e) = (d,e).
\end{array} \]
We claim that $f$ is a filtered chain isomorphism. By definition, we have $(f \circ \partial_{cyl})(-\Psi d-K_C e,d,e) = (\partial_D d + \Phi e, - \partial_C e)$. Meanwhile, $(\partial_{co} \circ f)(-\Psi d-K_C e,d,e) = (\partial_D d + \Phi e, - \partial_C e)$. Therefore, $f$ is a chain map. As for the filtrations,
\begin{align*}
\ell_{co}(f(-\Psi d-K_Ce,d,e)) &= \ell_{co} (d,e) \\ &=\max\{\ell_D(d)+\delta,\ell_C(c)+2\delta\} = \ell_1(-\Psi d-K_Ce,d,e).
\end{align*} 
Thus $f$ defines an isomorphism between $(\ker\beta,\partial_{cyl},\ell_1)$ and $(Cone_*(-\Phi), \partial_{co}, \ell_{co})$ as Floer-type complexes. Moreover, replacing $(\Phi, \Psi, K_C, K_D)$ by $(-\Phi, -\Psi, K_C, K_D)$ does not change the homotopy equations and also it has no effect on the filtration relations. Therefore, the conclusion follows from Theorem A and Proposition \ref{bdmpc}. \end{proof}

\subsection{End of the proof of Theorem \ref{stabthm}} \label{endproof}


Assume that $\delta\geq 0$ and that $(\Phi,\Psi,K_C,K_D)$ is a $\delta$-quasiequivalence  which is split with respect to splittings $F_{*}^{C}$ and $F_{*}^{D}$ for the Floer-type complexes $(C_*,\partial_C,\ell_C)$ and $(D_*,\partial_D,\ell_D)$.  The preceding subsection gives filtration functions $\ell_0,\ell_1\co Cyl(\Phi)_*\to \R\cup\{-\infty\}$ which evidently satisfy the bound $|\ell_1(x)-\ell_0(x)|\leq \delta$ for all $x\in Cyl(\Phi)_{*}$.  Hence by Proposition \ref{twofilt}, we have a bound \begin{equation}\label{l0l1}
d_B(\mathcal{B}_{Cyl,\ell_0},\mathcal{B}_{Cyl,\ell_1}) \leq \delta
\end{equation} for the bottleneck distance between the concise barcodes of the Floer-type complexes $(Cyl(\Phi)_*,\partial_{cyl},\ell_0)$ and $(Cyl(\Phi)_*,\partial_{cyl},\ell_1)$.



\begin{cor} \label{leqdic} If two Floer-type complexes $(C_*, \partial_C, \ell_C),(D_*,\partial_D, \ell_D)$,  are $\delta$-quasiequivalent, then we have $d_B(\mathcal B_C, \mathcal B_D) \leq 2 \delta$. Therefore, in particular, 
\[ \begin{array}{l}
d_B(\mathcal B_C, \mathcal B_D) \leq 2 d_Q((C_*, \partial_C, \ell_C), (D_*, \partial_D, \ell_D)).
\end{array} \]\end{cor}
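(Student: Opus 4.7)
The plan is to combine all the machinery developed over the previous subsections, using the mapping cylinder $Cyl(\Phi)_{*}$ as a bridge between $C_*$ and $D_*$. First, by Corollary \ref{dqsplit}, for any $\delta' > \delta$ it suffices to produce the bound under the additional assumption that the $\delta'$-quasiequivalence $(\Phi,\Psi,K_C,K_D)$ is split with respect to some chosen splittings $F_{*}^{C}$ and $F_{*}^{D}$; taking $\delta' \to \delta$ at the end will yield the result. So I may assume $(\Phi,\Psi,K_C,K_D)$ itself is split, putting Propositions \ref{decompcyl} and \ref{changefil} in force. Form $(Cyl(\Phi)_{*},\partial_{cyl},\ell_0)$ and $(Cyl(\Phi)_{*},\partial_{cyl},\ell_1)$; since by construction $|\ell_1(x)-\ell_0(x)|\leq \delta$ pointwise, inequality (\ref{l0l1}) gives a $\delta$-matching between their concise barcodes $\mathcal{B}_{Cyl,\ell_0}$ and $\mathcal{B}_{Cyl,\ell_1}$.

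The next step is to identify each of these barcodes. For $\ell_0$, Proposition \ref{decompcyl}(ii) asserts that $(Cyl(\Phi)_{*},\partial_{cyl},\ell_0)$ is filtered homotopy equivalent to $(D_*,\partial_D,\ell_D)$, so Theorem B gives $\mathcal{B}_{Cyl,\ell_0} = \mathcal{B}_D$. For $\ell_1$, Proposition \ref{changefil}(ii) provides an orthogonal direct sum decomposition of chain complexes $Cyl(\Phi)_{*} = i_C(C_*) \oplus \ker\beta$ with respect to $\ell_1$; moreover one checks directly that $\ell_1(i_C(c)) = \ell_1(c,0,0) = \ell_C(c)$, so $i_C$ is a filtered isomorphism onto its image. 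Concatenating singular value decompositions of the two orthogonal summands yields one for the whole complex, so the verbose (and hence concise) barcode splits as a disjoint union, giving $\mathcal{B}_{Cyl,\ell_1} = \mathcal{B}_C \sqcup \mathcal{B}_{\ker\beta}$, where by Proposition \ref{changefil}(iii) every element of $\mathcal{B}_{\ker\beta}$ has second coordinate at most $2\delta$.

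The final step is to convert the $\delta$-matching $\sigma$ between $\mathcal{B}_D$ and $\mathcal{B}_C \sqcup \mathcal{B}_{\ker\beta}$ into a $2\delta$-matching between $\mathcal{B}_D$ and $\mathcal{B}_C$. Pairs matched by $\sigma$ with both endpoints in $\mathcal{B}_D$ and $\mathcal{B}_C$ are kept as is; their defect is at most $\delta \leq 2\delta$. Pairs that $\sigma$ matches across to $\mathcal{B}_{\ker\beta}$, as well as elements of $\mathcal{B}_{\ker\beta}$ that $\sigma$ leaves unmatched, are simply discarded: a bar $([a],L) \in \mathcal{B}_D$ matched by $\sigma$ to some $([a'],L') \in \mathcal{B}_{\ker\beta}$ satisfies $|L-L'| \leq 2\delta$ (approximately) by the $\delta$-matching condition, and since $L' \leq 2\delta$ we obtain $L \leq 4\delta$, which is precisely the bound required to leave $([a],L)$ unmatched in a $2\delta$-matching. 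Similarly, elements of $\mathcal{B}_C$ or $\mathcal{B}_D$ that were already unmatched by $\sigma$ have length at most $2\delta \leq 4\delta$. The remaining bijection gives the desired $2\delta$-matching, so $d_B(\mathcal{B}_C,\mathcal{B}_D) \leq 2\delta$; taking the infimum over $\delta$ yields the stated inequality in terms of $d_Q$.

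The only point requiring real care is the last bookkeeping step, and in particular handling correctly the ``$+\epsilon$'' tolerances in Definition \ref{dfnmatching} when converting a $\delta$-matching to a $2\delta$-matching — this is best addressed by working with a $(\delta+\epsilon)$-matching, producing a $(2\delta+2\epsilon)$-matching between $\mathcal{B}_C$ and $\mathcal{B}_D$ for every $\epsilon > 0$, and then taking the infimum. Everything else is a direct invocation of previously established results.
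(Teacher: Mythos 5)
Your proposal is correct and follows essentially the same route as the paper: assume (via Corollary \ref{dqsplit}) that the quasiequivalence is split, decompose $\mathcal{B}_{Cyl,\ell_1}$ as $\mathcal{B}_C \sqcup \mathcal{B}_{\ker\beta}$ with short bars in the second factor, identify $\mathcal{B}_{Cyl,\ell_0}$ with $\mathcal{B}_D$, and combine with inequality (\ref{l0l1}). The only cosmetic difference is at the end: the paper simply notes $d_B(\mathcal{B}_C,\mathcal{B}_{Cyl,\ell_1})\leq\delta$ and invokes the triangle inequality for $d_B$ (Remark \ref{bndtri}), whereas you unfold that triangle inequality into an explicit construction of the $2\delta$-matching; both arguments are the same under the hood.
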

 
\begin{proof} By Corollary \ref{dqsplit}, the assumption implies there is a $\delta$-quasiequivalence $(\Phi,\Psi,K_C,K_D)$ which moreover is split with respect to some splittings for $(C_*,\partial_C,\ell_C)$ and $(D_*,\partial_D,\ell_D)$.

By Proposition \ref{changefil} (ii), $(Cyl(\Phi)_{*},\partial_{cyl},\ell_1)$ decomposes as an orthogonal direct sum of subcomplexes $(i_C(C_*),\partial_{cyl},\ell_1)$ and $(\ker\beta,\partial_{cyl},\ell_1)$, so in any degree a singular value decomposition for   $(Cyl(\Phi)_{*},\partial_{cyl},\ell_1)$ may be obtained by combining singular value decompositions for $(i_C(C_*),\partial_{cyl},\ell_1)$ and $(\ker\beta,\partial_{cyl},\ell_1)$. Thus the concise barcode for $(Cyl(\Phi)_{*},\partial_{cyl},\ell_1)$ is the union of the concise barcodes for these two subcomplexes. 

Now $i_C$ embeds $(C_*,\partial_C,\ell_C)$ filtered isomorphically as $(i_C(C_*),\partial_{cyl},\ell_1)$, so the concise barcode of $(Cyl(\Phi)_{*},\partial_{cyl},\ell_1)$ consists of the concise barcode of $(C_*,\partial_C,\ell_C)$ together with the concise barcode of $(\ker\beta,\partial_{cyl},\ell_1)$. By Proposition \ref{changefil} (iii), all elements $([a],L)$ in the second of these barcodes have $L\leq 2\delta$.  Thus by matching the elements of the concise barcode of $(C_*,\partial_C,\ell_C)$ with themselves and leaving the  elements of the concise barcode $(\ker\beta,\partial_{cyl},\ell_1)$ unmatched, we obtain, in each degree, a partial matching between the concise barcodes of $(Cyl(\Phi)_*,\partial_{cyl},\ell_1)$ and of $(C_*,\partial_C,\ell_C)$ with defect at most $\delta$. Thus, in obvious notation, \[ d_B(\mathcal B_{C}, \mathcal B_{Cyl,\ell_1}) \leq \delta.\] 

Finally, by Proposition \ref{decompcyl} (ii) and Theorem B, we know 
\[ 
\mathcal B_{Cyl, \ell_0} = \mathcal B_{D}. 
\]
Therefore, by the triangle inequality and (\ref{l0l1}), we get 
\[ \begin{array}{l}
d_B(\mathcal B_{C}, \mathcal B_{D}) \leq d_B(\mathcal B_{C}, \mathcal B_{Cyl,\ell_1}) + d_B(\mathcal B_{Cyl, \ell_1}, \mathcal B_{Cyl, \ell_0}) + d_B(\mathcal B_{Cyl, \ell_0}, \mathcal B_{D} ) \leq 2 \delta.
\end{array} \] \end{proof}

We have thus proven the inequality (\ref{mainstability}).

For the last assertion in Theorem \ref{stabthm}, let $\lambda = d_Q((C_*, \partial_C, \ell_C), (D_*, \partial_D, \ell_D))$, so there are arbitrarily small $\ep>0$ such that there exists a (split) $(\lambda + \epsilon)$-quasiequivalence $(\Phi,\Psi,K_C,K_D)$ between $(C_*, \partial_C, \ell_C)$ and $(D_*, \partial_D, \ell_D)$.  So by (\ref{l0l1}) with $\delta=\lambda+\ep$, there is a $\delta$-matching $\mathfrak{m}$ between the concise barcodes of $(Cyl(\Phi)_*,\partial_{cyl},\ell_0)$ and $(Cyl(\Phi)_*,\partial_{cyl},\ell_1)$.  Just as in the proof of Corollary \ref{leqdic}, the first of these concise barcodes is, in any given degree $k$, the same as that of $(D_*,\partial_D,\ell_D)$, while the second of these is the union of the concise barcode of $(C_*,\partial_C,\ell_C)$ with a multiset $\mathcal{S}$ of elements all having second coordinate at most $2(\lambda+\ep)$.  
For a grading $k$ in which $\lambda<\frac{\Delta_{D,k}}{4}$, let us take $\ep$ so small that still $\delta=\lambda+\ep<\frac{\Delta_{D,k}}{4}$.  Now by definition, the image of any element $([a],L)$ which is not unmatched under a $\delta$-matching must have second coordinate at most $L+2\delta$. Meanwhile since $\delta<\frac{\Delta_{D,k}}{4}$, the concise barcode $\mathcal{B}_{D,k}$ has \emph{no} elements with second coordinate at most $4\delta$, all of the elements of our multiset $\mathcal{S}$ (each of which have second coordinate less than or equal to $2\delta$) must be unmatched under $\mathfrak{m}$.  But since all elements of $\mathcal{S}$ are unmatched, we can discard them from the domain of $\mathfrak{m}$ and so  restrict $\mathfrak{m}$ to a matching between the barcodes $\mathcal{B}_{C,k}$ and $\mathcal{B}_{D,k}$, still having defect at most $\delta=\lambda+\ep$.  So $d_B(\mathcal{B}_{C,k},\mathcal{B}_{D,k})\leq \lambda+\ep$, and since $\ep>0$ can be taken arbitrarily small this implies that $d_B(\mathcal{B}_{C,k},\mathcal{B}_{D,k})\leq \lambda=d_Q((C_*,\partial_C,\ell_C),(D_*,\partial_D,\ell_D))$.

\begin{remark}\label{densecase}
In the case that $\Gamma$ is dense, a simpler argument based on Corollary \ref{qegenebd} suffices to prove the stability theorem, in fact with the stronger inequality  $d_B\leq d_Q$.  Indeed, if $\Gamma$ is dense then the extended pseudometric $d$ from Example \ref{Gmatch} is easily seen to simplify to $d(([a],L),([a'],L'))=\frac{1}{2}|L-L'|$.  If two Floer-type complexes $(C_*,\partial_C,\ell_C)$ and $(D_*,\partial_C,\ell_C)$ are $\delta$-quasiequivalent, then we can obtain a partial matching of defect at most $\delta$ between the concise barcodes $\mathcal{B}_C$ and $\mathcal{B}_D$ by first sorting the respective barcodes in descending order by the size of the second coordinate $L$ and then matching elements in corresponding positions on the two sorted lists.  It follows easily from Theorem \ref{genebd} and Corollary \ref{qegenebd} that, when $\Gamma$ is dense, this partial matching has defect at most $\delta$.
\end{remark}

\section{Proof of converse stability}\label{convsect}

Recall the elementary Floer-type complexes $\mathcal{E}(a,L,k)$ from Definition \ref{elemdfn}.

\begin{lemma}\label{elemqe} If $\delta\in [0,\infty)$, $|a-a'|\leq \delta$, and either $L=L'=\infty$ or $|(a+L)-(a'+L')|\leq \delta$, then $\mathcal{E}(a,L,k)$ is $\delta$-quasiequivalent to $\mathcal{E}(a',L',k)$.  Moreover if $L\leq 2\delta$ then $\mathcal{E}(a,L,k)$ is $\delta$-quasiequivalent to the zero chain complex.
\end{lemma}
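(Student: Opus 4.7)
The plan is to construct explicit quasiequivalences, one for each part of the lemma, by writing down the obvious maps between the generators and verifying the filtration bounds directly from the definition of $\ell$ on $\mathcal{E}(a,L,k)$.

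For the first claim, I would split into two cases based on whether $L$ is finite or infinite. When $L=L'=\infty$, both complexes are concentrated in a single degree, one-dimensional and with trivial boundary, so the natural $\Lambda$-linear maps $\Phi\co \mathcal{E}(a,\infty,k)\to\mathcal{E}(a',\infty,k)$ and $\Psi$ in the opposite direction sending the generators to the generators give a genuine isomorphism of chain complexes with $K_C=K_D=0$; the filtration estimates $\ell_D(\Phi x)\le\ell_C(x)+\delta$ and $\ell_C(\Psi x')\le\ell_D(x')+\delta$ follow immediately from $|a-a'|\le\delta$. When $L,L'<\infty$, I would again take $\Phi$ and $\Psi$ to be the $\Lambda$-linear maps sending $x\mapsto x'$, $y\mapsto y'$ and $x'\mapsto x$, $y'\mapsto y$; these commute with the boundary since $\partial y=x$ and $\partial y'=x'$, and $\Psi\Phi=I$, $\Phi\Psi=I$ so we may take $K_C=K_D=0$. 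The filtration bounds on the generators $x,x'$ come from $|a-a'|\le\delta$ and those on $y,y'$ come from the hypothesis $|(a+L)-(a'+L')|\le\delta$. Note that the assumption automatically rules out the mixed case where exactly one of $L,L'$ is infinite, since $|(a+L)-(a'+L')|\le\delta$ would be nonsensical.

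For the second claim, since $L\le 2\delta<\infty$, the complex $\mathcal{E}(a,L,k)$ is the two-term complex generated by $y$ in degree $k+1$ and $x=\partial y$ in degree $k$, with $\ell(x)=a$ and $\ell(y)=a+L$. I would take $\Phi$ and $\Psi$ both to be the zero map, let $K_D=0$, and define $K_C\co \mathcal{E}(a,L,k)_*\to\mathcal{E}(a,L,k)_{*+1}$ by $K_C(x)=-y$ and $K_C(y)=0$. A direct computation shows $\partial K_C+K_C\partial=-I$, which is precisely the required homotopy $\Psi\Phi-I=\partial K_C+K_C\partial$. The only nontrivial filtration estimate is on $K_C(x)$, where $\ell_C(K_Cx)=\ell_C(-y)=a+L\le a+2\delta=\ell_C(x)+2\delta$, using exactly the hypothesis $L\le 2\delta$.

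No part of this is genuinely difficult; the main thing is to be careful about which case one is in and to check that the hypothesis $L\le 2\delta$ is used in precisely the right place (the filtration bound on $K_C$), as this is what forces the threshold $2\delta$ to appear in the definition of a $\delta$-matching in Definition \ref{dfnmatching}. There is no serious obstacle, but the lemma is important as the elementary building block for the proof of the converse stability theorem: combined with Proposition \ref{ournormalform} and the additivity of quasiequivalences under direct sums, it will let one convert an arbitrary $\delta$-matching between concise barcodes into a $\delta$-quasiequivalence of the corresponding Floer-type complexes.
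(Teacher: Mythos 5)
Your proof is correct and follows essentially the same route as the paper's: in the matched case you take $\Phi,\Psi$ to be the obvious identification of generators (which the paper phrases as the identity $\mathbb{I}$ after identifying the two underlying chain complexes) with $K_C=K_D=0$, and in the short-bar case you take $\Phi=\Psi=0$, $K_D=0$, $K_C x=-y$, $K_C y=0$, checking $\partial K_C+K_C\partial=-I$ and the bound $\ell_C(K_Cx)=a+L\le a+2\delta$. Nothing to change.
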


\begin{proof}  In the case that $L=L'=\infty$, the chain complexes underlying $\mathcal{E}(a,L,k)$ and $\mathcal{E}(a',L',k)$ are just one-dimensional, consisting of a copy of $\Lambda$ in degree $k$, with filtrations given by $\ell(\lambda)=a-\nu(\lambda)$ and $\ell'(\lambda)=a'-\nu(\lambda)$. Let $\mathbb{I}$ denote the identity on $\Lambda$. The fact that $|a-a'|\leq \delta$ then readily implies that $(\mathbb{I},\mathbb{I},0,0)$ is a $\delta$-quasiequivalence.  

Similarly if $L$ and hence (under the hypotheses of the lemma) $L'$ are both finite, the underlying chain complexes of $\mathcal{E}(a,L,k)$ and $\mathcal{E}(a',L',k)$ are both $\Lambda$-vector spaces generated by an element $x$ in degree $k$ and an element $y$ in degree $k+1$, with filtration functions $\ell$ and $\ell'$ given by saying that $(x,y)$ is an orthogonal ordered set with $\ell(x)=a$, $\ell(y)=a+L$, $\ell'(x)=a'$, and $\ell'(y)=a'+L'$.  The hypotheses imply that $|\ell(x)-\ell'(x)|\leq \delta$ and $|\ell(y)-\ell'(y)|\leq \delta$, and if $\mathbb{I}$ now denotes the identity on the two-dimensional vector space spanned by $x$ and $y$, $(\mathbb{I},\mathbb{I},0,0)$ is again a $\delta$-quasiequivalence.  

Finally, if similarly to the proof of Theorem \ref{if-part-THB} we define a linear transformation $K$ on $span_{\Lambda}\{x,y\}$ by $Kx=-y$ and $Ky=0$, then $(0,0,K,0)$ is readily seen to be a $\delta$-quasiequivalence between $\mathcal{E}(a,L,k)$ and the zero chain complex for all $\delta\geq L/2$, proving the last sentence of the lemma.
\end{proof}

\begin{proof}[Proof of Theorem \ref{convstab}]
Let $\delta = d_B(\mathcal B_C, \mathcal B_D)$; it suffices to prove the result under the assumption that $\delta< \infty$.

For any $k \in \Z$, $d_B(\mathcal B_{C,k}, \mathcal B_{D,k}) \leq \delta$. By the definition of the bottleneck distance (and using the fact that there are only finitely many partial matchings between the finite multisets $\mathcal{B}_{C,k}$ and $\mathcal{B}_{D,k}$, so the infimum in the definition is attained), there exists a partial matching $\mathfrak{m}_k=(\mathcal{B}_{C,k,short},\mathcal{B}_{D,k,short},\sigma_k)$ between $\mathcal{B}_{C,k}$ and $\mathcal{B}_{D,k}$ having defect $\delta(\mathfrak{m}_k)\leq \delta$. 

We claim that, for all $\ep>0$, \[ \oplus_k\oplus_{([a],L)\in \mathcal{B}_{C,k}}\mathcal{E}(a,L,k) \quad\mbox{and}\quad \oplus_k\oplus_{([a'],L')\in \mathcal{B}_{D,k}}\mathcal{E}(a',L',k) \] are $(\delta+\ep)$-quasiequivalent, for some representatives $a$ and $a'$ of the various cosets $[a]$ and $[a']$ in $\R/\Gamma$.  By Proposition \ref{if-part-THB} and Remark \ref{dqtri} this will imply that  $(C_*, \partial_C, \ell_C)$ and $(D_*, \partial_D, \ell_D)$ are $(\delta+\ep)$-quasiequivalent, which suffices to prove the theorem since by the definition of the quasiequivalence distance,  it will show that $d_Q( (C_*, \partial_C, \ell_C), (D_*, \partial_D, \ell_D)) \leq \delta+\ep = d_B(\mathcal B_C, \mathcal B_D)+\ep$ for all $\ep>0$.

To prove our claim, note that by Lemma \ref{elemqe} and the fact that $\delta(\mathfrak{m}_k)\leq \delta$, each $\mathcal{E}(a,L,k)$ for $([a],L)\in \mathcal{B}_{C,k,short}\cup\mathcal{B}_{D,k,short}$ is $(\delta+\ep)$-quasiequivalent to the zero chain complex (as these $\mathcal{E}(a,L,k)$ all have $L\leq 2\delta$).  Also, for $([a],L)\in \mathcal{B}_{C,k}\setminus\mathcal{B}_{C,k,short}$, if we write $([a'],L')=\sigma_k([a],L)$ where $\sigma_k$ is the bijection from the partial matching $\mathfrak{m}_k$, then there are representatives $a$ and $a'$ of the cosets $[a]$ and $[a']$ such that $|a-a'|\leq \delta+\ep$ and $|(a+L)-(a'+L')|\leq \delta+\ep$.  So by Lemma \ref{elemqe}, the associated summands $\mathcal{E}(a,L,k)$ and $\mathcal{E}(a',L',k)$ are $(\delta+\ep)$-quasiequivalent.

Moreover, it follows straightforwardly from the definitions that a direct sum of $(\delta+\ep)$-quasiequivalences is a $(\delta+\ep)$-quasiequivalence.  So we obtain a $(\delta+\ep)$-quasiequivalence between  $\oplus_k\oplus_{([a],L)\in \mathcal{B}_{C,k}}\mathcal{E}(a,L,k)$ and $ \oplus_k\oplus_{([a'],L')\in \mathcal{B}_{D,k}}\mathcal{E}(a',L',k)$ by taking a direct sum of:\begin{itemize}\item  a $(\delta+\ep)$-quasiequivalence between $\mathcal{E}(a,L,k)$ and $\mathcal{E}(a',L',k)$ for each $([a],L)\in  \mathcal{B}_{C,k}\setminus\mathcal{B}_{C,k,short}$, where $([a'],L')=\sigma_k([a],L)$;
\item a $(\delta+\ep)$-quasiequivalence between $\oplus_k\oplus_{([a],L)\in\mathcal{B}_{C,k,short}}\mathcal{E}(a,L,k)$ and the zero chain complex;
\item a $(\delta+\ep)$-quasiequivalence between the zero chain complex and  $\oplus_k\oplus_{([a'],L')\in\mathcal{B}_{D,k,short}}\mathcal{E}(a',L',k)$.
\end{itemize}
\end{proof}

\section{The interpolating distance} \label{interp}

In this section we introduce a somewhat more complicated distance function on Floer-type complexes, the interpolating distance $d_P$, and prove the isometry result Theorem \ref{globaliso} between this distance and the bottleneck distance between barcodes. We think that it is likely that $d_P$ is always equal to the quasiequivalence distance $d_Q$, and indeed in the case that $\Gamma$ is dense this equality can be inferred from our results (specifically, Theorem \ref{globaliso}, Remark \ref{densecase}, and Theorem \ref{convstab}), while in the case that $\Gamma$ is trivial it can be inferred from Theorem \ref{globaliso} and \cite[Theorem 4.11]{CSGO}. 

The definition of the distance $d_P$ will be based on a strengthening of the notion of quasiequivalence, asking not only for a quasiequivalence between the two complexes $C_*$ and $D_*$ but also for a one parameter family of complexes that interpolates between $C_*$ and $D_*$ in a suitably ``efficient'' way.  Our interest in $d_P$ is based on the facts that, on the one hand, we can prove Theorem \ref{globaliso} about it, and on the other hand standard arguments in Hamiltonian Floer theory (and other Floer theories) that give bounds for the quasiequivalence distance can be refined to give bounds on $d_P$, as we use in Section \ref{hamsect}.

\begin{dfn}\label{dfnitp} A {\bf $\delta$-interpolation} between two Floer-type complexes $(C_*, \partial_C, \ell_C)$ and $(D_*, \partial_D, \ell_D)$ is a family of Floer-type complexes $(C^s_*, \partial^s, \ell^s)$ indexed by a parameter $s$ that varies through $[0,1] \backslash \ S$ for some finite subset $S\subset (0,1)$,  such that: \begin{itemize}
\item $(C^0_*,\partial^0,\ell^0)=(C_*,\partial_C,\ell_C)$ and $(C^1,\partial^1,\ell^1)=(D_*,\partial_D,\ell_D)$; and
\item for all $s, t \in [0,1] \backslash S$, $(C^s_*, \partial^s, \ell^s)$ and $(C^t_*, \partial^t, \ell^t)$ are $\delta|s-t|$-quasiequivalent.
\end{itemize} The {\bf interpolating distance} $d_P$ between Floer-type complexes is then defined by 
\[  d_P((C_*,\partial_C,\ell_C),(D_*,\partial_D,\ell_D))=\inf\left\{\delta\geq 0\left|\begin{array}{cc}\mbox{There exists a $\delta$-interpolation between }\\(C_*,\partial_C,\ell_C)\mbox{ and }(D_*,\partial_D,\ell_D)\end{array}\right.\right\}.\] \end{dfn}

The following theorem gives a global isometry result between the bottleneck and interpolating distances.

\begin{theorem} \label{globaliso} For any two Floer-type complexes $(C_*,\partial_C,\ell_C)$ and $(D_*,\partial_D,\ell_D)$ we have \[ d_B(\mathcal{B}_C,\mathcal{B}_D)=d_P((C_*,\partial_C,\ell_C),(D_*,\partial_D,\ell_D)). \] \end{theorem}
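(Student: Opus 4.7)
The plan is to prove $d_P \le d_B$ by explicit construction of interpolations, and the reverse inequality $d_B \le d_P$ by a local stability argument integrated over $[0,1]$.

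For $d_P \le d_B$, I would begin by invoking Proposition \ref{if-part-THB} to replace $C_*$ and $D_*$ by their filtered-homotopy-equivalent normal forms $\hat C_* = \bigoplus_{k,\, ([a],L)\in\mathcal{B}_{C,k}}\mathcal{E}(a,L,k)$ and $\hat D_* = \bigoplus_{k,\, ([a'],L')\in\mathcal{B}_{D,k}}\mathcal{E}(a',L',k)$; since filtered homotopy equivalences are $0$-quasiequivalences, by Remark \ref{dqtri} it suffices to connect complexes filtered homotopy equivalent to $\hat C_*$ and $\hat D_*$ respectively. Given any $\delta > d_B$, choose a $\delta$-matching of the barcodes in each degree. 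I would then build the family $\{C^s_*\}_{s\in(0,1)}$ as a direct sum of elementary interpolations: each matched pair $(\mathcal{E}(a,L,k),\mathcal{E}(a',L',k))$ (with representatives chosen so that $|a-a'|\le\delta$ and $|(a+L)-(a'+L')|\le\delta$) is connected by the linear family $\mathcal{E}((1-s)a+sa',(1-s)L+sL',k)$; each unmatched bar $\mathcal{E}(a,L,k)$ with $L\le 2\delta$ is connected to the ``collapsed'' complex $\mathcal{E}(a+L/2,0,k)$ via the symmetric family $\mathcal{E}(a+(L/2)s,(1-s)L,k)$ shrinking both endpoints toward the midpoint (and analogously in reverse time for bars unmatched on the $D$-side). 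Lemma \ref{elemqe} verifies that each piece is a $\delta$-interpolation, and $\delta$-quasiequivalence is preserved by direct sums. Setting $C^0 = C_*$ and $C^1 = D_*$ literally and using Remark \ref{dqtri} to absorb the $0$-quasiequivalences from $C_*$ and $D_*$ to the respective endpoints of the direct-sum family yields a $\delta$-interpolation with empty exceptional set; letting $\delta\searrow d_B$ gives $d_P \le d_B$.

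For the reverse inequality, fix a degree $k$ and a $\delta$-interpolation $\{(C^s_*,\partial^s,\ell^s)\}_{s\in[0,1]\setminus S}$ with $\delta > d_P$ and $S\subset(0,1)$ finite. Define $\tilde f(s) = d_B(\mathcal{B}^0_k,\mathcal{B}^s_k)$ on $[0,1]\setminus S$. By Theorem \ref{stabthm}(i) applied to pairs $s,t\in[0,1]\setminus S$ (including pairs straddling points of $S$), $\tilde f$ is $2\delta$-Lipschitz on its domain and hence extends uniquely to a continuous function on $[0,1]$. The crux is to bound the upper right Dini derivative $D^+\tilde f(s_0) \le \delta$ at every $s_0 \in [0,1]\setminus S$. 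Since $C^{s_0}_k$ is finite-dimensional, the minimum length $\Delta^{s_0}_k$ of bars in the degree-$k$ concise barcode of $C^{s_0}_*$ is strictly positive; the $2\delta$-Lipschitz property yields $\Delta^{s_0+\epsilon}_k \ge \Delta^{s_0}_k - 4\delta\epsilon$, so for $\epsilon$ sufficiently small (specifically $\epsilon < \Delta^{s_0}_k/(8\delta)$ with $[s_0,s_0+\epsilon]\cap S = \varnothing$) we have $d_Q(C^{s_0}_*,C^{s_0+\epsilon}_*) \le \delta\epsilon < \Delta^{s_0+\epsilon}_k/4$, whereupon Theorem \ref{stabthm}(ii) gives $d_B(\mathcal{B}^{s_0}_k,\mathcal{B}^{s_0+\epsilon}_k) \le \delta\epsilon$ and hence $D^+\tilde f(s_0) \le \delta$. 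A standard real-analysis fact (a continuous function on $[0,1]$ whose right Dini derivative is bounded by $\delta$ outside a finite set satisfies $\tilde f(1)-\tilde f(0) \le \delta$) then yields $d_B(\mathcal{B}^0_k,\mathcal{B}^1_k) \le \delta$; taking the supremum over $k$ and infimum over $\delta > d_P$ gives $d_B \le d_P$.

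The main obstacle will be this second inequality. Theorem \ref{stabthm}(i) alone would only produce $d_B \le 2 d_P$; the essential point is that at sufficiently fine scales the sharper Theorem \ref{stabthm}(ii) applies precisely because the quasiequivalence distance between nearby complexes in the interpolation becomes small compared to the minimum bar length, and the real-analysis integration step then promotes this infinitesimal factor-of-one improvement into the desired global isometry.
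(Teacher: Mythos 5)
Your proposal follows the same overall blueprint as the paper, and both directions are essentially sound, but the packaging of the harder direction $d_B \le d_P$ differs in a way worth noting, and there is one step that is not correct as written.

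For $d_P\le d_B$, your construction is identical to the paper's: take a $\delta$-matching, linearly interpolate matched bars, shrink short unmatched bars to their midpoints, and absorb the filtered homotopy equivalences to the normal forms via Remark \ref{dqtri}. This part is fine. For $d_B\le d_P$, the paper proves the bound $d_B(\mathcal{B}_{C^s,k},\mathcal{B}_{C^t,k})\le\delta(t-s)$ directly by compactness (a finite cover of $[s,t]$ by intervals where Theorem \ref{stabthm}(ii) applies) together with an induction on $|S\cap[s,t]|$ that handles singular points by excising tiny $\epsilon'$-neighborhoods and invoking the factor-$2$ bound Theorem \ref{stabthm}(i), which contributes $O(\epsilon')$ and is then sent to zero. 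Your version replaces the compactness-plus-induction by a Dini-derivative / absolute-continuity argument: since $\tilde f$ is $2\delta$-Lipschitz and hence absolutely continuous, and $\tilde f'\le\delta$ a.e.\ (outside $S$), integration gives the same conclusion. These are genuinely different packagings of the same local input, and yours is arguably slicker; the paper's avoids having to appeal to results about Dini derivatives or Lebesgue integration of Lipschitz functions, which is the kind of thing one sometimes prefers to keep out of an otherwise purely algebraic paper, but both are valid.

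The genuine issue is your claim that the $2\delta$-Lipschitz property yields $\Delta^{s_0+\epsilon}_k\ge\Delta^{s_0}_k-4\delta\epsilon$. This does not follow: a $2\delta\epsilon$-matching is free to leave elements of $\mathcal{B}^{s_0+\epsilon}_k$ of any length up to $4\delta\epsilon$ unmatched, so a bar of length, say, $3\delta\epsilon$ can appear in $\mathcal{B}^{s_0+\epsilon}_k$ that was simply absent from $\mathcal{B}^{s_0}_k$ (an element of the \emph{verbose} barcode with $L=0$ becoming a short positive-length element of the concise barcode). Hence $\Delta^{s_0+\epsilon}_k$ can drop to order $\delta\epsilon$ regardless of how large $\Delta^{s_0}_k$ is. Fortunately this estimate is not needed at all: Theorem \ref{stabthm}(ii) only requires the quasiequivalence distance to be less than $\Delta_{D,k}/4$ for \emph{one} of the two complexes, which is symmetric in the roles of $C$ and $D$. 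Take $D_*=C^{s_0}_*$, so the hypothesis becomes $d_Q(C^{s_0}_*,C^{s_0+\epsilon}_*)\le\delta\epsilon<\Delta^{s_0}_k/4$, which holds for $\epsilon<\Delta^{s_0}_k/(4\delta)$ with no claim needed about $\Delta^{s_0+\epsilon}_k$ whatsoever. This is exactly what the paper's proof does (it uses $\Delta_{C^p_k}$, the quantity at the \emph{base} point). With this correction your argument goes through.
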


\begin{proof} First, we will prove that for any degree $k \in \Z$,
\[ d_B(\mathcal{B}_{C,k},\mathcal{B}_{D,k}) \leq d_P((C_*,\partial_C,\ell_C),(D_*,\partial_D,\ell_D)), \] which will imply that
 $d_B(\mathcal{B}_{C},\mathcal{B}_{D}) \leq d_P((C_*,\partial_C,\ell_C),(D_*,\partial_D,\ell_D))$ by taking the supremum over $k$. Let $\lambda = d_P((C_*,\partial_C,\ell_C),(D_*,\partial_D,\ell_D))$, so by definition, given any $\epsilon>0$, there exists a $\delta$-interpolation between $(C_*,\partial_C,\ell_C)$ and $(D_*,\partial_D,\ell_D)$ with $\delta \leq \lambda + \epsilon$, denoted as $(C^s, \partial^s, \ell^s)$ with a finite singular set $S$. 

For any $p \in [0,1]\setminus S$ and any degree $k \in \Z$, choose $\epsilon_{p,k} >0$ such that $\Delta_{C^p_k} > 4 \delta \epsilon_{p,k}$, where the meaning of $\Delta_{C^p_k}$ is as in the last statement of Theorem \ref{stabthm}. By the definition of a $\delta$-interpolation, for any $s \in (p - \epsilon_{p,k}, p]$, $(C_*^s, \partial^s, \ell^s)$ and $(C_*^{p}, \partial^{p}, \ell^{p})$ are $(\delta  (p-s))$-quasiequivalent, which implies that
\[ 
d_Q ((C_*^{s}, \partial^{s}, \ell^{s}), (C_*^{p }, \partial^{p }, \ell^{p })) < \frac{\Delta_{C^p_k}}{4}. \]
Then by the last assertion from Theorem \ref{stabthm}, we know (again assuming $s\in (p-\ep_{p,k},p]$)
\[ 
d_B(\mathcal B_{C^{s},k}, \mathcal B_{C^{p},k})  = d_Q ((C_*^{s}, \partial^{s}, \ell^{s}), (C_*^{p }, \partial^{p }, \ell^{p }))  \leq \delta (p-s). 
\]
Symmetrically, for any $s' \in [p, p + \epsilon_{p,k})$, 
\[ 
d_B(\mathcal B_{C^{p},k}, \mathcal B_{C^{s'},k})  = d_Q ((C_*^{p }, \partial^{p }, \ell^{p }), (C_*^{s'}, \partial^{s'}, \ell^{s'}))  \leq \delta (s'-p).
\]
Therefore, by the triangle inequality, for $s, s'$ such that $p - \epsilon_{p,k} <s \leq p \leq s' < p+ \epsilon_{p,k}$, we have $d_B(\mathcal B_{C^{s},k}, \mathcal B_{C^{s'},k}) \leq \delta (s'-s)$. 

Now we claim that for any closed interval $[s,t] \subset [0,1]$ with $s,t\notin S$, the following estimate holds:
\begin{equation} \label{partial-est}
d_B(\mathcal B_{C^s,k}, \mathcal B_{C^t,k}) \leq (t-s) \delta.
\end{equation}
We will prove this by induction on the cardinality of $S\cap [s,t]$.  First, when $S\cap [s,t]$ is empty,  by considering a covering $\{(p- \epsilon_{p,k}, p + \epsilon_{p,k})\}_{p \in [s,t]}$ of $[s,t]$ where the $\ep_{p,k}$ are as above, we may take a finite subcover to obtain $s = s_0< s_1< ...< s_N =t$ such that $d_B(\mathcal B_{C^{s_{i-1}},k}, \mathcal B_{C^{s_i},k}) \leq \delta (s_i-s_{i-1})$. Therefore, by the triangle inequality again, 
\[ 
d_B(\mathcal B_{C^s,k}, \mathcal B_{C^t,k}) \leq \sum_{i=1}^N d_B(\mathcal B_{C^{s_{i-1}}_k}, \mathcal B_{C^{s_i}_k}) \leq (t-s)\delta. \]

Now inductively, we will assume that (\ref{partial-est}) holds when $|S\cap [s,t]| \leq m$. For the case that $|S\cap [s,t]|=m+1$, denote the smallest element of $S\cap [s,t]$ by $p^*$ and consider the intervals $[s, p^*-\epsilon']$ and $[p^* + \epsilon', t]$ for any sufficiently small $\epsilon'>0$. Applying the inductive hypothesis on both intervals, 
\[ \begin{array}{l}
d_B(\mathcal B_{C^s,k}, \mathcal B_{C^{p^* - \epsilon'},k}) \leq (p^*- \epsilon'-s) \delta
\end{array} \]
and 
\[ 
d_B(\mathcal B_{C^{p^* + \epsilon'},k}, \mathcal B_{C^t,k}) \leq (t- p^*- \epsilon') \delta
. \]
Meanwhile, by the first conclusion of Theorem \ref{stabthm}, 
\[
d_B(\mathcal B_{C^{p^* - \epsilon'}_k}, \mathcal B_{C^{p^*+ \epsilon'}_k}) \leq 2 d_Q(\mathcal B_{C^{p^* - \epsilon'}_k}, \mathcal B_{C^{p^*+ \epsilon'}_k})\leq 4\epsilon' \delta. \]
Together, we get 
\[
d_B(\mathcal B_{C^s,k}, \mathcal B_{C^t,k}) \leq (p^*- \epsilon' -s) \delta+(t- p^*- \epsilon') \delta+4\epsilon' \delta = (t-s) \delta+ 2\epsilon' \delta
. \]
Since $\epsilon'$ is arbitrarily small, it follows that $d_B(\mathcal B_{C^s,k}, \mathcal B_{C^t,k}) \leq (t-s) \delta$ whenever $s\leq t$ and $s,t\in [0,1]\setminus S$. So we have proven (\ref{partial-est}). 

In particular, letting $s=0$ and $t=1$, we get $d_B(\mathcal{B}_{C,k},\mathcal{B}_{D,k}) \leq \delta \leq \lambda + \epsilon$. Since $\epsilon$ is arbitrarily small, this shows that indeed $d_B(\mathcal{B}_{C,k},\mathcal{B}_{D,k}) \leq \lambda = d_P((C_*,\partial_C,\ell_C),(D_*,\partial_D,\ell_D)$.

Now we will prove the converse direction:
\[ d_P((C_*,\partial_C,\ell_C),(D_*,\partial_D,\ell_D)) \leq d_B(\mathcal{B}_{C},\mathcal{B}_{D}).\]

Let $\delta = d_B(\mathcal{B}_{C},\mathcal{B}_{D})$. It is sufficient to prove the result under the assumption that $\delta < \infty$. For any $k \in \Z$, $d_B(\mathcal{B}_{C,k},\mathcal{B}_{D,k}) \leq \delta$. By definition, there exists a partial matching $\mathfrak{m}_k=(\mathcal{B}_{C,k,short},\mathcal{B}_{D,k,short},\sigma_k)$ between $\mathcal{B}_{C,k}$ and $\mathcal{B}_{D,k}$ such that $\delta(\mathfrak{m}_k) \leq \delta$. We will prove that, for all $\ep>0$, there exists a $(\delta+\ep)$-interpolation between $(C_*,\partial_C,\ell_C)$ and $(D_*,\partial_D,\ell_D)$.

For each $([a],L)\in \mathcal{B}_{C,k,short}$, choose a representative $a$ of $[a]$; also if $([a],L)\in\mathcal{B}_{C,k}\setminus\mathcal{B}_{C,k,short}$ write $\sigma([a],L)=([a'],L')$ where the representative $a'$ is chosen so that both $|a'-a|\leq \delta+\ep$ and $|(a+L)-(a'+L')|\leq \delta+\ep$.  Now for $t\in (0,1)$ consider the Floer-type complex  $(C^{t}_{*},\partial^t,\ell^t)$ given by:  \begin{align*} &\bigoplus_{k\in \Z}\left(\left(\bigoplus_{([a'],L')\in\mathcal{B}_{D,k,short}}\mathcal{E}(a'+(1-t)L'/2,tL',k)\right)\oplus\left(\bigoplus_{([a],L)\in\mathcal{B}_{C,k,short}}\mathcal{E}(a+tL/2,(1-t)L,k)\right)
\right. \\ & \qquad\quad  \left.\oplus\left(\bigoplus_{([a],L)\in\mathcal{B}_{C,k}\setminus\mathcal{B}_{C,k,short}}\mathcal{E}((1-t)a+ta',(1-t)L+tL',k)\right)\right) \end{align*}

It is easy to see by Lemma \ref{elemqe} that, for $t_0,t_1\in (0,1)$, the $t_0$-version of each of the  summands above is  $(\delta+\ep)|t_0-t_1|$-quasiequivalent to its corresponding $t_1$-version.  So since the direct sum of  $(\delta+\ep)|t_0-t_1|$-quasiequivalences is a  $(\delta+\ep)|t_0-t_1|$-quasiequivalence this shows that $(C^{t_0}_{*},\partial^{t_0},\ell^{t_0})$ and $(C^{t_1}_{*},\partial^{t_1},\ell^{t_1})$ are  $(\delta+\ep)|t_0-t_1|$-quasiequivalent for $t_0,t_1\in (0,1)$.  Moreover $\mathcal{E}(a'+(1-t)L'/2,tL',k)$ is $t\delta$-quasiequivalent to the zero chain complex for each $([a'],L')\in \mathcal{B}_{D,k,short}$, and likewise $\mathcal{E}(a+tL/2,(1-t)L,k)$ is $(1-t)\delta$-quasiequivalent to the zero chain complex for each $([a],L)\in \mathcal{B}_{C,k,short}$.  In view of Proposition \ref{if-part-THB} it follows that $(C_*,\ell_C,\partial_C)$ is $t(\delta+\ep)$-quasiequivalent to $(C_{*}^{t},\partial^t,\ell^t)$, and that $(D_*,\ell_D,\partial_D)$ is $(1-t)(\delta+\ep)$-quasiequivalent to $(C_{*}^{t},\partial^t,\ell^t)$.  So extending the family  $(C_{*}^{t},\partial^t,\ell^t)$ to all $t\in [0,1]$ by setting  $(C_{*}^{0},\partial^0,\ell^0)=(C_*,\partial_C,\ell_C)$ and  $(C_{*}^{1},\partial^1,\ell^1)=(D_*,\partial_D,\ell_D)$, $\{(C^{t}_{*},\partial^t,\ell^t)\}_{t\in [0,1]}$ gives the desired $(\delta+\ep)$-interpolation between $(C_*,\partial_C,\ell_C)$ and $(D_*,\partial_D,\ell_D)$.
\end{proof}

\section{Applications in Hamiltonian Floer theory}\label{hamsect}

We now bring our general algebraic theory into contact with Hamiltonian Floer theory on compact symplectic manifolds, leading to a rigidity result for fixed points of Hamiltonian diffeomorphisms. 
 First we quickly review the geometric content of the Hamiltonian Floer complex; see, e.g., \cite{F89}, \cite{HS}, \cite{AD} for more background, details, and proofs.

Let $(M,\omega)$ be a compact symplectic manifold. Identifying $S^1=\R/\Z$,  a smooth function $H\co S^1\times M\to \R$ induces a family of diffeomorphisms $\{\phi_{H}^{t}\}_{t\in\R}$ obtained as the flow of the time-dependent vector field $X_{H(t,\cdot)}$ that is characterized by the property that, for all $t$, $\omega(\cdot,X_{H(t,\cdot)})=d(H(t,\cdot))$.  Let \[ \mathcal{P}(H)=\left\{\gamma\co S^1\to M\,\left|\,\gamma(t)=\phi_{H}^{t}(\gamma(0)),\,\gamma\mbox{ is contractible}\right.\right\} \] so that in particular $\mathcal{P}(H)$ is in bijection with a subset of the fixed point set of $\phi_{H}^{1}$ via the map $\gamma\mapsto \gamma(0)\in M$.  The Hamiltonian $H$ is called \textbf{nondegenerate} if for each $\gamma\in\mathcal{P}(H)$ the linearized map $(d\phi_{H}^{1})_{\gamma(0)}\co T_{\gamma(0)}M\to T_{\gamma(0)}M$ has all eigenvalues distinct from $1$.  Generic Hamiltonians $H$ satisfy this property.  We will assume in what follows that $H$ is nondegenerate, which guarantees in particular that $\mathcal{P}(H)$ is a finite set.

Viewing $S^1$ as the boundary of the disk $D^2$ in the usual way, given $\gamma\in\mathcal{P}(H)$ and a map $u\co D^2\to M$ with $u|_{S^1}=\gamma$, one has a well-defined ``action'' $\int_{0}^{1}H(t,\gamma(t))dt-\int_{D^2}u^*\omega$ and Conley--Zehnder index.  Define $\tilde{\mathcal{P}}(H)$ to be the set of equivalence classes $[\gamma,u]$ of pairs $(\gamma,u)$ where $\gamma\in\mathcal{P}(H)$, $u\co D^2\to M$ has $u|_{S^1}=\gamma$, and $(\gamma,u)$ is equivalent to $(\gamma',v)$ if and only if $\gamma=\gamma'$ and the map $u\#\bar{v}\co S^2\to M$ obtained by gluing $u$ and $v$ along $\gamma$ has both vanishing $\omega$-area and vanishing first Chern number.  Then there are well-defined maps $\mathcal{A}_H\co \tilde{\mathcal{P}}(H)\to \R$ and $\mu\co \tilde{\mathcal{P}}(H)\to \Z$ defined by setting $\mathcal{A}_{H}([\gamma,u])=\int_{0}^{1}H(t,\gamma(t))dt -\int_{D^2}u^*\omega$ and $\mu([\gamma,u])$ equal to the Conley--Zehnder index of the path of symplectic matrices given by expressing $\{(d\phi_{H}^{t})_{\gamma(0)}\}_{t\in [0,1]}$ in terms of a symplectic trivialization of $u^*TM$.

The degree-$k$ part of the Floer chain complex $CF_k(H)$  is then by definition (using the ground field $\mathcal{K}$) \[ \left\{\left.\sum_{\tiny{\begin{array}{c}[\gamma,u]\in\tilde{\mathcal{P}}(H),\\ \mu([\gamma,u])=k   \end{array}}}a_{[\gamma,u]}[\gamma,u]\,\right|\,a_{[\gamma,u]}\in\mathcal{K},(\forall C\in \R)(\#\{[\gamma,u]|a_{[\gamma,u]}\neq 0,\,\mathcal{A}_{H}([\gamma,u])>C\}<\infty) \right\}. \]

Let \begin{equation}\label{hamgamma} \Gamma=\left\{\left.\int_{S^2}w^*\omega\,\right|\,w\co S^2\to M,\, \langle c_1(TM),w_*[S^2]\rangle=0\right\}.\end{equation} Then $CF_k(H)$ is a vector space over $\Lambda=\Lambda^{\mathcal{K},\Gamma}$, with the scalar multiplication obtained from the action of $\Gamma$ on $\mathcal{P}(H)$ given by, for $g\in \Gamma$ and $[\gamma,u]\in\tilde{\mathcal{P}}(H)$, gluing a sphere of Chern number zero and area $g$ to $u$.

We make $CF_k(H)$ into a non-Archimedean normed vector space over $\Lambda$ by setting \[ \ell_H\left(\sum a_{[\gamma,u]}[\gamma,u]\right)=\max\{\mathcal{A}_H([\gamma,u])\,|\,a_{[\gamma,u]}\neq 0\}. \] Denote \begin{equation}\label{pkdef} \mathcal{P}_k(H)=\left\{\gamma\in\mathcal{P}(H)\,|\,(\exists u\co D^2\to M)(u|_{S^1}=\gamma,\,\mu([\gamma,u])=k)\right\}. \end{equation} Then it is easy to see that an orthogonal ordered basis for $CF_k(H)$ is given by $([\gamma_1,u_1],\ldots,[\gamma_{n_k},u_{n_k}])$ where $\gamma_1,\ldots,\gamma_{n_k}$ are the elements of $\mathcal{P}_k(H)$ and, for each $i$, $u_i$ is an arbitrarily chosen map $D^2\to M$ with $u_i|_{\partial D^2}=\gamma_i$ and $\mu([\gamma_i,u_i])=k$.  In particular $(CF_k(H),\ell_H)$ is an orthogonalizable $\Lambda$-space.

 The function $\mathcal{A}_H$ introduced above could just as well have been defined on the cover of the entire space of contractible loops of $M$ obtained by dropping the condition that $\gamma\in\mathcal{P}(H)$; then $\tilde{\mathcal{P}}(H)$ is the set of critical points of this extended functional. The degree-$k$ part of the Floer boundary operator $(\partial_H)_k\co CF_k(H)\to CF_{k-1}(H)$ is constructed by counting isolated formal negative gradient flowlines of this extended version of $\mathcal{A}_H$ in the usual way indicated in the introduction.  It is a deep but (at least when $(M,\omega)$ is semipositive, but see \cite{Pa} for the more general case) by now standard fact that $\partial_H$ can indeed be defined in this way, so that the resulting triple $(CF_*(H),\partial_H,\ell_H)$ obeys the axioms of a  Floer-type complex; thus in every degree $k$ we obtain a concise barcode $\mathcal{B}_{CF_*(H),k}$.  The construction of $\partial_H$ depends on some auxiliary choices, but the filtered chain isomorphism type of $(CF_*(H),\partial_H,\ell_H)$ is independent of these choices (see, e.g., \cite[Lemma 1.2]{U11}), so $\mathcal{B}_{CF_*(H),k}$ is an invariant of $H$.

\begin{prop} For two non-degenerate Hamiltonians 
$H_0,H_1\co S^1\times M\to \R$ on a compact symplectic manifold, the associated Floer chain complexes $(CF_*(H_0),\partial_{H_0},\ell_{H_0})$ and $(CF_*(H_1),\partial_{H_1},\ell_{H_1})$ obey \[ d_{P}\left((CF_*(H_0),\partial_{H_0},\ell_{H_0}),(CF_*(H_1),\partial_{H_1},\ell_{H_1})\right) \leq \int_{0}^{1}\|H_1(t,\cdot)-H_0(t,\cdot)\|_{L^{\infty}}dt. \]
\end{prop}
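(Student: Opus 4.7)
The plan is to construct an explicit $\delta$-interpolation between $(CF_*(H_0),\partial_{H_0},\ell_{H_0})$ and $(CF_*(H_1),\partial_{H_1},\ell_{H_1})$ where $\delta=\int_0^1 \|H_1(t,\cdot)-H_0(t,\cdot)\|_{L^\infty}\,dt$. The natural candidate is the linear interpolation $H_s=(1-s)H_0+sH_1$ for $s\in[0,1]$, with the family of Floer-type complexes $(C_*^s,\partial^s,\ell^s)=(CF_*(H_s),\partial_{H_s},\ell_{H_s})$ defined at those parameter values $s$ for which $H_s$ is nondegenerate. The singular set $S\subset(0,1)$ will be the set of parameters at which $H_s$ fails to be nondegenerate.

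The first task is to show that one may arrange for $S$ to be finite. The straight-line path $s\mapsto H_s$ may in principle have infinitely many degenerate parameter values, but standard transversality arguments (applied to the $1$-parameter family of $1$-periodic orbits in $S^1\times M$) show that, after an arbitrarily $C^\infty$-small perturbation of the path $\{H_s\}$ keeping the endpoints $H_0$ and $H_1$ fixed, the degenerate Hamiltonians in the family occur at only finitely many parameter values. Since the bound we wish to prove depends only on the $L^\infty$-distance between the endpoints (which is preserved) and since $d_P$ is defined by an infimum, we may and do assume that the linear family itself is already of this generic form; any loss from a small perturbation can be absorbed into an arbitrary $\varepsilon>0$ at the end.

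The key analytic input is Example \ref{hfqe}: for any two nondegenerate $s,t\in[0,1]\setminus S$, the Floer complexes $(CF_*(H_s),\partial_{H_s},\ell_{H_s})$ and $(CF_*(H_t),\partial_{H_t},\ell_{H_t})$ are $\delta_{s,t}$-quasiequivalent with
\[ \delta_{s,t}=\max\{E_+(H_t-H_s),\,E_-(H_t-H_s)\}. \]
Since $H_t-H_s=(t-s)(H_1-H_0)$, one computes directly that
\[ E_\pm(H_t-H_s)=|t-s|\cdot E_\pm\bigl(\mathrm{sgn}(t-s)(H_1-H_0)\bigr)\leq |t-s|\int_0^1 \|H_1(\tau,\cdot)-H_0(\tau,\cdot)\|_{L^\infty}\,d\tau = |t-s|\delta, \]
so $\delta_{s,t}\leq \delta|t-s|$. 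This is precisely the compatibility required by Definition \ref{dfnitp}, yielding the desired $\delta$-interpolation and the stated inequality.

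The principal obstacle, as indicated above, is the finiteness of $S$ for the linear path; this is a transversality statement whose standard proof uses that generically the linearized Poincar\'e return map along a $1$-periodic orbit has eigenvalue $1$ only at isolated parameter values. Once this is arranged, the rest is a straightforward combination of the definition of $d_P$ with the standard continuation-map construction encoded in Example \ref{hfqe}.
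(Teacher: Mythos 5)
Your approach coincides with the paper's: perturb the straight-line path to make all but finitely many of the $H_s$ nondegenerate, then apply the standard continuation estimate (your Example \ref{hfqe}) to get pairwise quasiequivalences and read off a $\delta$-interpolation. However, there is a real gap in the way you handle the perturbation step, and your stated justification for it is incorrect.

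Your explicit calculation $\delta_{s,t}\leq\delta\,|t-s|$ uses $H_t-H_s=(t-s)(H_1-H_0)$, which holds only for the \emph{unperturbed} linear path. After the perturbation (call the new path $\hat H_s$) this identity fails, and you have not shown that the perturbed family still satisfies a Lipschitz estimate of the form $\int_0^1\|\hat H_{s_1}(t,\cdot)-\hat H_{s_0}(t,\cdot)\|_{L^\infty}\,dt\leq(\delta+\epsilon)\,|s_1-s_0|$. Your stated reason — that ``the bound we wish to prove depends only on the $L^\infty$-distance between the endpoints (which is preserved)'' — does not address the issue: the interpolation condition in Definition \ref{dfnitp} constrains \emph{all pairwise} distances $\delta_{s,t}$, and these are not determined by the endpoints. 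Moreover, a $C^0$-small perturbation of the path is not enough: if $\|\hat H_s-H_s\|_{L^\infty}<\epsilon'$ uniformly in $s$, you only get $\delta_{s,t}\leq\delta|t-s|+2\epsilon'$, and the additive error $2\epsilon'$ does not shrink as $|t-s|\to 0$, so it cannot be absorbed into $(\delta+\epsilon)|t-s|$. What is actually needed — and what the paper supplies — is a $C^1$-small perturbation, i.e.\ a bound on $\|\partial\hat H/\partial s-(H_1-H_0)\|_{L^\infty}$, which one then integrates in $s$ to obtain $|\hat H_{s_1}(t,m)-\hat H_{s_0}(t,m)|\leq(\epsilon+|H_1(t,m)-H_0(t,m)|)(s_1-s_0)$, hence the Lipschitz bound. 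You do ask for a $C^\infty$-small perturbation, which is strong enough, but since your rationale points in the wrong direction you should make explicit that the derivative $\partial\hat H/\partial s$ must be controlled and carry that estimate through the calculation of $\delta_{s,t}$.
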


\begin{proof} Write $\delta=\int_{0}^{1}\|H_1(t,\cdot)-H_0(t,\cdot)\|_{L^{\infty}}dt $ and let $\ep>0$; we will show that there exists a $(\delta+\ep)$-interpolation between $(CF_*(H_0),\partial_{H_0},\ell_{H_0})$ and $(CF_*(H_1),\partial_{H_1},\ell_{H_1})$.  

Define $\hat{H}^{0}\co [0,1]\times S^1\times M\to \R$ by $\hat{H}^{0}(s,t,m)=sH_1(t,m)+(1-s)H_0(t,m)$.  A standard argument with the Sard-Smale theorem (see e.g., \cite[Propositions 6.1.2, 6.1.3]{Lee}) shows that, arbitrarily close to $\hat{H}^{0}$ in the $C^1$-norm, there is a smooth map $\hat{H}\co [0,1]\times S^1\times M\to \R$ such that \begin{itemize}
\item $\hat{H}(0,t,m)=H_0(t,m)$ and $\hat{H}(1,t,m)=H_1(t,m)$ for all $(t,m)\in S^1\times M$.
\item There are only finitely many $s\in [0,1]$ with the property that $H(s,\cdot,\cdot)\co S^1\times M\to\R$ fails to be nondegenerate.
\end{itemize}

In particular we can take $\hat{H}$ to be so $C^1$-close to $\hat{H}^0$ that $\left\|\frac{\partial \hat{H}}{\partial s}-\frac{\partial\hat{H}^0}{\partial s}\right\|_{L^{\infty}}<\ep$.  

For $s\in [0,1]$ write $\hat{H}_s(t,m)=\hat{H}(s,t,m)$.  Then for $0\leq s_0\leq s_1\leq 1$ and $(t,m)\in S^1\times M$ we have \begin{align*}
&|\hat{H}_{s_1}(t,m)-\hat{H}_{s_0}(t,m)|=\left| \int_{s_0}^{s_1}\frac{\partial\hat{H}}{\partial s}(s,t,m)ds\right|\\&\quad \leq \ep(s_1-s_0)+
\int_{s_0}^{s_1}\left|\frac{\partial\hat{H}^{0}}{\partial s}(s,t,m)ds\right|ds=
(\ep+|H_1(t,m)-H_0(t,m)|)(s_1-s_0).
\end{align*}  

Thus, for any $s_0,s_1\in [0,1]$, \begin{equation}\label{hath} \int_{0}^{1}\|\hat{H}_{s_1}(t,\cdot)-\hat{H}_{s_0}(t,\cdot)\|_{L^{\infty}}dt\leq \left(\ep+\int_{0}^{1}\|H_1(t,\cdot)-H_0(t,\cdot)\|_{L^{\infty}}dt\right)|s_1-s_0|=(\delta+\ep)|s_1-s_0|.
\end{equation}

Let $S=\{s\in [0,1]\,|\,\hat{H}_s\mbox{ is not non-degenerate}\}$, so by construction $S$ is a finite set, and for $s\in [0,1]\setminus S$ we have a Floer-type complex $(CF_*(\hat{H}_s),\partial_{\hat{H}_s},\ell_{\hat{H}_s})$.  Standard facts from filtered Hamiltonian Floer theory (summarized for instance in \cite[Proposition 5.1]{U13}, though note that the definition of quasiequivalence there is slightly different from ours)
show that, for $s_0,s_1\in [0,1]\setminus S$, the Floer-type complexes 
$(CF_*(\hat{H}_{s_0}),\partial_{\hat{H}_{s_0}},\ell_{\hat{H}_{s_0}})$ and $(CF_*(\hat{H}_{s_1}),\partial_{\hat{H}_{s_1}},\ell_{\hat{H}_{s_1}})$ are $\left(\int_{0}^{1}\|\hat{H}_{s_1}(t,\cdot)-\hat{H}_{s_0}(t,\cdot)\|_{L^{\infty}}dt\right)$-quasiequivalent, and hence $(\delta+\ep)|s_1-s_0|$-quasiequivalent by (\ref{hath}). 

Thus the family $(CF_{*}(\hat{H}_s),\partial_{\hat{H}_s},\ell_{\hat{H}_s})$ defines a $(\delta+\ep)$ interpolation between  $(CF_*(H_0),\partial_{H_0},\ell_{H_0})$ and $(CF_*(H_1),\partial_{H_1},\ell_{H_1})$.
Since this construction can be carried out for all $\ep>0$ the result immediately follows.
\end{proof}

Combining this proposition with Theorem \ref{globaliso}, we immediately get the following result:

\begin{cor} \label{hamint} If $H_0$ and $H_1$ are two non-degenerate Hamiltonians on any compact symplectic manifold $(M,\omega)$, then the bottleneck distance between the concise barcodes of $(CF_*(H_0),\partial_{H_0},\ell_{H_0})$ and $(CF_{*}(H_1),\partial_{H_1},\ell_{H_1})$ is less than or equal to ${\textstyle \int_{0}^{1}}\|H_1(t,\cdot)-H_0(t,\cdot)\|_{L^{\infty}}dt$.\end{cor}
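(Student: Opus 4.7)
The plan is to deduce Corollary \ref{hamint} as a direct combination of the two results immediately preceding it: namely, the proposition bounding the interpolating distance $d_P$ between the Floer complexes by $\int_0^1\|H_1(t,\cdot)-H_0(t,\cdot)\|_{L^\infty}\,dt$, together with Theorem \ref{globaliso}, which establishes the global isometry $d_B(\mathcal{B}_C,\mathcal{B}_D)=d_P((C_*,\partial_C,\ell_C),(D_*,\partial_D,\ell_D))$ for any pair of Floer-type complexes.

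Concretely, I would first apply the preceding proposition to $H_0$ and $H_1$ to obtain
\[
d_P\!\left((CF_*(H_0),\partial_{H_0},\ell_{H_0}),(CF_*(H_1),\partial_{H_1},\ell_{H_1})\right)\leq \int_{0}^{1}\|H_1(t,\cdot)-H_0(t,\cdot)\|_{L^{\infty}}\,dt,
\]
noting that the hypothesis of nondegeneracy of $H_0$ and $H_1$ is exactly what guarantees that both endpoints of the interpolation are genuine Floer-type complexes, so the proposition applies. Then I would invoke Theorem \ref{globaliso} to replace $d_P$ on the left-hand side by the bottleneck distance $d_B$ between the associated families of concise barcodes. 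Chaining these two inequalities gives the desired estimate.

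Since both ingredients are stated earlier in the paper, there is really no substantive obstacle to overcome here — the work was already done in constructing the interpolating distance and proving that it coincides with the bottleneck distance. The only minor conceptual point worth emphasizing is \emph{why} one needs to pass through $d_P$ rather than $d_Q$: the analogous bound for $d_Q$ combined with the weaker inequality $d_B\leq 2d_Q$ from Theorem \ref{mainstab}(i) would produce an unwanted factor of $2$. Passing through $d_P$, for which the isometry with $d_B$ is sharp, gives the optimal constant $1$ in front of the Hofer-type integral and thus recovers (and extends, via the finer information encoded in the barcode) the classical Lipschitz estimates for spectral invariants and boundary depth.
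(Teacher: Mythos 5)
Your proposal is correct and matches the paper's argument exactly: the corollary is obtained by combining the preceding proposition's bound on $d_P$ with the isometry $d_B = d_P$ from Theorem \ref{globaliso}. The remark about why $d_P$ rather than $d_Q$ is needed to avoid a factor of $2$ is a sensible aside but not part of the proof itself.
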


Similar results apply to the way in which the  barcodes of Lagrangian Floer complexes $CF(L_0,\phi_{H}^{1}(L_1))$ depend on the Hamiltonian $H$, or for that matter to the dependence of Novikov complexes $CN_{*}(\tilde{f})$ on the function $\tilde{f}\co \tilde{M}\to \R$.   When $\Gamma$ is nontrivial these facts do not follow from previously-known results.  (When $\Gamma$ is trivial they can be inferred from \cite{CCGGO} and standard Floer-theoretic results like \cite[Proposition 5.1]{U13}.)

We now give an application of Corollary \ref{hamint} to fixed points of Hamiltonian diffeomorphisms.  Apart from its intrinsic interest, we also intend this as an illustration of how to use the methods developed in this paper.

It will be relevant  that the Floer-type complex $(CF_{*}(H),\partial_{H},\ell_H)$ of a nondegenerate Hamiltonian on a compact symplectic manifold obeys the additional property that $\ell_H(\partial_H c)< \ell_H(c)$ for all $c\in CF_*(H)$, rather than the weaker inequality ``$\leq$'' which is generally required in the definition of a Floer-type complex (this standard fact follows because the boundary operator $\partial_H$ counts nonconstant formal negative gradient flowlines of $\mathcal{A}_H$, and the function $\mathcal{A}_H$ strictly decreases along such flowlines).  Consequently there can be no elements of the form $([a],0)$ in the verbose barcode of $(CF_*(H),\partial_H,\ell_H)$ in any degree $k$, as such an element would correpond to elements $x\in CF_k(H)$ and $y\in CF_{k+1}(H)$ with $\partial_Hy=x$ and $\ell_H(y)=\ell_H(x)$.  In other words, for each degree $k$, the verbose barcode $\mathcal{\tilde{B}}_{CF_*(H),k}$ of $(CF_*(H),\partial_H,\ell_H)$ is equal to its concise barcode $\mathcal{B}_{CF_*(H),k}$.

To state the promised result, recall the notation  $\mathcal{P}_k(H)$ from (\ref{pkdef}), and for any subset $E\subset \R$, define \[ \mathcal{P}_{k}^{E}(H_0)=\{\gamma\in\mathcal{P}_k(H)\,|\, (\exists u\co D^2\to M)\left(u|_{S^1}=\gamma,\,\mathcal{A}_{H_0}([\gamma,u])\in E,\,\mu([\gamma,u])=k \right)\}. \]  

\begin{theorem}\label{pert} Let $H_0\co S^1\times M\to \R$ be a nondegenerate Hamiltonian on a compact symplectic manifold $(M,\omega)$, let $k\in \Z$, let $E\subset \R$ be any subset, and let $\Delta^E>0$ be the 
minimum of:
\begin{itemize}\item The smallest second coordinate $L$ of any element $([a],L)$ of the degree-$k$ part $\mathcal{B}_{CF_*(H_0),k}$ of the concise barcode such that some representative $a$ of the coset $[a]$ belongs to $E$; 
\item The smallest second coordinate of any $([a],L)\in \mathcal{B}_{CF_*(H_0),k-1}$ such that some $a\in [a]$ has $a+L\in E$.
\end{itemize}
 Let $H\co S^1\times M\to \R$ be any nondegenerate Hamiltonian with $\int_{0}^{1}\|H(t,\cdot)-H_0(t,\cdot)\|_{L^{\infty}}dt<\frac{\Delta^E}{2}$. Then there is an injection $f\co \mathcal{P}_{k}^{E}(H_0)\to \mathcal{P}_{k}(H)$ and, for each $\gamma\in \mathcal{P}_{k}(H_0)$, maps $u,\tilde{u}\co D^2\to M$ with $u|_{S^1}=\gamma$ and $\tilde{u}|_{S^1}=f(\gamma)$ such that \[ \left|\mathcal{A}_H([f(\gamma),\tilde{u}])-\mathcal{A}_{H_0}([\gamma,u])\right|\leq \int_{0}^{1}\|H(t,\cdot)-H_0(t,\cdot)\|_{L^{\infty}}dt. \]
\end{theorem}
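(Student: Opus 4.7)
The plan is to combine Corollary \ref{hamint} with the identification, via Proposition \ref{ournormalform}, between the filtration spectrum of each Hamiltonian Floer chain group and the endpoints of its concise barcode. Set $\delta=\int_{0}^{1}\|H(t,\cdot)-H_{0}(t,\cdot)\|_{L^{\infty}}dt$, so that the hypothesis reads $\delta<\Delta^{E}/2$. Corollary \ref{hamint} then provides, in each degree $j$, a $\delta$-matching $\mathfrak{m}_{j}$ between $\mathcal{B}_{CF_{*}(H_{0}),j}$ and $\mathcal{B}_{CF_{*}(H),j}$, and since $H_{0}$ and $H$ are nondegenerate the Floer boundary operators strictly decrease action, so verbose and concise barcodes agree for both complexes.

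Next I would translate periodic orbits into filtration-spectrum data. For each $\gamma\in\mathcal{P}_{k}(H_{0})$, choose any capping $u_{\gamma}$ with $\mu([\gamma,u_{\gamma}])=k$; then $\{[\gamma,u_{\gamma}]\}_{\gamma\in\mathcal{P}_{k}(H_{0})}$ is an orthogonal basis of $CF_{k}(H_{0})$ with $\ell_{H_{0}}([\gamma,u_{\gamma}])=\mathcal{A}_{H_{0}}([\gamma,u_{\gamma}])$, so by Proposition \ref{multi} the filtration spectrum of $CF_{k}(H_{0})$ equals the multiset $\{[\mathcal{A}_{H_{0}}([\gamma,u_{\gamma}])]\,|\,\gamma\in\mathcal{P}_{k}(H_{0})\}\subset\R/\Gamma$. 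On the other hand, Proposition \ref{ournormalform} decomposes $CF_{*}(H_{0})$ as a direct sum of elementary complexes $\mathcal{E}(a,L,j)$, and inspecting which summands contribute to degree $k$ shows this same filtration spectrum also equals
\[ \{[a]\,|\,([a],L)\in\mathcal{B}_{CF_{*}(H_{0}),k}\}\;\cup\;\{[a+L]\,|\,([a],L)\in\mathcal{B}_{CF_{*}(H_{0}),k-1},\,L<\infty\}, \]
and analogously for $CF_{k}(H)$. For $\gamma\in\mathcal{P}_{k}^{E}(H_{0})$ I would pick $u_{\gamma}$ with $\mathcal{A}_{H_{0}}([\gamma,u_{\gamma}])\in E$; then the coset $[\mathcal{A}_{H_{0}}([\gamma,u_{\gamma}])]$ arises either (a) as the first coordinate $[a_{\gamma}]$ of a bar $([a_{\gamma}],L_{\gamma})\in\mathcal{B}_{CF_{*}(H_{0}),k}$ with some representative of $[a_{\gamma}]$ in $E$, or (b) as the right endpoint $[a_{\gamma}+L_{\gamma}]$ of a finite bar $([a_{\gamma}],L_{\gamma})\in\mathcal{B}_{CF_{*}(H_{0}),k-1}$ with some representative of $[a_{\gamma}+L_{\gamma}]$ in $E$. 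By the definition of $\Delta^{E}$, in either case $L_{\gamma}\geq\Delta^{E}>2\delta$; in particular the bar is not short and therefore must be matched by $\mathfrak{m}_{k}$ or $\mathfrak{m}_{k-1}$ (noting that finite bars can only match to finite bars and infinite to infinite, since otherwise the endpoint condition in Definition \ref{dfnmatching} would compare a finite value to an infinite one).

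To conclude, the matched bar in $H$'s barcode provides, via the same filtration-spectrum correspondence applied to $CF_{k}(H)$, an element of the filtration spectrum of $CF_{k}(H)$ and hence a periodic orbit $f(\gamma)\in\mathcal{P}_{k}(H)$ together with a capping $\tilde{u}$. Exploiting the freedom to modify $\tilde{u}$ by a sphere class in $\ker c_{1}$ (which acts on $\mathcal{A}_{H}([f(\gamma),\tilde{u}])$ by $\Gamma$), one extracts a representative of the matched endpoint within $\delta$ of $\mathcal{A}_{H_{0}}([\gamma,u_{\gamma}])$, yielding the sought action estimate $|\mathcal{A}_{H}([f(\gamma),\tilde{u}])-\mathcal{A}_{H_{0}}([\gamma,u_{\gamma}])|\leq\delta$. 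Injectivity of $f$ should come from the bijectivity of $\mathfrak{m}_{k}\sqcup\mathfrak{m}_{k-1}$ on their non-short parts. The step I expect to require the most care is the bookkeeping of multiplicities: several distinct orbits in $\mathcal{P}_{k}^{E}(H_{0})$ may share the same coset in $\R/\Gamma$ and thus correspond to distinct copies—appearing with multiplicity—of nominally identical bars, so before invoking the matching bijections one must fix an auxiliary identification of $\mathcal{P}_{k}^{E}(H_{0})$ with the relevant sub-multiset of bars of $H_{0}$; once this is done, the bijections transport the identification injectively to $\mathcal{P}_{k}(H)$. A small closedness argument, upgrading the ``$\leq\delta+\ep$ for all $\ep>0$'' in Definition \ref{dfnmatching} to a single representative within $\delta$, will also be needed but is routine (using that $\Gamma$-cosets in $\R$ are either closed-discrete or dense).
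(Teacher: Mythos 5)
Your proposal follows essentially the same route as the paper's proof: both use the filtration-spectrum correspondence (Proposition \ref{multi} together with the singular value decomposition / elementary-complex decomposition) to identify capped orbits in degree $k$ with left endpoints of degree-$k$ bars and right endpoints of finite degree-$(k-1)$ bars, then combine Corollary \ref{hamint} with the observation that bars with an endpoint in $E$ have length $\geq\Delta^E>2\delta$ and hence cannot be unmatched, and finally recover the action estimate from the matching defect together with the freedom to recap by $\Gamma$. The two technicalities you flag (multiplicity bookkeeping, and upgrading ``$\leq\delta+\ep$ for all $\ep$'' to ``$\leq\delta$'') are handled in the paper implicitly via the explicit bijection $\alpha$ and by noting that the conclusion only constrains one endpoint of the bar at a time; your treatment is, if anything, slightly more scrupulous on both points.
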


\begin{proof}
As in the proof of Proposition \ref{ournormalform}, we can find singular value decompositions for $(\partial_{H_0})_{k+1}\co CF_{k+1}(H_0)\to \ker(\partial_{H_0})_{k}$ and $(\partial_{H_0})_{k}\co CF_{k}(H_0)\to \ker(\partial_{H_0})_{k-1}$ having the form \[ \left((y_{1}^{k},\ldots,y_{r_k}^{k},x_{1}^{k+1},\ldots,x_{m_{k+1}}^{k+1}),(x_{1}^{k},\ldots,x_{m_k}^{k})\right) \] and \[ \left((y_{1}^{k-1},\ldots,y_{r_{k-1}}^{k-1},x_{1}^{k},\ldots,x_{m_{k}}^{k}),(x_{1}^{k-1},\ldots,x_{m_{k-1}}^{k-1})\right). \] In particular $(y_{1}^{k-1},\ldots,y_{r_{k-1}}^{k-1},x_{1}^{k},\ldots,x_{m_{k}}^{k})$ is an orthogonal ordered basis for $CF_{k}(H_0)$.  Write the elements of $\mathcal{P}_{k}(H_0)$ as $\gamma_1,\ldots,\gamma_n$, ordered in such a way that $\mathcal{P}_{k}^{E}(H_0)=\{\gamma_1,\ldots,\gamma_s\}$ for some $s\leq n$.  As discussed before the statement of the theorem, if for each $i\in\{1,\ldots,n\}$ we choose an arbitrary $u_{i}\co D^2\to M$ with $u_{i}|_{S^1}=\gamma_i$ and $\mu([\gamma_i,u_i])=k$, and moreover $\mathcal{A}_{H_0}([\gamma_i,u_i])\in E$ for $i=1,\ldots,s$, then $([\gamma_1,u_1],\ldots,[\gamma_n,u_n])$ will be an orthogonal ordered basis for $CF_k(H_0)$.  So by Proposition \ref{multi} and the definition of $\ell_{H_0}$, there is a bijection $\alpha\co \mathcal{P}_k(H_0)\to \{y_{1}^{k-1},\ldots,y_{r_{k-1}}^{k-1},x_{1}^{k},\ldots,x_{m_{k}}^{k}\}$ such that $\ell_{H_0}(\alpha(\gamma_i))\equiv \mathcal{A}_{H_0}([\gamma_i,u_i])\, (mod\, \Gamma)$.

If $\alpha(\gamma_i)=y_{j_i}^{k-1}$ for some $j_i\in\{1,\ldots,r_{k-1}\}$, then the element $([a_i],L_i):=([\ell_{H_0}(x_{j_i}^{k-1})],\ell_{H_0}(y_{j_i}^{k-1})-\ell_{H_0}(x_{j_i}^{k-1}))$ of the degree-$(k-1)$ verbose barcode of $(CF_*(H_0),\partial_{H_0},\ell_{H_0})$ corresponds to a capped orbit $[\gamma_i, u_i]$ having filtration $\mathcal{A}_H([\gamma_i,u_i])\equiv a_i+L_i \, (mod\, \Gamma)$.  Otherwise, $\alpha(\gamma_i)=x_{j_i}^{k}$ for some $j_i\in\{1,\ldots,m_k\}$, and then we have an element $([a_i],L_i)$ of the degree-$k$ verbose barcode of  $(CF_*(H_0),\partial_{H_0},\ell_{H_0})$ where $a_i=\ell_{H_0}(x_{j_i}^{k})$ and $L_i=\ell_{H_0}(y_{j_i}^{k})-\ell_{H_0}(x_{j_i}^{k})$ if $1\leq i\leq m_k$ and $L_i=\infty$ otherwise; in this case $\mathcal{A}_H([\gamma_i,u_i])\equiv a_i\, (mod\, \Gamma)$.  As noted before the theorem, the verbose barcode of $(CF_*(H_0),\partial_{H_0},\ell_{H_0})$ is the same in every degree as its concise barcode, so in particular these elements $(a_i,L_i)$ of the verbose barcodes belong to the concise barcodes $\mathcal{B}_{CF_*(H_0),k}$ or $\mathcal{B}_{CF_*(H_0),k-1}$.

Considering now our new Hamiltonian $H$, write $\delta=\int_{0}^{1}\|H(t,\cdot)-H_0(t,\cdot)\|_{L^{\infty}}$.  Our hypothesis, along with the fact that $\mathcal{A}_{H_0}([\gamma_i,u_i])\in E$ for $i=1,\ldots,s$,  then guarantees that, for $i=1,\ldots,s$, the elements $([a_i],L_i)$ of the concise barcodes $\mathcal{B}_{CF_*(H_0),k}$ or $\mathcal{B}_{CF_*(H_0),k-1}$ described in the previous paragraph all have $L_i\geq \Delta^E>2\delta$.  On the other hand Corollary \ref{hamint} implies that there is a partial matching $\mathfrak m_k$ between $\mathcal{B}_{CF_*(H_0),k}$ and $\mathcal{B}_{CF_*(H),k}$, and  likewise a partial matching $\mathfrak m_{k-1}$ between  $\mathcal{B}_{CF_*(H_0),k-1}$ and $\mathcal{B}_{CF_*(H),k-1}$, with  both $\mathfrak{m}_k$ and $\mathfrak{m}_{k-1}$ having defects at most $\delta$.  So since each $L_i>2\delta$, none of the elements $([a_i],L_i)$ for $i=1,\ldots,s$ can be unmatched under these partial matchings.  So each of them is matched to an element, say $([\tilde{a}_i], \tilde{L}_i)$, of the degree-$k$ or $k-1$ concise barcode of $(CF_*(H),\partial_{H},\ell_{H})$. We will denote the multiset of all such ``targets'' by 
\begin{equation} \label{mathcalT}
\mathcal T_{k,k-1} = \{([\tilde{a}_i], \tilde{L}_i)\,| i=1,\ldots,s \}.
\end{equation}
Since the defect of our partial matching is at most $\delta$, we can each choose $\tilde{a}_i$ within its $\Gamma$-coset so that $|\tilde{a}_i-a_i|\leq \delta$ and either $\tilde{L}_i=L_i=\infty$ or $|(\tilde{a}_i+\tilde{L}_i)-(a_i+L_i)|\leq \delta$.

We now apply the reasoning that was used at the start of the proof to $CF_{*}(H)$ in place of $CF_{*}(H_0)$.
We may consider singular value decompositions for the maps $(\partial_H)_{k+1}$ and $(\partial_H)_k$ on $CF_*(H)$ having  the form \[ \left((z_{1}^{k},\ldots,z_{r'_k}^{k},w_{1}^{k+1},\ldots,w_{m'_{k+1}}^{k+1}),(w_{1}^{k},\ldots,w_{m'_k}^{k})\right) \] and \[ \left((z_{1}^{k-1},\ldots,z_{r'_{k-1}}^{k-1},w_{1}^{k},\ldots,w_{m'_{k}}^{k}),(w_{1}^{k-1},\ldots,w_{m'_{k-1}}^{k-1})\right). \] Then if the elements of $\mathcal{P}_{k}(H)$ are written as $\{\eta_1,\ldots,\eta_{p}\}$, we may choose $v_j\co D^2\to M$ with $v_j|_{S^1}=\eta_j$ for each $j\in\{1,\ldots,p\}$ in such a way that the multiset of real numbers $\mathcal{A}_H([\eta_j,v_j])$ is equal to the multiset $\{\ell_H(z_{j}^{k-1})|1\leq j\leq r'_{k-1}\}\cup\{\ell_H(w_{j}^{k})|1\leq j\leq m'_k \}$.  

This equality of multisets gives an injection $\iota$ from the submultiset $\mathcal T_{k,k-1}\subset\mathcal{B}_{CF_*(H),k}\cup\mathcal{B}_{CF_*(H),k-1}$ described in (\ref{mathcalT}) to $\mathcal{P}_k(H)$. Specifically: 
\begin{itemize}
\item For $i\in\{1,\ldots,s\}$  such that $\alpha(\gamma_i)=y_{j_i}^{k-1}$, the  element $([\tilde{a}_i],\tilde{L}_i)$ belongs to  $\mathcal{B}_{CF_*(H), k-1}$, and $\iota([\tilde{a}_i],\tilde{L}_i]) $ will be some $\eta_{q_i}\in\mathcal{P}_k(H)$ with $\mathcal{A}_{H}([\eta_{q_i},v_{q_i}])=\tilde{a}_i+\tilde{L}_i$;
\item For $i\in\{1,\ldots,s\}$ such that $\alpha(\gamma_i)=x_{j_i}^{k}$, the element $([\tilde{a}_i],\tilde{L}_i)$ belongs to   $\mathcal B_{CF_*(H), k}$, and $\iota([\tilde{a}_i],\tilde{L}_i])$ will be some $\eta_{q_i}$ with $\mathcal{A}_{H}([\eta_{q_i},v_{q_i}])=\tilde{a}_i$.
\end{itemize}  
The map $f\co \mathcal{P}_{k}^{E}(H_0)\to \mathcal{P}_k(H)$ promised in the theorem is then the one which sends each $\gamma_i$ to $\eta_{q_i}$; the fact that this obeys the required properties follows directly from the inequalities $|\tilde{a}_i-a_i|\leq \delta$ and $|(\tilde{a}_i+\tilde{L}_i)-(a_i+L_i)|\leq \delta$ and the fact that the value of $\mathcal{A}_H([\gamma_{q_i},v_{q_i}])$ can be varied within its $\Gamma$-coset, without changing the grading $k$, by using a different choice of capping disk $v_{q_i}$.
\end{proof}

\begin{remark}\label{pertrem} Theorem \ref{pert} may be applied with $E=\R$, in which case it shows that if $\int_{0}^{1}\|H(t,\cdot)-H(t,\cdot)\|_{L^{\infty}}dt$ is less than half of the minimal second coordinate of the concise barcode of $CF_*(H_0)$ in any degree, then the time-one flow of the perturbed Hamiltonian $H$ will have at least as many fixed points\footnote{with contractible orbit under $\phi_{H}^{t}$, though one can drop this restriction by using a straightforward variant of the Floer complex built from noncontractible orbits}
as that of the original Hamiltonian $H_0$.
This may appear somewhat surprising, as a $C^0$-small perturbation of the Hamiltonian function $H$ can still rather dramatically alter the Hamiltonian vector field $X_H$, which depends on the derivative of $H$.  However this basic phenomenon is by now rather well-known in symplectic topology; see in particular \cite[Theorem 2.1]{CR}, \cite[Corollary 2.3]{U11}, though these other results do not give control over the values of $\mathcal{A}_H$ on $\tilde{\mathcal{P}}_k(H)$ as in Theorem \ref{pert}.  

For a more general choice of $E$ our result does not appear to have analogues in the literature, particularly when $\Gamma\neq \{0\}$; this generalization is of interest when $\Delta^E$, thought of as the minimal length of a barcode interval with endpoint lying in $E$, is larger than the minimal length $\Delta^{\R}$ of all barcode intervals, in which case the Theorem shows that fixed points of $\phi_{H_0}^{1}$ with action lying in $E$ enjoy a robustness that the other fixed points of $\phi_{H_0}^{1}$ may not.  For instance in the case that
$E=\{a_0\}$ is a singleton and there is just one element $[\gamma_0,u_0]$ of $\tilde{\mathcal{P}}_k$ having $\mathcal{A}_{H}([\gamma_0,u_0])=a_0$, then $\Delta^E$ is bounded below by the lowest energy of a Floer trajectory converging to $\gamma_0$ in positive or negative time, whereas $\Delta^{\R}$ is bounded below by the lowest energy of \emph{all} Floer trajectories, which might be much smaller. 

In the special case that both $\Gamma=\{0\}$ and $E=\{a_0\}$ a version of Theorem \ref{pert} can be obtained using a standard argument in terms of the ``action window'' Floer homologies $HF^{[a,b]}_{*}(H)$ of the quotient complexes $\frac{\{c\in CF_{*}(H)|\ell_H(c)\leq b\}}{\{c\in CF_{*}(H)|\ell_H(c)<a\}}$.  Indeed, for any $\delta\in\R$ such that $\int_{0}^{1}\|H(t,\cdot)-H_0(t,\cdot)\|_{L^{\infty}}dt<\delta<\frac{\Delta^E}{2}$ we will have a commutative diagram of continuation maps (induced by appropriate monotone homotopies, cf. \cite[Section 6.6]{HZ}): \[ \xymatrix{ HF_{k}^{[a_0-\delta,a_0+\delta]}(H_0+\delta)\ar[rr]^{\Phi}\ar[dr] & &  HF_{k}^{[a_0-\delta,a_0+\delta]}(H_0-\delta)
\\ & HF_{k}^{[a_0-\delta,a_0+\delta]}(H) \ar[ur] } \] and the hypothesis on the barcode can be seen to imply that the above map $\Phi$ has rank at least equal to $\#\mathcal{P}_{k}^{E}(H_0)$, whence  $HF_{k}^{[a_0-\delta,a_0+\delta]}(H)$ has dimension at least equal to $\#\mathcal{P}_{k}^{E}(H_0)$.  When $\Gamma=\{0\}$ this last statement implies  that the number of fixed points of the time-one flow of $H$ with action in the interval $[a_0-\delta,a_0+\delta]$ is at least $\#\mathcal{P}_{k}^{E}(H_0)$.  However for $\Gamma\neq \{0\}$ the implication in the previous sentence may not be valid, since the above argument only estimates the dimension of $HF_{k}^{[a_0-\delta,a_0+\delta]}(H)$ over $\mathcal{K}$, and the contribution of a single fixed point to $\dim_{\mathcal{K}}HF_{k}^{[a_0-\delta,a_0+\delta]}(H)$ might be greater than one due to recapping.  

Thus Theorem \ref{pert} provides a way of avoiding difficulties with recapping that arise in arguments with action window Floer homology when $\Gamma\neq\{0\}$.  Even when $\Gamma=\{0\}$, if $E$  consists of, say, of two or more real numbers that are a distance less than $\Delta^E/2$ away from each other, then Theorem \ref{pert} can  be seen to give sharper results than are obtained by  action window arguments such as those described in the previous paragraph.
\end{remark}

\appendix

\section{Interleaving distance}\label{intsect}

In this brief appendix, we will discuss the relation of our quasiequivalence distance $d_Q$ to the notion of \emph{interleaving}, which is often used (e.g. in \cite{CCGGO}) as a measure of proximity between persistence modules.  Because the main objects of the paper are Floer-type complexes, rather than the persistence modules given by their filtered homologies, we will use the following definition; on passing to homology this gives (at least in principle) a slightly different notion than that used in \cite{CCGGO}, as the maps on filtered homology in \cite{CCGGO} are not assumed to be induced by maps on the original chain complexes.  

\begin{dfn} For $\delta\geq 0$, a \textbf{chain level $\delta$-interleaving} of two Floer-type  complexes $(C_*,\partial_C,\ell_C)$ and $(D_*,\partial_D,\ell_D)$ is a pair $(\Phi,\Psi)$ of chain maps $\Phi\co C_*\to D_*$ and $\Psi\co D_*\to C_*$ such that:
\begin{itemize}
\item $\ell_D(\Phi c)\leq \ell_C(c)+\delta$ for all $c\in C_*$
\item $\ell_D(\Psi d)\leq \ell_D(d)+\delta$ for all $d\in D_*$
\item For all $\lambda\in \R$ the compositions $\Psi\Phi\co C_{*}^{\lambda}\to C_{*}^{\lambda+2\delta}$ and $\Phi\Psi\co D_{*}^{\lambda}\to D_{*}^{\lambda+2\delta}$ induce the same maps on homology as the respective inclusions.
\end{itemize}
\end{dfn}

 It is  easy to see that a chain level $\delta$-interleaving induces maps $\Phi_*\co H^{\lambda}(C_*)\to H^{\lambda+\delta}(D_*)$ and $\Psi_*\co 
 H^{\lambda}(D_*)\to H^{\lambda+\delta}(C_*)$ (as $\lambda$ varies through $\mathbb{R}$) which give a strong $\delta$-interleaving between the persistence modules $\{H^{\lambda}(C_*)\}$ and $\{H^{\lambda}(D_*)\}$ in the sense of \cite{CCGGO}. It is also easy to see that if $(\Phi,\Psi,K_C,K_D)$ is a $\delta$-quasiequivalence between $(C_*, \partial_C, \ell_C)$ and $(D_*, \partial_D, \ell_D)$, then $(\Phi,\Psi)$ is a chain level $\delta$-interleaving.  We will see that the converse of this latter statement is true provided that $\Phi$ and $\Psi$ are split in the sense of Section \ref{splitsect}.

\begin{lemma} \label{htpform} Let $F_{*}^{C}$ be a splitting of a Floer-type complex $(C_*, \partial_C, \ell_C)$, and suppose that $A: C_* \rightarrow C_*$ is a chain map which is split with respect to this splitting, such that there exists $\epsilon>0$ such that $\ell_C(Ac)\leq \ell_C(c)+\ep$ for all $c\in C_*$ and, for all $\lambda\in\R$, the induced map $A_*: H_*(C^{\lambda}_*) \rightarrow H_*(C^{\lambda+\epsilon}_*)$ is zero. Then there exists a map $K: C_* \rightarrow C_{*+1}$ such that $\ell_C(Kc)\leq \ell_C(c) + \epsilon$ for all $c \in C_*$ and $A = \partial_C K + K \partial_C$.\end{lemma}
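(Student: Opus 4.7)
The plan is to exploit the orthogonal splitting $C_* = F_*^C \oplus \ker \partial_C$ by defining $K$ separately on each summand. I will set $K|_{F_*^C} \equiv 0$ and use the homology-vanishing hypothesis on $A_*$ to define $K$ on $\ker \partial_C$ by lifting $Ac_K$ through $\partial_C$ with controlled filtration.

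For the key step, let $c_K \in \ker \partial_C$ and set $\lambda = \ell_C(c_K)$. Then $c_K$ is a cycle in $C^\lambda_*$, so the hypothesis that $A_* \co H_*(C^\lambda_*) \to H_*(C^{\lambda+\epsilon}_*)$ is zero yields some $y \in C^{\lambda+\epsilon}_*$ with $\partial_C y = A c_K$. Decompose $y = y_F + y_K$ along $F_*^C \oplus \ker \partial_C$; since $\partial_C y_K = 0$, we obtain $\partial_C y_F = A c_K$. Orthogonality of the splitting forces $\ell_C(y_F) \leq \ell_C(y) \leq \lambda + \epsilon$. Because $\partial_C|_{F_*^C}$ is injective, $y_F$ is uniquely determined, and by that same uniqueness the assignment $c_K \mapsto y_F$ is $\Lambda$-linear. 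Define $K c_K := y_F$, and extend to all of $C_*$ by $K(c_F + c_K) := K(c_K)$.

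To verify that $A = \partial_C K + K \partial_C$, take $c = c_F + c_K$ with $c_F \in F_*^C$ and $c_K \in \ker \partial_C$. By construction $\partial_C K c = \partial_C K c_K = A c_K$. For the other term, $\partial_C c = \partial_C c_F \in \ker \partial_C$, so $K(\partial_C c_F)$ is by definition the unique element of $F_*^C$ whose $\partial_C$-image equals $A\partial_C c_F = \partial_C A c_F$. Here the splitness of $A$ is essential: since $A c_F \in F_*^C$, injectivity of $\partial_C|_{F_*^C}$ gives $K \partial_C c_F = A c_F$. Hence $(\partial_C K + K \partial_C)c = A c_K + A c_F = Ac$. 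The filtration bound is immediate: $\ell_C(Kc) = \ell_C(K c_K) \leq \ell_C(c_K) + \epsilon \leq \ell_C(c) + \epsilon$, where the final inequality again uses orthogonality of $F_*^C$ and $\ker \partial_C$.

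The main obstacle is really that one needs two structural ingredients to conspire: without the orthogonality of the splitting one cannot control $\ell_C(y_F)$ in terms of $\ell_C(y)$ when passing to the $F_*^C$-component of a lift, and without the splitness of $A$ the formula $K \partial_C c_F = A c_F$ on $F_*^C$ would generally fail, breaking the chain homotopy equation. Both hypotheses are used in exactly one place apiece, so the argument is essentially forced once one commits to defining $K$ via the splitting.
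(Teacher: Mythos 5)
Your proof is correct and is essentially the paper's argument, just presented in a slightly different packaging: the paper first builds a ``section'' $s$ (equal to $(\partial_C|_{F^C_{*+1}})^{-1}$ on $\Img\partial_C$ and zero on a complement) and then sets $K=sA$, while you define $K$ directly by lifting $Ac_K$ to its unique preimage in $F^C_{*+1}$ and declaring $K|_{F^C_*}=0$; unwinding the definitions shows these are the same operator, and the verification of the homotopy identity and the filtration bound proceed identically.
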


\begin{proof} Let $B_* = {\rm Im} (\partial_C)_{*+1}$. Then the boundary operator $\partial_C$ restricts as an isomorphism $(\partial_C)_{*+1}\co F_{*+1}^{C} \rightarrow B_*$. Let $L_*=\oplus_k L_k$ where each $L_k$ is a complement to $B_k$ in $\ker(\partial_C)_k$,  so that $\ker (\partial_C)_* = B_* \oplus L_*$,

Let $s: C_* \rightarrow C_{*+1}$ be the linear map such that $s|_{L_* \oplus F_*} = 0$ and $s|_{B_*} = (\partial_C |_{F_{*+1}})^{-1}$. Therefore, $\partial_C s|_{B_*}$ is the identity map on $B_*$, and for any $b \in B_*$, $s(b)$ is the unique element of $F^{C}_{*}$ such that $\partial_C s(b) = b$. Moreover, because $F_{*+1}^{C}$ is orthogonal to $\ker (\partial_C)_{*+1}$
\begin{equation}\label{opt2} 
\ell_C(s(b)) = \inf\{\ell_C(c) \,| \, c \in C_{*+1}\,, \partial_C c =b \}
. \end{equation}
Now let $K = sA$; we will check that $A = \partial_C K + K \partial_C$. Indeed, 
\begin{itemize}
\item[(i)] For $x \in \ker (\partial_C)_*$, we have $(\partial_C K + K \partial_C) x = \partial_C K x = \partial_C s A x = Ax$, since $Ax\in B_*$ by the hypothesis on $A_*\co H_*(C^{\lambda}_*)\to H_*(C^{\lambda+\ep}_{*})$
\item[(ii)] For $y \in F_{*}^{C}$, since $A$ is split and so $Ay \in F_{*}^{C}$, $Ky = sAy = 0$. Therefore, $(\partial_C K + K \partial_C) y = sA\partial_C y = s \partial_C Ay = Ay$, where the last equality comes from the fact that $\partial_C s \partial_C A y = \partial_C Ay$ and that both $s\partial_CAy$ and $Ay$ belong to $F_{*}^{C}$, together with the injectivity of $\partial_C|_{F_{*}^{C}}$. \end{itemize}

Finally, by the hypothesis that each $A_*: H_*(C^{\lambda}_*) \rightarrow H_*(C^{\lambda+\epsilon}_*)$ is zero, for any $x \in \ker(\partial_C)_*$, there exists some $z \in C_{*+1}$ such that $\partial_C z=Ax$ and $\ell_C(z) \leq \ell_C(x)  + \epsilon$. Since $Kx = sAx$ also obeys $\partial_CKx=Ax$, (\ref{opt2}) implies that 
\[ 
\ell_C(Kx) \leq \ell_C(z) \leq \ell_C(x) + \epsilon. \]
More generally any $c\in C_*$ can be written $c=x+f$ where $x\in \ker (\partial_C)_*$ and $f\in F_{*}^{C}$, and by definition $Kf=0$, so \[ \ell_C(Kc)=\ell_C(Kx)\leq \ell(x)+\ep\leq \ell_C(c)+\ep \] where the final inequality follows from the orthogonality of $\ker(\partial_C)_*$ and $F_{*}^{C}$.
\end{proof}

\begin{cor} If there is a chain-level $\delta$-interleaving between the Floer-type complexes $(C_*,\partial_C,\ell_C)$ and $(D_*,\partial_D,\ell_D)$, then there exists a $\delta$-quasiequivalence between $(C_*, \partial_C, \ell_C)$ and $(D_*, \partial_D, \ell_D)$.\end{cor}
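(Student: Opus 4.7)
The plan is to start from the given interleaving $(\Phi,\Psi)$, replace $\Phi$ and $\Psi$ by split versions using Lemma \ref{repcomp}, and then apply Lemma \ref{htpform} to the chain maps $\Psi^{\pi}\Phi^{\pi}-I_C$ and $\Phi^{\pi}\Psi^{\pi}-I_D$ to produce the required chain homotopies $K_C$ and $K_D$.

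First I would fix splittings $F^C_*$ and $F^D_*$ of $C_*$ and $D_*$, and set $\Phi^{\pi}=\pi_D\Phi\pi_C+\Phi(I_C-\pi_C)$ and $\Psi^{\pi}=\pi_C\Psi\pi_D+\Psi(I_D-\pi_D)$ as in Lemma \ref{repcomp}. That lemma tells us $\Phi^{\pi}$ and $\Psi^{\pi}$ are split chain maps still satisfying $\ell_D(\Phi^{\pi}c)\le \ell_C(c)+\delta$ and $\ell_C(\Psi^{\pi}d)\le \ell_D(d)+\delta$, and that they agree with $\Phi,\Psi$ on $\ker\partial_C$ and $\ker\partial_D$ respectively. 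Since a chain map's action on filtered homology is determined by its restriction to cycles, $\Psi^{\pi}\Phi^{\pi}$ induces on $H_*(C^{\lambda}_*)\to H_*(C^{\lambda+2\delta}_*)$ the same map as $\Psi\Phi$, which by hypothesis equals the inclusion-induced map; likewise for $\Phi^{\pi}\Psi^{\pi}$. Equivalently, the chain maps $A:=\Psi^{\pi}\Phi^{\pi}-I_C$ and $A':=\Phi^{\pi}\Psi^{\pi}-I_D$ induce the zero map on filtered homology with shift $\epsilon=2\delta$.

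Next I would check that $A$ and $A'$ are themselves split: since $\Phi^{\pi}(F^C_*)\subset F^D_*$ and $\Psi^{\pi}(F^D_*)\subset F^C_*$, the composition $\Psi^{\pi}\Phi^{\pi}$ carries $F^C_*$ into $F^C_*$, and adding $-I_C$ preserves that. The filtration bounds on $\Phi^{\pi}$ and $\Psi^{\pi}$ combined with the non-Archimedean triangle inequality give $\ell_C(Ac)\le \ell_C(c)+2\delta$, and symmetrically for $A'$. Thus both $A$ and $A'$ satisfy the hypotheses of Lemma \ref{htpform} with $\epsilon=2\delta$. Applying that lemma produces $K_C\colon C_*\to C_{*+1}$ with $A=\partial_C K_C+K_C\partial_C$ and $\ell_C(K_C c)\le \ell_C(c)+2\delta$, and similarly $K_D\colon D_*\to D_{*+1}$ with $A'=\partial_D K_D+K_D\partial_D$ and $\ell_D(K_D d)\le \ell_D(d)+2\delta$. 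The quadruple $(\Phi^{\pi},\Psi^{\pi},K_C,K_D)$ then satisfies all the conditions of Definition \ref{dfnqusie} and is the desired $\delta$-quasiequivalence.

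The only nontrivial ingredient is the passage from the (purely homological) interleaving hypothesis on $\Psi\Phi$ to a chain-level statement strong enough to invoke Lemma \ref{htpform}; this is why the replacement by $\Phi^{\pi},\Psi^{\pi}$ is needed, as Lemma \ref{htpform} requires the chain map $A$ it builds a homotopy for to be split. Once this replacement is in place the remaining verifications are routine bookkeeping with the filtration inequalities already packaged in Lemmas \ref{repcomp} and \ref{htpform}.
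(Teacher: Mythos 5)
Your proof is correct and follows essentially the same route as the paper's: replace $\Phi,\Psi$ with their split versions $\Phi^{\pi},\Psi^{\pi}$ via Lemma \ref{repcomp}, observe that the induced maps on filtered homology are unchanged, and then invoke Lemma \ref{htpform} with $\epsilon=2\delta$ to manufacture $K_C$ and $K_D$. The filtration bookkeeping you fill in (e.g.\ $\ell_C(Ac)\le\ell_C(c)+2\delta$) is exactly what the paper leaves implicit.
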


\begin{proof} By Lemma \ref{repcomp}, we can replace both $\Phi$ and $\Psi$ by $\Phi^{\pi}$ and $\Psi^{\pi}$ which are split with respect to splittings $F_{*}^{C}$ and $F_{*}^{D}$ of our two complexes; then we will have 
\[ \begin{array}{l}
(\Psi^{\pi} \circ \Phi^{\pi} - I_C)(F^C_*) \subset F^C_* \,\,\,\,\,\,$and$\,\,\,\,\,\,(\Phi^{\pi} \circ \Psi^{\pi} - I_D)(F^D_*) \subset F^D_*.
\end{array} \]
Note that, due to condition (ii) in Lemma \ref{repcomp}, $\Phi^{\pi}$ and $\Psi^{\pi}$ induce the same maps on homology as do $\Phi$ and $\Psi$, so the fact that $(\Phi,\Psi)$ is a chain level $\delta$-interleaving implies that the maps $\Psi^{\pi}_{*}\Phi^{\pi}_{*}-I_{C*}\co H^{\lambda}(C_*)\to H^{\lambda+2\delta}(C_*)$ and $\Phi^{\pi}_{*}\Psi^{\pi}_{*}-I_{D*}\co H^{\lambda}(D_*)\to H^{\lambda+2\delta}(D_*)$ are all zero. Hence applying Lemma \ref{htpform} to $\Psi^{\pi}\Phi^{\pi}-I_C$ and to $\Phi^{\pi}\Psi^{\pi}-I_D$ gives maps  $K_C$ and $K_D$  such that ($\Phi^{\pi},\Psi^{\pi},K_C,K_D)$ is a $\delta$-quasiequivalence.  \end{proof}

In other words, if we define the (chain level)  interleaving distance $d_I$ by, for any two Floer-type complexes $(C_*,\partial_C,\ell_C)$ and $(D_*,\partial_D,\ell_D)$, \[ d_I((C_*,\partial_C,\ell_C), (D_*,\partial_D,\ell_D))=\inf\left\{\delta\geq 0\left|\begin{array}{c}\mbox{There exists a chain level $\delta$-interleaving}\\ \mbox{between  $(C_*,\partial_C,\ell_C)$ and $(D_*,\partial_D,\ell_D)$} \end{array}\right.\right\},\] then we have an equality of distance functions $d_I=d_Q$ where $d_Q$ is the quasiequivalence distance.

\end{document}